\newtheorem{theorem}{Theorem}[section]
\newtheorem*{theorem*}{Theorem}
\newtheorem{corollary}[theorem]{Corollary}
\newtheorem{lemma}[theorem]{Lemma}
\newtheorem{proposition}[theorem]{Proposition}
\newtheorem*{proposition*}{Proposition}
\newtheorem*{conjecture*}{Conjecture}
\theoremstyle{definition}
\newtheorem{problem}{Problem}
\newtheorem*{problem*}{Problem}
\newtheorem{remark}[theorem]{Remark}
\numberwithin{equation}{section}
\def\bR {\mathbb{R}}
\def\bZ {\mathbb{Z}}
\def\cM {\mathcal{M}}
\def\cY {\mathcal{Y}}
\def\cZ {\mathcal{Z}}
\def\grad {{\nabla}}
\def\la {\langle}
\def\ra {\rangle}
\newcommand{\tx}[1]{\mathrm{#1}}
\newcommand{\wt}[1]{\widetilde{#1}}
\newcommand{\bs}[1]{\boldsymbol{#1}}
\newcommand{\sh}[1]{#1^\sharp}
\newcommand{\spn}{\operatorname{span}}
\renewcommand{\ker}{\operatorname{ker}}
\newcommand{\eee}{\mathrm e}
\newcommand{\ud}{\mathrm{\,d}}
\newcommand{\vd}{\mathrm{d}}
\newcommand{\vD}{\mathrm{D}}
\newcommand{\dd}[1]{{\frac{\vd}{\vd{#1}}}}
\title[Dynamics of strongly interacting unstable two-solitons]{Dynamics of strongly interacting \\ unstable two-solitons for generalized \\ Korteweg-de Vries equations}
\author{Jacek Jendrej}
\address{CNRS and Universit\'e Sorbonne Paris Nord, LAGA, UMR 7539, 99 av J.-B.~Cl\'ement, 93430 Villetaneuse, France}
\email{jendrej@math.univ-paris13.fr}
\begin{document}

\begin{abstract}
We consider the generalized Korteweg-de Vries equation $\partial_t u = -\partial_x(\partial_x^2 u + f(u))$,
where $f$ is an odd function of class $C^3$.
Under some assumptions on $f$, this equation admits \emph{solitary waves}, that is solutions of the form $u(t, x) = Q_v(x - vt - x_0)$,
for $v$ in some range $(0, v_*)$. We study pure two-solitons in the case of the same limit speed, in other words global solutions $u(t)$
such that
\begin{equation}
\label{eq:abstract}
\tag{$\ast$}
\lim_{t\to\infty}\|u(t) - (Q_v(\cdot - x_1(t)) \pm Q_v(\cdot - x_2(t)))\|_{H^1} = 0, \qquad \text{with}\quad\lim_{t \to \infty}x_2(t) - x_1(t) = \infty.
\end{equation}
Existence of such solutions is known for $f(u) = |u|^{p-1}u$ with $p \in \bZ \setminus \{5\}$ and $p > 2$.
We describe the~dynamical behavior of any solution satisfying \eqref{eq:abstract} under the assumption that $Q_v$ is linearly unstable
(which corresponds to $p > 5$ for power nonlinearities).
We prove that in this case the sign in \eqref{eq:abstract} is necessarily ``$+$'', which corresponds to an attractive interaction.
We also prove that the~distance $x_2(t) - x_1(t)$ between the solitons equals $\frac{2}{\sqrt v}\log(\kappa t) + o(1)$
for some $\kappa = \kappa(v) > 0$.
\end{abstract}

\maketitle
%-------------------------INTRODUCTION------------------------------------------%
\section{Introduction}
\label{sec:intro}
\subsection{Setting of the problem}
\label{ssec:setting}
We consider the generalized Korteweg-de Vries equation
\begin{equation}
\label{eq:kdv}
\tag{gKdV}
\Bigg\{ \begin{aligned}
& \partial_t u(t, x) = - \partial_x\big(\partial_x^2 u(t, x) + f(u(t, x))\big), \\
& u(0, x) = u_0(x),\qquad u_0 \in H^1(\bR).
\end{aligned}
\end{equation}
For $f(u) = u^2$ we obtain the classical KdV equation and for $f(u) = u^3$ the~mKdV equation.
Both equations are completely integrable. Thus, for these two models, at least in principle,
the dynamical behavior of solutions can be fully understood, see for instance \cite{EckSch83}.
We are interested in describing some aspects of the dynamical behavior of solutions for other nonlinearities $f$.

In this paper, we assume that $f$ is a non-trivial odd function of class $C^3$
such that $f(0) = f'(0) = 0$ and $f(u)$ is convex for $u \geq 0$.
Local well-posedness in $H^1(\bR)$ of the Cauchy problem was established by Kenig, Ponce and Vega~\cite{KPV89, KPV93}.
Moreover, if the final time of existence is finite, then the solution is unbounded in $H^1$.

For $u_0 \in H^1(\bR)$ we define the following quantities:
\begin{align}
M(u_0) &:= \int_\bR u_0(x)^2 \ud x&\text{(momentum)}, \\
E(u_0) &:= \int_\bR \Big[\frac 12 (\partial_x u_0(x))^2 - F(u_0(x))\Big]\ud x&\text{(energy)},
\end{align}
where $F(u) := \int_0^u f(u')\ud u'$.
We say that $H^1(\bR)$ is the \emph{energy space}, because it is the largest functional space
whose elements have finite energy and finite momentum.
The functionals $M$ and $E$ are conservation laws: if $u(t)$ solves \eqref{eq:kdv},
then $M(u(t)) = M(u_0)$ and $E(u(t)) = E(u_0)$ for all $t$ belonging to the maximal time interval of existence.

It is known, see Proposition~\ref{prop:Q-asym} below, that for $v > 0$ the equation
\begin{equation}
\label{eq:Qv}
-\partial_x^2 w(x) - f(w(x)) + v w(x) = 0, \qquad w \in H^1(\bR)
\end{equation}
has a unique positive even solution $w(x) = Q_v(x)$ if and only if $v < v_* := \lim_{u \to \infty}f(u) / u$.
It is easy to see that for any $v \in (0, v_*) $ and $x_0 \in \bR$ the function $u(t, x) = Q_v(x -vt- x_0)$
is a solution of \eqref{eq:kdv}. These solutions are called \emph{solitons} or \emph{travelling waves}.
We call $v$ the \emph{velocity} of the soliton.

We denote
\begin{equation}
\label{eq:wtQv}
\wt Q_v := \partial_v Q_v, \qquad v \in (0, v_*)
\end{equation}
(we will justify later, in Lemma~\ref{lem:Qv}, that this derivative is well defined).
By classical results, $Q_v$ is orbitally (with respect to translations)
stable if and only if $\int_\bR Q_v \wt Q_v\ud x > 0$, see \cite{CaLi82, Weinstein86, GSS87, BSS87}.
Pego and Weinstein~\cite{PeWe92} proved that if $\int_\bR Q_v \wt Q_v\ud x < 0$,
then $Q_v$ is linearly unstable.
The corresponding unstable manifold was constructed by Combet~\cite{Combet10},
giving another proof of instability in this case.
For power nonlinearities $f(u) = |u|^{p-1}u$,
we have $\int_\bR Q_v \wt Q_v\ud x > 0$ if and only if $p < 5$ ($L^2$-subcritical case)
and $\int_\bR Q_v \wt Q_v\ud x < 0$ if and only if $p > 5$ ($L^2$-supercritical case).

Martel and Merle \cite{MaMe01-ARMA, MaMe08} proved that solitons are asymptotically stable in a suitable sense.

We say that $u(t)$ is a \emph{multi-soliton} as $t \to \infty$ if there exist
$K \in \bZ_{>0}$, $\sigma_k\in \{{-}1, 1\}$, $v_k \in (0, v_*)$
and continuous functions $x_k(t)$ for $k \in \{1, \ldots, K\}$
such that $v_{k+1} \geq v_k$, $\lim_{t\to\infty}x_{k+1}(t) - x_k(t) = \infty$ and
\begin{equation}
u(t) \simeq \sum_{k=1}^K \sigma_k Q_{v_k}(\cdot - x_k(t))\qquad\text{as }t \to \infty,
\end{equation}
where the meaning of ``$\simeq$'' can depend on the context.
We say that $u(t)$ is a \emph{pure multi-soliton} as $t \to \infty$ if
\begin{equation}
\lim_{t \to\infty}\Big\|u(t) - \sum_{k=1}^K \sigma_k Q_{v_k}(\cdot - x_k(t))\Big\|_{H^1} = 0.
\end{equation}
In the case $v_1 < v_2 < \ldots < v_K$, stability and asymptotic stability of multi-solitons
was proved by Martel, Merle and Tsai~\cite{MMT02}. Also for $v_1 < \ldots < v_K$, pure multi-solitons
were completely classified by Martel~\cite{Martel05} and Combet~\cite{Combet11}.

\subsection{Formal prediction of multi-soliton dynamics}
\label{ssec:formal}
Consider a solution which is close to a superposition of a finite number of solitons:
\begin{equation}
\label{eq:formal-1}
u(t) \simeq \sum_{k=1}^K \sigma_k Q_{v_k(t)}(\cdot - x_k(t)),\quad x_{k+1}(t) - x_k(t) \gg 1,\ \sigma_k \in \{{-}1, 1\}.
\end{equation}
One natural way to predict the dynamical behavior of the parameters $x_k(t)$ and $v_k(t)$
is to consider the \emph{motion with constraints}, see \cite[Chapter 1.5]{AKN-Celestial}.

We equip the space of real-valued functions on $\bR$ with the symplectic form $\omega(v, w) := \int_{\bR}v\partial_x^{-1}w\ud x$, where
$\partial_x^{-1}w(x) := \frac 12\big(\int_{-\infty}^x w(y)\ud y - \int_x^\infty w(y)\ud y\big)$.
The Hamiltonian vector field corresponding to the energy functional $E$
is given by $X_E(u) = {-}\partial_x(\partial_x^2 u + f(u))$,
which is the right hand side of \eqref{eq:kdv}.
We now restrict our Hamiltonian system to the $2K$-dimensional manifold
\begin{equation}
\label{eq:manifold}
\cM := \bigg\{ \sum_{k=1}^K \sigma_k Q_{v_k}(\cdot - x_k)\,:\,x_{k+1} - x_k \gg 1,\ v_k \in (0, v_*)\bigg\}.
\end{equation}
Let us stress that in general $\cM$ is not invariant under the flow.
Denote $\bs x := (x_1, \ldots, x_K)$ and $\bs v := (v_1, \ldots, v_K)$.
Then $(\bs x, \bs v)$ is a natural system of coordinates on $\cM$.
The basis of the tangent space $T_{(\bs x, \bs v)}\cM$ induced by these coordinates is given by
$\partial_{x_k} = -\sigma_k\partial_x Q_{v_k}(\cdot - x_k)$ and
$\partial_{v_k} = \sigma_k \wt Q_{v_k}(\cdot - x_k)$.
Let
\begin{equation}
\begin{pmatrix}A(\bs x, \bs v) & C(\bs x, \bs v) \\
-C(\bs x, \bs v) & B(\bs x, \bs v)\end{pmatrix} =
\begin{pmatrix}(a_{jk})_{j, k=1}^{K} & (c_{jk})_{j, k=1}^{K} \\
({-}c_{jk})_{j, k=1}^{K} & (b_{jk})_{j, k=1}^{K}\end{pmatrix}\end{equation}
be the matrix of the symplectic form in this basis,
in other words for $j, k \in \{1, \ldots, K\}$ we have
\begin{align}
a_{jk} &= \omega(\partial_{x_j}, \partial_{x_k}) = \sigma_j\sigma_k\int_\bR \partial_x Q_{v_j}(x - x_j)Q_{v_k}(x - x_k)\ud x,\\
c_{jk} &= \omega(\partial_{x_j}, \partial_{v_k}) = \sigma_j\sigma_k\int_\bR Q_{v_j}(x - x_j)
\wt Q_{v_k}(x - x_k)\ud x,\\
%a_{K+j, k} &= \omega(\partial_{v_j}, \partial_{x_k})  = -\sigma_j\sigma_k\int_\bR \wt Q_{v_j}(x - x_j)Q_{v_k}(x - x_k)\ud x,\\
b_{jk} &= \omega(\partial_{v_j}, \partial_{v_k}) = \sigma_j\sigma_k\int_\bR \wt Q_{v_j}(x -x_j)\partial_x^{-1}\wt Q_{v_k}(x - x_k)\ud x.
\end{align}
Note that if $\int_\bR Q_{v_k}\wt Q_{v_k}\ud x = 0$ for some $k \in \{1, \ldots, K\}$,
then $C(\bs x, \bs v)$ and $\begin{pmatrix}A(\bs x, \bs v) & C(\bs x, \bs v) \\
-C(\bs x, \bs v) & B(\bs x, \bs v)\end{pmatrix}$  become singular as the separation between the solitons tends to infinity.
This corresponds to the delicate $L^2$-critical regime
studied for instance in \cite{MMR14-1, MMR15-2, MMR15-3}, which will not be considered in this paper.
We denote $V_\tx{crit} \subset (0, v_*)$ the set of $v \in (0, v_*)$ such that $\int_\bR Q_{v}\wt Q_{v}\ud x = 0$.
This is a closed set.

The Hamiltonian is the restriction $E\vert_{\cM}$. Slightly abusing the notation, we write
\begin{equation}
\label{eq:E-on-M}
E(\bs x, \bs v) := E\bigg(\sum_{k=1}^K \sigma_k Q_{v_k}(\cdot - x_k)\bigg).
\end{equation}
The function $E(\bs x, \bs v)$ is sometimes called the \emph{reduced Hamiltonian}.
The motion with constraints is given by the equation
\begin{equation}
\label{eq:constrained}
\begin{pmatrix}\bs x' \\ \bs v'\end{pmatrix} = \begin{pmatrix}X(\bs x, \bs v) \\ V(\bs x, \bs v)\end{pmatrix} := \begin{pmatrix}A(\bs x, \bs v) & C(\bs x, \bs v) \\
-C(\bs x, \bs v) & B(\bs x, \bs v)\end{pmatrix}^{-1}\begin{pmatrix}\partial_{\bs x}E(\bs x, \bs v) \\ \partial_{\bs v}E(\bs x, \bs v)\end{pmatrix}.
\end{equation}
Two problems seem natural.
\begin{problem}
\label{prob:constrained}
Study the solutions of the reduced equation \eqref{eq:constrained}.
\end{problem}
\begin{problem}
\label{prob:full}
Is the dynamical behavior of (pure) multi-soliton solutions to \eqref{eq:kdv}
correctly described by equation~\eqref{eq:constrained}?
\end{problem}
It turns out that if $0 < v_1^\infty < \ldots < v_K^\infty < v_*$ and $v_k^\infty \notin V_\tx{crit}$
for $k \in \{1, \ldots, K\}$,
%\begin{equation}
%\label{eq:Q-nondeg-gen}
%\int_\bR Q_{v_{k}^\infty}(x)\wt Q_{v_{k}^\infty}(x)\ud x \neq 0\qquad\text{for }k \in \{1, \ldots, K\},
%\end{equation}
then one can easily classify all the solutions to \eqref{eq:constrained} such that $v_k(t) \to v_k^\infty$
and $x_{k+1}(t) - x_k(t) \to \infty$ as $t \to \infty$, see Proposition~\ref{prop:constrained-distinct}.
Also for distinct limit velocities, Problem~\ref{prob:full} was solved in the works
\cite{MMT02, Martel05, Combet11} mentioned above.

Without the assumption that the limit velocities are distinct, the situation is only partially understood.
Existence of 2-solitons and 3-solitons with asymptotically equal velocities
was first observed for the mKdV equation by Wadati and Ohkuma~\cite{WaOhk82}.
For any power nonlinearity except for the critical case, such 2-solitons were constructed by Nguyen~\cite{Vinh17}.
We call this case the \emph{strong interaction regime} because, as we will see, interactions
between the solitons play an essential role in determining the asymptotic behavior of the solution.

We are not aware of any systematic treatment of equation~\eqref{eq:constrained},
we note though that the equation itself appears for instance in \cite{GoOs81}.
Providing a full answer to Problem~\ref{prob:constrained} might be of independent interest,
in view of the fact that an analogous formal reduction argument can be carried out
in the study of multi-solitons for various other models, most likely leading to a reduced system
similar to~\eqref{eq:constrained}.

\subsection{Statements of the results}
\label{ssec:results}
In this paper, we only consider the case $K = 2$. We hope to treat the general case in the future.
Concerning Problem~\ref{prob:constrained}, we have the following result.
\begin{proposition}
\label{prop:constrained}
Let $\sigma_1, \sigma_2 \in \{{-}1, 1\}$ and let $v^\infty \in (0, v_*) \setminus V_\tx{crit}$.
% be such that
%\begin{equation}
%\label{eq:Q-nondeg}
%\int_\bR Q_{v^\infty}(x)\wt Q_{v^\infty}(x)\ud x \neq 0.
%\end{equation}
If
\begin{equation}
\label{eq:sigma1sigma2}
\sigma_1 \sigma_2 = \begin{cases}
-1\quad&\text{in the case }\int_\bR Q_{v^\infty}\wt Q_{v^\infty}\ud x > 0, \\
1\quad&\text{in the case }\int_\bR Q_{v^\infty}\wt Q_{v^\infty}\ud x < 0,
\end{cases}
\end{equation}
then the equation \eqref{eq:constrained} has a solution $(\bs x(t), \bs v(t)) = (x_1(t), x_2(t), v_1(t), v_2(t))$ such that
\begin{gather}
\lim_{t\to\infty} t^\beta\big(\big|x_1(t) - v^\infty t + (v^\infty)^{-\frac 12}\log(\kappa t)\big| + \big|x_2(t) - v^\infty t - (v^\infty)^{-\frac 12}\log(\kappa t)\big|\big) = 0, \\
\lim_{t \to \infty} t^{\beta+1}\big(\big|v_1(t) - v^\infty + (v^\infty)^{-\frac 12}t^{-1}\big| + \big|v_2(t) - v^\infty
- (v^\infty)^{-\frac 12}t^{-1}\big|\big) = 0,
\end{gather}
for some $\beta > 0$, where $\kappa = \kappa(v^\infty) > 0$ is an explicit constant.
In particular,
\begin{equation}
\label{eq:conv-vitesse}
\begin{gathered}
\lim_{t \to \infty}v_1(t) = \lim_{t\to\infty}v_2(t) = v^\infty, \\
\lim_{t \to \infty} x_2(t) - x_1(t) = +\infty.
\end{gathered}
\end{equation}

If $(\sh{\bs x}, \sh{\bs v})$ is any solution of \eqref{eq:constrained} satisfying \eqref{eq:conv-vitesse},
then there exist unique $t^\infty, x^\infty \in \bR$ such that $(\sh{\bs x}(t), \sh{\bs v}(t)) = (\bs x(t - t^\infty) + x^\infty, \bs v(t - t^\infty))$.

Finally, if \eqref{eq:sigma1sigma2} is not satisfied, then there are no solutions of \eqref{eq:constrained} satisfying \eqref{eq:conv-vitesse}.
\end{proposition}
We give a proof (skipping the most routine computations) in Appendix~\ref{sec:reduced}.

Our main result is a partial positive answer to Problem~\ref{prob:full}
in the case of the same limit velocity $v_1^\infty = v_2^\infty =: v^\infty$.
%Without loss of generality we can assume $v^\infty = 1$
%(it suffices to rescale the spacetime variables $(t, x) \mapsto \big((v^\infty)^{-3/2}t, (v^\infty)^{-1/2}x\big)$
%and replace $f$ by $\frac{1}{v^\infty} f$).
%We denote $Q := Q_1$ and $\wt Q := \wt Q_1$.
\begin{theorem}
\label{thm:classif}
Let $v^\infty \in (0, v_*)$ be such that
\begin{equation}
\label{eq:supercrit}
\int_\bR Q_{v^\infty}(x)\wt Q_{v^\infty}(x)\ud x < 0
\end{equation}
and let
$u: [T_0, \infty) \to H^1(\bR)$ be a solution of \eqref{eq:kdv} satisfying
\begin{equation}
\label{eq:u-conv}
\lim_{t \to \infty}\big\|u(t) - Q_{v^\infty}\big(\cdot -\, x_1(t)\big) - \sigma Q_{v^\infty}\big(\cdot -\, x_2(t)\big)\big\|_{H^1} = 0,
\end{equation}
where $\sigma \in \{{-1}, 1\}$ and
$x_1, x_2: [T_0, \infty) \to \bR$ are continuous functions such that
\begin{equation}
\label{eq:q-conv}
\lim_{t\to\infty} x_2(t) - x_1(t) = +\infty.
\end{equation}
 Then $\sigma = 1$ and
\begin{equation}
\label{eq:classif}
\lim_{t\to \infty}x_2(t) - x_1(t) -\frac{2}{\sqrt{v^\infty}}\log(\kappa t) = 0,
\end{equation}
where $\kappa = \kappa(v^\infty) > 0$.
\end{theorem}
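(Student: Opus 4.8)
The plan is to show that any solution satisfying \eqref{eq:u-conv}--\eqref{eq:q-conv} has modulation parameters obeying, to leading order, the reduced system \eqref{eq:constrained}, and then to conclude using the classification in Proposition~\ref{prop:constrained}. First I would introduce a modulated decomposition
\[
u(t) = Q_{v_1(t)}(\cdot - x_1(t)) + \sigma Q_{v_2(t)}(\cdot - x_2(t)) + g(t),
\]
where the four parameters $(x_1, x_2, v_1, v_2)$ are fixed by requiring $g(t)$ to be orthogonal to the four directions $\partial_x Q_{v_k}(\cdot - x_k)$ and $\wt Q_{v_k}(\cdot - x_k)$, $k \in \{1,2\}$, which span the generalized kernels of the two linearizations. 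The assumption \eqref{eq:u-conv} guarantees that such a decomposition exists for $t$ large with $\|g(t)\|_{H^1} \to 0$ and $v_k(t) \to v^\infty$. It is then convenient to pass to the center $s = \tfrac12(x_1 + x_2)$, the separation $y = x_2 - x_1 \to \infty$, the mean velocity, and the velocity gap $w = v_2 - v_1$.

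Next I would derive the modulation equations by differentiating the orthogonality conditions and projecting \eqref{eq:kdv} onto the modulation directions. Using the exponential decay $Q_v(x) \sim c\,\eee^{-\sqrt v\,|x|}$ from Proposition~\ref{lem:Q-asym}, every interaction integral between the two solitons is of size $\eee^{-\sqrt{v^\infty}\,y}$, so the equations take the schematic form $\dot y = w + O(\|g\|_{H^1} + \eee^{-\sqrt{v^\infty}y})$ and $\dot w = -c_1 \sigma\, \eee^{-\sqrt{v^\infty}y} + O(\|g\|_{H^1}^2 + \ldots)$, with $c_1$ an explicit constant; the factor $\int_\bR Q_{v^\infty}\wt Q_{v^\infty}\ud x < 0$ enters through the inverse of the symplectic matrix in \eqref{eq:constrained} and is what ultimately flips the effective sign of the interaction relative to the stable case. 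In parallel I would use conservation of $E$ and $M$, together with localized almost-conserved momenta obtained by integrating $u^2$ against cutoffs separating the two bumps, as a robust global check on the velocity dynamics.

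The core difficulty is controlling $g$, because in the regime \eqref{eq:supercrit} the linearized flow around $Q_{v^\infty}$ is not coercive: it carries a pair of real eigenvalues $\pm e_0$ with eigenfunctions $\yp, \ym$. I would split $g$ into its components $a^\pm$ along $\yp, \ym$ (for each soliton) plus a part on which a mixed energy--virial Lyapunov functional is coercive. The amplitudes satisfy $\dot a^\pm = \pm e_0 a^\pm + (\text{forcing of size }\eee^{-\sqrt{v^\infty}y} + \|g\|_{H^1}^2)$. Here the hypothesis that $u(t)$ converges as $t\to\infty$ is essential: the stable mode decays on its own, while the unstable mode cannot be allowed to grow, so it must coincide with the unique bounded solution $a^+(t) = -\int_t^\infty \eee^{-e_0(\tau - t)}(\text{forcing})\,\ud\tau$, which slaves $a^+$ to the forcing and yields $|a^\pm(t)| + \|g(t)\|_{H^1} \lesssim \eee^{-\sqrt{v^\infty}\,y(t)}$. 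This bootstrap, taming the linear instability by means of the convergence assumption, is the step I expect to be the main obstacle.

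Finally, inserting this bound on $g$ back into the modulation equations reduces the dynamics to the scalar second-order equation $\ddot y = c\,\sigma\, \eee^{-\sqrt{v^\infty}y}\,(1 + o(1))$. Multiplying by $\dot y$ and integrating produces the almost-conserved quantity $\tfrac12\dot y^2 + \tfrac{c\sigma}{\sqrt{v^\infty}}\,\eee^{-\sqrt{v^\infty}y}$; since $v_1, v_2 \to v^\infty$ forces $w = \dot y + o(1) \to 0$ while $y \to \infty$, this constant must vanish, which is possible only when $c\sigma < 0$. Computing $c$ (where the sign $\int_\bR Q_{v^\infty}\wt Q_{v^\infty}\ud x < 0$ is decisive) shows that in the unstable case this selects $\sigma = 1$, proving the first assertion and ruling out $\sigma = -1$, for which the effective interaction would be repulsive and incompatible with $\dot y \to 0$. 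With $\sigma = 1$ the relation $\tfrac12\dot y^2 = \tfrac{|c|}{\sqrt{v^\infty}}\,\eee^{-\sqrt{v^\infty}y}$ integrates to $\eee^{\sqrt{v^\infty}\,y/2} = \kappa t\,(1 + o(1))$, that is, $y(t) = \frac{2}{\sqrt{v^\infty}}\log(\kappa t) + o(1)$, which is \eqref{eq:classif} and matches \eqref{eq:rel-dist} of Proposition~\ref{prop:constrained}.
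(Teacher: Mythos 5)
Your overall architecture (modulated decomposition, coercivity of $H=E+M$ to control the error, slaving of the exponentially unstable modes to the forcing via the convergence hypothesis, then integration of a reduced ODE for the separation) matches the paper's, and you correctly identify the taming of the linear instability as a key step. However, there is a genuine gap at the modulation stage. With your four-parameter decomposition, the equation for the velocity gap reads $\dot w = -c_1\sigma\,\eee^{-\sqrt{v^\infty}y} + O(\|g\|_{H^1}^2)$, and the best available bound on the error term is $\|g\|_{H^1}^2 \leq C_0\,\eee^{-\sqrt{v^\infty}y}$ where $C_0$ comes from the coercivity estimate and is \emph{not} small (note also that coercivity only yields $\|g\|_{H^1}\lesssim \eee^{-\sqrt{v^\infty}y/2}$, not $\eee^{-\sqrt{v^\infty}y}$ as you write). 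So the quadratic error is of exactly the same order as the interaction force you are trying to isolate, and the claimed leading-order ODE $\ddot y = c\sigma\,\eee^{-\sqrt{v^\infty}y}(1+o(1))$ does not follow. This is precisely why the paper does not modulate the velocities: instead it keeps two position parameters and introduces momenta $p_k = \la \sigma_k R_k,\varepsilon\ra + \tfrac12\int\phi_k\varepsilon^2$ containing an explicit quadratic \emph{correction term}; Lemma~\ref{lem:virial} shows that the time derivative of this correction combines with the quadratic contributions to $\tfrac{\vd}{\vd t}\la R_k,\varepsilon\ra$ into $\int\phi_k\,\partial_x\big(F(R+\varepsilon)-F(R)-f(R)\varepsilon\big)\ud x$, which after integration by parts is $O(\|\partial_x\phi_k\|_{L^\infty}\|\varepsilon\|_{H^1}^2) = O\big(\|\varepsilon\|_{H^1}^2/(q_2-q_1)\big) \ll \eee^{-q}$. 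Without this cancellation (or an equivalent device) your reduced system is not closed at the relevant order.

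Two further points. First, your determination of $\sigma$ via the almost-conserved quantity $\tfrac12\dot y^2 + \tfrac{c\sigma}{\sqrt{v^\infty}}\eee^{-\sqrt{v^\infty}y}$ depends on the very modulation equation that is in question, and controlling the accumulated error $\int_t^\infty O(\eee^{-\sqrt{v^\infty}y})|\dot y|\ud s$ requires quantitative decay of $\eee^{-\sqrt{v^\infty}y(t)}$ in $t$, which is part of what is being proved; the paper instead excludes $\sigma=-1$ purely variationally: for $\sigma=-1$ the expansion $H(U)=2H(Q)+ (2k_0^2+o(1))\eee^{-(q_2-q_1)}$ together with conservation of $H$ forces $\eee^{-(q_2-q_1)}\lesssim \sum_k\big((a_k^+)^2+(a_k^-)^2\big)$ (Proposition~\ref{prop:coer}), which contradicts the smallness of the stable and unstable projections. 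Second, the passage from the first-order differential inequalities to \eqref{eq:classif} requires a monotonicity argument for $q_2-q_1$ (Lemma~\ref{lem:q-mono}) and the substitution $r=p-2\kappa\la Q,\wt Q\ra\eee^{-q/2}$; your energy-integration shortcut would need an analogous justification that $\dot y$ does not oscillate.
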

%\begin{remark}
%\label{rem:kappa-0}
%See Remark~\ref{rem:kappa} for the value of $\kappa$.
%\end{remark}
%\begin{remark}
%We stated Theorem~\ref{thm:classif} for solutions which are pure two-solitons as $t \to -\infty$,
%because we find it more convenient to write the proof in this setting.
%Thanks to the time reversibility property of \eqref{eq:kdv},
%the same conclusion holds for solutions which are pure two-solitons as $t \to \infty$.
%\end{remark}
\begin{remark}
We were unable to treat the stable case $\int_\bR Q_{v^\infty}(x)\wt Q_{v^\infty}(x)\ud x > 0$.
The distance between the solitons for the solutions constructed in~\cite{Vinh17}
is given by~\eqref{eq:classif} both in the unstable and stable case.
However, in the stable case it remains an open problem to prove that this is the only possible separation.
\end{remark}
\begin{remark}
One natural refinement of Theorem~\ref{thm:classif} would be to obtain a complete classification
of all the solutions satisfying \eqref{eq:u-conv}.
The set of solutions obtained in Proposition~\ref{prop:constrained} is a two-dimensional manifold,
hence, taking into account that the linearisation around each of the two solitons has one stable direction,
we conjecture that all the solutions satisfying \eqref{eq:u-conv} form a four-dimensional manifold.
For kink-antikink pairs, a uniqueness result of this kind was obtained in \cite{JKL1}.
\end{remark}
\begin{remark}
The assumption $f \in C^3$ is mainly to ensure local well-posedness.
We expect that $f \in C^{1, \gamma}$ for some $\gamma > 0$ would suffice to justify our computations.
\end{remark}
\begin{remark}
The problem considered here is quite similar to the work of Gustafson and Sigal~\cite{GuSi06}
on multi-vortices in the Higgs model.
Our proof, based mainly on exploiting the Hamiltonian structure of the equation
combined with the modulation method (see below), also bears some resemblance to the approach
adopted in~\cite{GuSi06}. One important difference is that while
we consider \emph{pure} multi-solitons and control them for all positive times,
in~\cite{GuSi06} non-pure multi-vortices are controlled on a large but finite time interval.
\end{remark}
\begin{remark}
Constructions of strongly interacting pure two-solitons or two-bubbles for models which are not completely integrable can be found in
\cite{MaRa15p, moi-two-bub, moi17, Vinh17, Vinh19}, see also \cite{GLPR18} for a construction of a slightly different type.
%To the author's knowledge, the only classification result in the style of Theorem~\ref{thm:classif} is included in \cite{moi-andy-two-bub}.

\end{remark}
\subsection{Main elements of the proof}
\label{ssec:outline}
The key ingredient of the proof is the so-called \emph{modulation method}.
We study solutions which are close (in the energy space) to a superposition of two translated
copies of the soliton $Q$. Hence, it is natural to decompose
\begin{equation}
\label{eq:idea-decomp}
u(t) = \sigma_1 Q(\cdot - x_1(t)) + \sigma_2 Q(\cdot - x_2(t)) + \eta(t),
\end{equation}
where $x_1(t)$ and $x_2(t)$ are the centers of the two solitons (we address below the question
of how exactly $x_1(t)$ and $x_2(t)$ are chosen)
and $ \eta(t)$ is the \emph{error term}.
The only \emph{a priori} information is that $x_2(t) - x_1(t) \to \infty$
and $\| \eta(t)\|_{H^1} \to 0$ as $t \to \infty$.
The idea of the modulation method is to derive some differential inequalities on
the \emph{modulation parameters} $x_1(t)$ and $x_2(t)$.
These inequalities are traditionally called \emph{modulation equations}.
Since it is hard to obtain any precise information about $\eta(t)$,
preferably $ \eta(t)$ should not appear in the formulas.

Guided by the intuition explained in Section~\ref{ssec:formal}, we expect that we should define
two auxiliary parameters playing the role of the momenta, in order to obtain a system close to \eqref{eq:constrained}.
These parameters $p_k(t)$, defined by formula \eqref{eq:pk} below, are related to the projections of the error term
on null directions of the adjoint of the linearization of the flow around our two-soliton.
This choice makes linear terms disappear when we compute the time derivative of these momenta.
It turns out that one can define parameters, which we call $q_k(t)$,
related to the positions of the two solitons, whose time derivatives are essentially the momenta $p_k(t)$, see \eqref{eq:dtqk}.

The only way of estimating the error term we could think of is to use coercivity properties of the conservation laws.
It seems to us that this can only be achieved in the case $\big\la Q_{v^\infty}, \wt Q_{v^\infty}\big\ra < 0$,
which is precisely the obstacle preventing us from treating the case $\big\la Q_{v^\infty}, \wt Q_{v^\infty}\big\ra > 0$.
In the favorable case, we obtain essentially that $\|\eta\|_{H^1}^2$ is bounded by the size of the interaction between the solitons.
Thus, in order to have useful bounds on derivatives of the momenta, we have to absorb somehow the main quadratic terms,
which is why $p_k(t)$ contains a \emph{correction term}, quadratic in $\eta$.
A similar idea was used in \cite{moi-andy-two-bub} and (in a different context of minimal-mass blow-up)
in the earlier work of Rapha\"el and Szeftel~\cite{RaSz11}.

Note that an alternative way, perhaps more natural in view of Section~\ref{ssec:formal}, would be to decompose
\begin{equation}
\label{eq:idea-decomp-alt}
u(t) = \sigma_1 Q_{v_1(t)}(\cdot - x_1(t)) + \sigma_2 Q_{v_2(t)}(\cdot - x_2(t)) + \eta(t),
\end{equation}
with $\eta(t)$ satisfying four orthogonality conditions.
We have not tried to carry out the computation following this approach.

The paper is organized as follows. In Section~\ref{sec:spectral}, we study the stationary equation~\eqref{eq:Qv}.
In Section~\ref{sec:variational}, we study variational properties of the conserved quantities in a neighborhood of a two-soliton.
In Section~\ref{sec:modulation-two}, we define the modulation parameters and derive bounds on their derivatives.
In Section~\ref{sec:dynamics}, we finish the proof of Theorem~\ref{thm:classif}.
Appendix~\ref{sec:reduced}, independent of the main text, is devoted to the ODE \eqref{eq:constrained}.
\subsection{Acknowledgments}
\label{ssec:merci}
I would like to thank Yvan Martel for helpful discussions
and Carlos Kenig for encouragement.

I am indebted to the anonymous referees for a careful reading of the manuscript,
noticing~an error in the proof of Proposition~\ref{prop:constrained},
and many valuable corrections and suggestions.

I was supported by the ANR-18-CE40-0028 project ESSED.
Part of this work was completed when I was employed by the University of Chicago.

\subsection{Notation}
\label{ssec:notation}
We denote $L^2 := L^2(\bR)$, $H^1 := H^1(\bR)$, $L^\infty := L^\infty(\bR)$, etc.
All the functions are real-valued.
The $L^2$ scalar product is denoted $\la w_1, w_2\ra := \int_\bR w_1(x)w_2(x)\ud x$.
We use the same notation for the distributional pairing.
In the integrals, we often omit the variable and write $\int_\bR w\ud x$ instead of $\int_\bR w(x)\ud x$, etc.

For a nonlinear functional $\Phi: H^1 \to \bR$ we denote $\vD\Phi : H^1 \to H^{-1}$
its Fr\'echet derivative. If $\vD \Phi(w) = 0$, we denote $\vD^2 \Phi(w) \in H^{-1} \otimes H^{-1}$
the second derivative (Hessian).

Even if $w(x)$ is a function of one variable $x$, we often write $\partial_x w(x)$
instead of $w'(x)$ to denote the derivative. The prime notation is only used
for the time derivative of a function of one variable $t$
and for the derivative of the nonlinearity $f$.
For $w \in L^1(\bR)$ we denote $\partial_x^{-1}w(x) := \frac 12\big(\int_{-\infty}^x w\ud y - \int_x^\infty w\ud y\big)$.
It follows that if $\int_\bR w\ud x = 0$, then $\partial_x^{-1}w(x) = \int_{-\infty}^x w\ud y$.

For two functions $a$ and $b$, we write $a \lesssim b$ if $ a \leq Cb$ for some constant $C > 0$,
$a \gtrsim b$ if $a \geq cb$ for some constant $c > 0$,
and $a \sim b$ if $a / b$ converges to $1$.
We use the symbol $a \simeq b$ to denote equality ``up to negligible terms''.
It will be specified in each case which terms are considered as negligible.
\section{Analysis of the stationary equation}
\label{sec:spectral}
In this preliminary section, we study equation \eqref{eq:Qv}.
Many arguments are well-known and included mainly for the convenience of the reader.
\subsection{Existence and asymptotic behavior of $Q_v$ and $\wt Q_v$}
\label{ssec:asympt-Qv}
\begin{proposition}
\label{prop:Q-asym}
Let $v_* := \lim_{u \to \infty}f(u) / u$. For $v > 0$ the following conditions are equivalent.
\begin{enumerate}[(a)]
\item \label{it:vstar} $v \in (0, v_*)$.
\item \label{it:exists}
Equation~\eqref{eq:Qv} has a nontrivial solution $w \in H^1$.
\item \label{it:Qv-unique} Equation~\eqref{eq:Qv} has a unique positive even solution $w = Q_v$
and $Q_v(0)$ is the unique positive zero of $s \mapsto \frac v2 s^2 - F(s)$.
All the other solutions are obtained from $Q_v$ by translations and sign change.
\end{enumerate}
\end{proposition}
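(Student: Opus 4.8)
The plan is to reduce \eqref{eq:Qv} to an autonomous second-order ODE and exploit its first integral. Writing the equation as $\partial_x^2 w = vw - f(w)$ and multiplying by $\partial_x w$, one sees that every solution conserves the quantity $\tfrac12(\partial_x w)^2 - G(w)$, where $G(s) := \tfrac v2 s^2 - F(s)$. First I would record that a nontrivial $H^1$ solution is automatically in $H^2$ (its right-hand side lies in $L^2$), so $w$ and $\partial_x w$ tend to $0$ at $\pm\infty$; this forces the value of the first integral to vanish, and hence every such solution satisfies the separated equation $(\partial_x w)^2 = 2G(w)$.

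The technical heart is the analysis of $G$, for which I would pass to $\phi(s) := 2F(s)/s^2$, so that $G(s) = \tfrac12 s^2(v - \phi(s))$ and the positive zeros of $G$ are exactly the solutions of $\phi(s) = v$. Using that $f$ is odd with $f(0) = f'(0) = 0$ gives $\phi(0^+) = 0$, while the convexity of $f$ on $[0,\infty)$ yields monotonicity: setting $\psi(s) := s f(s) - 2F(s)$ one computes $\psi(0) = \psi'(0) = 0$ and $\psi''(s) = s f''(s) \ge 0$, so $\psi \ge 0$ and $\phi'(s) = 2\psi(s)/s^3 \ge 0$. Since $f$ is non-trivial and convex with $f(0)=f'(0)=0$, it cannot be linear, so $f'' > 0$ on a set of positive measure, and one checks that $\psi(s) > 0$ (hence $\phi$ is \emph{strictly} increasing) for every $s$ larger than the endpoint of the possible initial interval on which $f$ vanishes. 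The last ingredient is the identification $\lim_{s\to\infty}\phi(s) = v_*$ together with $\phi(s) < v_*$ for all finite $s$, which I would deduce from the fact that $s \mapsto f(s)/s$ is nondecreasing with limit $v_*$, a direct consequence of convexity and $f(0)=0$. Combining these facts: for $v \in (0, v_*)$ there is a unique $s_0 > 0$ with $\phi(s_0) = v$, i.e. a unique positive zero $s_0$ of $G$, with $G > 0$ on $(0, s_0)$ and $G'(s_0) = v s_0 - f(s_0) = -\psi(s_0)/s_0 < 0$; for $v \ge v_*$ one has $G(s) > 0$ for all $s \neq 0$.

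With this in hand the equivalences follow. For $(a)\Rightarrow(c)$ I would construct $Q_v$ by quadrature of $(\partial_x w)^2 = 2G(w)$: imposing $w(0) = s_0$, $\partial_x w(0) = 0$ and solving with $\partial_x w < 0$ for $x > 0$ produces a strictly decreasing profile, and since $G(s) \sim \tfrac v2 s^2$ near $0$ the integral $x = \int_{w(x)}^{s_0} (2G(s))^{-1/2}\ud s$ diverges as $w \to 0^+$, so the solution decays to $0$ (exponentially, at rate $\sqrt v$) as $x \to +\infty$; reflecting in $x$ gives an even, strictly positive $H^1$ solution, and $s_0 = Q_v(0)$ is the asserted zero. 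The implication $(c)\Rightarrow(b)$ is immediate. For $(b)\Rightarrow(a)$, a nontrivial solution attains a nonzero extremum $m$ where $\partial_x w = 0$, so $G(m) = 0$; since $G$ has no nonzero zero when $v \ge v_*$, necessarily $v < v_*$. Uniqueness up to translation and sign in $(c)$ I would read off the phase portrait of $(w')^2 = 2G(w)$: the origin is a hyperbolic fixed point (the linearization has eigenvalues $\pm\sqrt v$ because $f'(0)=0$), and by the structure of $G$ together with the oddness of $f$ the zero level set is a pair of disjoint homoclinic loops, to $+s_0$ and to $-s_0$, which cannot be crossed; hence every nontrivial solution is a translate of $Q_v$ or of $-Q_v$.

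The main obstacle is the quantitative study of $G$, equivalently of $\phi$: proving strict monotonicity of $\phi$ from convexity while allowing $f$ to vanish on an initial interval, and rigorously pinning down $\lim_{s\to\infty}\phi(s) = v_*$ with the strict bound $\phi < v_*$. Once the unique positive zero $s_0$ and the signs of $G$ and $G'$ at $s_0$ are secured, the construction, the decay, and the uniqueness reduce to standard phase-plane arguments.
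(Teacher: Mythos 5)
Your argument is correct, but it follows a more self-contained route than the paper. The paper does not redo the ODE analysis: it quotes Berestycki--Lions (\cite{BeLi83}, Section 6), according to which \ref{it:exists} and \ref{it:Qv-unique} are each equivalent to the existence of a smallest positive zero $s_0$ of $s \mapsto \frac v2 s^2 - F(s)$ with $vs_0 - f(s_0) < 0$, and then the entire proof reduces to the study of $\wt F(s) = 2F(s)/s^2$ --- which is exactly your $\phi$, analyzed the same way via the positivity of $sf(s)-2F(s)$. What you do differently is to re-derive the Berestycki--Lions criterion from scratch: the $H^1\Rightarrow H^2$ bootstrap, the vanishing of the first integral $\tfrac12(w')^2 - G(w)$, the quadrature construction, and the phase-plane uniqueness argument. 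This buys you a proof independent of \cite{BeLi83} (and the structure of $G$ you establish --- $G>0$ on $(0,s_0)$, $G'(s_0)<0$ --- is in any case re-derived and used by the paper in Lemma~\ref{lem:Q-asym} for the asymptotics of $Q_v$), at the cost of having to handle the standard degeneracy of the first-order equation $(w')^2=2G(w)$ at $w=s_0$, which you gloss over but which is resolved by working with the second-order IVP at $w(0)=s_0$, $w'(0)=0$ and using $w''(0)=vs_0-f(s_0)<0$. One genuinely careful point on your side: you correctly note that convexity alone permits $f$ to vanish on an initial interval, so that $\phi$ is only \emph{eventually} strictly increasing; the paper's claim that $u\mapsto f(u)/u$ is strictly increasing (and its inequality \eqref{eq:f-F-cond} for \emph{all} $s>0$) silently excludes this case, while your version still yields uniqueness of the positive zero $s_0$ and the sign $G'(s_0)<0$.
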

\begin{remark}
The limit defining $v_*$ exists because our assumptions on $f$ imply that the function
$u \mapsto f(u) / u$ is increasing.
\end{remark}
\begin{proof}
This is an easy consequence of \cite[Section 6]{BeLi83}, where it was proved that \ref{it:exists} and \ref{it:Qv-unique}
are both equivalent to the following condition: there exists $s_0$ the smallest positive zero of $s \mapsto \frac v2 s^2 - F(s)$ and $s_0$ satisfies $vs_0 - f(s_0) < 0$.

Our assumptions on $f$ imply $f(u) < \frac us f(s)$ for all $0 < u < s$.
Integrating for $u \in (0, s)$ we obtain
\begin{equation}
\label{eq:f-F-cond}
sf(s) - 2F(s) > 0,\qquad\text{for all }s > 0.
\end{equation}
Consider the function
\begin{equation}
\label{eq:Ftil}
\wt F(s) := \frac{2F(s)}{s^2}, \quad \wt F(0) := 0.
\end{equation}
From \eqref{eq:f-F-cond} we get $\wt F'(s) > 0$,
so $\wt F: [0, \infty) \to [0, \infty)$ is an increasing continuous function.
Clearly $\lim_{s \to \infty}\wt F(s) = v_*$.
This shows that $s \mapsto \frac v2 s^2 - F(s)$ has a positive zero $s_0$ if and only if $v \in (0, v_*)$
and that $s_0$ is unique. The condition $vs_0 - f(s_0) < 0$ is automatic, as is seen from \eqref{eq:f-F-cond}.
\end{proof}
For $v \in (0, v_*)$, we denote $L_v := -\partial_x^2 - f'(Q_v) + v$
the linearization of the left hand side of \eqref{eq:Qv} around $w = Q_v$.
\begin{lemma}
\label{lem:Q-asym}
For all $v \in (0, v_*)$, $Q_v \in C^5$ and $L_v \partial_x Q_v = 0$.
Moreover, there exists $k_0 = k_0(v) > 0$ such that for $j \in \{0, 1, 2, \ldots\}$ the function $Q_v$ satisfies
$Q_v^{(j)}(x) \sim (-\sqrt v)^j k_0 \eee^{-\sqrt vx}$
as $x \to +\infty$ and $Q_v^{(j)}(x) \sim k_0\sqrt v^j \eee^{\sqrt vx}$ as $x \to -\infty$.
\end{lemma}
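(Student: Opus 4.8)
The plan is to obtain the regularity and the relation $L_v\partial_x Q_v = 0$ by a standard bootstrap, and then to read off the asymptotics from the first integral of the autonomous ODE~\eqref{eq:Qv}. First I would establish regularity. Since $H^1(\bR) \hookrightarrow C_0(\bR)$, the function $Q_v$ is continuous, bounded, and tends to $0$ at $\pm\infty$. Rewriting \eqref{eq:Qv} as $\partial_x^2 Q_v = vQ_v - f(Q_v)$ and using $f \in C^3$, a three-step bootstrap gives $Q_v \in C^5$: the right-hand side is $C^0$, hence $Q_v \in C^2$; then $f(Q_v) \in C^2$, hence $Q_v \in C^4$; then $f(Q_v) \in C^3$, hence $Q_v \in C^5$ (one cannot do better, as $f \in C^3$ caps the regularity of $f(Q_v)$). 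Differentiating \eqref{eq:Qv} once, which is legitimate since $Q_v \in C^3$, yields $-\partial_x^3 Q_v - f'(Q_v)\partial_x Q_v + v\partial_x Q_v = 0$, that is $L_v\partial_x Q_v = 0$.

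For the asymptotics the key tool is the first integral. Multiplying \eqref{eq:Qv} by $\partial_x Q_v$ and integrating, using $Q_v, \partial_x Q_v \to 0$ and $F(0) = 0$, I get $(\partial_x Q_v)^2 = Q_v^2\big(v - \wt F(Q_v)\big)$, with $\wt F$ as in \eqref{eq:Ftil}. By Proposition~\ref{prop:Q-asym}, $Q_v(0) = s_0$ is the unique positive zero of $s \mapsto \frac v2 s^2 - F(s)$, equivalently $\wt F(s_0) = v$; since $\wt F$ is strictly increasing and $Q_v \to 0$, for $x$ large one has $Q_v(x) < s_0$, so $v - \wt F(Q_v) > 0$ and $\partial_x Q_v$ cannot vanish there. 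As $Q_v > 0$ decreases to $0$, this forces $\partial_x Q_v < 0$ for $x$ large, whence $\partial_x \log Q_v = -\sqrt{v - \wt F(Q_v)} \to -\sqrt v$. This already gives the rough bound $Q_v(x) \le C\eee^{-(\sqrt v - \delta)x}$ for every $\delta > 0$. To pin down the constant I would write $\partial_x\big(\log Q_v + \sqrt v\, x\big) = \sqrt v - \sqrt{v - \wt F(Q_v)} = \tfrac{\wt F(Q_v)}{\sqrt v + \sqrt{v - \wt F(Q_v)}}$; since $f(0) = f'(0) = 0$ and $f \in C^2$ give $\wt F(s) = O(s)$ as $s \to 0$, the right-hand side is $O(\eee^{-(\sqrt v - \delta)x})$, hence integrable. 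Being monotone and bounded, $\log Q_v + \sqrt v\, x$ converges to some finite limit $\log k_0$ with $k_0 > 0$, i.e. $Q_v(x) \simeq k_0\eee^{-\sqrt v x}$, which is the case $j = 0$.

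Finally I would handle the derivatives. The case $j = 1$ follows from $\partial_x Q_v = -Q_v\sqrt{v - \wt F(Q_v)} \simeq -\sqrt v\, k_0\eee^{-\sqrt v x}$. The remaining orders follow by induction from the relation $Q_v^{(j)} = vQ_v^{(j-2)} - \partial_x^{j-2}\big(f(Q_v)\big)$, valid for $j \ge 2$: by the chain rule $\partial_x^{j-2}(f(Q_v))$ is a sum of monomials in $Q_v, \partial_x Q_v, \ldots, Q_v^{(j-2)}$ with coefficients bounded on the (bounded) range of $Q_v$, the only term linear in a single derivative of $Q_v$ being $f'(Q_v)Q_v^{(j-2)}$; since $f'(Q_v) \to f'(0) = 0$ and every other monomial is a product of at least two factors each $O(\eee^{-\sqrt v x})$, we obtain $\partial_x^{j-2}(f(Q_v)) = o(\eee^{-\sqrt v x})$, hence $Q_v^{(j)} \simeq vQ_v^{(j-2)} \simeq (-\sqrt v)^j k_0\eee^{-\sqrt v x}$. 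Together with the base cases $j = 0, 1$ this gives the $+\infty$ asymptotics for all $j$; the behavior as $x \to -\infty$ then follows from the evenness of $Q_v$, since $Q_v^{(j)}(x) = (-1)^j Q_v^{(j)}(-x)$ and $(-1)^j(-\sqrt v)^j = (\sqrt v)^j$. The only genuinely delicate point is the existence and positivity of $k_0$, which rests on the integrability of the correction term above, itself a consequence of the quantitative vanishing $\wt F(s) = O(s)$ near $0$; the derivative bookkeeping, though slightly tedious, is routine.
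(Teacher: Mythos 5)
Your argument is correct, and it rests on the same underlying identity as the paper's proof, namely the first integral $(\partial_x Q_v)^2 = vQ_v^2 - 2F(Q_v)$ of the autonomous equation \eqref{eq:Qv} (the identity recorded in \eqref{eq:bogomolny}). The execution of the endgame differs, though. The paper substitutes $Q_v = s_0 - P(x)^2$, separates variables, and integrates explicitly, arriving at the closed-form expression \eqref{eq:k0-formula} for $k_0 = \lim_{x\to\infty}\eee^{\sqrt v x}Q_v(x)$; the convergence of that integral is checked at both endpoints ($vs_0 - f(s_0) < 0$ near $s = s_0$, $F(s)\lesssim s^3$ near $s=0$). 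You instead write $\partial_x(\log Q_v + \sqrt v\,x) = \wt F(Q_v)/(\sqrt v + \sqrt{v - \wt F(Q_v)})$, observe that this is nonnegative and integrable thanks to the preliminary bound $Q_v \lesssim \eee^{-(\sqrt v - \delta)x}$ and $\wt F(s) = O(s)$, and conclude that $\log Q_v + \sqrt v\,x$ converges to a finite limit $\log k_0$. This is a softer route: it yields existence and strict positivity of $k_0$ with less computation, but it does not produce the explicit formula \eqref{eq:k0-formula}, which the paper exploits in Remark~\ref{rem:unif-bound} to deduce continuity of $v \mapsto k_0(v)$ and hence the uniform-in-$v$ decay bounds used later; with your approach that continuity would need a separate (e.g.\ dominated convergence) argument. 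Your bootstrap for $Q_v \in C^5$ and the inductive treatment of the higher derivatives via Fa\`a di Bruno, isolating the single linear term $f'(Q_v)Q_v^{(j-2)} = o(\eee^{-\sqrt v x})$, is a legitimate and slightly more detailed version of the paper's one-line remark that ``the estimates for the derivatives follow from the differential equation,'' and your reduction of the $x\to-\infty$ case to evenness matches the statement exactly.
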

\begin{remark}
\label{rem:kappa}
The constant $\kappa$ in Theorem~\ref{thm:classif} turns out to be $\kappa := k_0(v^\infty)\sqrt{\frac{2(v^\infty)^{3/2}}{-\la Q_{v^\infty}, \wt Q_{v^\infty}\ra}}$.
\end{remark}
\begin{proof}[Proof of Lemma~\ref{lem:Q-asym}]
Let $v \in (0, v_*)$ and let $s_0 > 0$ be such that $\frac v2 s_0^2 - F(s_0) = 0$.
Regularity of $Q_v$ follows from the equation. Differentiating
${-}\partial_x^2 Q_v(\cdot - x_0) - f(Q_v(\cdot - x_0)) + vQ_v(\cdot - x_0) = 0$
with respect to $x_0$ we get $L_v \partial_x Q_v = 0$.

In order to determine the asymptotic behavior of $Q_v$, we recall how to solve \eqref{eq:Qv}.
We observe that \eqref{eq:Qv} implies $\frac 12 (\partial_x Q_v)^2 + F(Q_v) - \frac v2 Q_v^2 = 0$, hence
\begin{equation}
\label{eq:bogomolny}
\partial_x Q_v(x) = \pm\sqrt{vQ_v^2 - 2F(Q_v)},\qquad \text{for all }x \in \bR.
\end{equation}
Since $Q_v'(0) = 0$ and $Q_v''(0) = vs_0 - f(s_0) < 0$, for small positive $x$ the sign in \eqref{eq:bogomolny} is ``$-$''.
We also have $vs^2 - 2F(s) > 0$ for all $0 < s < s_0$, hence by a straightforward continuity argument,
$Q_v$ is decreasing for $x > 0$ and $\partial_x Q_v(x) = -\sqrt{vQ_v^2 - 2F(Q_v)}$ for all $x \geq 0$.
After separation of variables, for all $0 < x_1 < x < \infty$ we obtain
\begin{equation}
\int_{Q_v(x)}^{Q_v(x_1)}\bigg(\Big(s^2-\frac{2F(s)}{v}\Big)^{-\frac 12}-\frac 1s\bigg)\ud s = \sqrt v(x - x_1) + \log\frac{Q_v(x)}{Q_v(x_1)}.
\end{equation}
Taking the limit $x_1 \to 0$, we have
\begin{equation}
\eee^{\sqrt v x}Q_v(x) = s_0\exp\bigg(\int_{Q_v(x)}^{s_0}\bigg(\Big(s^2-\frac{2F(s)}{v}\Big)^{-\frac 12}-\frac 1s\bigg)\ud s\bigg).
\end{equation}
Since $Q_v \in H^1(\bR)$ implies $\lim_{x \to \infty}Q_v(x) = 0$, taking the limit $x \to \infty$ yields
\begin{equation}
\label{eq:k0-formula}
\lim_{x\to\infty} \eee^{\sqrt v x}Q_v(x) = s_0\exp\bigg(\int_0^{s_0}\bigg(\Big(s^2-\frac{2F(s)}{v}\Big)^{-\frac 12}-\frac 1s\bigg)\ud s\bigg) \in (0, \infty).
\end{equation}
Integrability near $s = s_0$ follows from $vs_0 - f(s_0) < 0$
and integrability near $s = 0$ from $F(s) \lesssim s^3$.

Once the asymptotic behavior of $Q_v$ is known, the estimates for the derivatives follow from
the differential equation.
\end{proof}
\begin{remark}
\label{rem:unif-bound}
We see from \eqref{eq:bogomolny} that
$\partial_x Q_v(x) = -\sqrt{vQ_v^2 - 2F(Q_v)} > -\sqrt{v}Q_v(x)$ for all $x > 0$,
so in fact we have
\begin{equation}
Q_v(x) \leq k_0\eee^{-\sqrt v x},\qquad\text{for all }x > 0.
\end{equation}
Formula \eqref{eq:k0-formula} implies (by standard arguments) that $k_0(v)$ is continuous with respect to $v$.
Thus we can conclude that for any $0 < v_1 < v_2 < v_*$ there exists $C_0 > 0$ such that
\begin{equation}
Q_v(x) \leq C_0\eee^{-\sqrt{v_1} x},\qquad\text{for all }x > 0, v \in [v_1, v_2],
\end{equation}
and similarly for $\partial_x^j Q_v$.

Again from \eqref{eq:bogomolny}, we have $|\partial_x Q_v + \sqrt v Q_v| \lesssim \eee^{-2\sqrt v x}$ for $x > 0$,
which implies $|Q_v(x) - k_0\eee^{-\sqrt v x}| \lesssim \eee^{-2\sqrt v x}$, and similarly for derivatives.
This estimate can be also made uniform in $v$, as above.
\end{remark}
\begin{lemma}
\label{lem:Qv}
For all $v\in (0, v_*)$, the function $\wt Q_v := \partial_v Q_v$ is well-defined as a classical partial derivative.
Moreover, $\wt Q_v \in C^4$, $|\wt Q_v(x)| \lesssim |x|\eee^{-\sqrt v |x|}$ as $|x| \to \infty$
and $L_v\wt Q_v = -Q_v$.
\end{lemma}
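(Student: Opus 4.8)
The plan is to realize $Q_v$ as the solution of a second-order ODE initial value problem depending on the parameter $v$, to deduce existence of $\wt Q_v$ and the equation $L_v\wt Q_v = -Q_v$ from the standard theory of differentiable dependence on parameters, and finally to extract the decay by differentiating the first integral of the equation. First I would record that $Q_v$ solves
\begin{equation}
\partial_x^2 Q_v = vQ_v - f(Q_v), \qquad Q_v(0) = s_0(v), \quad \partial_x Q_v(0) = 0,
\end{equation}
where $s_0(v)$ is the positive zero of $s \mapsto \frac v2 s^2 - F(s)$ from Proposition~\ref{prop:Q-asym}. Since $\partial_s(\frac v2 s^2 - F(s))\rstr_{s=s_0} = vs_0 - f(s_0) < 0 \neq 0$, the implicit function theorem gives $s_0 \in C^1$ (in fact $C^3$) in $v$. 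The right-hand side $(w, v) \mapsto vw - f(w)$ is $C^1$, so the classical theorem on differentiable dependence of solutions on parameters and initial data shows that $v \mapsto Q_v(x)$ is $C^1$, locally uniformly in $x$, and that $\wt Q_v := \partial_v Q_v$ exists as a classical derivative. Differentiating the ODE in $v$ yields $\partial_x^2 \wt Q_v = (v - f'(Q_v))\wt Q_v + Q_v$, that is, $L_v\wt Q_v = -Q_v$. Evenness of $\wt Q_v$ follows from that of $Q_v$ for every $v$.

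For the regularity I would simply bootstrap this last identity: writing $\partial_x^2 \wt Q_v = (v - f'(Q_v))\wt Q_v + Q_v$ and using $Q_v \in C^5$ (Lemma~\ref{lem:Q-asym}) together with $f \in C^3$ — so that $f'(Q_v) \in C^2$ — one reads off that $\wt Q_v \in C^4$.

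The decay is the one step requiring an idea, and is where I expect the \emph{main difficulty} to lie. Differentiating the first integral $(\partial_x Q_v)^2 = vQ_v^2 - 2F(Q_v)$ from \eqref{eq:bogomolny} with respect to $v$ gives the first-order relation
\begin{equation}
\partial_x Q_v\,\partial_x\wt Q_v - \partial_x^2 Q_v\,\wt Q_v = \tfrac12 Q_v^2.
\end{equation}
On $(0,\infty)$ we have $\partial_x Q_v < 0$, so dividing by $(\partial_x Q_v)^2$ this reads $\dd{x}\big(\wt Q_v/(-\partial_x Q_v)\big) = -Q_v^2/(2(\partial_x Q_v)^2)$, and integrating from a fixed $x_1 > 0$ produces an explicit formula for $\wt Q_v$ on $(0,\infty)$. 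Since \eqref{eq:bogomolny} gives $(\partial_x Q_v)^2/Q_v^2 = v - 2F(Q_v)/Q_v^2 \to v$ as $x \to \infty$, the integrand tends to the nonzero constant $1/(2v)$, so the integral grows like $x/(2v)$; hence $\wt Q_v(x) \sim \frac{x}{2v}\partial_x Q_v(x)$, which by Lemma~\ref{lem:Q-asym} behaves like $x\eee^{-\sqrt v x}$. This yields the claimed bound $|\wt Q_v(x)| \lesssim x\eee^{-\sqrt v x}$ as $x \to +\infty$, and evenness extends it to $x \to -\infty$. The key point — and the reason the polynomial factor $|x|$ appears — is the resonance visible here: the forcing term $-Q_v$ in $L_v\wt Q_v = -Q_v$ decays at the same rate $\eee^{-\sqrt v x}$ as the kernel element $\partial_x Q_v$ of $L_v$. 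A pleasant consequence of using the first-order reduction is that its general solution $\wt Q_v = (-\partial_x Q_v)\big(C - \int_{x_1}^x \tfrac{Q_v^2}{2(\partial_x Q_v)^2}\big)$ never contains the growing mode $\eee^{+\sqrt v x}$ for any value of $C$, so no separate argument ruling out exponential growth of $\wt Q_v$ is required.
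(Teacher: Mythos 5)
Your argument is correct, and while the first half coincides with the paper's (realize $Q_v$ as the solution of an initial value problem with $C^1$ data $s_0(v)$, invoke differentiable dependence on parameters to get $\wt Q_v = \partial_v Q_v$ classically, and differentiate the equation in $v$ to get $L_v\wt Q_v = -Q_v$), the second half — where the real content lies — takes a genuinely different route. The paper builds a fundamental system $(\phi_v, \psi_v)$ for $L_v$ with $\phi_v = \partial_x Q_v$ decaying and $\psi_v$ growing like $\eee^{\sqrt v|x|}$, writes down an explicit variation-of-parameters formula $\wt Q_v = -\phi_v\int_0^x\psi_v Q_v\ud y + \frac12\psi_v Q_v^2$, reads off the regularity and the $|x|\eee^{-\sqrt v|x|}$ bound from the growth of $\psi_v$, and then must separately verify that this formula actually equals $\partial_v Q_v$ (by checking it solves the same equation, matching the value at $x=0$ via implicit differentiation of $\frac v2 s_0^2 = F(s_0)$, and using evenness). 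You instead differentiate the first integral $(\partial_x Q_v)^2 = vQ_v^2 - 2F(Q_v)$ in $v$ to obtain the Wronskian identity $\partial_x Q_v\,\partial_x\wt Q_v - \partial_x^2 Q_v\,\wt Q_v = \frac12 Q_v^2$ \emph{directly for} $\partial_v Q_v$, reduce to a first-order ODE for $\wt Q_v/\partial_x Q_v$ on $(0,\infty)$, and get the linear-times-exponential decay from the fact that the integrand $Q_v^2/(2(\partial_x Q_v)^2) = (v - 2F(Q_v)/Q_v^2)^{-1}/2 \to 1/(2v)$. This buys you two things: you never need to introduce the growing solution $\psi_v$ or estimate it, and you skip the identification step entirely since you work with $\partial_v Q_v$ itself throughout; note also that obtaining the Wronskian relation by differentiating the first integral (rather than by Abel's formula applied to $L_v\partial_xQ_v=0$ and $L_v\wt Q_v=-Q_v$) is what pins the additive constant to zero without any a priori control of $\wt Q_v$ at infinity. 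The one place to be slightly careful is the interchange $\partial_v\partial_x Q_v = \partial_x\partial_v Q_v$ used when differentiating the first integral, but this is supplied by the parameter-dependence theorem (the $v$-derivative of the phase-space curve $(Q_v,\partial_x Q_v)$ is $(\wt Q_v,\partial_x\wt Q_v)$), so there is no gap. Your bootstrap for $\wt Q_v \in C^4$ from the variational equation is also fine.
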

\begin{proof}
The function $\wt F(s)$ defined in \eqref{eq:Ftil} is $C^4$ for $s > 0$.
Thus $Q_v(0) = s_0 = \wt F^{-1}(v)$ is $C^4$ on $(0, v_*)$.
By smooth dependence on initial conditions of solutions of ordinary equations,
$Q_v(x)$ is of class $C^3$ as a function of $2$ variables $(x, v)$.
Differentiating $-\partial_x^2 Q_v - f(Q_v) + vQ_v = 0$ with respect to $v$ we obtain $L_v\partial_v Q_v = -Q_v$.

Denote $\phi_v(x) := \partial_x Q_v(x)$ and $\psi_v(x)$ the solution of
$L_v\psi_v = 0$ with initial conditions $\psi_v(0) = \frac{1}{f(s_0)-vs_0}$
and $\frac{\vd\psi_v}{\vd x}(0) = 0$. Then $(\phi_v, \psi_v)$ is a fundamental system of solutions for the operator
$L_v$, with the Wronskian equal to $1$. We set
\begin{equation}
\label{eq:Qtil-def}
\wt Q_v(x) := {-}\phi_v(x)\int_0^x \psi_v(y)Q_v(y)\ud y + \frac 12 \psi_v(x)Q_v(x)^2.
\end{equation}
For the moment, it is not clear that $\wt Q_v = \partial_v Q_v$, but we will prove that this is indeed the case.

By standard ODE theory, $\psi_v(x)$ is continuous in both variables, of class $C^4$ in $x$,
and $|\psi_v(x)| \lesssim \eee^{\sqrt v|x|}$.
Thus \eqref{eq:Qtil-def} yields $\wt Q_v \in C^4$ and, using Lemma~\ref{lem:Q-asym},
$|\wt Q_v(x)| \lesssim |x|\eee^{-\sqrt v|x|}$. The fact that $L_v \wt Q_v = -Q_v$
is a routine computation. Hence $\partial_v Q_v$ and $\wt Q_v$ satisfy the same differential equation.
Moreover, $\wt Q_v(0) = \frac{s_0^2}{2(f(s_0) - vs_0)} = \partial_v Q_v(0)$,
where the last equality follows by differentiating $\frac v2 (Q_v(0))^2 - F(Q_v(0)) = 0$ with respect to $v$.
Since both $\wt Q_v$ and $\partial_v Q_v$ are even functions, we obtain $\wt Q_v = \partial_v Q_v$.
\end{proof}
\begin{remark}
We note that one can obtain ``semi-explicit'' formulas for $\|Q_v\|_{L^2}^2$ and $\la Q_v, \wt Q_v\ra$.
Using the fact that $\partial_x Q_v(x) = -\sqrt{v Q_v(x)^2 - 2F(Q_v(x))}$ for $x > 0$ and changing the variable
we find
\begin{equation}
\|Q_v\|_{L^2}^2 = 2\int_0^{s_0}\frac{s^2\ud s}{\sqrt{v s^2 - 2F(s)}}.
\end{equation}
One can find a similar (but more complicated) formula for $\la Q_v, \wt Q_v\ra$
by carefully differentiating the formula above (taking into account the singular behavior near $s = s_0$).
Alternatively, one can use \eqref{eq:Qtil-def} and then change the variable to $s = Q_v(x)$.
\end{remark}

%%%%%%%%%%%%%%%%%%% SPECTRAL PROPERTIES %%%%%%%%%%%%%%%%%%%%%%%%%%%%
\subsection{Spectral properties of $L_v$}
All the results contained in this section are proved in \cite{CMM11}
in the case $f(u) = u^{2m+1}$. Since the specific form of the nonlinearity is used
in proofs given there, for reader's convenience we provide alternative proofs,
but of course some steps are the same as in \cite{CMM11}.

Without loss of generality, we take $v = 1$ (the general case follows by rescaling).
We denote $Q := Q_1$, $\wt Q := \wt Q_1$ and $L := L_1$.
From Lemmas~\ref{lem:Q-asym} and \ref{lem:Qv} we have
\begin{equation}
\label{eq:L-ker}
L (\partial_x Q) = 0, \qquad L(\wt Q) = -Q.
\end{equation}
We assume $\la Q, \wt Q\ra < 0$.

\begin{proposition}
\label{prop:L-index}
The operator $L$ is self-adjoint with domain $H^2(\bR)$, has one simple negative
eigenvalue and $\ker L = \spn(\partial_x Q)$.
\end{proposition}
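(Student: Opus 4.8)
The plan is to regard $L = -\partial_x^2 + V$ with $V := 1 - f'(Q)$ as a one-dimensional Schrödinger operator and to read off its entire low-lying spectrum from the single explicit eigenfunction $\partial_x Q$ together with Sturm oscillation theory. First I would settle self-adjointness and the essential spectrum. Since $Q \in C^5$ is bounded and $f \in C^3$, the potential $V$ is bounded and continuous, so $L$ is a bounded symmetric perturbation of the self-adjoint operator $-\partial_x^2 + 1$ on $H^2$; Kato--Rellich then gives self-adjointness of $L$ with domain $H^2$. Because $f'(0) = 0$ and $Q(x) \to 0$ exponentially by Lemma~\ref{lem:Q-asym}, the perturbation $-f'(Q)$ decays to $0$ at $\pm\infty$ and is relatively compact with respect to $-\partial_x^2 + 1$; Weyl's theorem then yields $\sigma_{\mathrm{ess}}(L) = [1, \infty)$, so the spectrum in $(-\infty, 1)$ is purely discrete.

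Next comes a Wronskian argument that simultaneously pins down the kernel and the simplicity of every eigenvalue below $1$. The equation $L\phi = \lambda\phi$ is a second-order linear ODE, hence has a two-dimensional solution space; any two linearly independent solutions have constant nonzero Wronskian, and since an $L^2$ eigenfunction decays exponentially together with its derivative at $+\infty$, at most one solution (up to scaling) can lie in $L^2$. Therefore each eigenvalue below the essential spectrum is simple. Applied to $\lambda = 0$, together with $L(\partial_x Q) = 0$ from \eqref{eq:L-ker} and the exponential decay of $\partial_x Q$, this gives $\ker L = \spn(\partial_x Q)$.

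Finally I would count the negative eigenvalues. By \eqref{eq:bogomolny} the even function $Q$ is strictly decreasing on $(0,\infty)$, so $\partial_x Q$ vanishes only at $x = 0$: it is an eigenfunction for the eigenvalue $0$ with exactly one node. By the Sturm oscillation theorem for one-dimensional Schrödinger operators---ordering the eigenvalues below the essential spectrum as $\lambda_0 < \lambda_1 < \cdots$, the eigenfunction of $\lambda_n$ has exactly $n$ zeros---a one-node eigenfunction at energy $0$ forces $0 = \lambda_1$, so there is exactly one eigenvalue strictly below $0$, and it is simple by the previous paragraph. That this eigenvalue is genuinely negative (rather than merely inferred from the ordering) can be checked directly: from $-\partial_x^2 Q = f(Q) - Q$ one computes $LQ = f(Q) - f'(Q)Q$, and since $u \mapsto f(u)/u$ is strictly increasing one has $f(Q) < f'(Q)Q$ pointwise, whence $\la LQ, Q\ra < 0$ and the bottom of the spectrum is negative.

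The main obstacle is invoking oscillation theory cleanly on the whole line: one must guarantee that the spectrum below $1$ is discrete (supplied by Weyl's theorem above), that the counting statement ``number of zeros $=$ index'' remains valid in this non-compact setting, and that the node of $\partial_x Q$ is correctly localized. If one prefers to avoid the general oscillation theorem, the same conclusion can be reached by combining the simple, nodeless ground state produced by the test-function computation with a Sturm comparison against $\partial_x Q$, whose unique node sits at the origin; turning this nodal comparison into a rigorous ``at most one'' statement is the one step that requires genuine care.
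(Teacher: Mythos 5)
Your proposal is correct and follows essentially the same route as the paper, whose proof simply declares the statement ``a standard consequence of Sturm--Liouville theory and the fact that $\partial_x Q$ has exactly one zero''; you have merely filled in the standard details (Kato--Rellich self-adjointness, Weyl's theorem for the essential spectrum, the Wronskian simplicity argument, and the oscillation-theoretic count anchored at the one-node eigenfunction $\partial_x Q$). The direct verification that $\la Q, LQ\ra < 0$ via the monotonicity of $f(u)/u$ is a nice independent confirmation that the bottom eigenvalue is genuinely negative, but it is not needed beyond what the oscillation ordering already gives.
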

\begin{proof}
This is a standard consequence of the Sturm-Liouville theory and the fact
that $\partial_x Q$ has exactly one zero.
\end{proof}
\begin{proposition}
\label{prop:eigen}
There exist exponentially decaying $C^4$ functions $\cY^-, \cY^+$ and $\nu > 0$ such that
\begin{gather}
\partial_x(L \cY^-) = -\nu\cY^-, \qquad \partial_x(L \cY^+) = \nu\cY^+, \label{eq:dxLY} \\
\cY^+(x) = \cY^-(-x), \label{eq:Yrefl} \\
\|\cY^-\|_{L^2} = \|\cY^+\|_{L^2} = 1, \label{eq:Y-norm} \\
\int_\bR \cY^- = \int_\bR \cY^+ = 0, \label{eq:intY} \\
\la \cY^-, L\cY^-\ra = \la \cY^+, L\cY^+\ra = 0,  \label{eq:YLY1} \\
\la \cY^-, L\cY^+\ra = \la \cY^+, L\cY^-\ra \neq 0, \label{eq:YLY2} \\
\cY^-, \cY^+, \partial_x Q \text{ are linearly independent.} \label{eq:YQ'indep}
\end{gather}
\end{proposition}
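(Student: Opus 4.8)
The plan is to exploit the Hamiltonian structure $\partial_x L = JL$, with $J = \partial_x$ skew-adjoint and $L$ self-adjoint, together with the reflection symmetry inherited from $Q$ being even, and to read off every identity from the single eigenvalue equation $\partial_x(L\cY) = \nu\cY$. First I would reduce the construction of the pair $(\cY^-, \cY^+)$ to producing one nonzero real eigenvalue of $\partial_x L$. Since $Q$, and hence $f'(Q)$, is even, the operator $L$ commutes with the reflection $(Rg)(x) := g(-x)$, while $\partial_x$ anticommutes with $R$; therefore $\partial_x L$ anticommutes with $R$, so if $\partial_x(L\cY^+) = \nu\cY^+$ then $\cY^- := R\cY^+$ satisfies $\partial_x(L\cY^-) = -\nu\cY^-$. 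This gives \eqref{eq:Yrefl} at once, and because $R$ is an $L^2$-isometry a single normalization yields \eqref{eq:Y-norm}.

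Existence of the eigenvalue $\nu > 0$ is the heart of the matter: it is exactly the linear instability of $Q$ under the hypothesis $\la Q, \wt Q\ra < 0$, due to Pego--Weinstein, which one may cite or reprove by a shooting/Evans-function argument. Writing the equation as the third-order ODE $-\cY''' - (f'(Q)\cY)' + \cY' = \nu\cY$, whose coefficients converge exponentially (by Lemma~\ref{lem:Q-asym}) to constants, the limiting characteristic polynomial $\mu^3 - \mu + \nu$ has for $\nu > 0$ a one-dimensional stable subspace at $+\infty$ and a two-dimensional one at $-\infty$; the condition that the solution decaying at $+\infty$ also decays at $-\infty$ defines an Evans-type function $\mathcal D(\nu)$. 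Because $\partial_x Q \in \ker(\partial_x L)$ and $\partial_x(L\wt Q) = -\partial_x Q$ (from \eqref{eq:L-ker}) form a Jordan block at $0$, the value $\nu = 0$ is a double root of $\mathcal D$; the decisive point is that the sign of the leading nonzero coefficient of $\mathcal D$ at $0$ is governed by $\frac{\vd}{\vd v}\|Q_v\|_{L^2}^2 = 2\la Q, \wt Q\ra$, so that in the unstable case $\la Q, \wt Q\ra < 0$ an intermediate-value comparison with the sign of $\mathcal D$ for large $\nu$ produces a genuine root $\nu > 0$. This sign computation, together with the simplicity of $\nu$, is the main obstacle.

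Given $\nu \neq 0$ and an eigenfunction $\cY^+$, the remaining properties are formal. Integrating $\nu\cY^+ = \partial_x(L\cY^+)$ over $\bR$ and using that $L\cY^+$ decays gives $\int_\bR \cY^+ = 0$, hence \eqref{eq:intY} (and likewise for $\cY^-$); this also lets me write $L\cY^+ = \nu\,\partial_x^{-1}\cY^+$ with no additive constant. The key algebraic identity is that for two eigenpairs $\partial_x(L\phi) = \lambda\phi$, $\partial_x(L\psi) = \mu\psi$, self-adjointness of $L$ and skew-adjointness of $\partial_x^{-1}$ give
\[
(\lambda + \mu)\,\la \partial_x^{-1}\phi, \psi\ra = 0.
\]
Taking $\phi = \psi = \cY^+$ forces $\la \partial_x^{-1}\cY^+, \cY^+\ra = 0$, whence $\la \cY^+, L\cY^+\ra = \nu\la\partial_x^{-1}\cY^+, \cY^+\ra = 0$, and the same for $\cY^-$, which is \eqref{eq:YLY1}; the equality in \eqref{eq:YLY2} is just self-adjointness of $L$. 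For the nonvanishing I note $\la \cY^-, L\cY^+\ra = \nu\,\la\partial_x^{-1}\cY^+, \cY^-\ra$; since $\pm\nu$ are simple and the spectral set $\{\pm\nu\}$ is isolated and symmetric under the reflection involution, the symplectic form $\omega(v,w) = \la\partial_x^{-1}v, w\ra$ is nondegenerate on $\spn(\cY^+, \cY^-)$, and as $\omega(\cY^\pm, \cY^\pm) = 0$ this forces $\omega(\cY^-, \cY^+) \neq 0$, giving \eqref{eq:YLY2}. Finally, $\cY^+, \cY^-, \partial_x Q$ are eigenfunctions of $\partial_x L$ for the distinct eigenvalues $\nu, -\nu, 0$, hence linearly independent \eqref{eq:YQ'indep}; exponential decay follows from the constant-coefficient asymptotics of the ODE (the characteristic roots have nonzero real part precisely because $\nu \neq 0$), and $C^4$ regularity from bootstrapping the ODE using $Q \in C^5$ and $f \in C^3$.
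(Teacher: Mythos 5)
Your construction and most of the verifications coincide with the paper's: existence of $(\nu, \cY^-)$ is taken from Pego--Weinstein, $\cY^+$ is produced by reflection (the paper checks this directly rather than via the anticommutation of $\partial_x L$ with $R$, but it is the same observation), \eqref{eq:intY} comes from integrating the eigenvalue equation, \eqref{eq:YLY1} from writing $L\cY^\pm = \mp\nu\,\partial_x^{-1}\cY^\pm$ and using skew-adjointness, and \eqref{eq:YQ'indep} from the fact that $\cY^+,\cY^-,\partial_xQ$ are eigenfunctions of $\partial_x L$ for the distinct eigenvalues $\nu,-\nu,0$. The one place where you genuinely diverge is \eqref{eq:YLY2}, and it is the weak point of your write-up. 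You reduce $\la\cY^-,L\cY^+\ra\neq 0$ to the assertion that the symplectic form $\omega(v,w)=\la\partial_x^{-1}v,w\ra$ is nondegenerate on $\spn(\cY^+,\cY^-)$. That assertion is not free: it is equivalent to saying that the adjoint eigenfunction $\partial_x^{-1}\cY^-$ of $(\partial_x L)^*=-L\partial_x$ for the eigenvalue $\nu$ pairs nontrivially with $\cY^+$, i.e.\ to the \emph{algebraic} simplicity of $\nu$ as an isolated eigenvalue (plus a Riesz-projection argument you do not spell out). You yourself flag simplicity as ``the main obstacle'' and then use it without proof, so as written the argument for \eqref{eq:YLY2} is circular in spirit: the hard spectral input is exactly what you are trying to conclude. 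The paper's route is more elementary and entirely self-contained: if $\la\cY^-,L\cY^+\ra=0$, then together with \eqref{eq:YLY1} and $L\partial_xQ=0$ the quadratic form $v\mapsto\la v,Lv\ra$ vanishes identically on the three-dimensional space $\spn(\cY^-,\cY^+,\partial_xQ)$, which by the min-max principle would force $L$ to have at least three nonpositive eigenvalues, contradicting Proposition~\ref{prop:L-index} (one simple negative eigenvalue and a one-dimensional kernel). I would recommend replacing your spectral-subspace argument by this index count: it needs no information about algebraic multiplicities of $\pm\nu$ and uses only Sturm--Liouville theory for $L$, which is already available at that point in the paper.
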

\begin{proof}
Existence of $\cY^-$ satisfying \eqref{eq:dxLY} is proved in \cite{PeWe92}.
It is easy to check that if $\partial_x(L\cY^-) = -\nu\cY^-$,
then $\cY^+(x) := \cY^-(-x)$ satisfies $\partial_x(L\cY^+) = \nu\cY^+$.
We obtain \eqref{eq:Y-norm} by normalizing. Integrating \eqref{eq:dxLY} we get \eqref{eq:intY}.
Using again \eqref{eq:dxLY} we get
\begin{equation}
\la \cY^-, L\cY^-\ra = -\frac{1}{\nu}\la \partial_x L\cY^-, L\cY^-\ra = 0,
\end{equation}
and similarly $\la \cY^+, L\cY^+\ra = 0$.

In order to prove \eqref{eq:YQ'indep}, we first check that $\cY^-$ and $\cY^+$
are linearly independent. Indeed, suppose that $a^-\cY^- + a^+\cY^+ = 0$.
Applying the operator $\partial_x L$ to both sides and using \eqref{eq:dxLY} we get
$a^-\cY^- - a^+\cY^+ = 0$, which implies $a^- = a^+ = 0$.

Now suppose that $\partial_x Q = a^-\cY^- + a^+\cY^+$. Again, applying $\partial_x L$
to both sides and using the fact that $L\partial_x Q = 0$, we obtain $a^-\cY^- - a^+\cY^+ = 0$.
Since $\cY^-$ and $\cY^+$ are linearly independent, it follows that $a^- = a^+ = 0$.

It remains to prove \eqref{eq:YLY2}.
Suppose that $\la \cY^-, L\cY^+\ra = 0$.
Let $v = a^-\cY^- + a^+\cY^+ + b\partial_xQ \in \spn(\cY^-, \cY^+, \partial_xQ)$.
We have
\begin{equation}
\begin{aligned}
\la v, Lv\ra &= \la a^-\cY^- + a^+\cY^+ + b\partial_xQ, a^-L\cY^- + a^+L\cY^+\ra \\
&= (a^-)^2\la \cY^-, L\cY^-\ra + (a^+)^2\la \cY^+, L\cY^+\ra\\
&+ 2a^-a^+\la \cY^-, L\cY^+\ra
+b\la L\partial_xQ, a^-\cY^- + a^+\cY^+\ra = 0.
\end{aligned}
\end{equation}
Since $\cY^-$, $\cY^+$ and $\partial_xQ$ are linearly independent, this is in contradiction
with Proposition~\ref{prop:L-index} (by the~min-max theorem for self-adjoint operators).
\end{proof}
\begin{proposition}
\label{prop:alpha}
There exist exponentially decaying $C^5$ functions $\alpha^-, \alpha^+$ such that
\begin{gather}
L(\partial_x \alpha^-) = \nu\alpha^-, \qquad
L(\partial_x \alpha^+) = -\nu\alpha^+, \label{eq:Ldxal} \\
\la \alpha^-, \cY^-\ra = \la \alpha^+, \cY^+\ra = 1, \label{eq:alY1} \\
\la \alpha^-, \cY^+\ra = \la \alpha^+, \cY^-\ra = 0, \label{eq:alY2} \\
\la \alpha^-, \partial_xQ\ra = \la \alpha^+, \partial_xQ\ra = 0. \label{eq:alQ'}
\end{gather}
\end{proposition}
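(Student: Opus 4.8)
The plan is to construct $\alpha^\pm$ by applying $L$ to the eigenfunctions $\cY^\mp$ already produced in Proposition~\ref{prop:eigen}. The guiding observation is that the generator $\partial_x L$ and its formal adjoint $L\partial_x$ are intertwined by $L$: if $\partial_x(L\cY) = \lambda\cY$, then applying $L$ to both sides gives $L\partial_x(L\cY) = \lambda\,(L\cY)$, so $L\cY$ is an eigenfunction of $L\partial_x$ with the \emph{same} eigenvalue $\lambda$. Accordingly I would set $\mu := \la\cY^-, L\cY^+\ra$, which is nonzero by \eqref{eq:YLY2}, and define
\[
\alpha^- := \tfrac{1}{\mu}L\cY^+, \qquad \alpha^+ := \tfrac{1}{\mu}L\cY^-.
\]
Reading \eqref{eq:dxLY} as $\partial_x L\cY^+ = \nu\cY^+$ and $\partial_x L\cY^- = -\nu\cY^-$, the intertwining identity immediately yields $L(\partial_x\alpha^-) = \nu\alpha^-$ and $L(\partial_x\alpha^+) = -\nu\alpha^+$, which is exactly \eqref{eq:Ldxal}.

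The normalization conditions then follow from self-adjointness of $L$ together with Proposition~\ref{prop:eigen}. Since $\la\cY^+, L\cY^-\ra = \la L\cY^+, \cY^-\ra = \la\cY^-, L\cY^+\ra = \mu$, we get $\la\alpha^-, \cY^-\ra = \mu^{-1}\la\cY^+, L\cY^-\ra = 1$, and symmetrically $\la\alpha^+, \cY^+\ra = 1$, giving \eqref{eq:alY1}. The vanishing pairings \eqref{eq:alY2} reduce to $\la\cY^+, L\cY^+\ra = \la\cY^-, L\cY^-\ra = 0$, which is \eqref{eq:YLY1}. Finally, moving $L$ across and using $L\partial_x Q = 0$ from \eqref{eq:L-ker}, one finds $\la\alpha^\pm, \partial_x Q\ra = \mu^{-1}\la\cY^\mp, L\partial_x Q\ra = 0$, which is \eqref{eq:alQ'}.

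It remains to check exponential decay and $C^5$ regularity. For the decay, integrating \eqref{eq:dxLY} and using $\int_\bR\cY^\pm = 0$ from \eqref{eq:intY} gives $L\cY^+ = \nu\,\partial_x^{-1}\cY^+$ (the integration constant vanishing because $L\cY^\pm \in L^2$), and $\partial_x^{-1}\cY^+$ inherits the exponential decay of $\cY^+$ at both ends; hence $\alpha^\pm$ decay exponentially. For the regularity, I would read \eqref{eq:Ldxal} as the third-order ODE $\partial_x^3\alpha^\pm = (1 - f'(Q))\,\partial_x\alpha^\pm \mp \nu\,\alpha^\pm$ and bootstrap: starting from $\alpha^\pm = \mu^{-1}L\cY^\mp \in C^2$ (since $\cY^\mp \in C^4$) and using $f'(Q)\in C^2$ (from $f \in C^3$ and $Q \in C^5$), two passes through this identity upgrade $\alpha^\pm$ first to $C^4$ and then to $C^5$.

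The only real difficulty here is conceptual rather than technical: recognizing the intertwining relation $L\,(\partial_x L) = (L\partial_x)\,L$, which converts the eigenfunctions of the generator $\partial_x L$ into eigenfunctions of its adjoint $L\partial_x$. Once the ansatz $\alpha^\pm = L\cY^\mp/\mu$ is fixed, every required property is a one-line consequence of the self-adjointness of $L$ and the facts already recorded for $\cY^\pm$ in Proposition~\ref{prop:eigen}, while the decay and regularity are standard ODE considerations and present no serious obstacle. (As a consistency check, the reflection symmetry \eqref{eq:Yrefl} propagates to $\alpha^-(x) = \alpha^+(-x)$, which one may verify but does not need.)
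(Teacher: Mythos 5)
Your construction is correct and is essentially the paper's own: since $\partial_x(L\cY^+)=\nu\cY^+$ and $\int_\bR\cY^+=0$ force $L\cY^+=\nu\int_{-\infty}^x\cY^+$, your $\alpha^-=\mu^{-1}L\cY^+$ coincides exactly with the paper's normalized antiderivative $\la\wt\alpha^-,\cY^-\ra^{-1}\wt\alpha^-$ (where $\wt\alpha^-=\int_{-\infty}^x\cY^+$ and $\la\wt\alpha^-,\cY^-\ra=\mu/\nu$), and likewise for $\alpha^+$. All the verifications — the intertwining identity for \eqref{eq:Ldxal}, self-adjointness of $L$ together with \eqref{eq:YLY1}, \eqref{eq:YLY2} and $L\partial_xQ=0$ for the pairings, and the decay/regularity — match the paper's argument.
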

\begin{proof}
Set $\wt \alpha^- := \int_{-\infty}^x \cY^+$ and $\wt \alpha^+ := \int_{-\infty}^x\cY^-$.
Using \eqref{eq:dxLY} we have
\begin{equation}
\label{eq:Ldxal-1}
L(\partial_x \wt\alpha^-) = L\cY^+ = \nu \int_{-\infty}^x \cY^+ = \nu\wt\alpha^-,
\end{equation}
and analogously $L(\partial_x \wt\alpha^+) = -\nu\wt\alpha^+$.

Next, we compute
\begin{equation}
\la \wt\alpha^-, \cY^+\ra = \frac{1}{\nu}\la \wt\alpha^-, \partial_x L\cY^+\ra =
-\frac{1}{\nu}\la \cY^+, L\cY^+\ra = 0,
\end{equation}
where in the last step we use \eqref{eq:YLY1}.
Similarly, using \eqref{eq:YLY1} and \eqref{eq:YLY2} we obtain
$\la \wt\alpha^+, \cY^-\ra = 0$, $\la \wt\alpha^-, \cY^-\ra \neq 0$
and $\la \wt\alpha^+, \cY^+\ra \neq 0$.
We set $\alpha^- := \la \wt\alpha^-, \cY^-\ra^{-1}\wt\alpha^-$
and $\alpha^+ := \la \wt\alpha^+, \cY^+\ra^{-1}\wt\alpha^+$.

Finally, \eqref{eq:alQ'} follows from \eqref{eq:Ldxal} and $L\partial_xQ = 0$.
\end{proof}
\begin{lemma}
\label{lem:coer}
If $\la \alpha^-, v\ra = \la \alpha^+, v\ra = 0$ and $\la v, Lv\ra \leq 0$, then $v \in \spn(\partial_xQ)$.
\end{lemma}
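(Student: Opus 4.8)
The plan is to deduce the statement from the single fact that $L$ has exactly one negative eigenvalue and $\ker L=\spn(\partial_x Q)$ (Proposition~\ref{prop:L-index}), by showing that the two constraints $\la\alpha^-,v\ra=\la\alpha^+,v\ra=0$ cut out a subspace on which the quadratic form $\la v,Lv\ra$ is positive semidefinite, with null space exactly $\spn(\partial_x Q)$. Write $Z:=\spn(\cY^-,\cY^+)$ and $V_0:=\{v:\la\alpha^-,v\ra=\la\alpha^+,v\ra=0\}$. The biorthogonality relations \eqref{eq:alY1}--\eqref{eq:alY2} show that $\Pi v:=\la\alpha^-,v\ra\cY^-+\la\alpha^+,v\ra\cY^+$ is a projection onto $Z$ with kernel $V_0$, so that $Z\cap V_0=\{0\}$ and $V_0^\perp=\spn(\alpha^-,\alpha^+)$.

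The key observation, which makes the seemingly two-dimensional obstruction decouple, is that $Z$ and $V_0$ are orthogonal with respect to the bilinear form associated with $L$. Indeed, integrating \eqref{eq:dxLY} from $-\infty$ and using the decay of $\cY^\pm$ gives $L\cY^+=\nu\int_{-\infty}^x\cY^+$ and $L\cY^-=-\nu\int_{-\infty}^x\cY^-$, which by the construction in the proof of Proposition~\ref{prop:alpha} means $L\cY^+\in\spn(\alpha^-)$ and $L\cY^-\in\spn(\alpha^+)$. Hence $L(Z)=\spn(\alpha^-,\alpha^+)=V_0^\perp$, so for every $z\in Z$ and every $v\in V_0$ we have $\la z,Lv\ra=\la Lz,v\ra=0$, where the pairing is understood through the bilinear form $\mathcal L(z,v)=\int_\bR\partial_x z\,\partial_x v-f'(Q)zv+zv$, which is legitimate since $\cY^\pm$ are smooth and exponentially decaying, so that only $v\in H^1$ is needed.

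With this orthogonality in hand I would argue by contradiction. By \eqref{eq:YLY1}--\eqref{eq:YLY2} the matrix of $L$ on $Z$ in the basis $(\cY^-,\cY^+)$ is $\bigl(\begin{smallmatrix}0&c\\ c&0\end{smallmatrix}\bigr)$ with $c=\la\cY^-,L\cY^+\ra\neq0$, so it has signature $(1,1)$; pick $z_*\in Z$ with $\la z_*,Lz_*\ra<0$. If some $v\in V_0$, $v\neq0$, satisfied $\la v,Lv\ra<0$, then, since $v\notin Z$ (as $Z\cap V_0=\{0\}$), the vectors $z_*,v$ would be linearly independent, and by the $L$-orthogonality $\la sz_*+tv,L(sz_*+tv)\ra=s^2\la z_*,Lz_*\ra+t^2\la v,Lv\ra<0$ for all $(s,t)\neq(0,0)$. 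This would exhibit a two-dimensional subspace on which $L$ is negative definite, contradicting, via the min-max principle, the fact that $L$ has only one negative eigenvalue (Proposition~\ref{prop:L-index}). Hence $\la v,Lv\ra\ge0$ for all $v\in V_0$.

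It remains to treat the equality case. If $v\in V_0$ and $\la v,Lv\ra=0$, then $v$ minimizes the nonnegative form over $V_0$, so the first variation vanishes, i.e. $\la h,Lv\ra=0$ for all $h\in V_0$, which means $Lv\in V_0^\perp=\spn(L\cY^-,L\cY^+)$. Writing $Lv=L(\beta\cY^-+\gamma\cY^+)$ gives $v-\beta\cY^--\gamma\cY^+\in\ker L=\spn(\partial_x Q)$, and then the constraints $\la\alpha^-,v\ra=\la\alpha^+,v\ra=0$ together with \eqref{eq:alY1}--\eqref{eq:alQ'} force $\beta=\gamma=0$, so that $v\in\spn(\partial_x Q)$, as desired. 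The main obstacle is the second paragraph: recognizing that the constraint space $V_0$ is \emph{exactly} $L$-orthogonal to $\spn(\cY^-,\cY^+)$, since this is precisely what collapses an apparently two-dimensional indefiniteness onto the single negative direction counted by Proposition~\ref{prop:L-index}; interpreting all pairings through the bounded bilinear form is the only technical point beyond this.
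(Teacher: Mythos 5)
Your proof is correct. It rests on the same key observation as the paper's --- namely that $L\cY^{+}$ is a multiple of $\alpha^{-}$ (and $L\cY^{-}$ of $\alpha^{+}$), so that the constraints $\la\alpha^{\pm},v\ra=0$ make $v$ $L$-orthogonal to $\spn(\cY^{-},\cY^{+})$ --- and it appeals to the same eigenvalue count from Proposition~\ref{prop:L-index} via min--max. But the packaging is genuinely different. The paper runs a single contradiction: assuming $v\notin\spn(\partial_xQ)$ and $\la v,Lv\ra\leq 0$, it exhibits the three-dimensional subspace $\spn(\cY^{+},\partial_xQ,v)$ on which the form is $\leq 0$, which is one dimension too many for an operator with one negative eigenvalue and a one-dimensional kernel. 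You instead split the argument: first you show $\la v,Lv\ra\geq 0$ on the constraint space by pairing $v$ with a negative direction $z_*\in\spn(\cY^{-},\cY^{+})$ (using the signature-$(1,1)$ computation from \eqref{eq:YLY1}--\eqref{eq:YLY2}) to produce a two-dimensional negative-definite subspace; then you treat the equality case separately via the Cauchy--Schwarz/first-variation argument for nonnegative forms, concluding $Lv\in\spn(\alpha^{-},\alpha^{+})=L\big(\spn(\cY^{-},\cY^{+})\big)$ and reducing to $\ker L$. Your route is slightly longer and uses both $\cY^{-}$ and $\cY^{+}$ where the paper needs only $\cY^{+}$, but it has the advantage of isolating the nonnegativity of the constrained form as a standalone statement and of making the degenerate case transparent; the only technical points you should make explicit are the elementary Cauchy--Schwarz inequality for nonnegative bilinear forms in the equality step, and the (routine) elliptic regularity needed to place $v-\beta\cY^{-}-\gamma\cY^{+}$ in $H^2$ before invoking $\ker L=\spn(\partial_xQ)$.
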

\begin{proof}
Let $v$ be such that
\begin{equation}
\label{eq:v-ortho}
\la \alpha^-, v\ra = \la \alpha^+, v\ra = 0,\quad v\notin\spn(\partial_xQ)\quad\text{and}\quad \la v, Lv\ra \leq 0.
\end{equation}
Consider the space $\Sigma := \spn(\cY^+, \partial_xQ, v)$.
First, we prove that $\dim(\Sigma) = 3$.
Indeed, if $v = a\cY^+ + b\partial_xQ$, then
\begin{equation}
\label{eq:v-lin-dep}
0 = \la \alpha^+, v\ra = a\la \alpha^+, \cY^+\ra + b\la \alpha^+, \partial_xQ\ra = a,
\end{equation}
which contradicts the assumption $v \notin \spn(\partial_xQ)$.

Let $w = a\cY^+ + b\partial_xQ + cv \in \Sigma$. We have
\begin{equation}
\la w, Lw\ra = \la a\cY^+ + b\partial_xQ + cv, aL\cY^+ + cLv\ra = 2ac\la L\cY^+, v\ra + c^2\la v, Lv\ra.
\end{equation}
We see from \eqref{eq:Ldxal-1} that $L\cY^+ \in \spn(\alpha^-)$, so that $\la L\cY^+, v\ra = 0$.
Thus $\la w, Lw\ra = c^2 \la v, Lv\ra \leq 0$.
Since $\dim(\Sigma) = 3$, this contradicts Proposition~\ref{prop:L-index} and finishes the proof.
\end{proof}
\begin{remark}
Note that from the second part of the proof above we can obtain the following fact:
if $\la \alpha^-, v\ra = 0$ and $\la v, Lv\ra \leq 0$, then $v \in \spn(\cY^+, \partial_xQ)$.
Similarly, if $\la \alpha^+, v\ra = 0$ and $\la v, Lv\ra \leq 0$, then $v \in \spn(\cY^-, \partial_xQ)$.
In particular, either of the conditions $\la \alpha^-, v\ra = 0$, $\la \alpha^+, v\ra = 0$
implies $\la v, Lv\ra \geq 0$.
\end{remark}
\begin{proposition}
\label{prop:vLv-coer}
There exists $\lambda_0 > 0$ such that for all $v \in H^1$
\begin{equation}
\label{eq:vLv-coer}
\la v, Lv\ra \geq \lambda_0 \|v\|_{H^1}^2 - \frac{1}{\lambda_0}\big(\la \alpha^-, v\ra^2
+ \la \alpha^+, v\ra^2 + \la \partial_x Q, v\ra^2\big).
\end{equation}
\end{proposition}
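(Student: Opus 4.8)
The plan is to first establish that the quadratic form $\la v, Lv\ra$ is strictly coercive on the codimension-three subspace cut out by the three orthogonality conditions, and then to remove those conditions by a standard projection argument, paying the price of the three penalty terms on the right-hand side.

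Write $L = -\partial_x^2 - f'(Q) + 1$, so that $\la v, Lv\ra = \|v\|_{H^1}^2 - \int_\bR f'(Q)v^2\ud x$. The analytic input is that the potential term $v \mapsto \int_\bR f'(Q)v^2\ud x$ is weakly continuous on $H^1$: since $Q(x) \to 0$ as $|x| \to \infty$ and $f'(0) = 0$, the weight $f'(Q)$ is bounded and tends to $0$ at infinity, so if $v_n \wto v_*$ in $H^1$, then Rellich compactness on bounded intervals together with the decay of $f'(Q)$ gives $\int_\bR f'(Q)v_n^2\ud x \to \int_\bR f'(Q)v_*^2\ud x$.

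Set $V_1 := \{v \in H^1 : \la \alpha^-, v\ra = \la\alpha^+, v\ra = \la\cZ, v\ra = 0\}$ and
\[
\mu := \inf\{\la v, Lv\ra : v \in V_1,\ \|v\|_{H^1} = 1\}.
\]
By the Remark following Lemma~\ref{lem:coer}, $\la v, Lv\ra \geq 0$ on $V_1 \subset \{\la\alpha^-, v\ra = 0\}$, so $\mu \geq 0$; I claim $\mu > 0$. Suppose $\mu = 0$ and take $v_n \in V_1$ with $\|v_n\|_{H^1} = 1$ and $\la v_n, Lv_n\ra \to 0$. After extraction, $v_n \wto v_*$ in $H^1$, and since $\alpha^\pm, \cZ \in L^2$ the limit still satisfies $v_* \in V_1$. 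As $\|v_n\|_{H^1}^2 = 1$, the condition $\la v_n, Lv_n\ra \to 0$ forces $\int_\bR f'(Q)v_n^2\ud x \to 1$, hence by the weak continuity $\int_\bR f'(Q)v_*^2\ud x = 1$; in particular $v_* \neq 0$, and $\la v_*, Lv_*\ra = \|v_*\|_{H^1}^2 - 1 \leq 0$. Now $\la\alpha^-, v_*\ra = \la\alpha^+, v_*\ra = 0$ and $\la v_*, Lv_*\ra \leq 0$, so Lemma~\ref{lem:coer} gives $v_* \in \spn(\partial_x Q)$. But $\la\cZ, \partial_x Q\ra \neq 0$ while $\la\cZ, v_*\ra = 0$, which forces $v_* = 0$, a contradiction. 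Therefore $\mu > 0$ and $\la v, Lv\ra \geq \mu\|v\|_{H^1}^2$ for all $v \in V_1$.

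It remains to pass from $V_1$ to all of $H^1$. The three functionals $\la\alpha^-, \cdot\ra$, $\la\alpha^+, \cdot\ra$, $\la\cZ, \cdot\ra$ are linearly independent on $H^1$: evaluating on $\cY^-$ and $\cY^+$ and using \eqref{eq:alY1}--\eqref{eq:alY2} shows $\la\alpha^\pm,\cdot\ra$ are independent, while $\la\cZ, \cdot\ra$ is independent of both because it does not vanish on $\partial_x Q$, on which $\la\alpha^\pm,\cdot\ra$ vanish by \eqref{eq:alQ'}. Hence there are $g_1, g_2, g_3 \in H^1$ dual to these functionals. Given $v \in H^1$, set $\ell_1 := \la\alpha^-, v\ra$, $\ell_2 := \la\alpha^+, v\ra$, $\ell_3 := \la\cZ, v\ra$ and $w := v - \sum_j \ell_j g_j \in V_1$. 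Expanding $\la v, Lv\ra = \la w, Lw\ra + 2\sum_j \ell_j\la w, Lg_j\ra + \sum_{j,k}\ell_j\ell_k\la g_j, Lg_k\ra$, applying $\la w, Lw\ra \geq \mu\|w\|_{H^1}^2$, bounding the cross terms by Young's inequality to absorb a fraction of $\mu\|w\|_{H^1}^2$, and using $\|w\|_{H^1}^2 \geq \tfrac12\|v\|_{H^1}^2 - C(\ell_1^2 + \ell_2^2 + \ell_3^2)$ yields \eqref{eq:vLv-coer} after replacing $\lambda_0$ by a smaller constant. The main obstacle is the strict coercivity $\mu > 0$ on $V_1$: the bound $\la v, Lv\ra \geq 0$ there is immediate, but upgrading it to a genuine spectral gap requires the compactness of the potential term together with Lemma~\ref{lem:coer} to rule out a nonzero degenerate weak limit; the final projection step is entirely routine.
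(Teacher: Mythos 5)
Your proof is correct and rests on exactly the same ingredients as the paper's: the weak continuity of $v \mapsto \int_\bR f'(Q)v^2\ud x$ coming from the decay of $f'(Q)$, weak lower semicontinuity of $\la v, Lv\ra$, and Lemma~\ref{lem:coer} together with $\la \cZ, \partial_x Q\ra \neq 0$ to rule out a nonzero degenerate limit. The only difference is organizational — the paper negates the penalized inequality \eqref{eq:vLv-coer} directly and extracts a single contradictory weak limit, whereas you first prove the spectral gap on the constrained subspace and then restore the penalty terms by projection — but this is a cosmetic repackaging of the same argument.
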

\begin{proof}
By the definition of $L$ we have
\begin{equation}
\label{eq:vLv-1}
\la v, Lv\ra = \|v\|_{H^1}^2 - \int_\bR f'(Q)v^2\ud x,
\end{equation}
so we can rewrite \eqref{eq:vLv-coer} as
\begin{equation}
\label{eq:vLv-2}
\la v, Lv\ra \geq \frac{\lambda_0}{1-\lambda_0}\int_\bR f'(Q)v^2\ud x - \frac{1}{\lambda_0(1-\lambda_0)}\big(\la \alpha^-, v\ra^2
+ \la \alpha^+, v\ra^2 + \la \partial_x Q, v\ra^2\big).
\end{equation}
If \eqref{eq:vLv-2} does not hold for any $\lambda_0 > 0$,
then there exists a sequence $(v_n) \in H^1$ such that
\begin{gather}
\int_\bR f'(Q)v_n^2\ud x = 1, \label{eq:vn-norm} \\
\la v_n, Lv_n\ra \leq \frac 1n - n \big(\la \alpha^-, v_n\ra^2
+ \la \alpha^+, v_n\ra^2 + \la \partial_x Q, v_n\ra^2\big). \label{eq:vnLvn}
\end{gather}
We see from \eqref{eq:vLv-1} that $(v_n)$ is bounded in $H^1$,
hence it has a subsequence weakly converging to $v \in H^1$.
By standard arguments, we obtain $\int_\bR f'(Q)v^2\ud x = 1$, $\la v, Lv\ra \leq 0$
and $\la \alpha^-, v\ra = \la \alpha^+, v\ra = \la \partial_x Q, v\ra = 0$.
In particular, Lemma~\ref{lem:coer} yields $v = 0$, which is impossible.
\end{proof}
We also need a localized version of the last coercivity result.
\begin{lemma}
\label{lem:vLv-loc-coer}
There exists $\lambda_0 > 0$ such that the following is true.
For any $c > 0$ there exists $\rho > 0$ such that for all $v \in H^1$
\begin{equation}
\label{eq:vLv-loc-coer}
\begin{aligned}
&(1-\lambda_0)\int_{-\rho}^\rho\big((\partial_x v)^2 + v^2\big)\ud x - \int_\bR f'(Q)v^2\ud x \\
&\qquad\geq -c \|v\|_{H^1}^2 - \frac{1}{\lambda_0}\big(\la \alpha^-, v\ra^2
+ \la \alpha^+, v\ra^2 + \la \partial_x Q, v\ra^2\big).
\end{aligned}
\end{equation}
\end{lemma}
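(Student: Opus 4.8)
The plan is to deduce this localized estimate from the global coercivity of Proposition~\ref{prop:vLv-coer} by inserting a spatial cutoff. Write $\lambda_1 > 0$ for the constant furnished by Proposition~\ref{prop:vLv-coer}, and set $\lambda_0 := \lambda_1/2$. Fix a smooth cutoff $\chi = \chi_\rho$ with $0 \le \chi \le 1$, $\chi \equiv 1$ on $[-\rho/2, \rho/2]$, $\supp\chi \subseteq [-\rho, \rho]$ and $|\chi'| \lesssim 1/\rho$. The whole point is that $\chi v$ is supported where the potential $f'(Q)$ lives, so that applying the global estimate to $\chi v$ reproduces the localized left-hand side up to errors that are either $O(1/\rho)$ or governed by the tails of $f'(Q)$, $\alpha^-, \alpha^+$ and $\cZ$, hence negligible for $\rho$ large.

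First I would bound the local gradient term from below by the $H^1$ norm of $\chi v$. Expanding $\partial_x(\chi v) = \chi'v + \chi\partial_x v$ and estimating the cross term by $2|\chi'||v||\partial_x v| \le \frac{C}{\rho}(v^2 + (\partial_x v)^2)$, together with $\int(\chi')^2 v^2 \lesssim \rho^{-2}\|v\|_{L^2}^2$, gives $\|\chi v\|_{H^1}^2 \le \int_{-\rho}^{\rho}\big((\partial_x v)^2 + v^2\big)\ud x + \frac{C}{\rho}\|v\|_{H^1}^2$. Next, since $Q(x) \to 0$ and $f'(0) = 0$, the quantity $\epsilon(\rho) := \sup_{|x| \ge \rho/2}|f'(Q(x))|$ tends to $0$, so $\int_\bR f'(Q)v^2\ud x \le \int_\bR f'(Q)(\chi v)^2\ud x + \epsilon(\rho)\|v\|_{L^2}^2$ because $1 - \chi^2$ is supported in $\{|x| \ge \rho/2\}$. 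Combining these two facts, the localized left-hand side is bounded below by $(1-\lambda_0)\|\chi v\|_{H^1}^2 - \int_\bR f'(Q)(\chi v)^2\ud x$ minus an error $\big(\frac{C}{\rho} + \epsilon(\rho)\big)\|v\|_{H^1}^2$. The main term equals $\la \chi v, L(\chi v)\ra - \lambda_0\|\chi v\|_{H^1}^2$ by \eqref{eq:vLv-1}, and Proposition~\ref{prop:vLv-coer} applied to $\chi v$ gives $\la \chi v, L(\chi v)\ra \ge \lambda_1\|\chi v\|_{H^1}^2 - \frac{1}{\lambda_1}\big(\la\alpha^-,\chi v\ra^2 + \la\alpha^+,\chi v\ra^2 + \la\cZ,\chi v\ra^2\big)$; since $\lambda_0 = \lambda_1/2 \le \lambda_1$, the surviving $(\lambda_1-\lambda_0)\|\chi v\|_{H^1}^2$ has a nonnegative coefficient and may simply be discarded.

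It then remains to transfer the three projections from $\chi v$ back to $v$. Writing $\la\alpha^\pm, \chi v\ra = \la\alpha^\pm, v\ra - \la(1-\chi)\alpha^\pm, v\ra$ and using the exponential decay of $\alpha^-, \alpha^+$ (Proposition~\ref{prop:alpha}) yields $\la\alpha^\pm,\chi v\ra^2 \le 2\la\alpha^\pm, v\ra^2 + C\eee^{-c\rho}\|v\|_{L^2}^2$; for the last projection one uses instead that $\|(1-\chi)\cZ\|_{L^2}^2 \le \|\cZ\|_{L^2(|x|\ge\rho/2)}^2 \to 0$, which only requires $\cZ \in L^2$ (no decay rate). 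Because $\lambda_0 = \lambda_1/2$ gives $\frac{2}{\lambda_1} = \frac{1}{\lambda_0}$, the projection terms assemble exactly into $-\frac{1}{\lambda_0}\big(\la\alpha^-,v\ra^2 + \la\alpha^+,v\ra^2 + \la\cZ,v\ra^2\big)$, while all remaining contributions collect into a single coefficient times $\|v\|_{H^1}^2$ that tends to $0$ as $\rho \to \infty$. Given $c > 0$, choosing $\rho$ large enough that this coefficient is at most $c$ finishes the proof. The one delicate point is the interplay of constants: the cutoff unavoidably costs a factor $(1-\lambda_0)$ on the kinetic term and an $O(1/\rho)$ commutator loss, and the argument only closes because the global constant $\lambda_1$ leaves enough room---this is precisely why $\lambda_0$ is taken strictly below $\lambda_1$ and why the statement must allow the slack term $-c\|v\|_{H^1}^2$.
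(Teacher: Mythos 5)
Your argument is correct and follows essentially the same route as the paper: truncate $v$ by a cutoff supported in $[-\rho,\rho]$, apply the global coercivity of Proposition~\ref{prop:vLv-coer} to the truncated function, and absorb the commutator, potential-tail and projection-transfer errors into $c\|v\|_{H^1}^2$ for $\rho$ large. The only (harmless) difference is bookkeeping: you pay a factor $2$ when replacing $\la\alpha^\pm,\chi v\ra^2$ by $\la\alpha^\pm,v\ra^2$ and compensate by taking $\lambda_0=\lambda_1/2$, whereas the paper writes the difference of squares $\la\alpha^-,v\ra^2-\la\alpha^-,\wt v\ra^2$ and makes it directly of size $c\|v\|_{H^1}^2$, keeping the same constant.
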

\begin{proof}
Let $\chi$ be a cut-off function supported in $[{-}1, 1]$, $\chi(x) = 1$ for $x \in \big[{-}\frac 12, \frac 12\big]$
and let $\rho \gg 1$. Let $\wt v := \chi\big(\frac{\cdot}{\rho}\big)v$.
By the Chain Rule,
\begin{equation}
\partial_x \wt v = \frac{1}{\rho}\partial_x\chi\big(\frac{\cdot}{\rho}\big)v + \chi\big(\frac{\cdot}{\rho}\big)\partial_x v,
\end{equation}
which implies
\begin{equation}
\Big|\|\partial_x \wt v\|_{L^2} - \big\|\chi\big(\frac{\cdot}{\rho}\big)\partial_x v\big\|_{L^2}\Big|\lesssim \frac 1\rho\|v\|_{L^2} \ll \|v\|_{H^1}.
\end{equation}
Let $\gamma := (1-\lambda_0)\chi\big(\frac{\cdot}{\rho}\big)^2$.
Applying Proposition~\ref{prop:vLv-coer} to the function
$\wt v$ we obtain
\begin{equation}
\label{eq:vLv-loc-1}
\begin{aligned}
&\int_\bR \gamma\big((\partial_x v)^2 + v^2\big)\ud x \geq (1-\lambda_0)\int_\bR \big((\partial_x \wt v)^2
+ \wt v^2\big)\ud x - \frac c4 \|v\|_{H^1}^2 \\
&\qquad \geq \int_\bR f'(Q)\wt v^2\ud x - \frac{1}{\lambda_0}\big(\la \alpha^-, \wt v\ra^2
+ \la \alpha^+, \wt v\ra^2 + \la \partial_x Q, \wt v\ra^2\big) - \frac c4 \|v\|_{H^1}^2.
\end{aligned}
\end{equation}
We have
\begin{equation}
\begin{aligned}
|\la \alpha^-, v\ra^2 - \la \alpha^-, \wt v\ra^2| &= |\la \alpha^-, (1-\chi(\cdot/\rho))v\ra
\la \alpha^-, (1+\chi(\cdot/\rho))v\ra|  \\
&\lesssim \| v\|_{L^2}|\la (1-\chi(\cdot/\rho))\alpha^-, v\ra|.
\end{aligned}
\end{equation}
But $\|(1-\chi(\cdot/\rho))\alpha^-\|_{L^2}$ can be made arbitrarily
small by taking $\rho$ large enough, so we can ensure that
\begin{equation}
|\la \alpha^-, v\ra^2 - \la \alpha^-, \wt v\ra^2| \leq \frac{c\lambda_0}{6}\| v\|_{L^2}^2 \leq \frac{c\lambda_0}{6}\| v\|_{H^1}^2,
\end{equation}
and analogously for similar terms involving $ \alpha^+$ and $\partial_x Q$. Thus \eqref{eq:vLv-loc-1} implies
\begin{multline}
\label{eq:vLv-loc-2}
%\begin{aligned}
\int_\bR \gamma\big((\partial_x v)^2 + v^2\big)\ud x 
\geq \int_\bR f'(Q)\wt v^2\ud x \\
- \frac{1}{\lambda_0}\big(\la \alpha^-, v\ra^2
+ \la \alpha^+, v\ra^2 + \la \partial_x Q, v\ra^2\big) - \frac{3c}{4} \|v\|_{H^1}^2.
%\end{aligned}
\end{multline}
Finally, we have
\begin{equation}
\begin{aligned}
\Big|\int_\bR f'(Q) v^2\ud x - \int_\bR f'(Q)\wt v^2\ud x\Big| &= \int_\bR f'(Q)(1-\chi(\cdot/\rho)^2)v^2\ud x \\
&\leq \|f'(Q)(1-\chi(\cdot/\rho)^2)\|_{L^\infty}\|v\|_{L^2}^2.
\end{aligned}
\end{equation}
By taking $\rho$ large enough, we can ensure that the last term is $\leq \frac c4 \|v\|_{H^1}^2$,
so that \eqref{eq:vLv-loc-2} yields the conclusion.
\end{proof}
\section{Coercivity near a two-soliton}
\label{sec:variational}
Following Weinstein \cite{Weinstein85}, we will make an extensive use of the following functional:
\begin{equation}
H(u) := E(u) + \frac 12 M(u),\qquad\text{for }u \in H^1(\bR).
\end{equation}
We are interested in coercivity properties of $H(u)$, for $u$ close to a sum of
two translated copies of~$Q$.
%It will be convenient to use the notation $T_q v(x) := v(x - q)$ for $v: \bR \to \bR$ and $q \in \bR$.

In the next lemma, we gather some easy facts which will be frequently used
to bound various interaction terms. We skip the standard proof.
\begin{lemma}
\label{lem:Taylor}
Fix $M > 0$. For all $u, v \in \bR$ such that $|u| + |v| \leq M$
the following inequalities hold:
\begin{align}
|f(u + v) - f(u)| &\lesssim |v|, \label{eq:f-Lip} \\
|f(u + v) - f(u) - f(v)| &\lesssim |uv|, \label{eq:f-cross} \\
|f(u + v) - f(u) - f'(u)v| &\lesssim v^2, \label{eq:f-1stTay} \\
|F(u + v) - F(u) - f(u)v| &\lesssim v^2, \label{eq:F-1stTay} \\
\Big|F(u + v) - F(u) - f(u)v-\frac 12 f'(u)v^2\Big| &\lesssim |v|^3, \label{eq:F-2ndTay}
%\|f(v + w) - f(v) - f'(v)w\|_{L^2} &\lesssim \|w\|_{H^1}^2, \label{eq:f-1stTayL2} \\
%\|F(v + w) - F(v) - f(v)w\|_{L^1} &\lesssim \|w\|_{H^1}^2, \label{eq:F-1stTayL1} \\
%\big\|F(v + w) - F(v) - f(v)w-\frac 12f'(v)w^2\big\|_{L^1} &\lesssim \|w\|_{H^1}^3, \label{eq:F-2ndTayL1}
\end{align}
with constants depending on $M$.
\end{lemma}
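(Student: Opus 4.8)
The plan is to reduce every estimate to Taylor's theorem on the fixed compact interval $[-2M, 2M]$, exploiting that $f \in C^3$ (hence $F \in C^4$, with $F' = f$, $F'' = f'$, $F''' = f''$) guarantees that $f'$, $f''$, $f'''$ are bounded there; all the implicit constants will then be sup-norms of these derivatives over $[-2M, 2M]$, and thus depend only on $M$. Throughout I would use that $|x|, |y| \le M$, so every intermediate point $x + t$ or $s + t$ that appears stays in $[-2M, 2M]$.

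For \eqref{eq:f-Lip} I would simply apply the mean value theorem: $f(x+y) - f(x) = f'(\xi)y$ with $\xi$ between $x$ and $x+y$, so $|f(x+y) - f(x)| \le \|f'\|_{L^\infty([-M,M])}|y|$. The first-order Taylor bounds \eqref{eq:f-1stTay} and \eqref{eq:F-1stTay}, and the second-order bound \eqref{eq:F-2ndTay}, all follow from the integral form of the remainder. Concretely, $f(x+y) - f(x) - f'(x)y = \int_0^y (y - t)f''(x + t)\ud t$ gives \eqref{eq:f-1stTay}; using $F'' = f'$, the identity $F(x+y) - F(x) - f(x)y = \int_0^y (y-t)f'(x+t)\ud t$ gives \eqref{eq:F-1stTay}; and using $F''' = f''$, the identity $F(x+y) - F(x) - f(x)y - \tfrac12 f'(x)y^2 = \tfrac12\int_0^y (y - t)^2 f''(x+t)\ud t$ gives \eqref{eq:F-2ndTay}. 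In each case the integrand is controlled by the relevant sup-norm of a derivative of $f$, and the correct powers of $y$ drop out of the elementary integrals $\int_0^y(y-t)\ud t$ and $\int_0^y (y-t)^2\ud t$.

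The only estimate requiring a genuinely different idea is the cross term \eqref{eq:f-cross}: a naive triangle inequality yields only an $O(1)$ bound, whereas we must capture the gain $|xy|$. Here the key is to use the normalization $f(0) = 0$. Setting $g(x,y) := f(x+y) - f(x) - f(y)$, one has $g(x,0) = g(0,y) = 0$, so applying the fundamental theorem of calculus in each variable and computing $\partial_s\partial_t g(s,t) = f''(s+t)$ yields the mixed representation $g(x,y) = \int_0^x\!\int_0^y f''(s+t)\ud t\ud s$. Bounding the integrand by $\|f''\|_{L^\infty([-2M,2M])}$ and integrating gives $|g(x,y)| \le \|f''\|_{L^\infty([-2M,2M])}|x|\,|y|$, which is exactly \eqref{eq:f-cross}. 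I expect this step — more precisely, the realization that one must exploit the vanishing of $f$ at the origin rather than estimate term by term — to be the only place where any thought is needed; the remaining bounds are routine consequences of Taylor's formula on a compact interval.
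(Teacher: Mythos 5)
Your proof is correct, and since the paper explicitly skips this proof as standard, your argument (integral-form Taylor remainders on a compact interval, plus the double-integral representation $f(x+y)-f(x)-f(y)=\int_0^x\!\int_0^y f''(s+t)\ud t\ud s$ exploiting $f(0)=0$ for the cross term) is exactly the intended one. All constants are indeed controlled by sup-norms of $f'$, $f''$ on $[-2M,2M]$, so nothing is missing.
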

\begin{lemma}
\label{lem:D2H}
Fix $\sigma \in \{{-}1, 1\}$.
There exist constants $\delta, \lambda_0, L_0 > 0$ such that if
$y_2 - y_1 \geq L_0$ and $\|U - (Q(\cdot - y_1) + \sigma Q(\cdot - y_2))\|_{L^\infty} \leq \delta$,
then for all $ \varepsilon \in H^1$
\begin{equation}
\label{eq:D2H-coer}
\begin{aligned}
\la \varepsilon, \vD^2 H(U) \varepsilon\ra \geq \lambda_0 \| \varepsilon\|_{H^1}^2
- &\frac{1}{\lambda_0}\big( \la \alpha^-(\cdot - y_1), \varepsilon\ra^2 +
\la\alpha^+(\cdot - y_1), \varepsilon\ra^2 + \la\partial_x Q(\cdot - y_1), \varepsilon\ra^2 \\
&+\la\alpha^-(\cdot - y_2), \varepsilon\ra^2 + 
\la\alpha^+(\cdot - y_2), \varepsilon\ra^2 + \la\partial_x Q(\cdot - y_2), \varepsilon\ra^2\big).
\end{aligned}
\end{equation}
\end{lemma}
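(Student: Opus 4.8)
The plan is to reduce the two-soliton estimate to two decoupled copies of the single-bump coercivity of Proposition~\ref{prop:vLv-coer}, glued by a localization argument. First I would compute the Hessian explicitly. Since $\vD^2 M(U) = 2\,\Id$ and $\vD^2 E(U) = -\partial_x^2 - f'(U)$, we have
\[
\la \varepsilon, \vD^2 H(U)\varepsilon\ra = \int_\bR \big((\partial_x\varepsilon)^2 - f'(U)\varepsilon^2 + 2\varepsilon^2\big)\ud x.
\]
Near $q_1$ the profile is close to $Q(\cdot - q_1)$: writing $U = Q(\cdot - q_1) + \sigma Q(\cdot - q_2) + r$ with $\|r\|_{L^\infty}\leq \delta$, the second bump is exponentially small there (by evenness of $Q$ and Remark~\ref{rem:unif-bound}, $|Q(x-q_2)| \lesssim \eee^{-(q_2-q_1)/2}$ on $\{x \lesssim \tfrac{q_1+q_2}{2}\}$), and since $f$ is odd, $f'$ is even, so $f'(\sigma Q) = f'(Q)$ and the sign $\sigma$ is irrelevant. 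Denoting $L(\cdot-q_1) := -\partial_x^2 - f'(Q(\cdot-q_1)) + 1$, in that region $\vD^2 H(U) \approx L(\cdot-q_1) + 1$, the extra $+1$ being only favorable; the symmetric statement holds near $q_2$.

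Next I would take cut-offs $\phi_1,\phi_2 \geq 0$ with $\phi_1^2 + \phi_2^2 \equiv 1$, $\phi_1\equiv 1$ to the left and $\phi_2 \equiv 1$ to the right of the midpoint $\tfrac{q_1+q_2}2$, with transition width comparable to $q_2-q_1$, so that $\|\partial_x\phi_j\|_{L^\infty}\lesssim (q_2-q_1)^{-1}$. The IMS localization identity gives
\[
\la\varepsilon, \vD^2 H(U)\varepsilon\ra = \sum_{j=1}^2 \la\phi_j\varepsilon, \vD^2 H(U)(\phi_j\varepsilon)\ra - \sum_{j=1}^2\|(\partial_x\phi_j)\varepsilon\|_{L^2}^2,
\]
where the last sum is $\lesssim (q_2-q_1)^{-2}\|\varepsilon\|_{L^2}^2$, hence negligible once $L_0$ is large. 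On $\supp\phi_1$ I replace $f'(U)$ by $f'(Q(\cdot-q_1))$; by the mean value theorem (valid since $f\in C^3$, so $f'$ is $C^2$, and $U,Q$ stay bounded) the incurred error is controlled by $\|r\|_{L^\infty} + \sup_{\supp\phi_1}|Q(\cdot-q_2)|\lesssim \delta + \eee^{-(q_2-q_1)/2}$ times $\|\phi_1\varepsilon\|_{L^2}^2$. Dropping the favorable $+\|\phi_1\varepsilon\|_{L^2}^2$ this yields
\[
\la\phi_1\varepsilon,\vD^2 H(U)(\phi_1\varepsilon)\ra \geq \la\phi_1\varepsilon, L(\cdot-q_1)(\phi_1\varepsilon)\ra - C\big(\delta + \eee^{-(q_2-q_1)/2}\big)\|\phi_1\varepsilon\|_{L^2}^2,
\]
and Proposition~\ref{prop:vLv-coer}, translated by $q_1$ (with $\cZ$ replaced by $\cZ(\cdot-q_1)$), bounds the first term below by $\lambda_0\|\phi_1\varepsilon\|_{H^1}^2$ minus the three projections of $\phi_1\varepsilon$ onto $\alpha^\pm(\cdot-q_1)$ and $\cZ(\cdot-q_1)$. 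The same holds near $q_2$.

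Finally I would convert the localized projections back: $\la\alpha^\pm(\cdot-q_1),\phi_1\varepsilon\ra$ differs from $\la\alpha^\pm(\cdot-q_1),\varepsilon\ra$ by $\la(1-\phi_1)\alpha^\pm(\cdot-q_1),\varepsilon\ra$, which is exponentially small since $1-\phi_1$ lives beyond the midpoint while $\alpha^\pm$ decay exponentially (Proposition~\ref{prop:alpha}), and similarly for $\cZ$. For the recombination I use that $\sum_j\|\phi_j\varepsilon\|_{H^1}^2 \geq \|\varepsilon\|_{H^1}^2$ (the derivative cross terms cancel because $\sum_j\phi_j\partial_x\phi_j = \tfrac12\partial_x(\phi_1^2+\phi_2^2) = 0$). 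Collecting everything and choosing $\delta$ small and $L_0$ large so that all error constants fall below $\lambda_0/2$ gives \eqref{eq:D2H-coer}, after possibly shrinking $\lambda_0$. The main obstacle is the $\cZ$ projection: unlike $\alpha^\pm$, the fixed function $\cZ$ is merely in $L^2$ and need not decay pointwise, so $\la(1-\phi_1)\cZ(\cdot-q_1),\varepsilon\ra$ cannot be controlled by exponential localization; instead one notes $\|(1-\phi_1)\cZ(\cdot-q_1)\|_{L^2} = \|(1-\phi_1(\cdot+q_1))\cZ\|_{L^2}\to 0$ as $q_2-q_1\to\infty$ by dominated convergence, which is precisely what forces the threshold $L_0$ to depend on $\cZ$. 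Threading the three scales $\delta$, $\eee^{-(q_2-q_1)/2}$, and $(q_2-q_1)^{-1}$ so that every error stays uniformly below $\lambda_0$ is the only delicate bookkeeping; the spectral content is entirely contained in Proposition~\ref{prop:vLv-coer}.
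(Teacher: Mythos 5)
Your argument is correct and is essentially the proof in the paper: the paper likewise reduces to the one-soliton coercivity of Proposition~\ref{prop:vLv-coer} by localization, except that it packages the cutoff step as a separate localized coercivity statement (Lemma~\ref{lem:vLv-loc-coer}, proved by exactly your ``cut off $\varepsilon$, apply the global estimate, convert the projections back'' device, including the same treatment of the merely-$L^2$ function $\cZ$) and then sums two instances of it after first replacing $\vD^2 H(U)$ by the two-well operator $T_q = -\partial_x^2 - f'(Q) - f'(Q(\cdot - q)) + 1$. The only organizational difference is your IMS partition of unity with $\phi_1^2 + \phi_2^2 = 1$ and the recombination $\sum_j\|\phi_j\varepsilon\|_{H^1}^2 \geq \|\varepsilon\|_{H^1}^2$, versus the paper's two fixed-width cutoffs $\chi(\cdot/\rho)$ with the uncovered region absorbed into a $-c\|\varepsilon\|_{H^1}^2$ slack that is compensated by adding $\lambda_0\|\varepsilon\|_{H^1}^2$ at the end.
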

\begin{proof}
Without loss of generality we can assume that $y_1 = 0$ and $y_2 = y \geq L_0$.
Consider the operator $T_y$ defined by the formula
\begin{equation}
\label{eq:Tq-def}
T_y := -\partial_x^2 - f'(Q) - f'(Q(\cdot - y)) + 1.
\end{equation}
We have $\vD^2 H(U) = -\partial_x^2 - f'(U) + 1$, hence
\begin{equation}
\label{eq:D2H-Tq}
\la \varepsilon, \vD^2 H(U) \varepsilon\ra - \la \varepsilon, T_y \varepsilon\ra
= -\int_\bR \big(f'(U) - f'(Q) - f'(Q(\cdot - y))\big)\varepsilon^2\ud x.
\end{equation}
Let $c > 0$. Since $f'$ is locally Lipschitz, we have
\begin{equation}
\label{eq:f'U-1}
\|f'(U) - f'(Q + \sigma Q(\cdot - y))\|_{L^\infty} \leq \frac c2,
\end{equation}
provided that we take $\delta$ small enough.
Considering separately the regions $x \leq \frac y2$ and $x \geq \frac y2$
one can check that
\begin{equation}
\label{eq:f'QQq}
\|f'(Q + \sigma Q(\cdot - y)) - f'(Q) - f'(Q(\cdot - y))\|_{L^\infty} \leq \frac c2,
\end{equation}
if $L_0$ is sufficiently large.
From \eqref{eq:D2H-Tq}, \eqref{eq:f'U-1} and \eqref{eq:f'QQq} we obtain
\begin{equation}
\label{eq:close-Tq}
|\la \varepsilon, \vD^2 H(U) \varepsilon\ra - \la \varepsilon, T_y \varepsilon\ra | \leq c\| \varepsilon\|_{L^2}^2,\qquad \forall \varepsilon \in H^1.
\end{equation}
Since $c$ is arbitrary, it suffices to prove \eqref{eq:D2H-coer} with $\vD^2 H(U)$
replaced by $T_y$.

From Lemma~\ref{lem:vLv-loc-coer} we have
\begin{equation}
\label{eq:D2H-coer-1}
\begin{aligned}
&(1-\lambda_0)\int_{-\rho}^\rho\big((\partial_x  \varepsilon)^2 + \varepsilon^2\big)\ud x - \int_\bR f'(Q)\varepsilon^2\ud x \\
&\quad\geq -c \| \varepsilon\|_{H^1}^2 - \frac{1}{\lambda_0}\big(\la \alpha^-, \varepsilon\ra^2
+ \la \alpha^+, \varepsilon\ra^2 + \la \partial_x Q, \varepsilon\ra^2\big)
\end{aligned}
\end{equation}
and
\begin{equation}
\label{eq:D2H-coer-2}
\begin{aligned}
&(1-\lambda_0)\int_{y-\rho}^{y+\rho}\big((\partial_x \varepsilon)^2 + \varepsilon^2\big)\ud x - \int_\bR f'(Q(\cdot -y))\varepsilon^2\ud x \\
&\quad\geq -c \| \varepsilon\|_{H^1}^2 - \frac{1}{\lambda_0}\big(\la \alpha^-(\cdot -y), \varepsilon\ra^2
+ \la \alpha^+(\cdot -y), \varepsilon\ra^2 + \la \partial_x Q(\cdot -y), \varepsilon\ra^2\big).
\end{aligned}
\end{equation}
Now, if $y \geq 2\rho$, then it suffices to take the sum of \eqref{eq:D2H-coer-1} and \eqref{eq:D2H-coer-2},
and add $\lambda_0\| \varepsilon\|_{H^1}^2$ to both sides.
\end{proof}

\begin{lemma}
\label{lem:H-deux-solit}
Let $k_0 = k_0(1)$ be the constant from Lemma~\ref{lem:Q-asym}. Then
\begin{equation}
\label{eq:H-deux-solit}
H(Q(\cdot - y_1) + \sigma Q(\cdot - y_2)) = 2H(Q) - \sigma(2k_0^2 + o(1)) \eee^{-(y_2 - y_1)},
\end{equation}
where $o(1)$ tends to $0$ as $y_2 - y_1 \to +\infty$.
\end{lemma}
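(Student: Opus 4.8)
The plan is to compute $H$ of the two-soliton configuration directly and isolate the leading exponential. By translation invariance I may take $q_1=0$ and set $q:=q_2\to+\infty$, $R_1:=Q$, $R_2:=Q(\cdot-q)$, $u:=R_1+\sigma R_2$. Recall that in the normalization $v=1$ one has $H(u)=\int_\bR\big(\tfrac12(\partial_x u)^2+\tfrac12 u^2-F(u)\big)\ud x$, so that $Q$ is a critical point of $H$ and $\vD^2H(Q)=L$, as in Lemma~\ref{lem:D2H}; since $F$ is even, $H(R_1)=H(\sigma R_2)=H(Q)$. First I would write
\[
H(u)-2H(Q)=H(R_1+\sigma R_2)-H(R_1)-H(\sigma R_2)=\sigma\int_\bR\big(\partial_xR_1\,\partial_xR_2+R_1R_2\big)\ud x-\int_\bR G\ud x,
\]
where $G:=F(R_1+\sigma R_2)-F(R_1)-F(R_2)$ and the diagonal quadratic and potential contributions have cancelled. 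For the kinetic-plus-mass cross term I would integrate by parts and insert the translated stationary equation $\partial_x^2R_2=R_2-f(R_2)$ from \eqref{eq:Qv}, obtaining $\int\partial_xR_1\,\partial_xR_2=-\int R_1R_2+\int R_1f(R_2)$, so that this term equals $\sigma\int_\bR R_1f(R_2)\ud x$.

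For $\int_\bR G$ I would use that, $Q$ and $F$ being even, $G$ is symmetric under $x\mapsto q-x$, whence $\int_\bR G=2\int_{-\infty}^{q/2}G$. On $\{x\le q/2\}$ the far bump $R_2$ is uniformly small, and the second-order Taylor bound \eqref{eq:F-2ndTay} gives $G=\sigma f(R_1)R_2+\tfrac12 f'(R_1)R_2^2+O(R_2^3)-F(R_2)$. The crucial point—and the main obstacle—is that the quadratic and higher terms are genuinely negligible even though $\int_{-\infty}^{q/2}R_2^2\sim e^{-q}$ is of the same order as the answer: one must expose the factor $f'(R_1)=f'(Q)\lesssim Q^2\lesssim e^{-2|x|}$ and use $R_2(x)\lesssim e^{-(q-x)}$, which yields $\int_{-\infty}^{q/2}f'(R_1)R_2^2=O(qe^{-2q})$, while $\int_{-\infty}^{q/2}\big(R_2^3+F(R_2)\big)=O(e^{-3q/2})$ by \eqref{eq:F-2ndTay}; all of these are $o(e^{-q})$. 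Combined with $\int_{-\infty}^{q/2}f(R_1)R_2=\int_\bR R_1f(R_2)+o(e^{-q})$, which holds by the same reflection symmetry and because the discarded tail is $O(e^{-3q/2})$ (as $f(Q(x))\lesssim e^{-3x}$ for $x\ge q/2$), this gives $\int_\bR G=2\sigma\int_\bR R_1f(R_2)+o(e^{-q})$, and therefore
\[
H(u)-2H(Q)=-\sigma\int_\bR R_1f(R_2)\ud x+o(e^{-q}).
\]

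It remains to pin down $\int_\bR R_1f(R_2)=\int_\bR f(Q(x))\,Q(x-q)\ud x$. By Lemma~\ref{lem:Q-asym}, $e^{q}Q(x-q)\to k_0e^{x}$ pointwise, and since $f(Q(x))e^{x}$ is integrable (it decays exponentially at $\pm\infty$, using $f(u)\lesssim|u|^3$ near $0$), a dominated-convergence argument gives $e^{q}\int f(Q)\,Q(\cdot-q)\to k_0\int_\bR e^{x}f(Q)\ud x$. To evaluate the last integral I would substitute $f(Q)=Q-\partial_x^2Q$ and note that $e^{x}(Q-\partial_xQ)$ is an antiderivative of $e^{x}(Q-\partial_x^2Q)$; the boundary term vanishes at $-\infty$ and tends to $2k_0$ at $+\infty$, because $Q\sim-\partial_xQ\sim k_0e^{-x}$ there, with the refined bound $|\partial_xQ+Q|\lesssim e^{-2|x|}$ from Remark~\ref{rem:unif-bound} controlling the lower endpoint. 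Hence $\int_\bR e^xf(Q)=2k_0$ and $\int_\bR R_1f(R_2)=(2k_0^2+o(1))e^{-q}$, which together with the previous display yields \eqref{eq:H-deux-solit}. Besides the cancellation in $\int_\bR G$, the other delicate step is this conditionally convergent integration by parts, which is precisely why the second-order tail expansion of $Q$ from Remark~\ref{rem:unif-bound} is needed.
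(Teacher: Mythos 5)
Your proof is correct, and it reaches \eqref{eq:H-deux-solit} by a genuinely different route from the paper. The paper splits the cross terms at the midpoint $m=\tfrac{q_1+q_2}{2}$, integrates by parts on each half-line, and reads the leading order off the boundary terms $\sigma\,\partial_x R_1(m)R_2(m)$ and $-\sigma\,\partial_x R_2(m)R_1(m)$, the remaining bulk integrals cancelling against the first-order Taylor term of $F(R_1+\sigma R_2)-F(R_1)$. You instead integrate by parts over all of $\bR$ (no boundary terms), which converts the quadratic cross term into the interaction integral $\sigma\int_\bR R_1 f(R_2)\ud x$; you then use the reflection symmetry $x\mapsto q-x$ of $G$ (valid because $Q$ and $F$ are even) to identify $\int_\bR G = 2\sigma\int_\bR R_1 f(R_2)\ud x + o(\eee^{-q})$, and finally evaluate $\int_\bR R_1 f(R_2)\ud x = (2k_0^2+o(1))\eee^{-q}$ by dominated convergence together with the exact antiderivative $\eee^{x}(Q-\partial_x Q)$ of $\eee^{x}f(Q)$. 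All the estimates you need do hold: in particular $f'(s)=O(s^2)$ on bounded sets (since $f$ is odd and $C^3$ with $f'(0)=0$, hence $f''(0)=0$), which justifies $\int_{-\infty}^{q/2}f'(R_1)R_2^2\ud x = O(q\eee^{-2q})$, the step you correctly flag as the crux. Two cosmetic inaccuracies: the integral $\int_\bR \eee^{x}f(Q)\ud x$ is absolutely (not conditionally) convergent since $\eee^{x}f(Q(x))\lesssim \eee^{x-3|x|}$, and the endpoint requiring the precise asymptotics of Lemma~\ref{lem:Q-asym} is $+\infty$ (where $\eee^{x}(Q-\partial_x Q)\to 2k_0$), not $-\infty$, where the boundary term vanishes trivially. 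What your route buys is a clean closed-form identity for the interaction constant ($\int_\bR\eee^{x}f(Q)\ud x=2k_0$) and a halving of the work via symmetry; what the paper's midpoint splitting buys is a template that is reused almost verbatim in the non-symmetric force computation of Lemma~\ref{lem:inter-asym}.
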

\begin{proof}
We introduce the following notation, which we will often use later:
\begin{equation}
R_1(x) := Q(x - y_1),\qquad R_2(x) := Q(x - y_2).
\end{equation}
We also denote
\begin{equation}
m := \frac{y_1 + y_2}{2}, \quad m_1 := \frac{y_1 + m}{2} = \frac{3y_1 + y_2}{4}, \quad m_2 := \frac{m + y_2}{2}
= \frac{y_1 + 3y_2}{4}.
\end{equation}
We have
\begin{equation}
\label{eq:energy-comp-0}
\begin{aligned}
H(R_1 + \sigma R_2) = \int_\bR \Big(\frac 12 (\partial_x R_1 + \sigma\partial_x R_2)^2
+ \frac 12 (R_1 + \sigma R_2)^2 - F(R_1 + \sigma R_2)\Big)\ud x \\
= 2H(Q) + \int_\bR \big(\sigma \partial_x R_1 \partial_x R_2 + \sigma R_1R_2 - \big(F(R_1 + \sigma R_2)-F(R_1) - F(R_2)\big)\big)\ud x.
\end{aligned}
\end{equation}
In order to compute the main term of the last integral, we consider separately $x \leq m$ and $x \geq m$.
Integrating by parts and using \eqref{eq:Qv}, we get
\begin{equation}
\label{eq:energy-comp-1}
\int_{-\infty}^m \big(\sigma \partial_x R_1 \partial_x R_2 + \sigma R_1R_2)\ud x
= \sigma \partial_x R_1(m)R_2(m) + \int_{-\infty}^m \sigma f(R_1)R_2\ud x.
\end{equation}
Since $|F(u)| \ll |u|^2$ for $|u|$ small, Lemma~\ref{lem:Q-asym} easily implies
$\big|\int_{-\infty}^m F(R_2)\ud x\big| \ll \eee^{-(y_2 - y_1)}$.
Together with \eqref{eq:energy-comp-1}, this yields
\begin{equation}
\label{eq:energy-comp-2}
\begin{aligned}
\int_{-\infty}^m \big(\sigma \partial_x R_1 \partial_x R_2 + \sigma R_1R_2 - \big(F(R_1 + \sigma R_2)-F(R_1) - F(R_2)\big)\big)\ud x \\
\simeq \sigma \partial_x R_1(m)R_2(m) -\int_{-\infty}^m \big(F(R_1 + \sigma R_2) - F(R_1) - \sigma f(R_1) R_2\big)\ud x
\end{aligned}
\end{equation}
(where in this proof ``$\simeq$'' always means ``up to terms of order $\ll \eee^{-(y_2 - y_1)}$'').
From \eqref{eq:F-2ndTay} we obtain
\begin{equation}
\begin{aligned}
\int_{-\infty}^m \big(F(R_1 + \sigma R_2) - F(R_1) - \sigma f(R_1) R_2\big)\ud x \simeq \frac 12 \int_{-\infty}^m
f'(R_1)R_2^2\ud x \\ = \frac 12 \int_{-\infty}^{m_1}f'(R_1)R_2^2\ud x + \frac 12 \int_{m_1}^mf'(R_1)R_2^2\ud x.
\end{aligned}
\end{equation}
By Lemma~\ref{lem:Q-asym}, the first integral is $\lesssim \eee^{-\frac 32(y_2 - y_1)} \ll \eee^{-(y_2 - y_1)}$.
The second integral is also $\ll \eee^{-(y_2 - y_1)}$, because $|f'(R_1)| \ll 1$ for $x \geq m_1$.
Taking this into account, we get from \eqref{eq:energy-comp-2}
\begin{equation}
\label{eq:energy-comp-3}
\begin{aligned}
\int_{-\infty}^m \big(\sigma \partial_x R_1 \partial_x R_2 + \sigma R_1R_2 - \big(F(R_1 + \sigma R_2)-F(R_1) - F(R_2)\big)\big)\ud x \\
\simeq \sigma \partial_x R_1(m)R_2(m) \simeq -\sigma k_0^2 \eee^{-(y_2 - y_1)},
\end{aligned}
\end{equation}
where the last step follows from Lemma~\ref{lem:Q-asym}.

A similar computation yields
\begin{equation}
\label{eq:energy-comp-4}
\begin{aligned}
\int_m^\infty \big(\sigma \partial_x R_1 \partial_x R_2 + \sigma R_1R_2 - \big(F(R_1 + \sigma R_2)-F(R_1) - F(R_2)\big)\big)\ud x \\
\simeq -\sigma \partial_x R_2(m)R_1(m) \simeq -\sigma k_0^2 \eee^{-(y_2 - y_1)}.
\end{aligned}
\end{equation}
The conclusion directly follows from \eqref{eq:energy-comp-0}, \eqref{eq:energy-comp-3}
and \eqref{eq:energy-comp-4}.
\end{proof}

\begin{proposition}
\label{prop:coer}
There exist $\delta, L_0, C_0 > 0$ such that if $H(Q(\cdot - y_1) + \sigma Q(\cdot - y_2) + \varepsilon) = 2H(Q)$,
$\| \varepsilon\|_{H^1} \leq \delta$ and $y_2 - y_1 \geq L_0$, then
\begin{itemize}
\item in the case $\sigma = -1$,
\begin{equation}
\begin{aligned}
\| \varepsilon\|_{H^1}^2 + \eee^{-(y_2 - y_1)} \leq C_0\big(&\la \alpha^-(\cdot - y_1), \varepsilon\ra^2 +
\la\alpha^+(\cdot - y_1), \varepsilon\ra^2 + \la\partial_x Q(\cdot - y_1), \varepsilon\ra^2 \\
+ &\la\alpha^-(\cdot - y_2), \varepsilon\ra^2 + 
\la\alpha^+(\cdot - y_2), \varepsilon\ra^2 + \la\partial_x Q(\cdot - y_2), \varepsilon\ra^2\big).
\end{aligned}
\end{equation}
\item in the case $\sigma = 1$,
\begin{equation}
\begin{aligned}
\| \varepsilon\|_{H^1}^2 \leq C_0\big(\eee^{-(y_2 - y_1)} +  &\la \alpha^-(\cdot - y_1), \varepsilon\ra^2 +
\la\alpha^+(\cdot - y_1), \varepsilon\ra^2 + \la\partial_x Q(\cdot - y_1), \varepsilon\ra^2 \\
+ &\la\alpha^-(\cdot - y_2), \varepsilon\ra^2 + 
\la\alpha^+(\cdot - y_2), \varepsilon\ra^2 + \la\partial_x Q(\cdot - y_2), \varepsilon\ra^2\big).
\end{aligned}
\end{equation}
\end{itemize}
\end{proposition}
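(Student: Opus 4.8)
The plan is to feed the decomposition $U_0 := R_1 + \sigma R_2$, with $R_1 := Q(\cdot - q_1)$ and $R_2 := Q(\cdot - q_2)$, into the constraint $H(U_0 + \varepsilon) = 2H(Q)$ and Taylor-expand $H$ to second order around $U_0$. Since the only nonlinear contribution to $H$ comes from $F$, this expansion is exact up to a cubic remainder:
\[
H(U_0 + \varepsilon) = H(U_0) + \la \vD H(U_0), \varepsilon\ra + \tfrac12\la\varepsilon, \vD^2 H(U_0)\varepsilon\ra + \cR.
\]
The three structural ingredients are already in hand: Lemma~\ref{lem:H-deux-solit} evaluates $H(U_0)$, Lemma~\ref{lem:D2H} gives coercivity of the Hessian $\vD^2 H(U_0)$ (which applies because $\|\varepsilon\|_{L^\infty} \lesssim \|\varepsilon\|_{H^1} \le \delta$), and it remains only to show that the linear term and $\cR$ are negligible. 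Imposing the constraint and inserting Lemma~\ref{lem:H-deux-solit}, I rewrite the identity as
\[
\tfrac12\la\varepsilon, \vD^2 H(U_0)\varepsilon\ra = \sigma\big(2k_0^2 + o(1)\big)\eee^{-(q_2 - q_1)} - \la\vD H(U_0),\varepsilon\ra - \cR,
\]
where $o(1) \to 0$ as $q_2 - q_1 \to \infty$.

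Next I would estimate the two lower-order terms. Because $Q$ solves \eqref{eq:Qv} with $v = 1$, each $R_k$ is a critical point of $H$, so $\vD H(U_0) = f(R_1) + \sigma f(R_2) - f(U_0)$ is a pure interaction term; by oddness of $f$ and \eqref{eq:f-cross} it obeys $|\vD H(U_0)| \lesssim R_1 R_2$ pointwise. Hence, using $H^1 \hookrightarrow L^\infty$ and the decay of $Q$ from Lemma~\ref{lem:Q-asym},
\[
|\la\vD H(U_0),\varepsilon\ra| \lesssim \|R_1 R_2\|_{L^1}\|\varepsilon\|_{H^1} \lesssim (q_2 - q_1)\eee^{-(q_2 - q_1)}\|\varepsilon\|_{H^1},
\]
and Young's inequality converts this into $\tfrac{\lambda_0}{8}\|\varepsilon\|_{H^1}^2 + o(\eee^{-(q_2-q_1)})$, the crucial point being that $(q_2 - q_1)^2\eee^{-2(q_2-q_1)} \ll \eee^{-(q_2-q_1)}$. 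For the remainder, exactness of the quadratic part of $H$ gives $\cR = -\int\big(F(U_0 + \varepsilon) - F(U_0) - f(U_0)\varepsilon - \tfrac12 f'(U_0)\varepsilon^2\big)\ud x$, which by \eqref{eq:F-2ndTay} together with $\|\varepsilon\|_{L^\infty} \lesssim \delta$ is $\lesssim \delta\|\varepsilon\|_{H^1}^2$, hence absorbable for $\delta$ small.

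Combining these bounds with the coercivity \eqref{eq:D2H-coer}, and writing $S$ for the sum of the six squared scalar products on its right-hand side, I obtain after absorbing all $\|\varepsilon\|_{H^1}^2$ contributions (taking $\delta$ small and $L_0$ large)
\[
\tfrac{\lambda_0}{4}\|\varepsilon\|_{H^1}^2 \leq \tfrac{1}{2\lambda_0}S + \sigma\big(2k_0^2 + o(1)\big)\eee^{-(q_2 - q_1)}.
\]
The result then follows by the sign dichotomy. If $\sigma = 1$ the interaction term is positive and $\leq 3k_0^2\eee^{-(q_2-q_1)}$ for $q_2 - q_1$ large, giving $\|\varepsilon\|_{H^1}^2 \leq C_0(S + \eee^{-(q_2-q_1)})$. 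If $\sigma = -1$ it is negative and can be moved to the left; since $2k_0^2 + o(1) \geq k_0^2 > 0$ once $L_0$ is large, both $\|\varepsilon\|_{H^1}^2$ and $\eee^{-(q_2-q_1)}$ are then separately controlled by $S$, which yields $\|\varepsilon\|_{H^1}^2 + \eee^{-(q_2-q_1)} \leq C_0 S$. The genuinely delicate step is the linear estimate: one must ensure that, despite the factor $q_2 - q_1$ in $\|R_1 R_2\|_{L^1}$, the linear term contributes only $o(\eee^{-(q_2-q_1)})$ at the constant level, so that it never competes with the leading interaction $\pm 2k_0^2\eee^{-(q_2-q_1)}$ that drives the dichotomy.
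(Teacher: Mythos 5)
Your proposal is correct and follows essentially the same route as the paper: Taylor expansion of $H$ around $U_0=R_1+\sigma R_2$, insertion of Lemma~\ref{lem:H-deux-solit} and the constraint, coercivity of $\vD^2H(U_0)$ from Lemma~\ref{lem:D2H}, and the sign dichotomy at the end. The only (harmless) deviation is in the linear term: you use the pointwise bound $|\vD H(U_0)|\lesssim R_1R_2$ from \eqref{eq:f-cross} together with $L^1$--$L^\infty$ duality and Young's inequality, whereas the paper pairs in $L^2$ and proves $\|f(R_1+\sigma R_2)-f(R_1)-\sigma f(R_2)\|_{L^2}\ll \eee^{-\frac12(q_2-q_1)}$ by a splitting argument --- both give the required $o(\eee^{-(q_2-q_1)}+\|\varepsilon\|_{H^1}^2)$.
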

\begin{proof}
Denote $R_1 := Q(\cdot - y_1)$, $R_2 := Q(\cdot - y_2)$ and $U := R_1 + \sigma R_2$.
We have the Taylor expansion
\begin{equation}
\label{eq:H-Tay}
H(U + \varepsilon) = H(U) + \la \vD H(U), \varepsilon\ra + \frac 12 \la \varepsilon, \vD^2 H(U) \varepsilon\ra
+ O(\| \varepsilon\|_{H^1}^3).
\end{equation}
Indeed, from the definition of $H$ we obtain
\begin{equation}
\begin{aligned}
H(U + \varepsilon) - \Big( H(U) + \la \vD H(U), \varepsilon\ra + \frac 12 \la \varepsilon, \vD^2 H(U) \varepsilon\ra
\Big) \\
= -\int_\bR \Big(F(U + \varepsilon) - F(U) - f(U) \varepsilon - \frac 12 f'(U) \varepsilon^2\Big)\ud x.
\end{aligned}
\end{equation}
Now \eqref{eq:F-2ndTay} yields
\begin{equation}
\Big|H(U + \varepsilon) - \Big( H(U) + \la \vD H(U), \varepsilon\ra + \frac 12 \la \varepsilon, \vD^2 H(U) \varepsilon\ra
\Big)\Big| \lesssim \int_\bR | \varepsilon|^3 \ud x = \| \varepsilon\|_{L^3}^3 \lesssim \| \varepsilon\|_{H^1}^3.
\end{equation}
Replacing $H(U)$ in \eqref{eq:H-Tay} by the formula given in Lemma~\ref{lem:H-deux-solit} and using the assumption
$H(U+ \varepsilon) = 2H(Q)$, we get
\begin{equation}
\label{eq:H-Tay-1}
- 2\sigma k_0^2 \eee^{-(y_2 - y_1)} + \la \vD H(U), \varepsilon\ra + \frac 12 \la \varepsilon, \vD^2 H(U) \varepsilon\ra = o\big(\eee^{-(y_2 - y_1)} + \| \varepsilon\|_{H^1}^2\big).
\end{equation}
We now show that
\begin{equation}
\label{eq:small-tension-0}
|\la \vD H(U), \varepsilon\ra| \ll \eee^{-(y_2 - y_1)} + \| \varepsilon\|_{H^1}^2.
\end{equation}
By the Cauchy-Schwarz inequality, it suffices to check that
\begin{equation}
\label{eq:small-tension}
\|f(R_1 + \sigma R_2) - f(R_1) - \sigma f(R_2)\|_{L^2} \ll \eee^{-\frac 12(y_2 - y_1)}.
\end{equation}
This proof is similar to the computations in Lemma~\ref{lem:H-deux-solit}.
For $x \leq m$ we have
\begin{equation}
\big|f(R_1 + \sigma R_2) - f(R_1) - \sigma f(R_2) - \sigma f'(R_1)R_2\big| \lesssim R_2^2,
\end{equation}
and $\|R_2^2\|_{L^2(x \leq m)} \lesssim \eee^{-(y_2 - y_1)} \ll \eee^{-\frac 12(y_2 - y_1)}$.
Considering separately $x \leq m_1$ and $m_1 \leq x \leq m$,
it is easy to see that $\|f'(R_1)R_2\|_{L^2(x \leq m)} \ll \eee^{-\frac 12(y_2 - y_1)}$.
Thus
\begin{equation}
\|f(R_1 + \sigma R_2) - f(R_1) - \sigma f(R_2)\|_{L^2(x \leq m)} \ll \eee^{-\frac 12(y_2 - y_1)},
\end{equation}
and a similar argument yields the same estimate for $x \geq m$. This proves \eqref{eq:small-tension}.

From \eqref{eq:H-Tay-1} and \eqref{eq:small-tension-0} we have
\begin{equation}
\frac 12 \la  \varepsilon, \vD^2 H(U) \varepsilon\ra - 2\sigma k_0^2 \eee^{-(y_2 - y_1)} = o\big(\eee^{-(y_2 - y_1)} + \| \varepsilon\|_{H^1}^2\big),
\end{equation}
so \eqref{eq:D2H-coer} yields the conclusion, both for $\sigma = 1$ and $\sigma = -1$.
\end{proof}
%%%%%%%%%%%%%%% TWO-SOLITON MODULATION %%%%%%%%%%%%%%%%%
\section{Modulation near a two-soliton}
\label{sec:modulation-two}
This section is the heart of our proof. We show here how a good choice of modulation parameters
allows to identify the interaction force in the modulation equations.
\subsection{Definition of the position parameters}
\label{ssec:orth}
We consider a solution which is close to a two-soliton
on some time interval (with velocities of both solitons close to $1$):
\begin{equation}
\label{eq:u-decomp}
u(t, x+t) = Q(x-y_1(t)) + \sigma Q(x - y_2(t)) + \varepsilon(t, x),
\end{equation}
where $y_2(t) - y_1(t) \gg 1$ and $\| \varepsilon(t)\|_{H^1} \ll 1$.
%\red{Move to a later place: We denote $q(t) := q_2(t) - q_1(t)$ the distance between the centers of the solitons
%(which is assumed $\gg 1$).}
We also set $\sigma_1 = 1$ and $\sigma_2 = \sigma$.
Note the simple relation between $y_k$ and the parameters $x_k$ used in the Introduction: $y_k(t) = x_k(t) - t$.

Given $y_1(t)$ and $y_2(t)$, we denote $R_k(t, x) := Q(x - y_k(t))$ for $k \in \{1, 2\}$.
Note that $\partial_x R_k(t, x) = \partial_xQ(x - y_k(t))$. By the Chain Rule,
we also have $\partial_t R_k(t) = -y_k'(t)\partial_x R_k(t)$ and
$\partial_t \partial_x R_k(t) = -y_k'(t)\partial_x^2 R_k(t)$.
The values of $y_1(t)$ and $y_2(t)$ are chosen as follows.
\begin{lemma}
\label{lem:mod0}
There exist $\delta, L_0, C_0 > 0$ such that if $L \geq L_0$ and
\begin{equation}
\inf_{x_2 - x_1 \geq L} \|u(t) - Q(\cdot - x_1) - \sigma Q(\cdot - x_2)\|_{H^1}
= \wt \delta < \delta,\qquad\text{for all }t \in [T_1, T_2],
\end{equation}
then for $t \in [T_1, T_2]$ there exist unique $y_1(t), y_2(t)$ such that
$\varepsilon(t, x) := u(t, x + t) - R_1(t, x) - \sigma R_2(t, x)$ satisfies
\begin{gather}
y_2(t) - y_1(t) \geq L_0 - 1, \\
\| \varepsilon(t) \|_{H^1} < C_0 \delta, \\
\label{eq:eps-orth}
\la \partial_x R_1(t), \varepsilon(t)\ra = \la \partial_x R_2(t), \varepsilon(t)\ra = 0.
\end{gather}
These functions satisfy
$y_2(t) - y_1(t) \geq L - 1$ and $\| \varepsilon(t) \|_{H^1} \leq C_0 \wt \delta$ for $t \in [T_1, T_2]$.
Moreover, $y_1(t)$ and $y_2(t)$ are of class $C^1$ and
\begin{equation}
\label{eq:dtyk}
|y_1'(t)| + |y_2'(t)| \leq C_0\|\varepsilon(t)\|_{H^1} + c_0\eee^{-\frac 12(y_2(t) - y_1(t))},
\end{equation}
where $c_0 > 0$ can be made arbitrarily small by taking $L_0$ large enough.
\end{lemma}
\begin{proof}
Existence and uniqueness of $y_1(t)$ and $y_2(t)$ is a standard application of the Implicit Function Theorem
and we skip it, see for example \cite[Proposition 3]{GuSi06}.

In order to prove \eqref{eq:dtyk}, we need the evolution equation of $\varepsilon(t)$.
Differentiating \eqref{eq:u-decomp} in time we obtain
\begin{equation}
\label{eq:u-decomp-1}
\partial_t u(t, x+t) + \partial_x u(t, x+t) = -y_1'(t)\partial_x R_1(t, x) - \sigma y_2'(t)\partial_x R_2(t, x) +\partial_t \varepsilon(t, x).
\end{equation}
From \eqref{eq:kdv} we have
\begin{equation}
\partial_t u(t, x+t) + \partial_x u(t, x+t) =
\partial_x\big({-}\partial_x^2 u(t, x+t) - f(u(t, x+t)) + u(t, x+t)\big).
\end{equation}
Using again \eqref{eq:u-decomp}, we obtain that the right hand side is
\begin{equation}
\partial_x\big({-}\partial_x^2 R_1 - \sigma \partial_x^2 R_2 -\partial_x^2 \varepsilon
- f(R_1 + \sigma R_2 + \varepsilon)+R_1 + \sigma R_2 + \varepsilon\big),
\end{equation}
which, using $\partial_x^2Q + f(Q) = Q$, is equal to
\begin{equation}
\partial_x\big({-}\partial_x^2 \varepsilon - f(R_1 + \sigma R_2 + \varepsilon)
+ f(R_1) + \sigma f(R_2)+ \varepsilon\big).
\end{equation}
Combining this with \eqref{eq:u-decomp-1} we get
\begin{equation}
\label{eq:dteps}
%\begin{aligned}
\partial_t \varepsilon = y_1'\partial_x R_1 + \sigma y_2' \partial_x R_2
+\partial_x\big({-}\partial_x^2 \varepsilon - f(R_1 + \sigma R_2 + \varepsilon)
+ f(R_1) + \sigma f(R_2)+ \varepsilon\big).
%\end{aligned}
\end{equation}
From \eqref{eq:eps-orth} and \eqref{eq:dteps}, we have, for $k \in \{1, 2\}$,
\begin{equation}
\label{eq:dt-orth-all}
\begin{aligned}
0 &= \dd t\la \partial_x R_k(t), \varepsilon(t)\ra =
-y_k'(t) \la \partial_x^2 R_k(t), \varepsilon(t)\ra + \la \partial_x R_k(t), \partial_t\varepsilon(t)\ra \\
&= -y_k'(t) \la \partial_x^2 R_k(t), \varepsilon(t)\ra \\
&+\big\la \partial_x R_k, y_1'\partial_x R_1 + \sigma y_2' \partial_x R_2
+\partial_x\big({-}\partial_x^2 \varepsilon - f(R_1 + \sigma R_2 + \varepsilon)
+ f(R_1) + \sigma f(R_2)+ \varepsilon\big)\big\ra.
\end{aligned}
\end{equation}
We obtain the following linear system for $y_1'$ and $y_2'$:
\begin{equation}
\label{eq:orth-syst}
\begin{pmatrix} M_{11} & M_{12} \\ M_{21} & M_{22} \end{pmatrix}
\begin{pmatrix} y_1' \\ y_2' \end{pmatrix} = \begin{pmatrix} B_1 \\ B_2 \end{pmatrix},
\end{equation}
where
\begin{align}
M_{11} &= \la \partial_x R_1, \partial_x R_1\ra - \la \partial_x^2 R_1, \varepsilon\ra, \\
M_{12} &= \sigma\la \partial_x R_1, \partial_x R_2\ra, \\
M_{21} &= \la \partial_x R_2, \partial_x R_1\ra, \\
M_{22} &= \sigma\la \partial_x R_2, \partial_x R_2\ra - \la \partial_x^2 R_2, \varepsilon\ra, \\
B_1 &= \big\la \partial_x R_1, \partial_x\big(\partial_x^2 \varepsilon + f(R_1 + \sigma R_2 + \varepsilon)
- f(R_1) - \sigma f(R_2) - \varepsilon\big)\big\ra, \\
B_2 &= \big\la \partial_x R_2, \partial_x\big(\partial_x^2 \varepsilon + f(R_1 + \sigma R_2 + \varepsilon)
- f(R_1) - \sigma f(R_2) - \varepsilon\big)\big\ra.
\end{align}
The diagonal terms are of size $\sim 1$, whereas the off-diagonal terms are small when $L_0$ is large.
Moreover, \eqref{eq:f-Lip} and \eqref{eq:small-tension} imply
$|B_1| + |B_2| \leq  \wt C_0\|\varepsilon\|_{H^1} + \wt c_0\eee^{-\frac 12(y_2 - y_1)}$,
where $\wt c_0$ is small when $L_0$ is large, so we get \eqref{eq:dtyk}.
\end{proof}

The orthogonality condition \eqref{eq:eps-orth} was not chosen very carefully.
In fact, we could just as well use a different one.
For this reason, $y_k'(t)$ is not sufficiently well controlled.
To remedy this, we will now introduce a different parameter $q_k(t)$,
such that $|q_k(t) - y_k(t)| \ll 1$, but $q_k'(t)$ behaves better.

We set
\begin{equation}
\label{eq:Z-def}
\cZ(x) := \chi\Big(\frac{x}{\rho}\Big)\int_0^x \wt Q(y)\ud y,
\end{equation}
where $\chi$ is a cut-off function supported in $[-2, 2]$, $\chi(x) = 1$ for $x \in [-1, 1]$
and $\rho \gg 1$. Note that
\begin{equation}
\int_\bR \partial_xQ(x)\Big(\int_0^x \wt Q(y)\ud y\Big)\ud x = {-}\int_\bR Q(x)\wt Q(x)\ud x > 0,
\end{equation}
where in the last step we use \eqref{eq:supercrit}.

Since $\rho$ is large, the triangle inequality yields
\begin{equation}
\label{eq:Z-Q'-sgn}
\la \cZ, \partial_xQ\ra > 0.
\end{equation}

We write $\cZ_k(t, x) := \cZ(x - y_k(t))$ for $k \in \{1, 2\}$.
Note that $\partial_t \cZ_k(t, x) = -y_k'(t)\partial_x \cZ_k(t, x)$.
\begin{lemma}
\label{lem:LcZ'}
For any $c > 0$ there exists $\rho_0 > 0$ such that if $\rho \geq \rho_0$, then
\begin{equation}
\|L(\partial_x \cZ) + Q\|_{L^2} \leq c.
\end{equation}
\end{lemma}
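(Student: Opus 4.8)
The plan is to exploit the spectral identity $L\wt Q = -Q$ from~\eqref{eq:L-ker}, which recasts the estimate as the assertion that $\cZ'$ is a good approximation of $\wt Q$ after applying $L$. Writing $G(x) := \int_0^x \wt Q(y)\ud y$, so that $\cZ = \chi(\cdot/\rho)\,G$, we have
\begin{equation*}
L(\cZ') + Q = L(\cZ') - L(\wt Q) = L(\cZ' - \wt Q),
\end{equation*}
and it therefore suffices to show that $\|L(\cZ' - \wt Q)\|_{L^2} \to 0$ as $\rho \to \infty$. Since $\wt Q \in L^1$ by Lemma~\ref{lem:Qv}, its primitive $G$ is a bounded function on $\bR$, converging to finite limits as $x \to \pm\infty$; this boundedness, together with the exponential decay of $\wt Q$ and of its first two derivatives (for $\partial_x^2\wt Q$ immediate from $\partial_x^2\wt Q = (1 - f'(Q))\wt Q + Q$, and for $\partial_x\wt Q$ by integrating this relation), is the only input required.

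Next I would expand $\cZ' - \wt Q$. Since $G' = \wt Q$, the product rule gives
\begin{equation*}
\cZ' = \tfrac 1\rho\, \chi'(\cdot/\rho)\,G + \chi(\cdot/\rho)\,\wt Q, \qquad\text{hence}\qquad \cZ' - \wt Q = \big(\chi(\cdot/\rho) - 1\big)\wt Q + \tfrac 1\rho\, \chi'(\cdot/\rho)\,G.
\end{equation*}
The first summand is supported in $\{|x| \geq \rho\}$ and the second in $\{\rho \leq |x| \leq 2\rho\}$. Applying $L = -\partial_x^2 - f'(Q) + 1$ and sorting the resulting terms, every contribution carrying a factor of $\wt Q$, $\partial_x\wt Q$, $\partial_x^2\wt Q$, or $f'(Q)$ is supported in $\{|x| \geq \rho\}$ and is pointwise $O(\rho\,\eee^{-\rho})$ there (using $|f'(Q)| \lesssim |Q| \lesssim \eee^{-|x|}$, valid since $f'(0) = 0$); such terms have exponentially small $L^2$ norm. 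This disposes of the entire first summand and of all but two pieces of the second.

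The only terms in which $G$ survives undifferentiated, and hence does not decay, arise from acting on $\tfrac 1\rho \chi'(\cdot/\rho)G$ by the $+1$ of $L$ and by the top-order part of $-\partial_x^2$, producing $\tfrac 1\rho \chi'(\cdot/\rho)G$ and $-\tfrac 1{\rho^3}\chi'''(\cdot/\rho)G$. This is the crux of the argument: decay cannot come from $G$, so I would instead combine the inverse powers of $\rho$ with the fact that both terms are supported on a set of measure $\lesssim \rho$; since $G$ is bounded this yields $L^2$ norms $\lesssim \rho^{-1/2}$ and $\lesssim \rho^{-5/2}$, both tending to $0$. Collecting the exponentially small terms together with these two gives $\|L(\cZ' - \wt Q)\|_{L^2} \to 0$, and choosing $\rho_0$ large enough that this quantity is $\leq c$ completes the proof. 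The main obstacle is precisely the non-decaying term $\tfrac 1\rho \chi'(\cdot/\rho)G$: it forces the use of both the cut-off scale $\rho$ in the denominator and the $\rho$-sized measure of the transition region, rather than any decay of the profile itself.
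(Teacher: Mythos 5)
Your proposal is correct and follows essentially the same route as the paper: both reduce the claim to $L(\cZ'-\wt Q)$ via $L\wt Q = -Q$, expand by the product rule, and bound the non-decaying terms containing the bounded primitive $\int_0^x\wt Q$ by combining the $O(\rho^{-1})$ prefactor with the $O(\rho)$ measure of $\supp\chi'(\cdot/\rho)$ to get $O(\rho^{-1/2})$ in $L^2$, while the remaining terms are small by the decay of $\wt Q$ and its derivatives. The only cosmetic difference is that you track which terms are exponentially small versus merely $O(\rho^{-1/2})$, whereas the paper bounds everything uniformly by $\sqrt{1/\rho}$.
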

\begin{proof}
We compute $L(\partial_x \cZ)$ applying the Product Rule:
\begin{align}
\partial_x\cZ(x) &= \frac{1}{\rho}\partial_x\chi\Big(\frac{x}{\rho}\Big)\int_0^x \wt Q(y)\ud y +
\chi\Big(\frac{x}{\rho}\Big)\wt Q(x), \label{eq:cZ'} \\
\partial_x^2\cZ(x) &= \frac{1}{\rho^2}\partial_x^2\chi\Big(\frac{x}{\rho}\Big)\int_0^x \wt Q(y)\ud y +
\frac{2}{\rho}\partial_x\chi\Big(\frac{x}{\rho}\Big)\wt Q(x)
+ \chi\Big(\frac{x}{\rho}\Big)\partial_x\wt Q(x), \label{eq:cZ''} \\
 &\begin{aligned}\mathllap{\partial_x^3\cZ(x)}&= \frac{1}{\rho^3}\partial_x^3\chi\Big(\frac{x}{\rho}\Big)\int_0^x \wt Q(y)\ud y +
\frac{3}{\rho^2}\partial_x^2\chi\Big(\frac{x}{\rho}\Big)\wt Q(x) \\
&+ \frac{3}{\rho}\partial_x\chi\Big(\frac{x}{\rho}\Big)\partial_x\wt Q(x)
+ \chi\Big(\frac{x}{\rho}\Big)\partial_x^2\wt Q(x). \end{aligned}\label{eq:cZ'''}
\end{align}
We claim that
\begin{align}
\label{eq:cZderiv-bound}
\|\partial_x\cZ - \wt Q\|_{L^2} + \|\partial_x^3\cZ - \partial_x^2\wt Q\|_{L^2} \lesssim \sqrt{1/\rho}.
\end{align}
In order to see this, note that the functions
$x \mapsto \int_0^x \wt Q(y)\ud y$, $\wt Q$, $\partial_x\wt Q$, $\partial_x^2\wt Q$, $\partial_x\chi$, $\partial_x^2\chi$ and $\partial_x^3\chi$ are bounded.
Moreover, $\partial_x\chi(\cdot / \rho)$, $\partial_x^2\chi(\cdot / \rho)$ and $\partial_x^3\chi(\cdot / \rho)$
are supported on an interval of length $\lesssim \rho$.
Therefore, in the formulas \eqref{eq:cZ'}, \eqref{eq:cZ''} and \eqref{eq:cZ'''},
all the terms containing derivatives of $\chi$ are functions with $L^\infty$ norms
$\lesssim 1/\rho$ and with supports of measure $\lesssim \rho$.
The~$L^2$~norm of such a function is $\lesssim \sqrt{1/\rho}$.

To finish the proof of \eqref{eq:cZderiv-bound}, it suffices to notice that
\begin{equation}
\big\|\big(1-\chi(\cdot/\rho)\big)\wt Q\big\|_{L^2} + \big\|\big(1-\chi(\cdot/\rho)\big)\partial_x^2\wt Q\big\|_{L^2} \lesssim \sqrt{1/\rho}.
\end{equation}
In fact, by Lemma~\ref{lem:Q-asym}, the right hand side could even be replaced
by an exponentially decaying function.

The function $f'(Q)$ is bounded, so \eqref{eq:cZderiv-bound} implies  $\|f(Q)(\partial_x\cZ - \wt Q)\|_{L^2} \lesssim \sqrt{1/\rho}$. Using \eqref{eq:L-ker}, we obtain
\begin{equation}
\begin{aligned}
\|L(\partial_x\cZ) + Q\|_{L^2} &= \|L(\partial_x\cZ) - L\wt Q\|_{L^2} \\
&= \|{-}(\partial_x^3\cZ - \partial_x^2\wt Q) - f'(Q)(\partial_x\cZ - \wt Q)
+ (\partial_x\cZ - \wt Q)\|_{L^2} \lesssim \sqrt{1/\rho}.
\end{aligned}
\end{equation}
~\vspace{-2em}	~

\end{proof}

For $k \in \{1, 2\}$ we define
\begin{equation}
\label{eq:qk-def}
q_k(t) := y_k(t) + \sigma_k \frac{\la \cZ_k(t), \varepsilon(t)\ra}{\la Q, \wt Q\ra}.
\end{equation}
By the Cauchy-Schwarz inequality, for fixed $\rho > 0$ we have $|q_k(t) - y_k(t)| \to 0$ as $\|\varepsilon(t)\|_{L^2} \to 0$.

Let $\psi \in C^\infty(\bR)$ be a decreasing function such that $\psi(x) = 1$ for $x \leq \frac 13$
and $\psi(x) = 0$ for $x \geq \frac 23$.
We set
\begin{align}
\phi_1(t, x) &:= \psi\Big(\frac{x-y_1(t)}{y_{2}(t)-y_1(t)}\Big), \label{eq:phi1-def} \\
\phi_2(t, x) &:= 1 - \phi_1(t, x). \label{eq:phi2-def}
\end{align}

Finally, for $k \in \{1, 2\}$ we define
\begin{equation}
\label{eq:pk}
p_k(t) := \la \sigma_k R_k(t), \varepsilon(t)\ra + \frac 12\int_\bR \phi_k(t) \varepsilon(t)^2\ud x.
\end{equation}
Again, by the Cauchy-Schwarz inequality, we have $p_k(t) \to 0$ as $\|\varepsilon(t)\|_{L^2} \to 0$.
Note that $p_k(t)$ is related to the momentum localized around each soliton.
As expected, $p_1(t)$ and $p_2(t)$ will play the role of the momentum
in the reduced finite-dimensional dynamical system.

Our first goal is to relate $q_k(t)$ and $p_k(t)$.
\begin{lemma}
\label{lem:mod}
For any $c > 0$ and $\rho > 0$ there exists $\delta > 0$ such that if
\begin{equation}
\label{eq:norm-small-mod}
\eee^{-(y_2(t) - y_1(t))} + \|\varepsilon(t)\|_{H^1}^2 \leq \delta,
\end{equation}
then
\begin{equation}
\label{eq:dtqk}
\big|q_k'(t) -\la Q, \wt Q\ra^{-1} p_k(t)\big| \leq c\sqrt{\eee^{-(y_2(t) - y_1(t))} + \|\varepsilon(t)\|_{H^1}^2}.
\end{equation}
%In particular,
%\begin{equation}
%\label{eq:dtqk-bound}
%\big|q_k'(t)\big| \lesssim \sqrt{\eee^{-(q_2(t) - q_1(t))} + \|\varepsilon(t)\|_{H^1}^2}.
%\end{equation}
\end{lemma}
\begin{remark}
Condition \eqref{eq:supercrit} implies that, up to the error term, $q_k'$ and $p_k$
have opposite signs.
\end{remark}
\begin{proof}[Proof of Lemma~\ref{lem:mod}]
We only check \eqref{eq:dtqk} for $k = 1$, the proof for $k = 2$ being almost the same.
From \eqref{eq:dteps}, we have
\begin{equation}
\label{eq:dt-Zk}
\begin{aligned}
\dd t\la \cZ_1(t), \varepsilon(t)\ra &=
-y_1'(t) \la \partial_x\cZ_1(t), \varepsilon(t)\ra + \la \cZ_1(t), \partial_t\varepsilon(t)\ra \\
&= -y_1' \la \partial_x\cZ_1, \varepsilon\ra 
+\big\la \cZ_1, y_1'\partial_x R_1 + \sigma y_2' \partial_x R_2 \\
&\qquad\qquad+\partial_x\big({-}\partial_x^2 \varepsilon - f(R_1 + \sigma R_2 + \varepsilon)
+ f(R_1) + \sigma f(R_2)+ \varepsilon\big)\big\ra.
\end{aligned}
\end{equation}
By \eqref{eq:dtyk}, we have $|y_1' \la \partial_x\cZ_1, \varepsilon\ra| \lesssim \| \varepsilon\|_{H^1}^2
+ \eee^{-(y_2(t) - y_1(t))}$,
which is negligible. Consider the second line in the formula \eqref{eq:dt-Zk} above. From \eqref{eq:cZderiv-bound}, it is clear that for $\delta$ small enough we have
\begin{equation}
\big|\la \cZ_1, \partial_x R_1\ra + \la Q, \wt Q\ra \big| = \big|\la Q, \wt Q\ra - \la R_1,  \partial_x \cZ_1\ra \big|
= \big|\la Q, \wt Q - \partial_x\cZ\ra\big|
\leq c,
\end{equation}
so \eqref{eq:dtyk} yields
\begin{equation}
\label{eq:q-bound-1}
\big|\la \cZ_1, y_1'\partial_x R_1\ra + \la Q, \wt Q\ra y_1' \big| \leq c\sqrt{\eee^{-(y_2 - y_1)} + \|\varepsilon\|_{H^1}^2}.
\end{equation}
Similarly, we have $|\la \cZ_1, \partial_x R_2\ra| \leq c$, which yields
\begin{equation}
\label{eq:q-bound-2}
\big|\la \cZ_1, y_2'\partial_x R_2\ra\big| \leq c\sqrt{\eee^{-(y_2 - y_1)} + \|\varepsilon\|_{H^1}^2}.
\end{equation}
Finally, we claim that
\begin{equation}
\label{eq:B1}
\begin{aligned}
\big|\big\la \cZ_1, \partial_x\big({-}\partial_x^2 \varepsilon - f(R_1 + \sigma R_2 + \varepsilon)
+ f(R_1) + \sigma f(R_2)+ \varepsilon\big)\big\ra - \la R_1, \varepsilon\ra \big| \\ \leq c\sqrt{\eee^{-(y_2 - y_1)} + \|\varepsilon\|_{H^1}^2}.
\end{aligned}
\end{equation}

Let $L_1 := -\partial_x^2 - f'(R_1) + 1$, which is obtained by conjugating $L$
with a translation of the variable $x$ by $y_1$.
Integrating by parts, we see that \eqref{eq:B1} is equivalent to
\begin{equation}
\label{eq:B1-1}
\begin{aligned}
\big|\la \partial_x \cZ_1, L_1 \varepsilon - f(R_1 + \sigma R_2 + \varepsilon)
+ f(R_1) + f'(R_1) \varepsilon + \sigma f(R_2)\ra + \la R_1, \varepsilon\ra\big| \\
\leq c\sqrt{\eee^{-(y_2 - y_1)} + \|\varepsilon\|_{H^1}^2}.
\end{aligned}
\end{equation}
By Lemma~\ref{lem:LcZ'} we have
\begin{equation}
\big|\la \partial_x \cZ_1, L_1 \varepsilon \ra + \la R_1, \varepsilon\ra\big|
= \big|\la L_1(\partial_x \cZ_1) + R_1, \varepsilon\ra\big|
\leq \|L_1(\partial_x \cZ_1) + R_1\|_{L^2}\| \varepsilon\|_{L^2} \leq c\| \varepsilon\|_{L^2}.
\end{equation}
In order to finish the proof of \eqref{eq:B1}, we need to check that
\begin{equation}
\label{eq:B1-2}
\big|\la \partial_x \cZ_1, f(R_1 + \sigma R_2 + \varepsilon) - f(R_1)
 - f'(R_1) \varepsilon - \sigma f(R_2)\ra\big| \leq c\sqrt{\eee^{-(y_2 - y_1)} + \|\varepsilon\|_{H^1}^2}.
\end{equation}
We restrict to $x \in [y_1 - 2\rho, y_1 + 2\rho]$, because $\partial_x \cZ_1(x) = 0$
for $x \notin [y_1 - 2\rho, y_1 + 2\rho]$. Thus $R_2 \lesssim \eee^{2\rho}\eee^{-(y_2 - y_1)}$
is small when $\delta$ is small.
By the triangle inequality, \eqref{eq:f-Lip} and \eqref{eq:f-1stTay}, we have
\begin{equation}
\begin{aligned}
&|f(R_1 + \sigma R_2 + \varepsilon) - f(R_1)
 - f'(R_1) \varepsilon - \sigma f(R_2)| \\ &\leq |f(R_1 + \sigma R_2 + \varepsilon) - f(R_1+ \varepsilon)| + |f(R_1+ \varepsilon) - f(R_1) - f'(R_1) \varepsilon| + |f(R_2)|
 \lesssim R_2 + \varepsilon^2.
 \end{aligned}
\end{equation}
Now \eqref{eq:B1-2} follows from the boundedness of $\partial_x \cZ_1$, since
\begin{align}
&\int_{y_1 - 2\rho}^{y_1 + 2\rho} R_2\ud x \lesssim \rho\,\eee^{2\rho}\eee^{-(y_2 - y_1)} \ll
\eee^{-\frac 12(y_2 - y_1)}, \\
&\int_{y_1 - 2\rho}^{y_1 + 2\rho} \varepsilon^2\ud x \lesssim \| \varepsilon\|_{L^2}^2 \ll \| \varepsilon\|_{H^1}.
\end{align}
This finishes the proof of \eqref{eq:B1}.

Putting together \eqref{eq:dt-Zk}, \eqref{eq:q-bound-1}, \eqref{eq:q-bound-2} and \eqref{eq:B1}, we have
\begin{equation}
\Big|\dd t \la \cZ_1, \varepsilon\ra + \la Q, \wt Q\ra y_1' - \la R_1, \varepsilon\ra \Big| \leq
c\sqrt{\eee^{-(y_2 - y_1)} + \|\varepsilon\|_{H^1}^2},
\end{equation}
which, by the definition of $q_1$, yields
\begin{equation}
|q_1' - \la Q, \wt Q\ra^{-1}\la R_1, \varepsilon\ra| \leq c\sqrt{\eee^{-(y_2 - y_1)} + \|\varepsilon\|_{H^1}^2}.
\end{equation}
The definition of $p_k$, see \eqref{eq:pk}, implies
\begin{equation}
|p_1 - \la R_1, \varepsilon\ra | \lesssim \| \varepsilon\|_{L^2}^2 \leq c\sqrt{\eee^{-(y_2 - y_1)} + \|\varepsilon\|_{H^1}^2},
\end{equation}
so the triangle inequality yields \eqref{eq:dtqk}.
\end{proof}

\subsection{Computation of the interaction force}
\label{ssec:inter-force}
Our second goal is to compute $p_k'(t)$ (at least the main term).
We call the second term in the definition of $p_k(t)$ the \emph{correction term}.
In order to treat the derivative of the correction term,
we will need the following easy fact.
\begin{lemma}
\label{lem:virial}
For any $M > 0$ there exists a constant $C > 0$ such that for any functions $\phi$, $R$
and $ \varepsilon$ such that $\|R\|_{H^1} \leq M$, $\|\phi\|_{L^\infty} + \|\partial_x \phi\|_{L^\infty} < +\infty$
and $\| \varepsilon\|_{H^1} \leq 1$ the following inequality is true:
\begin{equation}
\label{eq:virial}
\begin{aligned}
\bigg|&\int_\bR \phi\,\partial_x \varepsilon\big(f(R + \varepsilon) - f(R)\big)\ud x \\
 &+ \int_\bR \phi\,\partial_x R\big(f(R+ \varepsilon) - f(R) - f'(R) \varepsilon\big)\ud x \bigg|
 \leq C\|\partial_x \phi\|_{L^\infty}\| \varepsilon\|_{H^1}^2.
\end{aligned}
\end{equation}
\end{lemma}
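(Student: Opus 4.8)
The plan is to recognize that the two a priori unrelated integrands in \eqref{eq:virial} combine into a single perfect $x$-derivative, after which one integration by parts produces the gain of $\phi'$. The key observation is to introduce the function of two real variables
\[
P(a, b) := F(a + b) - F(a) - f(a)\,b,
\]
where $F' = f$. A direct computation gives $\partial_b P(a,b) = f(a+b) - f(a)$ and $\partial_a P(a,b) = f(a+b) - f(a) - f'(a)\,b$. Hence, setting $a = R(x)$ and $b = \varepsilon(x)$ and using the chain rule,
\[
\partial_x\big[P(R, \varepsilon)\big] = \big(f(R+\varepsilon) - f(R)\big)\partial_x \varepsilon + \big(f(R+\varepsilon) - f(R) - f'(R)\varepsilon\big)\partial_x R,
\]
which is precisely the sum of the two integrands appearing in \eqref{eq:virial}, without the weight $\phi$.

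Consequently the expression inside the absolute value equals $\int_\bR \phi\,\partial_x\big[P(R, \varepsilon)\big]\ud x$, and I would integrate by parts. Since $R, \varepsilon \in H^1$ tend to $0$ at $\pm\infty$ and $\phi$ is bounded, while $|P(R, \varepsilon)|$ is integrable (this follows from the pointwise bound below), the boundary terms vanish and
\[
\int_\bR \phi\,\partial_x\big[P(R, \varepsilon)\big]\ud x = -\int_\bR \phi'\,P(R, \varepsilon)\ud x,
\]
so that the left-hand side of \eqref{eq:virial} is bounded by $\|\phi'\|_{L^\infty}\int_\bR |P(R, \varepsilon)|\ud x$.

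It then remains to control $\int_\bR |P(R, \varepsilon)|\ud x$. By the Sobolev embedding $H^1(\bR) \hookrightarrow L^\infty(\bR)$, the hypotheses $\|R\|_{H^1} \leq M$ and $\|\varepsilon\|_{H^1} \leq 1$ give a uniform pointwise bound $|R(x)| + |\varepsilon(x)| \lesssim M + 1$, so the first-order Taylor estimate \eqref{eq:F-1stTay} from Lemma~\ref{lem:Taylor} applies with the corresponding constant and yields $|P(R, \varepsilon)| = |F(R+\varepsilon) - F(R) - f(R)\varepsilon| \lesssim \varepsilon^2$ pointwise. Integrating, $\int_\bR |P(R, \varepsilon)|\ud x \lesssim \|\varepsilon\|_{L^2}^2 \leq \|\varepsilon\|_{H^1}^2$, and combining the three displays gives \eqref{eq:virial} with a constant $C$ depending only on $M$. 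The only genuinely delicate point is the very first one: spotting that $P$ is the correct antiderivative so that the two integrands collapse into a single total derivative. Once that structural identity is in hand, the rest is routine, the gain of $\phi'$ being automatic from the integration by parts and the quadratic control of $P$ coming directly from Lemma~\ref{lem:Taylor}.
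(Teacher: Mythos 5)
Your proof is correct and follows essentially the same route as the paper: both identify the sum of the two integrands as the exact derivative $\partial_x\big(F(R+\varepsilon)-F(R)-f(R)\varepsilon\big)$, integrate by parts to transfer the derivative onto $\phi$, and conclude with the Taylor bound \eqref{eq:F-1stTay}. The only cosmetic difference is that you justify the vanishing of boundary terms directly, whereas the paper first reduces to $R, \varepsilon \in C_0^\infty$ by approximation.
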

\begin{proof}
The assumption $\|\phi\|_{L^\infty} < +\infty$ is only used to ensure that both integrals
on the left hand side are well defined.
By the standard approximation procedure,
we can assume that $ \varepsilon, R \in C_0^\infty(\bR)$.
%Indeed, there exist sequences $ \varepsilon_n \in C_0^\infty(\bR)$
%and $ R_n \in C_0^\infty(\bR)$ such that
%$ \varepsilon_n \to \varepsilon$ in $H^1$, $\| \varepsilon_n \|_{H^1}
%\leq \| \varepsilon\|_{H^1}$, $R_n \to R$ in $H^1$ and $\| R_n \|_{H^1} \leq
%\|R \|_{H^1}$. Using the assumption $\|\phi\|_{L^\infty}$, one easily obtains
%\begin{gather}
%\int \phi\partial_x \varepsilon\big(f(R_n + \varepsilon_n) - f(R_n)\big)\ud x
%\to \int \phi\partial_x \varepsilon\big(f(R + \varepsilon) - f(R)\big)\ud x, \\
%\int \phi\partial_x R_n\big(f(R_n+ \varepsilon_n) - f(R_n) - f'(R_n) \varepsilon_n\big)\ud x
%\to \int \phi\partial_x R\big(f(R+ \varepsilon) - f(R) - f'(R) \varepsilon\big)\ud x.
%\end{gather}

Rearranging the terms, we obtain
\begin{equation}
\begin{aligned}
&\int_\bR \phi\Big(\partial_x \varepsilon\big(f(R+ \varepsilon) - f(R)\big)
+ \partial_x R\big(f(R+ \varepsilon) - f(R) - f'(R) \varepsilon\big)\Big)\ud x \\
= &\int_\bR \phi\Big(\partial_x (R + \varepsilon)f(R + \varepsilon) - \partial_x R f(R)
 - \big(\partial_x \varepsilon f(R) + \varepsilon\, \partial_x R f'(R)\big)\Big)\ud x \\
 =& \int_\bR \phi\, \partial_x\big(F(R+ \varepsilon) - F(R) - f(R) \varepsilon\big)\ud x.
\end{aligned}
\end{equation}
Integrating by parts and invoking \eqref{eq:F-1stTay} finishes the proof.
\end{proof}

In the next lemma, we compute what will turn out to be the main interaction terms.
\begin{lemma}
\label{lem:inter-asym}
For any $c > 0$ there exists $\delta > 0$ such that if $\eee^{-(y_2 - y_1)} \leq \delta$, then
\begin{align}
\big|\la \partial_x R_1, f(R_1 + \sigma R_2) - f(R_1) - \sigma f(R_2)\ra +\sigma 2k_0^2 e^{-(y_2 - y_1)}\big| \leq c\,e^{-(y_2 - y_1)}, \label{eq:inter-asym-1} \\
\big|\la \partial_x R_2, f(R_1 + \sigma R_2) - f(R_1) - \sigma f(R_2)\ra - 2k_0^2e^{-(y_2 - y_1)}\big| \leq c\,e^{-(y_2 - y_1)}. \label{eq:inter-asym-2}
\end{align}
\end{lemma}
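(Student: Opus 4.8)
Set $U := R_1 + \sigma R_2$ and $G := f(R_1+\sigma R_2) - f(R_1) - \sigma f(R_2)$, so the two quantities to estimate are $\la \partial_x R_1, G\ra$ and $\la \partial_x R_2, G\ra$. The plan is to exploit the variational meaning of these pairings as a guide. Since $R_1,R_2$ solve \eqref{eq:Qv} with $v=1$, we have $\vD H(U) = -\partial_x^2 U - f(U) + U = -G$ (consistent with $\vD^2H$ in Lemma~\ref{lem:D2H}), and as $\partial_{q_1}U = -\partial_x R_1$, $\partial_{q_2}U = -\sigma\partial_x R_2$ this gives the exact identities
\[
\la \partial_x R_1, G\ra = \partial_{q_1}H(U), \qquad \sigma\la \partial_x R_2, G\ra = \partial_{q_2}H(U).
\]
Differentiating the expansion in Lemma~\ref{lem:H-deux-solit} with respect to $q_1$ and $q_2$ already predicts the main terms; since the error there is only controlled as $o(1)$, I would use this solely as a heuristic and prove the estimates directly, following the computation in that lemma.

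For \eqref{eq:inter-asym-1}, the pairing is against $\partial_x R_1$, which is concentrated near $q_1$, where $R_2 = O(\eee^{-(q_2-q_1)})$ is small. By \eqref{eq:f-1stTay} and $|f(R_2)|\lesssim R_2^2$,
\[
G = \sigma f'(R_1)R_2 + O(R_2^2),
\]
and since $f'(R_1)\partial_x R_1 = \partial_x(f(R_1))$ the leading contribution is $\sigma\la \partial_x(f(R_1)), R_2\ra$. By Remark~\ref{rem:unif-bound} I may replace $R_2(x)$ by its tail $k_0\eee^{x-q_2}$ up to an $O(\eee^{2(x-q_2)})$ error; integrating by parts and setting $y = x-q_1$ reduces the leading term to $-\sigma k_0\eee^{q_1-q_2}\Lambda$, where
\[
\Lambda := \int_\bR f(Q(y))\,\eee^{y}\ud y.
\]
Writing $f(Q) = Q - \partial_x^2 Q$ from \eqref{eq:Qv} and integrating by parts twice, the bulk term vanishes (because $-\partial_x^2\eee^y + \eee^y = 0$) and only the boundary survives: $\Lambda = \big[(Q-\partial_x Q)\eee^{y}\big]_{-\infty}^{+\infty} = 2k_0$ by Lemma~\ref{lem:Q-asym}. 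Hence the main term is $-2\sigma k_0^2\eee^{-(q_2-q_1)}$, which is \eqref{eq:inter-asym-1}.

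The estimate \eqref{eq:inter-asym-2} is obtained identically, the pairing against $\partial_x R_2$ now concentrating the integral near $q_2$, where $R_1$ is small. Expanding $G$ about $\sigma R_2$ and using that $f$ is odd, so $f(\sigma R_2) = \sigma f(R_2)$ and $f'(\sigma R_2) = f'(R_2)$, gives $G = f'(R_2)R_1 + O(R_1^2)$, whose leading pairing is $\la \partial_x(f(R_2)), R_1\ra$ with $R_1$ replaced by its tail $k_0\eee^{q_1-x}$. The same reduction now yields $\int_\bR f(Q(y))\eee^{-y}\ud y$, which again equals $\Lambda = 2k_0$ since $f(Q)$ is even, so the main term is $2k_0^2\eee^{-(q_2-q_1)}$. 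This is consistent with the identity $\sigma\la \partial_x R_2, G\ra = \partial_{q_2}H(U)$, whose right-hand side equals $\sigma 2k_0^2\eee^{-(q_2-q_1)}$ to leading order by Lemma~\ref{lem:H-deux-solit}.

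The crux, and the only non-routine part, is the error analysis, which I would organize exactly as in Lemma~\ref{lem:H-deux-solit} by splitting $\bR$ at $\tfrac{3q_1+q_2}{4}$, $\tfrac{q_1+q_2}{2}$ and $\tfrac{q_1+3q_2}{4}$. One must verify that the Taylor remainders $\int|\partial_x R_1|R_2^2$ and $\int|\partial_x R_2|R_1^2$, the error from replacing $R_2$ and $R_1$ by their pure exponential tails (controlled by $|Q(x)-k_0\eee^{-|x|}|\lesssim \eee^{-2|x|}$ from Remark~\ref{rem:unif-bound}), and the boundary terms generated at the splitting points by the integrations by parts are each $\lesssim \eee^{-\frac32(q_2-q_1)} \ll \eee^{-(q_2-q_1)}$. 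Taking $q_2-q_1$ large then makes the total error $\le c\,\eee^{-(q_2-q_1)}$, as required.
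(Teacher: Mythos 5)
Your computation of the main terms is correct and follows a genuinely different route from the paper's. The paper keeps the leading term $\sigma\int_{-\infty}^m R_2\,f'(R_1)\,\partial_x R_1\ud x$ on a half-line only, integrates by parts and reads off boundary contributions at the midpoint $m$ from the asymptotics of $Q$; you instead replace the far soliton by its exponential tail on the whole line and reduce the constant to the clean identity $\int_\bR f(Q)\eee^{y}\ud y=\big[(Q-\partial_x Q)\eee^{y}\big]_{-\infty}^{+\infty}=2k_0$, which avoids the midpoint boundary terms altogether. Note that your answer for \eqref{eq:inter-asym-2} is $2k_0^2\eee^{-(q_2-q_1)}$ \emph{without} the factor $\sigma$, while the printed statement has $\sigma 2k_0^2$. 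Your value is in fact the correct one: it is the one consistent with $\sigma\la\partial_x R_2,G\ra=\partial_{q_2}H(U)=\sigma 2k_0^2\eee^{-(q_2-q_1)}+o(\eee^{-(q_2-q_1)})$ from Lemma~\ref{lem:H-deux-solit}, with \eqref{eq:dtp2}, and with the explicit check $f(u)=u^3$, $\sigma=-1$, where $G=3R_1R_2^2-3R_1^2R_2$ pairs positively with $\partial_x R_2$. You should have flagged this sign discrepancy with the statement explicitly rather than passing over it.

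The genuine gap is in the error analysis, exactly the part you identify as the crux. The quantity $\int_\bR|\partial_x R_1|R_2^2\ud x$ is \emph{not} $\lesssim\eee^{-\frac32(q_2-q_1)}$: on a unit neighbourhood of $q_2$ one has $R_2=Q(\cdot-q_2)=O(1)$ while $|\partial_x R_1|\simeq k_0\eee^{-(q_2-q_1)}$, so that region alone contributes a term of order $\eee^{-(q_2-q_1)}$ with a constant that is not small (more generally, $\int_{q_1}^{q_2}\eee^{-(x-q_1)}\eee^{-2(q_2-x)}\ud x\simeq\eee^{-(q_2-q_1)}$). Hence the pointwise Taylor bound $|G-\sigma f'(R_1)R_2|\lesssim R_2^2$, though valid, is too lossy near the second soliton and your proposed verification cannot close. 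The repair is the one the paper carries out: the expansion of $G$ around $R_1$ may only be used on $(-\infty,m]$, where $R_2^2\lesssim\eee^{-(q_2-q_1)}$ and the product with $|\partial_x R_1|$ integrates to $o(\eee^{-(q_2-q_1)})$ after the further split at $m_1$; on $[m,\infty)$ one must instead invoke the cross-term bound \eqref{eq:f-cross}, $|G|\lesssim R_1R_2$, together with $|\partial_x R_1|R_1\lesssim\eee^{-(q_2-q_1)}$ there, to show that the entire pairing restricted to $[m,\infty)$ is negligible. The symmetric remainder $\int|\partial_x R_2|R_1^2\ud x$ in your treatment of \eqref{eq:inter-asym-2} suffers from the same defect near $q_1$ and needs the same fix.
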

\begin{proof}
We only prove \eqref{eq:inter-asym-1}. Substituting $-x + y_1 + y_2$ for $x$ swaps $R_1$ and $R_2$, so we obtain the identity
\begin{equation}
\la \partial_x R_2, f(R_1 + \sigma R_2) - f(R_1) - \sigma f(R_2)\ra = -\sigma \la \partial_x R_1, f(R_1 + \sigma R_2) - f(R_1) - \sigma f(R_2)\ra,
\end{equation}
and \eqref{eq:inter-asym-2} follows from \eqref{eq:inter-asym-1}.

Let $m := \frac{y_1 + y_2}{2}$.
We claim that
\begin{equation}
\label{eq:inter-asym-3}
\Big| \la \partial_x R_1, f(R_1 + \sigma R_2) - f(R_1) - \sigma f(R_2)\ra - \sigma\int_{-\infty}^m
R_2f'(R_1)\partial_x R_1\ud x\Big| \ll \eee^{-(y_2 - y_1)}.
\end{equation}
Let $m_1 := \frac{y_1 + m}{2} = \frac{3y_1 + y_2}{4}$ and $m_2 := \frac{m+y_2}{2} = \frac{y_1 + 3y_2}{4}$. Bound \eqref{eq:inter-asym-3} will follow from the triangle inequality once we check that
\begin{align}
\label{eq:inter-asym-31}
\int_{-\infty}^{m_1} |\partial_x R_1||f(R_1 + \sigma R_2) - f(R_1) - \sigma f(R_2) - \sigma f'(R_1)R_2|\ud x \ll \eee^{-(y_2 - y_1)}, \\
\label{eq:inter-asym-32}
\int_{m_1}^{m} |\partial_x R_1||f(R_1 + \sigma R_2) - f(R_1) - \sigma f(R_2) - \sigma f'(R_1)R_2|\ud x \ll \eee^{-(y_2 - y_1)}, \\
\label{eq:inter-asym-33}
\int_{m}^{m_2} |\partial_x R_1||f(R_1 + \sigma R_2) - f(R_1) - \sigma f(R_2)|\ud x \ll \eee^{-(y_2 - y_1)}, \\
\label{eq:inter-asym-34}
\int_{m_2}^{+\infty} |\partial_x R_1||f(R_1 + \sigma R_2) - f(R_1) - \sigma f(R_2)|\ud x \ll \eee^{-(y_2 - y_1)}.
\end{align}

For $x \leq m_1$ we have
\begin{equation}
\begin{aligned}
|f(R_1 + \sigma R_2) - f(R_1) - \sigma f(R_2) - \sigma f'(R_1)R_2| &\lesssim |R_2|^2 \lesssim \big(\eee^{-(y_2 - m_1)}\big)^2 \\ &\lesssim \eee^{-\frac 32(y_2 - y_1)} \ll \eee^{-(y_2 - y_1)},
\end{aligned}
\end{equation}
and \eqref{eq:inter-asym-31} follows since $\|\partial_x R_1\|_{L^1} = \|\partial_xQ\|_{L^1} < +\infty$.

For $m_1 \leq x \leq m$ we have, similarly,
\begin{equation}
|f(R_1 + \sigma R_2) - f(R_1) - \sigma f(R_2) - \sigma f'(R_1)R_2| \lesssim |R_2|^2 \lesssim \big(\eee^{-(y_2 - m)}\big)^2 \lesssim \eee^{-(y_2 - y_1)},
\end{equation}
and, by Lemma~\ref{lem:Q-asym}, $|\partial_x R_1| \lesssim \eee^{-(m_1 - y_1)} \lesssim \eee^{-\frac 14(y_2 - y_1)}$. Thus
\begin{equation}
\begin{aligned}
\int_{m_1}^{m} |\partial_x R_1||f(R_1 + \sigma R_2) - f(R_1) - \sigma f(R_2) - \sigma f'(R_1)R_2|\ud x
\lesssim (m - m_1)\eee^{-\frac 54(y_2 - y_1)} \\
 \lesssim (y_2-y_1)\eee^{-\frac 54(y_2 - y_1)} \ll \eee^{-(y_2 - y_1)}.
\end{aligned}
\end{equation}

For $m \leq x \leq m_2$, using \eqref{eq:f-cross} we have
\begin{equation}
\label{eq:inter-asym-35}
|f(R_1 + \sigma R_2) - f(R_1)- \sigma f(R_2)| \lesssim R_1 R_2.
\end{equation}
Lemma~\ref{lem:Q-asym} yields $|\partial_x R_1| +R_1 \lesssim \eee^{-(m-y_1)} \lesssim \eee^{-\frac 12(y_2 - y_1)}$ and $R_2 \lesssim \eee^{-(y_2 - m_2)} \lesssim \eee^{-\frac 14(y_2 - y_1)}$,
thus $|\partial_x R_1|R_1 R_2 \lesssim \eee^{-\frac 54 (y_2 - y_1)}$.
Integrating between $m$ and $m_2$ yields \eqref{eq:inter-asym-33}.

Finally, for $x \geq m_2$, we use again \eqref{eq:inter-asym-35}.
From Lemma~\ref{lem:Q-asym} we have $|\partial_x R_1| +R_1 \lesssim \eee^{-(m_2-y_1)} \lesssim \eee^{-\frac 34(y_2 - y_1)}$, so $|\partial_x R_1|R_1 \lesssim \eee^{-\frac 32(y_2 - y_1)}$.
Since $\|R_2\|_{L^1} = \|Q\|_{L^1} < +\infty$, we obtain
\begin{equation}
\int_{m_2}^{+\infty} |\partial_x R_1|R_1R_2 \ud x \lesssim \eee^{-\frac 32 (y_2 - y_1)} \ll \eee^{-(y_2 - y_1)},
\end{equation}
and \eqref{eq:inter-asym-34} follows.

This finishes the proof of \eqref{eq:inter-asym-3}. It remains to compute
\begin{equation}
\label{eq:inter-asym-4}
\int_{-\infty}^m R_2f'(R_1)\partial_x R_1\ud x = \int_{-\infty}^m \partial_x\big(f(R_1)\big)R_2\ud x
= \int_{-\infty}^m \partial_x\big(R_1 - \partial_x^2 R_1\big)R_2\ud x,
\end{equation}
where in the last step we have used \eqref{eq:Qv}. Integrating by parts, we get
\begin{equation}
\begin{aligned}
\int_{-\infty}^m \partial_x\big(\partial_x^2 R_1\big)R_2\ud x
&= \partial_x^2 R_1(m)R_2(m)-\int_{-\infty}^m \partial_x^2R_1\partial_x R_2\ud x \\
&= \partial_x^2 R_1(m)R_2(m) - \partial_x R_1(m)\partial_x R_2(m) + \int_{-\infty}^m \partial_xR_1\partial_x^2 R_2\ud x.
\end{aligned}
\end{equation}
Thus
\begin{equation}
\int_{-\infty}^m \partial_x\big(R_1 - \partial_x^2 R_1\big)R_2\ud x
= \partial_x R_1(m)\partial_x R_2(m) - \partial_x^2 R_1(m)R_2(m) +
\int_{-\infty}^m f(R_2)\partial_x R_1\ud x,
\end{equation}
where in the last step we use $-\partial_x^2 R_2 + R_2 = f(R_2)$.
Dividing into $x \leq m_1$ and $x \geq m_1$, and using $|f(u)| \lesssim u^2$,
we see that $\Big|\int_{-\infty}^m f(R_2)\partial_x R_1\ud x\Big| \ll \eee^{-(y_2 - y_1)}$, hence the last term is negligible.
From Lemma~\ref{lem:Q-asym}, we have
\begin{align}
\partial_x R_1(m) &= {-}(k_0 + o(1))\eee^{-(m - y_1)} = {-}(k_0 + o(1))\eee^{-\frac{y_2 - y_1}{2}}, \\
\partial_x^2 R_1(m) &= (k_0 + o(1))\eee^{-(m - y_1)} = (k_0 + o(1))\eee^{-\frac{y_2 - y_1}{2}}, \\
R_2(m) &= (k_0 + o(1))\eee^{-(y_2 - m)} = (k_0 + o(1))\eee^{-\frac{y_2 - y_1}{2}}, \\
\partial_x R_2(m) &= (k_0 + o(1))\eee^{-(y_2 - m)} = (k_0 + o(1))\eee^{-\frac{y_2 - y_1}{2}}.
\end{align}
This yields
\begin{equation}
\partial_x R_1(m)\partial_x R_2(m) - \partial_x^2 R_1(m)R_2(m) = {-}(2k_0^2 + o(1))\eee^{-(y_2 - y_1)},
\end{equation}
which finishes the proof of \eqref{eq:inter-asym-1}.
\end{proof}

We are ready to compute $p_1'(t)$ and $p_2'(t)$, where $p_1(t)$ and $p_2(t)$
are defined by \eqref{eq:pk}.
\begin{lemma}
\label{lem:dtpk}
For any $c > 0$ there exists $\delta > 0$ such that if
\begin{equation}
\label{eq:norm-small-dtpk}
\eee^{-(y_2(t) - y_1(t))} + \|\varepsilon(t)\|_{H^1}^2 \leq \delta,
\end{equation}
then
\begin{align}
\big|p_1'(t) + \sigma 2k_0^2\eee^{-(y_2(t) - y_1(t))}\big| &\leq c\big(\eee^{-(y_2(t) - y_1(t))} + \|\varepsilon(t)\|_{H^1}^2\big), \label{eq:dtp1} \\
\big|p_2'(t) - \sigma 2k_0^2\eee^{-(y_2(t) - y_1(t))}\big| &\leq c\big(\eee^{-(y_2(t) - y_1(t))} + \|\varepsilon(t)\|_{H^1}^2\big), \label{eq:dtp2}
\end{align}
where $k_0$ is the constant from Lemma~\ref{lem:Q-asym}.
\end{lemma}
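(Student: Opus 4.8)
The plan is to differentiate $p_k$ from \eqref{eq:pk} along the flow, substitute \eqref{eq:dteps}, and check that after the linear and quadratic contributions have been accounted for, the only surviving term of size $\eee^{-(q_2-q_1)}$ is the interaction force evaluated in Lemma~\ref{lem:inter-asym}. Write $U:=R_1+\sigma R_2$ and let $G$ denote the quantity in the large parentheses of \eqref{eq:dteps}, so that $\partial_t\varepsilon=q_1'\partial_x R_1+\sigma q_2'\partial_x R_2+\partial_x G$. Using $\partial_t R_k=-q_k'\partial_x R_k$ one obtains
\begin{equation*}
p_k'=-\sigma_k q_k'\la\partial_x R_k,\varepsilon\ra+\sigma_k\la R_k,\,q_1'\partial_x R_1+\sigma q_2'\partial_x R_2+\partial_x G\ra+\tfrac12\int_\bR(\partial_t\phi_k)\varepsilon^2\ud x+\int_\bR\phi_k\varepsilon\,\partial_t\varepsilon\ud x.
\end{equation*}
Since $|\partial_t\phi_k|\lesssim(|q_1'|+|q_2'|)/(q_2-q_1)$, the third term is $\ll\|\varepsilon\|_{H^1}^2$ by \eqref{eq:dtqk-bound} and will be discarded.

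For the linear mechanism, integrate by parts in $\sigma_k\la R_k,\partial_x G\ra=-\sigma_k\la\partial_x R_k,G\ra$ and decompose $G=L_k\varepsilon-\big(f(U+\varepsilon)-f(R_1)-\sigma f(R_2)-f'(R_k)\varepsilon\big)$. By \eqref{eq:Lk-ker}, $-\sigma_k\la\partial_x R_k,L_k\varepsilon\ra=-\sigma_k\la L_k\partial_x R_k,\varepsilon\ra=0$, so the linear part vanishes exactly. Splitting the remaining bracket as $\big(f(U+\varepsilon)-f(U)-f'(U)\varepsilon\big)+\big(f(U)-f(R_1)-\sigma f(R_2)\big)+\big(f'(U)-f'(R_k)\big)\varepsilon$, the last piece is $\ll\eee^{-(q_2-q_1)}+\|\varepsilon\|_{H^1}^2$ because the companion soliton $R_{3-k}$ is exponentially small where $\partial_x R_k$ lives, the middle piece is the interaction force $\sigma_k\la\partial_x R_k,f(U)-f(R_1)-\sigma f(R_2)\ra$, and the first piece is a quadratic term of size $\|\varepsilon\|_{H^1}^2$.

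The whole point of the correction term $\tfrac12\int\phi_k\varepsilon^2$ is to cancel this quadratic contribution, which has an $O(1)$ constant and is therefore not negligible against the force. Integrating by parts, $\int\phi_k\varepsilon\,\partial_x G=-\int\phi_k\partial_x\varepsilon\,G-\int(\partial_x\phi_k)\varepsilon\,G$; the second integral, together with the contributions of $-\partial_x^2\varepsilon+\varepsilon$ to the first, consists of total derivatives landing on $\partial_x\phi_k$ and is $\ll\|\varepsilon\|_{H^1}^2$ since $\|\partial_x\phi_k\|_{L^\infty}\lesssim1/(q_2-q_1)$. The surviving piece $\int\phi_k\partial_x\varepsilon\big(f(U+\varepsilon)-f(R_1)-\sigma f(R_2)\big)$ splits off $\int\phi_k\partial_x\varepsilon\,(f(U)-f(R_1)-\sigma f(R_2))$, which is $\ll\eee^{-(q_2-q_1)}+\|\varepsilon\|_{H^1}^2$ by \eqref{eq:small-tension}; to the rest we apply Lemma~\ref{lem:virial} with $R=U$, obtaining $-\int\phi_k\partial_x U\big(f(U+\varepsilon)-f(U)-f'(U)\varepsilon\big)\ud x$ up to $O(\|\partial_x\phi_k\|_{L^\infty}\|\varepsilon\|_{H^1}^2)$. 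Since $\phi_k\equiv1$ and $\partial_x U\simeq\sigma_k\partial_x R_k$ where $\partial_x R_k$ lives, this equals $-\sigma_k\la\partial_x R_k,f(U+\varepsilon)-f(U)-f'(U)\varepsilon\ra$ up to a small error, exactly cancelling the quadratic term from the previous paragraph.

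It remains to treat the terms carrying $q_k'$. Taken alone, $-\sigma_k q_k'\la\partial_x R_k,\varepsilon\ra$ is of the size of the error but with an $O(1)$ constant; the gain comes from pairing it with the $q'$-part of $\int\phi_k\varepsilon\,\partial_t\varepsilon$ and using $\phi_1+\phi_2=1$. For $k=1$ this combination is $q_1'\la(\phi_1-1)\partial_x R_1,\varepsilon\ra+\sigma q_2'\int\phi_1\varepsilon\,\partial_x R_2\ud x$, and both $(\phi_1-1)\partial_x R_1$ and $\phi_1\partial_x R_2$ are exponentially small in $L^2$ (each is cut off away from the support of the relevant derivative), so the combination is $\ll\eee^{-(q_2-q_1)}+\|\varepsilon\|_{H^1}^2$; the case $k=2$ is symmetric, while the cross term $\sigma_k\sigma q_{3-k}'\la R_k,\partial_x R_{3-k}\ra$ is negligible because $\la R_k,\partial_x R_{3-k}\ra\lesssim(q_2-q_1)\eee^{-(q_2-q_1)}$ and $|q_{3-k}'|$ is small. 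Collecting everything, the lone term of order $\eee^{-(q_2-q_1)}$ is the force $\sigma_k\la\partial_x R_k,f(R_1+\sigma R_2)-f(R_1)-\sigma f(R_2)\ra$, which by Lemma~\ref{lem:inter-asym} yields $-\sigma2k_0^2\eee^{-(q_2-q_1)}$ for $k=1$ and $\sigma2k_0^2\eee^{-(q_2-q_1)}$ for $k=2$, i.e.\ \eqref{eq:dtp1}--\eqref{eq:dtp2}. \textbf{The main obstacle} is the quadratic cancellation of the third paragraph: it forces the precise form of the correction term in \eqref{eq:pk} and succeeds only because $\phi_k$ varies on the large scale $q_2-q_1$, which makes the virial error $O(\|\partial_x\phi_k\|_{L^\infty}\|\varepsilon\|_{H^1}^2)$ genuinely small; the matching bookkeeping for the $q'$-terms via $\phi_1+\phi_2=1$ is a close second.
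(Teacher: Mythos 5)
Your argument is, in substance, the paper's own proof: the same splitting of $p_k'$ into the linear part (killed by $L_k\partial_x R_k=0$ after integrating $\partial_x$ off $G$), the same use of Lemma~\ref{lem:virial} to turn the derivative of the correction term into the exact cancellation of the quadratic contribution $\sigma_k\la\partial_x R_k, f(U+\varepsilon)-f(U)-f'(U)\varepsilon\ra$, the same pairing of the $q_k'$-terms through $\phi_1+\phi_2=1$, and the same identification of the surviving force via Lemma~\ref{lem:inter-asym}; the bounds you invoke for each discarded piece are the ones the paper uses.

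One bookkeeping point in your last step deserves care. For $k=2$ the force enters $p_2'$ with the prefactor $\sigma_2=\sigma$ from \eqref{eq:pk}, so a literal application of \eqref{eq:inter-asym-2} gives $\sigma_2\cdot\sigma\,2k_0^2\eee^{-(q_2-q_1)}=2k_0^2\eee^{-(q_2-q_1)}$, not the $\sigma\,2k_0^2\eee^{-(q_2-q_1)}$ you wrote. Your final answer is nevertheless the correct one and agrees with \eqref{eq:dtp2}: expanding $f(R_1+\sigma R_2)$ around $\sigma R_2$ for $x\geq m$ gives $f'(\sigma R_2)R_1=f'(R_2)R_1$ (no $\sigma$, since $f'$ is even), so $\la\partial_x R_2, f(R_1+\sigma R_2)-f(R_1)-\sigma f(R_2)\ra\simeq 2k_0^2\eee^{-(q_2-q_1)}$ \emph{without} the factor $\sigma$ appearing in \eqref{eq:inter-asym-2} as stated; this is also what approximate conservation of $p_1+p_2$ forces, since $p_1'+p_2'$ must be $\ll\eee^{-(q_2-q_1)}$. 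So the discrepancy sits in \eqref{eq:inter-asym-2} (which is only correct as written when $\sigma=1$) rather than in your strategy, but as written your two sign factors do not multiply out to the stated conclusion; you should either record the corrected asymptotics for $\la\partial_x R_2,\cdot\ra$ or derive the $k=2$ case from the $k=1$ case by reflection together with the near-conservation of $p_1+p_2$.
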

\begin{proof}
We will only prove \eqref{eq:dtp1}, because \eqref{eq:dtp2} is obtained analogously.
We will discard terms which are much smaller (as $\delta \to 0$) than
$\eee^{-(y_2 - y_1)}+\| \varepsilon\|_{H^1}^2$. In the sequel, we call such terms ``negligible''
and the sign $\simeq$ always means ``up to terms $\ll \eee^{-(y_2 - y_1)}+\| \varepsilon\|_{H^1}^2$''.

Without loss of generality we can assume that $ \varepsilon \in C^1(I, H^3)$,
where $I$ is some open interval containing $t$
(by a standard approximation procedure using local well-posedness of the equation).

We differentiate the first term of $p_1(t)$ using \eqref{eq:dteps}:
\begin{equation}
\begin{aligned}
\dd t\la R_1, \varepsilon \ra &= -y_1'\la \partial_x R_1, \varepsilon\ra 
+ y_1'\la R_1, \partial_x R_1\ra + \sigma y_2'\la R_1, \partial_x R_2\ra \\
&+ \big\la R_1, \partial_x\big({-}\partial_x^2 \varepsilon - f(R_1 + \sigma R_2 + \varepsilon)
+ f(R_1) + \sigma f(R_2)+ \varepsilon\big)\big\ra.
\end{aligned}
\end{equation}
Since $\la R_1, \partial_x R_1\ra = 0$ and $L_1(\partial_x R_1) = 0$, we obtain
\begin{equation}
\begin{aligned}
\dd t\la R_1, \varepsilon \ra &= -y_1'\la\partial_x R_1, \varepsilon\ra +\sigma y_2'\la R_1, \partial_x R_2\ra \\ &+ \la \partial_x R_1, f(R_1 + \sigma R_2 + \varepsilon) - f(R_1) - f'(R_1) \varepsilon-\sigma f(R_2)\ra.
\end{aligned}
\end{equation}
We claim that the second term of the right hand side is negligible.
This follows from Lemma~\ref{lem:Q-asym},
\eqref{eq:dtyk} and the elementary inequality:
\begin{equation}
\label{eq:prod-exp}
\int_\bR \eee^{-|x - y_1|}\eee^{-|x - y_2|}\ud x \lesssim (y_2 - y_1)\eee^{-(y_2 - y_1)},
\end{equation}
which can be obtained by computing the left hand side separately for $x \leq y_1$,
$y_1 \leq x \leq y_2$ and $x \geq y_2$. We obtain
\begin{equation}
\label{eq:dtcrochet}
\dd t\la R_1, \varepsilon \ra \simeq -y_1'\la\partial_x R_1, \varepsilon\ra + \la \partial_x R_1, f(R_1 + \sigma R_2 + \varepsilon) - f(R_1) - f'(R_1) \varepsilon-\sigma f(R_2)\ra.
\end{equation}

Now we compute the derivative of the correction term. We have
\begin{equation}
\label{eq:dtvir-1}
\frac 12\dd t\int_\bR \phi_1 \,\varepsilon^2 \ud x = \frac 12 \int_\bR \partial_t \phi_1 \,\varepsilon^2\ud x
+ \int_\bR \phi_1 \,\varepsilon\,\partial_t \varepsilon\ud x.
\end{equation}
For the definition of $\phi_1$, see \eqref{eq:phi1-def}. By the Chain Rule, we have
\begin{equation}
\partial_t \phi_1(t, x) = \frac{-y_1'(t)(y_2(t) - y_1(t)) - (x - y_1(t))(y_2'(t) - y_1'(t))}{(y_2(t) - y_1(t))^2}\psi'\Big(\frac{x - y_1(t)}{y_2(t) - y_1(t)}\Big).
\end{equation}
If $x \leq y_1(t)$ or $x \geq y_2(t)$, then the right hand side equals $0$.
If $y_1(t) \leq x \leq y_2(t)$, then we get
\begin{equation}
|\partial_t \phi_1(t, x)| \lesssim \frac{|y_1'(t)| + |y_2'(t)|}{y_2(t) - y_1(t)},
\end{equation}
so \eqref{eq:dtyk} yields
\begin{equation}
\label{eq:virial-1st-term}
\Big|\int_\bR \partial_t \phi_1 \,\varepsilon^2\ud x\Big| \ll \eee^{-(y_2 - y_1)} + \| \varepsilon\|_{H^1}^2.
\end{equation}
We now consider the second term of the right hand side in \eqref{eq:dtvir-1}.
Using \eqref{eq:dteps} we find
\begin{equation}
\label{eq:dtvir-2}
\begin{aligned}
\int_\bR \phi_1 \,\varepsilon\,\partial_t \varepsilon\ud x &=
y_1'\int_\bR \phi_1\,\varepsilon\,\partial_x R_1\ud x + \sigma y_2' \int_\bR \phi_1\,\varepsilon\,\partial_x R_2\ud x \\
&+\int\phi_1\,\varepsilon\,\partial_x\big({-}\partial_x^2 \varepsilon - f(R_1 + \sigma R_2 + \varepsilon)
+ f(R_1) + \sigma f(R_2)+ \varepsilon\big)\ud x \\
&= (I) + (II) + (III).
\end{aligned}
\end{equation}
Recall that $\phi_1(t, x) = 0$ for $x \geq \frac{y_1(t) + 2y_2(t)}{3}$,
whereas for $x \leq \frac{y_1(t) + 2y_2(t)}{3}$ Lemma~\ref{lem:Q-asym} yields
$|\partial_x R_2| \lesssim \eee^{-(y_2 - x)}$, so
\begin{equation}
\int_\bR \phi_1^2 (\partial_x R_2)^2\ud x \lesssim \int_{-\infty}^{\frac{y_1+2y_2}{3}}\eee^{-2(y_2 - x)}\ud x \lesssim \eee^{-\frac 23(y_2 - y_1)}.
\end{equation}
Applying \eqref{eq:dtyk} and the Cauchy-Schwarz inequality, we obtain the bound
\begin{equation}
\label{eq:virial-II}
|(II)| \lesssim \big(\| \varepsilon\|_{H^1} + \eee^{-(y_2 - y_1)}\big)\eee^{-\frac 13(y_2 - y_1)}\| \varepsilon\|_{L^2} \ll \eee^{-(y_2 - y_1)}+\| \varepsilon\|_{H^1}^2.
\end{equation}
In the same way, one can show that
\begin{equation}
\Big|y_1'\int_\bR \phi_2\,\varepsilon\,\partial_x R_1\ud x \Big| \ll \eee^{-(y_2 - y_1)}+\| \varepsilon\|_{H^1}^2
\end{equation}
or, equivalently,
\begin{equation}
\label{eq:virial-I}
\big| (I) - y_1'\la \partial_x R_1, \varepsilon\ra\big| \ll \eee^{-(y_2 - y_1)}+\|\varepsilon\|_{H^1}^2.
\end{equation}
This implies that $(I)$ cancels
with the first term of the right hand side in \eqref{eq:dtcrochet}.

Finally, we consider $(III)$. We claim that
\begin{align}
\Big|\int_\bR \phi_1\,\varepsilon\,\partial_x \varepsilon\ud x\Big| &\ll \eee^{-(y_2 - y_1)} + \| \varepsilon\|_{H^1}^2, \label{eq:virial-III-1} \\
\Big|\int_\bR \phi_1\,\varepsilon\,\partial_x^3 \varepsilon\ud x\Big| &\ll \eee^{-(y_2 - y_1)} + \| \varepsilon\|_{H^1}^2, \label{eq:virial-III-2} \\
\Big|\int_\bR\phi_1\,\varepsilon\,\partial_x\big(f(R_1 + \sigma R_2) - f(R_1) - \sigma f(R_2)\big) \ud x \Big| &\ll \eee^{-(y_2 - y_1)} + \| \varepsilon\|_{H^1}^2, \label{eq:virial-III-3} 
\end{align}
which will imply
\begin{equation}
\label{eq:virial-III-4}
\Big|(III) + \int_\bR \phi_1\,\varepsilon\,\partial_x \big(f(R_1 + \sigma R_2 + \varepsilon) - f(R_1+\sigma R_2)\big)\ud x\Big| \ll \eee^{-(y_2 - y_1)} + \| \varepsilon\|_{H^1}^2.
\end{equation}
Let us assume \eqref{eq:virial-III-4} and finish the proof.
From \eqref{eq:f-Lip} we have $|f(R_1 + \sigma R_2 + \varepsilon) - f(R_1+\sigma R_2)| \lesssim | \varepsilon|$. Since $\|\partial_x\phi_1\|_{L^\infty} \lesssim \frac{1}{y_2 - y_1} \ll 1$, we have
\begin{equation}
\Big|\int_\bR \partial_x\phi_1\,\varepsilon\,\big(f(R_1 + \sigma R_2 + \varepsilon) - f(R_1+\sigma R_2)\big)\ud x \Big| \ll \| \varepsilon\|_{L^2}^2.
\end{equation}
Bound \eqref{eq:virial-III-4} and integration by parts yield
\begin{equation}
\label{eq:virial-III-5}
\Big|(III) - \int_\bR \phi_1\,\partial_x\varepsilon\,\big(f(R_1 + \sigma R_2 + \varepsilon) - f(R_1+\sigma R_2)\big)\ud x\Big| \ll \eee^{-(y_2 - y_1)} + \| \varepsilon\|_{H^1}^2.
\end{equation}
We apply Lemma~\ref{lem:virial} with $R = R_1 + \sigma R_2$ and $\phi = \phi_1(t, \cdot)$. We obtain
\begin{equation}
\label{eq:virial-III-6}
\begin{aligned}
\Big|(III) + \int_\bR \phi_1\,\partial_x(R_1 + \sigma R_2)\,\big(f(R_1 + \sigma R_2 + \varepsilon) - f(R_1+\sigma R_2) - f'(R_1 + \sigma R_2)\varepsilon \big)\ud x\Big|& \\
 \ll \eee^{-(y_2 - y_1)} + \| \varepsilon\|_{H^1}^2.&
\end{aligned}
\end{equation}
From \eqref{eq:f-1stTay} we have $|f(R_1 + \sigma R_2 + \varepsilon) - f(R_1+\sigma R_2) - f'(R_1 + \sigma R_2)\varepsilon| \lesssim \varepsilon^2$.
From the proof of \eqref{eq:virial-II} we see that $\|\phi_1\,\partial_x R_2\|_{L^\infty} \lesssim \eee^{-\frac 13(y_2 - y_1)} \ll 1$, hence in the integral above we can replace
$\phi_1\,\partial_x (R_1 + \sigma R_2)$ by $\phi_1\,\partial_x R_1$.
Similarly, we have the bound $\|(1 - \phi_1)\,\partial_x R_1\|_{L^\infty}
= \|\phi_2\,\partial_x R_1\|_{L^\infty} \lesssim \eee^{-\frac 13(y_2 - y_1)} \ll 1$,
which allows to replace $\phi_1\,\partial_x R_1$ by $\partial_x R_1$.
After all these operations we get
\begin{equation}
\label{eq:virial-III-7}
\begin{aligned}
\big|(III) + \la\partial_x R_1, f(R_1 + \sigma R_2 + \varepsilon) - f(R_1+\sigma R_2) - f'(R_1 + \sigma R_2)\varepsilon \ra\big| \\
 \ll \eee^{-(y_2 - y_1)} + \| \varepsilon\|_{H^1}^2.
\end{aligned}
\end{equation}
When we combine \eqref{eq:dtvir-1}, \eqref{eq:virial-1st-term}, \eqref{eq:dtvir-2}, \eqref{eq:virial-II}, \eqref{eq:virial-I}
and \eqref{eq:virial-III-7}, we find
\begin{equation}
\label{eq:dtvir-3}
\begin{aligned}
\frac 12\dd t\int_\bR \phi_1 \varepsilon^2\ud x &\simeq y_1'\la \partial_x R_1, \varepsilon\ra
- \la\partial_x R_1, f(R_1 + \sigma R_2 + \varepsilon) \\ &- f(R_1+\sigma R_2) - f'(R_1 + \sigma R_2)\varepsilon \ra.
\end{aligned}
\end{equation}
Together with \eqref{eq:dtcrochet} and the definition of $p_1$, this yields
\begin{equation}
\label{eq:dtp1-1}
p_1' \simeq \big\la \partial_x R_1, f(R_1 + \sigma R_2) - f(R_1) - \sigma f(R_2) + \big(f'(R_1 + \sigma R_2)-f'(R_1)\big)\varepsilon\big\ra.
\end{equation}
Subtracting \eqref{eq:inter-asym-1}, we get
\begin{equation}
\label{eq:dtp1-2}
p_1' + \sigma 2k_0^2\eee^{-(y_2 - y_1)} \simeq \big\la \partial_x R_1, \big(f'(R_1 + \sigma R_2)-f'(R_1)\big)\varepsilon\big\ra.
\end{equation}
Since $f'$ is locally Lipschitz, we have $|f'(R_1 + \sigma R_2)-f'(R_1)|
\lesssim R_2 \lesssim \eee^{-|\cdot - y_2|}$. We also have $|\partial_x R_1| \lesssim \eee^{-|\cdot - y_1|}$. Thus, by the Cauchy-Schwarz inequality,
\begin{equation}
\begin{aligned}
\big\la \partial_x R_1, \big(f'(R_1 + \sigma R_2)-f'(R_1)\big)\varepsilon\big\ra^2
&\lesssim \int_\bR \eee^{-2|x - y_1|}\eee^{-2|x - y_2|}\ud x\int_\bR \varepsilon^2\ud x \\
&\ll \big(\eee^{-(y_2 - y_1)} + \| \varepsilon\|_{H^1}^2\big)^2,
\end{aligned}
\end{equation}
where in the last step we use the fact that
\begin{equation}
\label{eq:prod-exp-2}
\int_\bR \eee^{-2|x - y_1|}\eee^{-2|x - y_2|}\ud x \lesssim (y_2 - y_1)\eee^{-2(y_2 - y_1)} \ll \eee^{-(y_2 - y_1)},
\end{equation}
see \eqref{eq:prod-exp}.
This finishes the proof of \eqref{eq:dtp1}, provided that we can show that
\eqref{eq:virial-III-1}, \eqref{eq:virial-III-2} and \eqref{eq:virial-III-3} hold.

Bound \eqref{eq:virial-III-1} follows from
\begin{equation}
\int_\bR \phi_1\,\varepsilon\,\partial_x \varepsilon\ud x =
\frac 12 \int_\bR \phi_1\,\partial_x( \varepsilon^2)\ud x =
-\frac 12 \int_\bR \partial_x \phi_1\, \varepsilon^2 \ud x,
\end{equation}
because $\|\partial_x \phi_1\|_{L^\infty} \lesssim (y_2 - y_1)^{-1} \ll 1$.
The proof of \eqref{eq:virial-III-2} is similar, but we need to integrate by parts
many times:
\begin{equation}
\begin{aligned}
\int_\bR \phi_1\,\varepsilon\,\partial_x^3 \varepsilon\ud x &=
-\int_\bR \partial_x\phi_1\, \varepsilon\,\partial_x^2 \varepsilon\ud x
- \int_\bR \phi_1\,\partial_x \varepsilon\,\partial_x^2 \varepsilon\ud x \\
&= \int_\bR \partial_x^2 \phi_1\, \varepsilon\,\partial_x \varepsilon\ud x
+ \int_\bR \partial_x \phi_1 (\partial_x \varepsilon)^2 \ud x
-\frac 12 \int_\bR \phi_1 \partial_x \big((\partial_x \varepsilon)^2\big)\ud x \\
&= -\frac 12 \int_\bR \partial_x^3 \phi_1\, \varepsilon^2 \ud x
+ \frac 32 \int_\bR\partial_x \phi_1\,(\partial_x \varepsilon)^2\ud x,
\end{aligned}
\end{equation}
and we see that both terms are negligible.

In order to prove \eqref{eq:virial-III-3}, it suffices to check that
\begin{equation}
\label{eq:virial-final}
\|f(R_1 + \sigma R_2) - f(R_1) - \sigma f(R_2)\|_{L^2} \ll \eee^{-\frac 12(y_2 - y_1)}
\end{equation}
and integrate by parts.
From \eqref{eq:f-cross} we have
\begin{equation}
|f(R_1 + \sigma R_2) - f(R_1) - \sigma f(R_2)| \lesssim R_1 R_2 \lesssim \eee^{-|\cdot - y_1|}\eee^{-|\cdot - y_2|},
\end{equation}
and \eqref{eq:virial-final} follows from \eqref{eq:prod-exp}.
\end{proof}
%%%%%%%%%%%%%%%%%%%%%%%%%%%%%%%%%%%%%%%%%%%%%%%%%%%%%%%%%%%%%%%
\subsection{Stable and unstable directions}
\label{ssec:unstable}
We also need to control the linear stable and unstable directions. We define
\begin{equation}
\label{eq:defal12}
\alpha_k^-(t, x) := \alpha^-(x - y_k(t)), \qquad \alpha_k^+(t, x) := \alpha^+(x - y_k(t)),
\qquad k \in \{1, 2\}
\end{equation}
(see Proposition~\ref{prop:alpha} for the definition of $ \alpha^-$ and $ \alpha^+$) and
\begin{equation}
\label{eq:defapm}
a_k^-(t) := \la \alpha_k^-(t), \varepsilon(t)\ra, \qquad
a_k^+(t) := \la \alpha_k^+(t), \varepsilon(t)\ra, \qquad k \in \{1, 2\}.
\end{equation}

\begin{lemma}
\label{lem:mod-stable}
For any $c > 0$ there exists $\delta > 0$ such that if
\begin{equation}
\label{eq:norm-small-stable}
\eee^{-(y_2(t) - y_1(t))} + \|\varepsilon(t)\|_{H^1}^2 \leq \delta,
\end{equation}
then
\begin{align}
\Big|\dd t a_k^-(t) + \nu a_k^-(t)\Big| &\leq c\sqrt{\eee^{-(y_2(t) - y_1(t))} + \|\varepsilon(t)\|_{H^1}^2}, \qquad k \in \{1, 2\}, \label{eq:dtakm} \\
\Big|\dd t a_k^+(t) - \nu a_k^+(t)\Big| &\leq c\sqrt{\eee^{-(y_2(t) - y_1(t))} + \|\varepsilon(t)\|_{H^1}^2}, \qquad k \in \{1, 2\}, \label{eq:dtakp}
\end{align}
where $\nu$ is defined in Proposition~\ref{prop:eigen}.
\end{lemma}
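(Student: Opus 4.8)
The plan is to differentiate $a_k^\mp(t) = \la \alpha_k^\mp(t), \varepsilon(t)\ra$ directly, feeding in the evolution equation \eqref{eq:dteps} for $\varepsilon$, and to show that the only surviving contribution at the required order is the one produced by the eigenrelations \eqref{eq:Ldxal}. I will carry out the computation for $a_k^-$; the case of $a_k^+$ is identical with $\nu$ replaced by $-\nu$. Since $\partial_t\alpha_k^-(t) = -q_k'(t)\,\partial_x\alpha_k^-(t)$, the product rule gives
\begin{equation}
\dd t a_k^-(t) = -q_k'\la\partial_x\alpha_k^-,\varepsilon\ra + \la\alpha_k^-,\partial_t\varepsilon\ra.
\end{equation}
The first term is a product of two factors of size $\lesssim\sqrt{\eee^{-(q_2-q_1)}+\|\varepsilon\|_{H^1}^2}$ (using \eqref{eq:dtqk-bound} and Cauchy--Schwarz, as $\partial_x\alpha_k^-\in L^2$), hence lies at the full scale $\eee^{-(q_2-q_1)}+\|\varepsilon\|_{H^1}^2$ and is negligible. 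Everything therefore reduces to $\la\alpha_k^-,\partial_t\varepsilon\ra$.

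I substitute \eqref{eq:dteps} and integrate by parts to move $\partial_x$ onto $\alpha_k^-$. The transport terms $q_1'\la\alpha_k^-,\partial_x R_1\ra + \sigma q_2'\la\alpha_k^-,\partial_x R_2\ra$ are harmless: the diagonal one vanishes by \eqref{eq:alQ'}, while the off-diagonal one is a factor $q_j'$ times an overlap integral bounded via \eqref{eq:prod-exp}, again a product of small quantities. What remains is $-\la\partial_x\alpha_k^-, G\ra$, where $G = -\partial_x^2\varepsilon + \varepsilon - f(R_1+\sigma R_2+\varepsilon) + f(R_1) + \sigma f(R_2)$. I split $G$ into (i) the linear part $-\partial_x^2\varepsilon+\varepsilon-f'(R_1+\sigma R_2)\varepsilon$; (ii) the pure interaction $-\big(f(R_1+\sigma R_2)-f(R_1)-\sigma f(R_2)\big)$; and (iii) the quadratic remainder $-\big(f(R_1+\sigma R_2+\varepsilon)-f(R_1+\sigma R_2)-f'(R_1+\sigma R_2)\varepsilon\big)$.

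The heart of the argument is part (i). Replacing $f'(R_1+\sigma R_2)$ by $f'(R_k)$ turns this part into $L_k\varepsilon$ up to an error supported where $R_j$ ($j\neq k$) is exponentially small against the exponentially localized $\partial_x\alpha_k^-$, controlled by \eqref{eq:prod-exp}. Then, by self-adjointness of $L_k$ and the translated eigenrelation \eqref{eq:Ldxal}, namely $L_k(\partial_x\alpha_k^-)=\nu\alpha_k^-$, one gets $\la\partial_x\alpha_k^-, L_k\varepsilon\ra = \la L_k(\partial_x\alpha_k^-),\varepsilon\ra = \nu a_k^-$, producing exactly the claimed main term $-\nu a_k^-$ (and $+\nu a_k^+$ in the other case, since $L_k(\partial_x\alpha_k^+)=-\nu\alpha_k^+$). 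Part (ii) is bounded by Cauchy--Schwarz together with the interaction estimate $\|f(R_1+\sigma R_2)-f(R_1)-\sigma f(R_2)\|_{L^2}\ll\eee^{-\frac12(q_2-q_1)}$ already proved in \eqref{eq:small-tension}, and part (iii) by the pointwise bound $\lesssim\varepsilon^2$ from \eqref{eq:f-1stTay} and $\|\partial_x\alpha_k^-\|_{L^\infty}<\infty$, giving $\lesssim\|\varepsilon\|_{L^2}^2$.

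I do not anticipate a genuine obstacle. The error tolerance $\sqrt{\eee^{-(q_2-q_1)}+\|\varepsilon\|_{H^1}^2}$ sits at the ``square-root'' scale, whereas every discarded term is either quadratic (at the full scale $\eee^{-(q_2-q_1)}+\|\varepsilon\|_{H^1}^2$) or a product of two square-root-scale factors, hence $\ll\sqrt{\eee^{-(q_2-q_1)}+\|\varepsilon\|_{H^1}^2}$ once $\delta$ is small enough. The only point needing care is the bookkeeping in part (i)---checking that the localization error from replacing $f'(R_1+\sigma R_2)$ by $f'(R_k)$ is truly small---but this is the same overlap estimate used throughout Section~\ref{sec:modulation-two}. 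In contrast to Lemma~\ref{lem:dtpk}, no correction or virial term is required here, precisely because the right-hand side is permitted to be of square-root size rather than quadratic.
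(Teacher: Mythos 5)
Your proposal is correct and follows essentially the same route as the paper's proof: differentiate $a_k^\mp$, discard the transport terms via \eqref{eq:alQ'}, \eqref{eq:dtqk-bound} and exponential localization, integrate by parts to land on $L_k(\partial_x\alpha_k^\mp)=\pm\nu\alpha_k^\mp$, and control the remainder by the pointwise bound $\lesssim R_1R_2+\varepsilon^2+R_2|\varepsilon|$ together with Cauchy--Schwarz. Your three-way splitting of $G$ is only a cosmetic reorganization of the paper's triangle-inequality decomposition, and your closing remark about the square-root error scale correctly explains why no correction term is needed here.
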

\begin{proof}
We will only prove \eqref{eq:dtakm} for $k = 1$, because the computation for \eqref{eq:dtakp}
or for $k = 2$ is almost the same. In this proof, we say that a real number is ``negligible''
if its absolute value is much smaller than $\sqrt{\eee^{-(y_2(t) - y_1(t))} + \|\varepsilon(t)\|_{H^1}^2}$ for $\delta$ sufficiently small.
We have
\begin{equation}
\label{eq:dta1m-1}
\dd t a_1^- = {-}y_1'\la \partial_x \alpha_1^-, \varepsilon\ra + \la \alpha_1^-, \partial_t \varepsilon\ra.
\end{equation}
Bound \eqref{eq:dtyk} implies that the first term is negligible and we can forget about it.
Like in the proof of Lemma~\ref{lem:mod},
we compute the second term using \eqref{eq:dteps}:
\begin{equation}
\la \alpha_1^-, \partial_t \varepsilon\ra =
\big\la \alpha_1^-, y_1'\partial_x R_1 + y_2' \partial_x R_2
+\partial_x\big({-}\partial_x^2 \varepsilon - f(R_1 + \sigma R_2 + \varepsilon)
+ f(R_1) + \sigma f(R_2)+ \varepsilon\big)\big\ra.
\end{equation}
We have $\la \alpha_1^-, \partial_x R_1\ra = \la \alpha^-, \partial_xQ\ra = 0$.
Moreover, the exponential decay of $\alpha^-$ and \eqref{eq:dtyk}
yield $|y_2'\la \alpha_1^-, \partial_x R_2\ra| = |y_2'|\,|\la \alpha^-, \partial_xQ(\cdot - (y_2 - y_1))\ra|
\ll \sqrt{\eee^{-(y_2(t) - y_1(t))} + \|\varepsilon(t)\|_{H^1}^2}$.
Hence, up to negligible terms, we have
\begin{equation}
\begin{aligned}
\dd t a_1^- &\simeq \big\la \alpha_1^-, \partial_x\big({-}\partial_x^2 \varepsilon - f(R_1 + \sigma R_2 + \varepsilon)
+ f(R_1) + \sigma f(R_2)+ \varepsilon\big)\big\ra \\
& \simeq {-}\big\la \partial_x \alpha_1^-, L_1 \varepsilon - f(R_1 + \sigma R_2 + \varepsilon)
+ f(R_1) + f'(R_1) \varepsilon + \sigma f(R_2)\big\ra,
\end{aligned}
\end{equation}
where $L_1 := -\partial_x^2 - f'(R_1) + 1$. We have $L_1(\partial_x \alpha_1^-) = \nu \alpha_1^-$,
see \eqref{eq:Ldxal}. Thus
\begin{equation}
-\la \partial_x \alpha_1^-, L_1\varepsilon\ra = -\la L_1(\partial_x \alpha_1^-), \varepsilon\ra
= -\nu\la \alpha_1^-, \varepsilon\ra = -\nu a_1^-,
\end{equation}
and we only need to check that
\begin{equation}
\label{eq:dta1m-2}
\big|\big\la \partial_x \alpha_1^-, f(R_1 + \sigma R_2 + \varepsilon)
- f(R_1) - f'(R_1) \varepsilon - \sigma f(R_2)\big\ra\big| \leq c \sqrt{\eee^{-(y_2(t) - y_1(t))} + \|\varepsilon(t)\|_{H^1}^2}.
\end{equation}
The triangle inequality and Lemma~\ref{lem:Taylor} yield
\begin{equation}
\begin{aligned}
&|f(R_1 + \sigma R_2 + \varepsilon)
- f(R_1) - f'(R_1) \varepsilon - \sigma f(R_2)| \\ 
&\quad\leq |f(R_1 + \sigma R_2) - f(R_1) - \sigma f(R_2)| \\
&\quad+ |f(R_1 + \sigma R_2 + \varepsilon) - f(R_1 + \sigma R_2) - f'(R_1 + \sigma R_2) \varepsilon| \\
&\quad+ |(f'(R_1 + \sigma R_2) - f'(R_1)) \varepsilon| \\
&\quad\lesssim R_1 R_2 + \varepsilon^2 + R_2| \varepsilon|,
\end{aligned}
\end{equation}
so we obtain, by the Cauchy-Schwarz inequality,
\begin{equation}
\begin{aligned}
\big|\big\la \partial_x \alpha_1^-, f(R_1 + \sigma R_2 + \varepsilon)
- f(R_1) - f'(R_1) \varepsilon - \sigma f(R_2)\big\ra\big| \\
\lesssim \|R_1 R_2\|_{L^2} + \| \varepsilon\|_{L^2}^2 + \| \alpha_1^- R_2\|_{L^2} \| \varepsilon\|_{L^2}
\end{aligned}
\end{equation}
(we have used the fact that $\alpha^- \in L^2 \cap L^\infty$).
The first term is negligible, see the proof of \eqref{eq:virial-final}.
The second term is clearly negligible.
The third term is negligible because
$$\|\alpha_1^- R_2\|_{L^2} = \|\alpha^- Q(\cdot - (y_2 - y_1))\|_{L^2} \leq c$$
if $y_2 - y_1$ is large enough (both $\alpha^-$ and $Q$ are exponentially decaying functions).
\end{proof}

%%%%%%%%%%%%%%%%%%%%%%%%% REDUCED %%%%%%%%%%%%%%%%%%%%%%%%%%%%%%%%%%%%%%
\section{Dynamics of the reduced system}
\label{sec:dynamics}
In this section we complete the proof of Theorem~\ref{thm:classif}. We always assume that
$u: [T_0, \infty) \to H^1$ is a solution of \eqref{eq:kdv}
satisfying \eqref{eq:u-conv} and \eqref{eq:q-conv} with $v^\infty = 1$ (for the sake of simplicity).
Thus, for $t \geq T_0$, we have well-defined modulation parameters $y_k(t)$
and the error term $\varepsilon(t)$ such that $\lim_{t \to \infty} y_2(t) - y_1(t) = \infty$
and $\lim_{t \to \infty} \| \varepsilon(t)\|_{H^1} = 0$.

Given $\rho \geq \rho_0 \gg 1$, we define $q_k(t)$ by \eqref{eq:qk-def}.
Then $\|\varepsilon(t)\|_{H^1} \to 0$ implies
\begin{equation}
\lim_{t \to \infty}\big((q_2(t) - q_1(t)) - (y_2(t) - y_1(t))\big) = 0.
\end{equation}
In particular, the estimates from Section~\ref{sec:modulation-two} remain true
if $\eee^{-(y_2(t) - y_1(t))}$ is replaced by $\eee^{-(q_2(t) - q_1(t))}$.
\begin{proposition}
\label{prop:eps-bound}
The sign $\sigma$ equals $1$ and there exist $C_0 > 0$ (independent of $\rho$)
and $t_0 \geq T_0$ (which might depend on $\rho$) such that for all $t \geq t_0$
\begin{equation}
\label{eq:eps-bound}
\| \varepsilon(t)\|_{H^1}^2 \leq C_0 \sup_{\tau \geq t}\eee^{-(q_2(\tau) - q_1(\tau))}.
\end{equation}
\end{proposition}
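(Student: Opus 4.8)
The plan is to organise everything around four scalar quantities: $g(t):=\eee^{-(q_2(t)-q_1(t))}$, $N(t):=\|\varepsilon(t)\|_{H^1}^2$, and the stable/unstable mode energies $\mathcal{S}(t):=(a_1^-(t))^2+(a_2^-(t))^2$, $\mathcal{U}(t):=(a_1^+(t))^2+(a_2^+(t))^2$; I abbreviate $\mathcal{E}(t)^2:=g(t)+N(t)$. First I would fix the energy level. Since $H=E+M$ is conserved and $u(t)$ converges in $H^1$ to a two-soliton whose separation tends to $+\infty$, Lemma~\ref{lem:H-deux-solit} together with continuity of $H$ on bounded sets of $H^1$ forces $H(u)=2H(Q)$. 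By Lemma~\ref{lem:mod0} the decomposition \eqref{eq:u-decomp} with the orthogonality \eqref{eq:eps-orth} is available for $t\ge t_0$, and by hypothesis $\mathcal{E}(t)\to 0$. The orthogonality kills the $\cZ$-terms, so Proposition~\ref{prop:coer} applies and reads $N\le C_0(g+\mathcal{S}+\mathcal{U})$ when $\sigma=1$ and $N+g\le C_0(\mathcal{S}+\mathcal{U})$ when $\sigma=-1$; moreover $\mathcal{S}+\mathcal{U}\lesssim N$ always, since the $a_k^\pm$ are $L^2$-pairings of $\varepsilon$ with fixed exponentially decaying functions.

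Next I would extract the dynamics of the modes from Lemma~\ref{lem:mod-stable}, taking the constant $c$ there as small as needed (i.e.\ $t_0$ large). For the unstable modes, $\frac{\vd}{\vd t}a_k^+=\nu a_k^++r_k$ with $|r_k|\le c\mathcal{E}$ and $a_k^+(t)\to 0$, so integrating the factor $\eee^{-\nu t}a_k^+$ forward to $+\infty$ gives $a_k^+(t)=-\int_t^\infty \eee^{-\nu(s-t)}r_k(s)\ud s$, whence $\mathcal{U}(t)\le \frac{2c^2}{\nu^2}\sup_{\tau\ge t}\mathcal{E}(\tau)^2$: a small multiple of the \emph{future} supremum of $\mathcal{E}^2$. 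For the stable combination, differentiating and inserting \eqref{eq:dtakm}--\eqref{eq:dtakp} yields the virial identity $\frac{\vd}{\vd t}(\mathcal{S}-\mathcal{U})=-2\nu(\mathcal{S}+\mathcal{U})+O(c\,\mathcal{E}^2)$, which will be the main tool for controlling $\mathcal{S}$.

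I would then rule out $\sigma=-1$. In this case $\mathcal{E}^2\le C_0(\mathcal{S}+\mathcal{U})$, so for $c$ small the virial identity gives $\frac{\vd}{\vd t}(\mathcal{S}-\mathcal{U})\le -\nu(\mathcal{S}+\mathcal{U})\le 0$; since $\mathcal{S}-\mathcal{U}\to 0$, a non-increasing limit forces $\mathcal{S}-\mathcal{U}\ge 0$ and $\nu\int_t^\infty(\mathcal{S}+\mathcal{U})\le (\mathcal{S}-\mathcal{U})(t)\le(\mathcal{S}+\mathcal{U})(t)$. Setting $I(t):=\int_t^\infty(\mathcal{S}+\mathcal{U})$ this reads $I'\le-\nu I$, so $I$ decays exponentially, and hence $\int_t^\infty g\le\int_t^\infty\mathcal{E}^2\lesssim \eee^{-\nu t}$. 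Using $|q_2'-q_1'|\lesssim\mathcal{E}\to 0$ from Lemma~\ref{lem:mod} to control the variation of $g$, this upgrades to $g(t)\lesssim\eee^{-\beta t}$ for some $\beta>0$, i.e.\ $q_2(t)-q_1(t)\gtrsim\beta t$ grows at least linearly. But $q_2'(t)-q_1'(t)=O(\mathcal{E}(t))\to 0$, so the separation grows sublinearly, a contradiction. Hence $\sigma=1$.

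It remains to prove \eqref{eq:eps-bound} in the attractive case, and this is where I expect the real difficulty. I would set up a continuity/bootstrap argument for the statement $\sup_{\tau\ge t}N(\tau)\le A\sup_{\tau\ge t}g(\tau)$ with a constant $A$ to be improved. The unstable part is already harmless, being $\le\frac{2c^2}{\nu^2}(1+A)\sup_{\tau\ge t}g$; the task is to show $\mathcal{S}$ is likewise negligible, after which $\mathcal{S}+\mathcal{U}$ is absorbed into the left side of $N\le C_0(g+\mathcal{S}+\mathcal{U})$ and the small constant improves $A$. The stable mode obeys a Duhamel formula with forcing $\le c\mathcal{E}$ and kernel $\eee^{-\nu(\tau-s)}$, which concentrates the contribution near the current time (where the bootstrap bounds $\mathcal{E}$ by $\sqrt{(1+A)\sup g}$) while exponentially suppressing the far past. \textbf{The main obstacle is precisely this stable-mode control:} a stable mode is naturally governed by the \emph{past} forcing, whereas the target estimate is a supremum over the \emph{future}, so the two must be reconciled. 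I expect this to go through only by exploiting the smallness of the modulation-error constant $c$ together with the virial monotonicity of $\mathcal{S}-\mathcal{U}$, and it is the technical heart of the proof.
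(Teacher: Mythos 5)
Your setup, your forward-in-time integration of the unstable modes, and your exclusion of $\sigma=-1$ are all correct; the last of these is in fact a nice self-contained variant of the paper's argument (the paper excludes $\sigma=-1$ only \emph{after} the full estimate \eqref{eq:eps-bound} is in hand, by evaluating the coercivity at a time realizing $\sup_{\tau\ge t}\eee^{-(q_2(\tau)-q_1(\tau))}$, whereas your integrated virial argument needs nothing beyond the coercivity and the mode equations). But the estimate \eqref{eq:eps-bound} itself in the case $\sigma=1$ is exactly the part you leave as an ``expected'' obstacle, and that is a genuine gap: you never actually control $\mathcal S$. The Duhamel representation of a stable mode carries the initial-data term $\eee^{-\nu(t-t_0)}a_k^-(t_0)$, and at a generic starting time $t_0$ you have no bound on $a_k^-(t_0)$ better than $O(\|\varepsilon(t_0)\|_{L^2})$, which is useless against $\sup_{\tau\ge t}g(\tau)$; smallness of the modulation constant $c$ and the monotonicity of $\mathcal S-\mathcal U$ do not by themselves remove this contribution, and your bootstrap hypothesis $\sup_{\tau\ge t}N\le A\sup_{\tau\ge t}g$ is not known to hold at any initial time, so the continuity argument has nowhere to start.

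The missing idea --- Lemma~\ref{lem:a+small} of the paper --- is that there exist \emph{arbitrarily large} times $t_1$ at which $\mathcal S(t_1)\le c\,\mathcal E(t_1)^2$ already holds. Its proof is precisely the mechanism you deployed to exclude $\sigma=-1$: if $\mathcal S(t)\ge c\,\mathcal E(t)^2$ persisted for all $t\ge t_0$, then \eqref{eq:dtakm} would give $\mathcal S'\le-\nu\mathcal S$, hence exponential decay of $\mathcal S$ and therefore of $\eee^{-(q_2-q_1)}$, forcing $q_2(t)-q_1(t)\gtrsim t$, which contradicts $q_k'(t)\to0$. Starting from such a $t_1$, one then needs the propagation step (the paper's \eqref{eq:eps-bound-5} together with the continuity argument comparing $\sup_{\tau\ge t}\mathcal S$ with $\sup_{\tau\ge t}g$): at any time where $\mathcal S$ attains its future supremum and exceeds $\sup_{\tau\ge t}g$, the coercivity $N\le C_0(g+\mathcal S+\mathcal U)$ closes on itself and again yields $\mathcal S'\le-\nu\mathcal S$, so $\sup_{\tau\ge t}\mathcal S$ can never climb above $c_0\sup_{\tau\ge t}g$ after $t_1$. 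You have all the ingredients on the table --- indeed your $\sigma=-1$ argument contains the germ of the key lemma --- but the assembly of the two-step structure (good initial time, then monotonicity-based propagation) is the technical heart of the proposition, and it is exactly what is not carried out.
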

Eventually, we will prove that
$q_2 - q_1$ is an increasing function, so $\sup$ in \eqref{eq:eps-bound}
is not really necessary.
We need two lemmas.
\begin{lemma}
\label{lem:a+small}
For any $c > 0$ and $t_0 \geq T_0$ there exists $t_1 \geq t_0$ such that
\begin{equation}
\label{eq:a+small}
a_1^-(t_1)^2 + a_2^-(t_1)^2 \leq c\big(\eee^{-(q_2(t_1) - q_1(t_1))} + \| \varepsilon(t_1)\|_{H^1}^2\big).
\end{equation}
\end{lemma}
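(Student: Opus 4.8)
Here is how I would approach Lemma~\ref{lem:a+small}.

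The plan is to argue by contradiction, exploiting the fact that $a_1^-$ and $a_2^-$ track the \emph{stable} directions of the linearized flow: their linear dynamics in \eqref{eq:dtakm} is governed by the decaying exponent $-\nu$, so a persistent failure of \eqref{eq:a+small} would force $\eee^{-(q_2-q_1)}$ to decay exponentially in $t$, which is incompatible with the sublinear growth of $q_2-q_1$ imposed by the common limit velocity. Write $b(t)^2 := a_1^-(t)^2 + a_2^-(t)^2$ and $N(t)^2 := \eee^{-(q_2(t)-q_1(t))} + \|\varepsilon(t)\|_{H^1}^2$, and recall from \eqref{eq:u-conv} and \eqref{eq:q-conv} that $N(t) \to 0$ and $q_2(t)-q_1(t)\to\infty$ as $t\to\infty$. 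Suppose, for contradiction, that there exist $c > 0$ and $t_0 \geq T_0$ with $b(t)^2 > c\,N(t)^2$ for all $t \geq t_0$.

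Next I would convert \eqref{eq:dtakm} into a differential inequality for $b^2$. Fix a small $c' > 0$, to be chosen, and enlarge $t_0$ so that \eqref{eq:norm-small-stable} holds with this $c'$ for all $t \geq t_0$, which is possible since $N(t)\to 0$; this does not weaken the contradiction hypothesis. Then Lemma~\ref{lem:mod-stable} gives $\dd t a_k^- = -\nu a_k^- + r_k$ with $|r_k| \leq c' N$, so that, by Cauchy--Schwarz,
\begin{equation*}
\dd t b^2 = -2\nu b^2 + 2\big(a_1^- r_1 + a_2^- r_2\big) \leq -2\nu b^2 + 2\sqrt 2\, c'\, b\, N.
\end{equation*}
The contradiction hypothesis yields $N < b/\sqrt c$, so choosing $c'$ with $2\sqrt 2\, c'/\sqrt c \leq \nu$ gives $\dd t b^2 \leq -\nu b^2$. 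By Gronwall, $b(t)^2 \leq b(t_0)^2\,\eee^{-\nu(t-t_0)}$, and therefore $\eee^{-(q_2(t)-q_1(t))} \leq N(t)^2 < c^{-1} b(t)^2 \lesssim \eee^{-\nu(t-t_0)}$. Taking logarithms, this means $q_2(t) - q_1(t) \geq \nu t - C_1$ for some constant $C_1$ and all large $t$.

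Finally I would close the argument using the equal limit velocities. Bound \eqref{eq:dtqk-bound} gives $|q_2'(t) - q_1'(t)| \lesssim N(t) \to 0$, hence
\begin{equation*}
\frac{q_2(t) - q_1(t)}{t} = \frac{q_2(t_0) - q_1(t_0)}{t} + \frac 1t\int_{t_0}^t \big(q_2'(\tau) - q_1'(\tau)\big)\ud\tau \to 0,
\end{equation*}
that is, $q_2(t) - q_1(t) = o(t)$. This flatly contradicts the linear lower bound $q_2(t) - q_1(t) \geq \nu t - C_1$ obtained above, which proves the lemma. The only computational inputs are Lemmas~\ref{lem:mod-stable} and~\ref{lem:mod}; the one delicate point is that the source constant $c'$ in \eqref{eq:dtakm} can be made as small as needed to beat the stable rate $\nu$, which is legitimate precisely because its admissible size in \eqref{eq:norm-small-stable} is controlled by $\delta$ and $N(t)$ decays to $0$. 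I expect the main obstacle to be conceptual rather than technical: recognizing that the stability of the $a_k^-$ modes, together with the sublinearity of $q_2 - q_1$ forced by the common limit velocity, is exactly the mechanism ruling out persistent non-smallness of the stable projections.
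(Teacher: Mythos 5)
Your proposal is correct and follows essentially the same route as the paper: argue by contradiction, use Lemma~\ref{lem:mod-stable} with a source constant chosen small relative to $c$ to derive $\frac{\vd}{\vd t}\big(a_1^-(t)^2+a_2^-(t)^2\big)\leq -\nu\big(a_1^-(t)^2+a_2^-(t)^2\big)$, conclude exponential decay and hence $q_2(t)-q_1(t)\gtrsim t$, and contradict this with the sublinearity of $q_2-q_1$ coming from $|q_1'|+|q_2'|\to 0$ via \eqref{eq:dtqk-bound}. The only cosmetic difference is that you track the remainder via Cauchy--Schwarz on $a_1^-r_1+a_2^-r_2$ where the paper bounds $|N_1'+2\nu N_1|$ directly; the substance is identical.
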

\begin{proof}
Suppose the conclusion is false. Then for all $t \geq t_0$ we have
\begin{equation}
\label{eq:a+small-1}
N_1(t) := a_1^-(t)^2 + a_2^-(t)^2 \geq c\big(\eee^{-(q_2(t) - q_1(t))} + \| \varepsilon(t)\|_{H^1}^2\big).
\end{equation}
By Lemma~\ref{lem:mod-stable}, if we take $T_0$ large enough, then
\begin{equation}
\big|N_1'(t) + 2\nu N_1(t)\big| \leq c\nu\big(\eee^{-(q_2(t) - q_1(t))} + \| \varepsilon(t)\|_{H^1}^2\big) \leq \nu N_1(t),
\end{equation}
where in the last step we use \eqref{eq:a+small-1}. In particular, $N_1'(t) \leq -\nu N_1(t)$
for all $t \geq t_0$, which implies
\begin{equation}
N_1(t) \leq \eee^{-\nu(t - t_0)}N_1(t_0),\qquad \text{for all }t \geq t_0.
\end{equation}
Applying again \eqref{eq:a+small-1}, we deduce that $q_2(t) - q_1(t) \gtrsim t$ as $t \to \infty$,
which is impossible because $|q_1'(t)| + |q_2'(t)| \to 0$ as $t \to \infty$.
\end{proof}
\begin{lemma}
\label{lem:eps-bound-weak}
There exists $C_0 > 0$ (independent of $\rho$) with the following property.
For any $c_0 > 0$ there is $t_0 \geq T_0$ such that for all $t \geq t_0$
\begin{align}
\label{eq:a-small}
a_1^+(t)^2 + a_2^+(t)^2 &\leq c_0\,\sup_{\tau \geq t}\big(\eee^{-(q_2(\tau) - q_1(\tau))}
+ a_1^-(\tau)^2 + a_2^-(\tau)^2\big), \\
\label{eq:eps-bound-weak}
\| \varepsilon(t)\|_{H^1}^2 &\leq C_0\,\sup_{\tau \geq t}\big(\eee^{-(q_2(\tau) - q_1(\tau))}
+ a_1^-(\tau)^2 + a_2^-(\tau)^2\big).
\end{align}
\end{lemma}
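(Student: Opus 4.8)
The plan is to combine the coercivity of $H$ near a two-soliton (Proposition~\ref{prop:coer}) with a backward-in-time integration of the unstable modes $a_k^+$. Throughout write $q := q_2 - q_1$ and abbreviate $N(t) := \eee^{-q(t)} + \|\varepsilon(t)\|_{H^1}^2$. Since $u$ is a pure two-soliton and $H = E + M$ is a conserved, translation-invariant, continuous functional, Lemma~\ref{lem:H-deux-solit} forces $H\big(Q(\cdot - q_1) + \sigma Q(\cdot - q_2) + \varepsilon(t)\big) = H(u(t)) = 2H(Q)$ for every $t$. Hence, taking $t_0$ large enough that $\|\varepsilon(t)\|_{H^1}\le\delta$ and $q(t)\ge L_0$ for $t\ge t_0$, Proposition~\ref{prop:coer} applies; using the orthogonality conditions \eqref{eq:eps-orth} to drop the $\cZ$-terms, it yields a fixed constant $C_1$ with
\begin{equation}
\|\varepsilon(t)\|_{H^1}^2 \leq C_1\big(\eee^{-q(t)} + a_1^-(t)^2 + a_2^-(t)^2 + a_1^+(t)^2 + a_2^+(t)^2\big)
\end{equation}
(the case $\sigma = -1$ only strengthens this). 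The stable components $a_k^-$ and $\eee^{-q}$ are precisely the quantities allowed on the right-hand side of the lemma, so the whole difficulty is to absorb the unstable components $a_k^+$.

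For these I would integrate \eqref{eq:dtakp} backward from $+\infty$. Since $\|\varepsilon(t)\|_{H^1}\to 0$ by \eqref{eq:u-conv} and $\alpha^+\in L^2$, we have $a_k^+(t)\to 0$; writing $g_k(t) := \dd t a_k^+(t) - \nu a_k^+(t)$, the relation $\dd t\big(\eee^{-\nu t}a_k^+\big) = \eee^{-\nu t}g_k$ integrated from $t$ to $+\infty$ gives $a_k^+(t) = -\int_t^\infty \eee^{-\nu(r-t)}g_k(r)\ud r$. By \eqref{eq:dtakp}, $|g_k(r)|\le c\sqrt{N(r)}$, so
\begin{equation}
|a_k^+(t)| \leq c\int_t^\infty \eee^{-\nu(r-t)}\sqrt{N(r)}\ud r \leq \frac{c}{\nu}\sup_{\tau\ge t}\sqrt{N(\tau)}, \qquad\text{hence}\qquad a_k^+(t)^2 \leq \frac{c^2}{\nu^2}\sup_{\tau\ge t}N(\tau).
\end{equation}
The decisive gain is that the unstable modes are controlled by the \emph{supremum over later times}, with a constant $c$ that can be made as small as we like by shrinking $\delta$ (enlarging $t_0$).

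To close the estimate I set $M(t) := \sup_{\tau\ge t}N(\tau)$, which is finite (as $N(\tau)\to 0$) and nonincreasing, and $S(t) := \sup_{\tau\ge t}\big(\eee^{-q(\tau)} + a_1^-(\tau)^2 + a_2^-(\tau)^2\big)$. Inserting the unstable bound $a_k^+(\tau)^2 \le \tfrac{c^2}{\nu^2}M(\tau)\le \tfrac{c^2}{\nu^2}M(t)$ into the coercivity estimate, for every $\tau\ge t$,
\begin{equation}
N(\tau) \leq (1+C_1)\eee^{-q(\tau)} + C_1\big(a_1^-(\tau)^2 + a_2^-(\tau)^2\big) + \frac{2C_1 c^2}{\nu^2}M(t).
\end{equation}
Taking the supremum over $\tau\ge t$ gives $M(t) \le (1+C_1)S(t) + \tfrac{2C_1 c^2}{\nu^2}M(t)$, and choosing $c$ small enough that $\tfrac{2C_1 c^2}{\nu^2}\le \tfrac12$ lets us absorb the last term, so $M(t)\le 2(1+C_1)S(t)$. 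Since $\|\varepsilon(t)\|_{H^1}^2\le N(t)\le M(t)$, this is \eqref{eq:eps-bound-weak} with $C_0 := 2(1+C_1)$; feeding $M(t)\le 2(1+C_1)S(t)$ back into the unstable bound yields $a_1^+(t)^2 + a_2^+(t)^2 \le \tfrac{4c^2(1+C_1)}{\nu^2}S(t)$, which is \eqref{eq:a-small} as soon as $c$ is small enough that $\tfrac{4c^2(1+C_1)}{\nu^2}\le c_0$.

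The one genuinely delicate point is the order of quantifiers in this bootstrap: the small constant $c$ of Lemma~\ref{lem:mod-stable} (at our disposal through $\delta$) must be fixed \emph{first}, small relative to the fixed coercivity constant $C_1$, to $\nu$, and to the given $c_0$, so that both the absorption and the final bound on $a_k^+$ hold; only then is $t_0$ chosen so that Lemma~\ref{lem:mod-stable} and Proposition~\ref{prop:coer} are valid with that $c$ for all $t\ge t_0$. It is also essential that the supremum $\sup_{\tau\ge t}$ appears on the right: it is what makes the backward integration and the absorption compatible, and replacing it by the instantaneous value $N(t)$ would not close the argument.
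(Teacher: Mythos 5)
Your proof is correct, and it shares the paper's overall skeleton (coercivity of $H$ from Proposition~\ref{prop:coer} plus the observation that the unstable modes must be small because they are forced to decay) while implementing the key step differently. Where you bound $a_k^+(t)$ \emph{pointwise for every} $t$ by the backward Duhamel formula $a_k^+(t) = -\int_t^\infty \eee^{-\nu(r-t)}g_k(r)\ud r$, the paper only establishes the bound $a_1^+(t_1)^2 + a_2^+(t_1)^2 \leq c\,N(t_1)$ at the special time $t_1$ where $\sup_{\tau\ge t}N(\tau)$ is attained (see \eqref{eq:eps-bound-weak-3}), and it does so by a last-exit-time contradiction: if \eqref{eq:a-small-1} failed, the last time $t_2$ with $N_2(t_2)\ge N_2(t_1)$ would satisfy $N_2'(t_2)\le 0$ while Lemma~\ref{lem:mod-stable} forces $N_2'(t_2) \geq 2\nu N_2(t_2) - \text{(small)} > 0$. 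The paper then applies coercivity only at $t_1$ and reads off both conclusions there; your version instead runs a sup-absorption over all $\tau\ge t$, using that $M(t)=\sup_{\tau\ge t}N(\tau)$ is finite and nonincreasing. Your route buys a cleaner, quantitative bound on $a_k^+$ valid at every time and avoids having to locate (and argue about) the maximizing time; the paper's route avoids writing the variation-of-parameters formula and the attendant convergence checks at $+\infty$. Both rely on exactly the same inputs (Proposition~\ref{prop:coer}, \eqref{eq:dtakp}, the vanishing of $\varepsilon$ and $\eee^{-q}$ at infinity, and the orthogonality conditions killing the $\cZ$-terms), and your handling of the quantifier order --- fixing $c$ in terms of $C_1$, $\nu$, $c_0$ before choosing $t_0$ --- matches what the paper does implicitly when it sets $c = 1/C_0$ and $c = c_0/C_0$.
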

\begin{proof}
Let $t \geq t_0$ and let $t_1 \geq t$ be such that
\begin{equation}
\label{eq:eps-bound-weak-3}
\eee^{-(q_2(t_1) - q_1(t_1))} + \| \varepsilon(t_1)\|_{H^1}^2 = \sup_{\tau \geq t}\big(
\eee^{-(q_2(\tau) - q_1(\tau))} + \| \varepsilon(\tau)\|_{H^1}^2\big).
\end{equation}
We first prove that for any $c > 0$, if $t_0$ is chosen large enough, then
\begin{equation}
\label{eq:a-small-1}
a_1^+(t_1)^2 + a_2^+(t_1)^2 \leq c\big(\eee^{-(q_2(t_1) - q_1(t_1))} + \| \varepsilon(t_1)\|_{H^1}^2\big).
\end{equation}
For $t \geq T_0$, denote $N_2(t) := a_1^+(t)^2 + a_2^+(t)^2$.
Suppose that \eqref{eq:a-small-1} does not hold and let $t_2 := \max\{\tau: N_2(\tau) \geq N_2(t_1)\}$.
Note that $t_2 \in [t_1, \infty)$, because $\lim_{\tau \to \infty} N_2(\tau) = 0$
and $N_2(t_1) > 0$.
We have $N_2'(t_2) \leq 0$ and
\begin{equation}
N_2(t_2) \geq N_2(t_1) \geq c\big(\eee^{-(q_2(t_1) - q_1(t_1))} + \| \varepsilon(t_1)\|_{H^1}^2\big) \geq c\big(\eee^{-(q_2(t_2) - q_1(t_2))} + \| \varepsilon(t_2)\|_{H^1}^2\big),
\end{equation}
where the last inequality follows from \eqref{eq:eps-bound-weak-3}.
This implies
\begin{equation}
-N_2'(t_2) + 2\nu N_2(t_2) \geq 2\nu c\big(\eee^{-(q_2(t_2) - q_1(t_2))} + \| \varepsilon(t_2)\|_{H^1}^2\big),
\end{equation}
which contradicts Lemma~\ref{lem:mod-stable} if $t_0$ is large enough.

From Proposition~\ref{prop:coer} we know that
\begin{equation}
\eee^{-(q_2(t_1) - q_1(t_1))} + \| \varepsilon(t_1)\|_{H^1}^2 \leq \frac{C_0}{2} \big(
\eee^{-(q_2(t_1) - q_1(t_1))}+ a_1^+(t_1)^2 + a_2^+(t_1)^2+a_1^-(t_1)^2 + a_2^-(t_1)^2\big).
\end{equation}
Setting $c = \frac{1}{C_0}$ in \eqref{eq:a-small-1}, we obtain
\begin{gather}
\label{eq:epsilon-t1-est}
\eee^{-(q_2(t_1) - q_1(t_1))} + \| \varepsilon(t_1)\|_{H^1}^2 \leq C_0 \big(
\eee^{-(q_2(t_1) - q_1(t_1))}+ a_1^-(t_1)^2 + a_2^-(t_1)^2\big),
\end{gather}
which implies \eqref{eq:eps-bound-weak}.

We prove \eqref{eq:a-small} by contradiction.
%
%\red{TODO}
%Setting $c = \frac{c_0}{C_0}$ in \eqref{eq:a-small-1} and using the above inequality, we obtain
%\begin{equation}
%\label{eq:a-small-2}
%N_2(t_1) = a_1^+(t_1)^2 + a_2^+(t_1)^2 \leq c_0 \big(
%\eee^{-(q_2(t_1) - q_1(t_1))}+ a_1^-(t_1)^2 + a_2^-(t_1)^2\big).
%\end{equation}
Set
\begin{equation}
t_3 := \sup\big\{\tau > t: N_2(\tau) > c_0 \big(
\eee^{-(q_2(t_1) - q_1(t_1))}+ a_1^-(t_1)^2 + a_2^-(t_1)^2\big)\big\}.
\end{equation}
Since $N_2(\tau) \to 0$ as $\tau \to \infty$, $t_3$ is well-defined and $t_3 \in (t, \infty)$.
Moreover, we would have $N_2'(t_3) \leq 0$ and,
using  the definition of $t_1$ and \eqref{eq:epsilon-t1-est},
\begin{equation}
\begin{aligned}
\eee^{-(q_2(t_3) - q_1(t_3))} + \|\varepsilon(t_3)\|_{H^1}^2 &\leq \eee^{-(q_2(t_1) - q_1(t_1))} + \|\varepsilon(t_1)\|_{H^1}^2  \\
&\leq C_0 \big(
\eee^{-(q_2(t_1) - q_1(t_1))}+ a_1^-(t_1)^2 + a_2^-(t_1)^2\big).
\end{aligned}
\end{equation}
Therefore,
\begin{equation}
\begin{aligned}
N_2(t_3) = c_0 \big(\eee^{-(q_2(t_1) - q_1(t_1))}+ a_1^-(t_1)^2 + a_2^-(t_1)^2\big)
\geq \frac{c_0}{C_0}\big(\eee^{-(q_2(t_3) - q_1(t_3))}+\|\varepsilon(t_3)\|_{H^1}^2\big).
\end{aligned}
\end{equation}
Hence, we get
\begin{equation}
-N_2'(t_3) + 2\nu N_2(t_3) \geq 2\nu \frac{c_0}{C_0}\big(\eee^{-(q_2(t_3) - q_1(t_3))} + \| \varepsilon(t_3)\|_{H^1}^2\big),
\end{equation}
which contradicts Lemma~\ref{lem:mod-stable} if $t_0$ is large enough.
\end{proof}
\begin{proof}[Proof of Proposition~\ref{prop:eps-bound}]
Let $c_0 > 0$. In Lemma~\ref{lem:a+small}, let $c = \frac{c_0}{2(C_0 + 1)}$, where $C_0$
is the constant from Lemma~\ref{lem:eps-bound-weak}. We obtain that there exists $t_1$
arbitrarily large such that
\begin{equation}
\label{eq:eps-bound-1}
%\begin{gathered}
N_1(t_1) \leq \frac{c_0}{2(C_0+1)}\big((C_0 +1)\sup_{\tau \geq t_1}\eee^{-(q_2(\tau) - q_1(\tau))} + C_0 \sup_{\tau \geq t_1}N_1(\tau)\big)
%\end{gathered}
\end{equation}
(the meaning of $N_1(t)$ is the same as in the proof of Lemma~\ref{lem:a+small}).
Let
\begin{equation}
\label{eq:wtN3-def}
N_3(t) := \eee^{-(q_2(t) - q_1(t))}, \qquad \wt N_3(t) := \sup_{\tau \geq t}N_3(\tau) = \sup_{\tau \geq t}\eee^{-(q_2(\tau) - q_1(\tau))}.
\end{equation}
We will show that for all $t \geq t_1$ we have
\begin{equation}
\label{eq:N1-bound}
\wt N_1(t) := \sup_{\tau \geq t} N_1(\tau) \leq c_0\wt N_3(t).
\end{equation}
In view of Lemma~\ref{lem:eps-bound-weak}, this will finish the proof of \eqref{eq:eps-bound}.

By the rising sun lemma, see \cite[Lemma 1.6.17]{Tao-Measure},
for all $t$ except for a countable set, the function $\wt N_3$ is either constant, or equal to $N_3$ in a neighborhood of $t$.
In particular, for all $t$ except a countable set, $\wt N_3$ is differentiable and
\begin{equation}
\label{eq:derivN3}
\begin{aligned}
|\wt N_3'(t)| &\leq (|q_1'(t)| + |q_2'(t)|)\eee^{-(q_2(t) - q_1(t))} \\
&\ll \eee^{-(q_2(t) - q_1(t))} = N_3(t) \leq \wt N_3(t)\qquad \text{as }t \to \infty.
\end{aligned}
\end{equation}
We claim that if $t$ is sufficiently large and $N_1(t) = \wt N_1(t)$, then
\begin{equation}
\label{eq:eps-bound-5}
N_1(t) \geq c_0\wt N_3(t) \quad \Rightarrow \quad N_1'(t) \leq -\nu N_1(t).
\end{equation}
Indeed, Lemma~\ref{lem:eps-bound-weak} yields
\begin{equation}
\| \varepsilon(t)\|_{H^1}^2 \leq C_0\big(\wt N_1(t) + \wt N_3(t)\big) \leq C_0(1 + c_0^{-1})N_1(t).
\end{equation}
By Lemma~\ref{lem:mod-stable}, for any $c > 0$ and $t$ sufficiently large we have
\begin{equation}
|N_1'(t) + 2\nu N_1(t)| \leq c\big(\| \varepsilon(t)\|_{H^1}^2 + N_3(t)\big)
\leq c(C_0(1 + c_0^{-1}) + c_0^{-1}) N_1(t),
\end{equation}
and it suffices to take $c \leq \frac{\nu}{C_0(1+c_0^{-1}) + c_0^{-1}}$.

Suppose that \eqref{eq:N1-bound} does not hold, and let $t_2 > t_1$ be such that
$\wt N_1(t_2) > c_0\wt N_3(t_2)$. Without loss of generality,
we can assume that $\wt N_1(t_2) = N_1(t_2)$ (it suffices to replace $t_2$
by $\sup\big\{\tau \geq t_2: N_1(\tau) = N_1(t_2)\big\}$).
Let
\begin{equation}
t_3 := \min \Big\{t \in [t_1, t_2]: N_1'(\tau) \leq -\frac{\nu}{2}N_1(\tau)\text{ for all }\tau \in [t, t_2]\Big\}.
\end{equation}
By \eqref{eq:eps-bound-5} and continuity, $t_3 < t_2$. Suppose that $t_3 > t_1$.
By \eqref{eq:derivN3}, we can assume that $\wt N_3'(t) \geq -\frac{\nu}{4} \wt N_3(t)$
for almost all $t \in [t_3, t_2]$ (provided that $t_1$ was chosen sufficiently large).
Since $N_1(t_2) > c_0\wt N_3(t_2)$, this implies $N_1(t_3) > c_0\wt N_3(t_3)$.
The function $N_1(t)$ is decreasing for $t\in [t_3, t_2]$, so $N_1(t_3) = \wt N_1(t_3)$.
Thus \eqref{eq:eps-bound-5} yields $N_1'(t_3) \leq -\nu N_1(t_3)$,
which is in contradiction with the definition of $t_3$. This proves that $t_3 = t_1$.

In particular, we have shown that $N_1(t_1) = \wt N_1(t_1)$ and $N_1(t_1) > c_0\wt N_3(t_1)$.
This contradicts \eqref{eq:eps-bound-1}, so \eqref{eq:N1-bound} has to hold.

It remains to prove that $\sigma = 1$.
Suppose that $\sigma = -1$. Proposition~\ref{prop:coer} yields
\begin{equation}
\label{eq:coer-666}
\eee^{-(q_2(t) - q_1(t))} \lesssim
a_1^+(t)^2 + a_2^+(t)^2 + a_1^-(t)^2 + a_2^-(t)^2, \qquad \text{for all }t \geq T_0.
\end{equation}
Take $t_1$ sufficiently large such that
\begin{equation}
\label{eq:eps-bound-weak-666}
\eee^{-(q_2(t_1) - q_1(t_1))}= \sup_{\tau \geq t_1}\big(
\eee^{-(q_2(\tau) - q_1(\tau))}\big).
\end{equation}
Then \eqref{eq:a-small} and \eqref{eq:coer-666} yield
\begin{equation}
\eee^{-(q_2(t_1) - q_1(t_1))} \lesssim
a_1^-(t_1)^2 + a_2^-(t_1)^2,
\end{equation}
which contradicts \eqref{eq:N1-bound} if $c_0$ is small enough.
\end{proof}
\begin{lemma}
\label{lem:q-mono}
There exists $t_0 \geq T_0$ such that $q_2(t) - q_1(t)$
is increasing for $t \geq t_0$.
\end{lemma}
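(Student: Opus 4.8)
The plan is to track the difference of the two localized momenta $P(t) := p_2(t) - p_1(t)$ together with the distance $q(t) := q_2(t) - q_1(t)$, using that $\sigma = 1$ by Proposition~\ref{prop:eps-bound}. Subtracting the two instances of \eqref{eq:dtqk} in Lemma~\ref{lem:mod} gives $q'(t) = \la Q, \wt Q\ra^{-1} P(t) + E_1(t)$ with $|E_1(t)| \leq c\sqrt{\eee^{-q(t)} + \|\varepsilon(t)\|_{H^1}^2}$, while subtracting \eqref{eq:dtp1} from \eqref{eq:dtp2} (with $\sigma = 1$) gives $P'(t) = 4k_0^2\,\eee^{-q(t)} + O(\eee^{-q(t)} + \|\varepsilon(t)\|_{H^1}^2)$, where the constant in $O(\cdot)$ and the constant $c$ can be made as small as we wish by taking $t_0$ large. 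Since $|p_k(t)| \lesssim \|\varepsilon(t)\|_{L^2}$ (see the proof of Lemma~\ref{lem:mod}) and $\|\varepsilon(t)\|_{H^1} \to 0$, we have $P(t) \to 0$ as $t \to \infty$.

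The mechanism driving the result is the following. Because $\la Q, \wt Q\ra < 0$, the main term of $q'$ is $G(t) := \la Q, \wt Q\ra^{-1}P(t)$, and one expects $P'(t) > 0$, so that $G'(t) = \la Q, \wt Q\ra^{-1}P'(t) < 0$. A decreasing function tending to $0$ is strictly positive, hence $G(t) > 0$, i.e. $P(t) < 0$; integrating $P'$ from $t$ to $\infty$ and using $P(\infty) = 0$ then yields the quantitative lower bound $|P(t)| = \int_t^\infty P'(s)\ud s \gtrsim \int_t^\infty \eee^{-q(s)}\ud s$. Thus I would aim to prove $q'(t) > 0$ by showing that this positive main term $G(t)$ dominates the error $E_1(t)$ for all $t \geq t_0$.

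The main obstacle is precisely this domination, because a priori the two are of the same order: Proposition~\ref{prop:eps-bound} only controls $\|\varepsilon(t)\|_{H^1}^2$ by $\wt N_3(t) = \sup_{\tau \geq t}\eee^{-q(\tau)}$ rather than by $\eee^{-q(t)}$ itself, so the error $E_1(t) \lesssim c\sqrt{\wt N_3(t)}$ need not be small relative to $G(t)$, and moreover the sign of $P'$ is not clear until $\|\varepsilon\|_{H^1}^2$ is controlled by the interaction $\eee^{-q}$. I would resolve this exactly as in the proof of Proposition~\ref{prop:eps-bound}, by a continuity/bootstrap argument on the running supremum: one shows that on the relevant time range the supremum defining $\wt N_3(t)$ is attained at the left endpoint, i.e. $\wt N_3(t) = \eee^{-q(t)}$ and hence $\|\varepsilon(t)\|_{H^1}^2 \lesssim \eee^{-q(t)}$. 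In that regime $P'(t) > 0$ holds, and the slow variation of $q$ closes the estimate: from \eqref{eq:dtqk-bound}, $|q'(s)| \lesssim \sqrt{\wt N_3(s)} \lesssim \eee^{-q(t)/2}$ for $s \geq t$, so $\eee^{-q(s)} \geq \eee^{-q(t)}\eee^{-\epsilon(t)(s-t)}$ with $\epsilon(t) := \sup_{s \geq t}|q'(s)| \to 0$, whence
\begin{equation}
|P(t)| \gtrsim \int_t^\infty \eee^{-q(s)}\ud s \geq \frac{\eee^{-q(t)}}{\epsilon(t)} \gtrsim \eee^{-q(t)/2} \gtrsim |E_1(t)|,
\end{equation}
the last inequality using $\epsilon(t) \lesssim \eee^{-q(t)/2}$ and the smallness of $c$. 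This gives $q'(t) \geq \tfrac12 G(t) > 0$ for $t \geq t_0$, which is the claimed strict monotonicity. The delicate point is setting up the bootstrap so that the constants close, but this is of the same nature as the argument already carried out for Proposition~\ref{prop:eps-bound}.
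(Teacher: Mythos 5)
You have the right mechanism in mind (with $\sigma=1$ the interaction pushes $p=p_2-p_1$ up, $p\to 0$, so $p<0$ and hence, since $\la Q,\wt Q\ra<0$, the main part of $q'$ is positive), but the argument as written does not close, and the gap is exactly at the point you flag. The lower bound $|p(t)|\gtrsim\int_t^\infty \eee^{-q(s)}\ud s$ requires $p'(s)\gtrsim \eee^{-q(s)}$ for \emph{all} $s\geq t$, i.e.\ that the error $c\big(\eee^{-q(s)}+\|\varepsilon(s)\|_{H^1}^2\big)$ in \eqref{eq:dtp1}--\eqref{eq:dtp2} be dominated pointwise by $\eee^{-q(s)}$ on the whole half-line $[t,\infty)$. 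Proposition~\ref{prop:eps-bound} only gives $\|\varepsilon(s)\|_{H^1}^2\lesssim \sup_{\tau\geq s}\eee^{-q(\tau)}$, and the ratio $\sup_{\tau\geq s}\eee^{-q(\tau)}/\eee^{-q(s)}$ (equivalently $\int_t^\infty \sup_{\tau\ge s}\eee^{-q(\tau)}\ud s \big/ \int_t^\infty \eee^{-q(s)}\ud s$) is controlled only if $q$ is already known to be essentially non-decreasing on $[t,\infty)$ --- which is the statement being proved. The same circularity enters your bound $\epsilon(t)\lesssim \eee^{-q(t)/2}$. Crucially, this cannot be repaired by a forward continuity/bootstrap argument ``as in Proposition~\ref{prop:eps-bound}'': that proposition propagates estimates by following the exponentially growing/decaying modes $a^\pm$, whereas here every quantity you need ($p(t)=-\int_t^\infty p'$, $\epsilon(t)=\sup_{s\ge t}|q'(s)|$, $\wt N_3(t)$) is determined by the \emph{entire future} of the solution, so a bootstrap hypothesis on $[t_0,T]$ gives you nothing about it. Saying ``one shows the supremum is attained at the left endpoint'' is just restating the conclusion.

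The paper avoids this by localizing at a hypothetical failure of monotonicity rather than arguing at a generic time. If $q$ is not increasing past $t_1$, let $t_2$ be the last time where $q$ attains $\inf_{\tau\geq t_1}q(\tau)=:q_0$; then on any window where $q\in[q_0,q_0+1]$ one gets for free that $\|\varepsilon\|_{H^1}^2\lesssim \eee^{-q_0}$ (from \eqref{eq:eps-bound}, since the future infimum of $q$ is exactly $q_0$) and $\eee^{-q(t)}\geq \eee^{-(q_0+1)}$, so $p'\geq k_0^2\eee^{-q_0}$ uniformly there; combined with $q'(t_2)=0$, which pins $p(t_2)\geq -c\,\eee^{-q_0/2}$ via \eqref{eq:dtqk}, a double integration (linear growth of $p$, hence quadratic decay of $\int q'$) shows $q$ can subsequently rise by at most $O(c^2)$ before turning around, contradicting $q\to\infty$. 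This is the quantitative version of ``a concave function tending to $+\infty$ is increasing,'' and it is the step your proposal is missing. I would either adopt this contradiction-at-the-minimum structure or find a genuinely different way to convert the a priori bound $\|\varepsilon\|_{H^1}^2\lesssim\sup_{\tau\ge t}\eee^{-q(\tau)}$ into a pointwise one; as written, the proof is incomplete.
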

\begin{proof}
Set $q(t) := q_2(t) - q_1(t)$. Let $t_1 \geq t_0$, where $t_0$ is large
(chosen later in the proof).
We need to show that for all $t > t_1$ we have $q(t) > q(t_1)$.
Suppose this is not the case, and let
\begin{equation}
t_2 := \sup\big\{t: q(t) = \inf_{\tau \geq t_1}q(\tau)\big\}.
\end{equation}
Then $t_2 > t_1$, $q(t_2) = \inf_{\tau \leq t_2}q(\tau)$ and $q'(t_2) = 0$.

Let $p(t) := p_2(t) - p_1(t)$, $q_0 := q(t_2)$, $t_3 := \inf\{t \geq t_2: q(t) = q_0 + 1\}$.
Since $\lim_{t \to \infty}q(t) = +\infty$, $t_3$ is finite.
We will show that the modulation equations imply
\begin{equation}
\label{eq:q-mono-0}
z(t_3) \leq q_0 + \frac 12,
\end{equation}
which is a contradiction.

Let $t \in [t_2, t_3]$.
By Proposition~\ref{prop:eps-bound} we have $\| \varepsilon(t)\|_{H^1}^2 \lesssim \eee^{-q_0}$,
thus \eqref{eq:dtp1} and \eqref{eq:dtp2} yield, for some $C > 0$,
\begin{equation}
\label{eq:q-mono-1}
\begin{aligned}
p'(t) &\geq (4k_0^2 - 2c) \eee^{-q(t)} - 2cC\eee^{-q_0} \\
&\geq (4k_0^2-2c) \eee^{-(q_0+1)} - 2cC\eee^{-q_0} \geq k_0^2\eee^{-q_0},
\qquad \text{for all }t \in [t_2, t_3].
\end{aligned}
\end{equation}
Since $q'(t_2) = 0$, \eqref{eq:dtqk} yields $p(t_2) \geq -c\,\eee^{-\frac{q_0}{2}}$.
Integrating \eqref{eq:q-mono-1}, we get
\begin{equation}
\label{eq:q-mono-2}
p(t) \geq -c\,\eee^{-\frac{q_0}{2}} + k_0^2(t - t_2)\eee^{-q_0}, \qquad \text{for all }t\in [t_2, t_3].
\end{equation}
Using \eqref{eq:dtqk} again we obtain
\begin{equation}
-\la Q, \wt Q\ra q'(t) \leq -k_0^2(t - t_2)\eee^{-q_0} + c\,\eee^{-\frac{q_0}{2}}, \qquad\text{for all }t\in [t_2, t_3].
\end{equation}
We now integrate for $t$ between $t_2$ and $t_3$:
\begin{equation}
\begin{aligned}
{-}\la Q, \wt Q\ra\big(q(t_3) - q(t_2)\big)
&\leq  \int_{t_2}^{t_3}\big({-}k_0^2(t - t_2)\eee^{-q_0} + c\,\eee^{-\frac{q_0}{2}}\big)\ud t \\
&=-\frac 12 k_0^2\eee^{-q_0}(t_3 - t_2)^2 + c\,\eee^{-\frac{q_0}{2}}(t_3 - t_2) \leq \frac{c^2}{2k_0^2},
\end{aligned}
\end{equation}
so that \eqref{eq:q-mono-0} follows if we take $c$ small enough.
\end{proof}
We have the following immediate consequence of Proposition~\ref{prop:eps-bound}
and Lemma~\ref{lem:q-mono}.
\begin{corollary}
\label{cor:eps-bound}
There exist $C_0 > 0$ and $t_0 \geq T_0$ such that for all $t \geq t_0$
\begin{equation}
\label{eq:eps-bound-strong}
\| \varepsilon(t)\|_{H^1}^2 \leq C_0 \eee^{-(q_2(t) - q_1(t))}.
\end{equation}
\end{corollary}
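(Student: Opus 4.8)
The plan is to combine Proposition~\ref{prop:eps-bound} and Lemma~\ref{lem:q-mono} directly; no genuinely new estimate is required. Write $q(t) := q_2(t) - q_1(t)$. By Lemma~\ref{lem:q-mono} there is a time $t_0 \geq T_0$ such that $q$ is strictly increasing on $[t_0, \infty)$, and by Proposition~\ref{prop:eps-bound} there are $C_0 > 0$ and a (possibly different) threshold beyond which $\|\varepsilon(t)\|_{H^1}^2 \leq C_0 \sup_{\tau \geq t}\eee^{-(q_2(\tau) - q_1(\tau))}$. First I would take $t_0$ to be the larger of the two times, so that both conclusions hold simultaneously on $[t_0, \infty)$.

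The single observation needed is that monotonicity of $q$ collapses the supremum appearing in Proposition~\ref{prop:eps-bound}. Since $q$ is increasing on $[t_0, \infty)$, the map $\tau \mapsto \eee^{-q(\tau)}$ is decreasing there, so for every $t \geq t_0$,
\begin{equation}
\sup_{\tau \geq t} \eee^{-(q_2(\tau) - q_1(\tau))} = \eee^{-(q_2(t) - q_1(t))}.
\end{equation}
Substituting this identity into the weak bound from Proposition~\ref{prop:eps-bound} yields \eqref{eq:eps-bound-strong} with the same constant $C_0$, completing the argument.

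There is essentially no obstacle at this stage: all the substantive work has already been carried out in establishing the two cited results, namely the bootstrap argument controlling $\|\varepsilon\|_{H^1}$ by the interaction size (together with the determination $\sigma = 1$) in Proposition~\ref{prop:eps-bound}, and the monotonicity of the separation derived from the modulation equations in Lemma~\ref{lem:q-mono}. The only care required is the bookkeeping of choosing a common $t_0$, after which the stated inequality is an immediate consequence.
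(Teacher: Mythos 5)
Your argument is exactly the one the paper intends: it states the corollary as an immediate consequence of Proposition~\ref{prop:eps-bound} and Lemma~\ref{lem:q-mono}, and the monotonicity of $q_2-q_1$ is precisely what collapses the supremum in \eqref{eq:eps-bound} to its value at $\tau = t$. The proposal is correct and complete.
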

\begin{proposition}
\label{prop:reduced}
Let $c > 0$. There exist $\rho > 0$ and $t_0 \geq T_0$ with the following property.
Let $q(t) := q_2(t) - q_1(t)$ and $p(t) := p_2(t) - p_1(t)$, where
$q_k(t)$ and $p_k(t)$ are the modulation parameters defined in Section~\ref{sec:modulation-two}.
Then for all $t \geq t_0$ the following inequalities are true:
\begin{align}
\label{eq:effective-dtq}
\big|q'(t) - \la Q, \wt Q\ra^{-1}p(t)\big| &\leq c\,\eee^{-\frac{q(t)}{2}}, \\
\label{eq:effective-dtp}
\big|p'(t) - 4k_0^2\eee^{-q(t)}\big| &\leq c\,\eee^{-q(t)}.
\end{align}
\end{proposition}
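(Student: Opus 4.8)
The plan is to assemble this effective reduced system directly from the modulation estimates already proved, namely Lemma~\ref{lem:mod} and Lemma~\ref{lem:dtpk}, upgraded by the crucial coercivity bound of Corollary~\ref{cor:eps-bound} and the determination $\sigma = 1$ from Proposition~\ref{prop:eps-bound}. The whole point is that the error terms $\|\varepsilon(t)\|_{H^1}^2$ appearing on the right-hand sides of \eqref{eq:dtqk}, \eqref{eq:dtp1} and \eqref{eq:dtp2} are no longer independent: by Corollary~\ref{cor:eps-bound} they are controlled by $\eee^{-q(t)}$, so every occurrence of $\eee^{-q(t)} + \|\varepsilon(t)\|_{H^1}^2$ collapses to $(1 + C_0)\eee^{-q(t)}$. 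First I would note that for $t$ large the smallness hypothesis $\eee^{-q(t)} + \|\varepsilon(t)\|_{H^1}^2 \leq \delta$ required by Lemmas~\ref{lem:mod} and~\ref{lem:dtpk} is automatic, since \eqref{eq:q-conv} gives $q(t) \to \infty$ and the decomposition \eqref{eq:u-decomp} together with \eqref{eq:u-conv} gives $\|\varepsilon(t)\|_{H^1} \to 0$.

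To obtain \eqref{eq:effective-dtq}, I would subtract the two instances ($k = 1, 2$) of the bound \eqref{eq:dtqk} from Lemma~\ref{lem:mod}. Writing $q' = q_2' - q_1'$ and $p = p_2 - p_1$, the triangle inequality yields
\begin{equation}
\big|q'(t) - \la Q, \wt Q\ra^{-1} p(t)\big| \leq 2c\sqrt{\eee^{-q(t)} + \|\varepsilon(t)\|_{H^1}^2}.
\end{equation}
Inserting $\|\varepsilon(t)\|_{H^1}^2 \leq C_0 \eee^{-q(t)}$ from Corollary~\ref{cor:eps-bound} bounds the square root by $\sqrt{1 + C_0}\,\eee^{-q(t)/2}$, so the right-hand side becomes $2c\sqrt{1 + C_0}\,\eee^{-q(t)/2}$. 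Since the constant $c$ in Lemma~\ref{lem:mod} is arbitrary, renaming it absorbs the harmless factor $2\sqrt{1 + C_0}$ and gives \eqref{eq:effective-dtq} for any prescribed $c$ once $t_0$ is large enough.

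For \eqref{eq:effective-dtp} the argument is entirely parallel, using Lemma~\ref{lem:dtpk} in place of Lemma~\ref{lem:mod}. Subtracting \eqref{eq:dtp1} from \eqref{eq:dtp2} and recalling that $\sigma = 1$ by Proposition~\ref{prop:eps-bound}, the main terms combine to $\sigma 4 k_0^2 \eee^{-q(t)} = 4 k_0^2 \eee^{-q(t)}$, so
\begin{equation}
\big|p'(t) - 4 k_0^2 \eee^{-q(t)}\big| \leq 2c\big(\eee^{-q(t)} + \|\varepsilon(t)\|_{H^1}^2\big).
\end{equation}
Again Corollary~\ref{cor:eps-bound} turns the right-hand side into $2c(1 + C_0)\eee^{-q(t)}$, and the arbitrariness of $c$ concludes the proof.

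There is no genuine obstacle at this stage: the difficult analytic work—establishing the sign $\sigma = 1$ and the sharp bound $\|\varepsilon\|_{H^1}^2 \lesssim \eee^{-q}$—was already carried out in Section~\ref{sec:dynamics}, and the present proposition is a clean bookkeeping step that feeds those facts back into the modulation equations. The only mild care required is to keep track that $c$ is a free parameter in Lemmas~\ref{lem:mod} and~\ref{lem:dtpk}, so that the multiplicative constants $2\sqrt{1+C_0}$ and $2(1+C_0)$ generated by the coercivity substitution can be reabsorbed to achieve any target $c$ in the statement.
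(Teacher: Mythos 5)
Your proof is correct and follows essentially the same route as the paper: subtract the two instances of \eqref{eq:dtqk} (resp.\ \eqref{eq:dtp1} and \eqref{eq:dtp2} with $\sigma=1$ from Proposition~\ref{prop:eps-bound}) and use Corollary~\ref{cor:eps-bound} to replace $\|\varepsilon\|_{H^1}^2$ by $C_0\eee^{-q(t)}$, absorbing the resulting constants into the free parameter $c$. The extra bookkeeping of the factors $2\sqrt{1+C_0}$ and $2(1+C_0)$ is a harmless elaboration of what the paper leaves implicit.
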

\begin{proof}
Subtracting \eqref{eq:dtqk} for $k = 1$ and $k = 2$ we get
\begin{equation}
\big|q'(t) - \la Q, \wt Q\ra^{-1}p(t)\big| \leq c\sqrt{\eee^{-q(t)} + \| \varepsilon\|_{H^1}^2}
\leq c\,\eee^{-\frac{q(t)}{2}},
\end{equation}
where the last inequality follows from Corollary~\ref{cor:eps-bound}.

We already know from Proposition~\ref{prop:eps-bound} that $\sigma = 1$.
Subtracting \eqref{eq:dtp1} and \eqref{eq:dtp2}
and using Corollary~\ref{cor:eps-bound}, we obtain \eqref{eq:effective-dtp}.
\end{proof}
\begin{remark}
An important feature of the system of differential inequalities \eqref{eq:effective-dtq},
\eqref{eq:effective-dtp} is that it does not involve the error term $ \varepsilon(t)$.
Thus the study of the dynamical behavior of the solution $u(t)$ to \eqref{eq:kdv}
is reduced to the study of a two-dimensional system of differential inequalities.
As we will see below, these inequalities determine the dynamics of the parameters $q(t)$
and $p(t)$, at least at the main order.
\end{remark}
\begin{proof}[Proof of Theorem~\ref{thm:classif}]
Let $r(t) := p(t) - 2\kappa\la Q, \wt Q\ra\eee^{-\frac{q(t)}{2}}$
for $t \geq T_0$, where $\kappa$ is defined in Remark~\ref{rem:kappa}.
Since $p_1(t)\to 0$, $p_2(t) \to 0$ and $q(t)\to \infty$ as $t \to \infty$, we first note that $r(t)\to 0$ as $t \to \infty$.

By \eqref{eq:effective-dtq} and \eqref{eq:effective-dtp}, we have
\begin{equation}
\label{eq:dtr}
\begin{aligned}
r'(t) &= p'(t) +\kappa\la Q, \wt Q\ra q'(t)\eee^{-\frac{q(t)}{2}}
= 4k_0^2 \eee^{-q(t)} +\kappa p(t)\eee^{-\frac{q(t)}{2}} + O(c\,\eee^{-q(t)}) \\
&= \kappa\eee^{-\frac{q(t)}{2}}\big(p(t) -2\kappa\la Q, \wt Q\ra\eee^{-\frac{q(t)}{2}}\big) + O(c\,\eee^{-q(t)}) = \kappa\eee^{-\frac{q(t)}{2}}r(t) + O(c\,\eee^{-q(t)}),
\end{aligned}
\end{equation}
where we have used the fact that $-2\kappa^2 \la Q, \wt Q\ra = 4k_0^2$ in order to pass
from the first to the second line.
We now show that for any $c_0$ there exists $t_0 \geq T_0$ such that
\begin{equation}
\label{eq:r-small}
|r(t)| \leq c_0 \eee^{-\frac{q(t)}{2}}, \qquad \text{for all }t\geq t_0.
\end{equation}
Suppose \eqref{eq:r-small} fails, so there exists $t_1$ arbitrarily large such that
$|r(t_1)| > c_0 \eee^{-\frac{q(t_1)}{2}}$. Assume $r(t_1) > 0$
(the case $r(t_1) < 0$ is similar).
Let $t_2 := \sup\big\{t: r(t) = c_0 \eee^{-\frac{q(t_1)}{2}}\big\}$.
We have $t_2 \in (t_1, \infty)$ and $r'(t_2) \leq 0$.
Since $q(t)$ is increasing, we have $r(t_2) \geq c_0 \eee^{-\frac{q(t_2)}{2}}$.
Thus \eqref{eq:dtr} yields $r'(t_2) > 0$, a contradiction.

From \eqref{eq:r-small} and \eqref{eq:effective-dtq} we deduce that for any $c_0 > 0$
and $t_0$ large enough we have
\begin{equation}
\big|q'(t) - 2\kappa \eee^{-\frac{q(t)}{2}}\big| \leq \frac{c_0}{2} \eee^{-\frac{q(t)}{2}}
\ \Leftrightarrow\ \big|\big(\eee^{\frac{q(t)}{2}}\big)' - \kappa\big| \leq \frac{c_0}{4},
\end{equation}
which implies, after integrating,
\begin{equation}
(\kappa - c_0)t \leq \eee^{\frac{q(t)}{2}} \leq (\kappa + c_0)t\ \Leftrightarrow
\ 2\log t + 2\log(\kappa - c_0) \leq q(t) \leq 2\log t + 2\log(\kappa + c_0),
\end{equation}
for $t$ large enough.
Since $c_0$ is arbitrary, this proves \eqref{eq:classif}.
\end{proof}

\appendix
\section{Reduced dynamics}
\label{sec:reduced}
In this section we prove some facts about the reduced equation for modulation parameters \eqref{eq:constrained}.
Of course $E(\bs x, \bs v)$ is a conservation law for this system.
Maybe not suprisingly,
\begin{equation}
M(\bs x, \bs v) := M\Big(\sum_{k=1}^K \sigma_k Q_{v_k}(\cdot - x_k)\Big)
\end{equation}
is also a conservation law.
Indeed, the Hamiltonian vector field corresponding to $M$ is the generator of space translations,
which leave $E(\bs x, \bs v)$ invariant.
\subsection{General lemmas}
We recall some facts from the theory of ordinary differential equations.

Given $t_0$, a Euclidean space $E$ and $\beta > 0$,
we denote $N_\beta(t_0; E)$ the space of continuous functions $f: [t_0, \infty) \to E$ such that
\begin{equation}
\|f\|_{N_\beta(t_0; E)} := \sup_{t \geq t_0} \eee^{\beta t}|f(t)|_E < +\infty.
\end{equation}
If $t_0$ and $E$ are known from the context, we write $N_\beta$ instead of $N_\beta(t_0; E)$.

Given $x_0 \in E$ and $\rho > 0$, we denote $B_E(x_0; \rho) := \{x \in E: |x - x_0|_E \leq \rho\}$ and $B_E(\rho) := B_E(0; \rho)$.
If $E$ is known from the context, we skip the subscript.
\begin{lemma}
\label{lem:lin-syst}
If $T \in \bR^{d\times d}$ is a matrix having no eigenvalues whose real part is smaller than $-\lambda \in \bR$, then for any $\beta > \lambda$ and $t_0 \in \bR$ the system
\begin{equation}
\label{eq:lin-syst}
\bs x'(t) = T\bs x(t) + \bs f(t)
\end{equation}
defines a bounded linear operator $S: N_\beta(t_0; \bR^d) \to N_\beta(t_0; \bR^d)$, $\bs f \mapsto \bs x = S\bs f$,
whose norm depends on $T$ and $\beta$.
\end{lemma}
\begin{proof}
Without loss of generality, we can assume $\lambda = 0$. Indeed, it suffices to set $\wt T := T + \lambda$, $\wt{\bs x}(t) := \eee^{\lambda t}\bs x(t)$
and $\wt{\bs f}(t) := \eee^{\lambda t}\bs f(t)$, so that $\|\wt{\bs x}\|_{N_{\beta - \lambda}} = \|\bs x\|_{N_{\beta}}$
and $\|\wt{\bs f}\|_{N_{\beta - \lambda}} = \|\bs f\|_{N_{\beta}}$.

The solutions of \eqref{eq:lin-syst} are given by the Duhamel formula:
\begin{equation}
\label{eq:lin-syst-1}
\bs x(t) = \eee^{(t-t_0)T}\bs x(t_0) + \int_{t_0}^t \eee^{(t-s)T}\bs f(s)\ud s, \qquad\text{for all }t \geq t_0.
\end{equation}
Applying $\eee^{-(t-t_0)T}$ to both sides, we get
\begin{equation}
\label{eq:lin-syst-2}
\bs x(t_0) = \eee^{-(t-t_0)T}\bs x(t) - \int_{t_0}^t \eee^{-(s - t_0)T}\bs f(s)\ud s, \qquad\text{for all }t \geq t_0.
\end{equation}
By the Jordan decomposition, there exists $\wt C > 0$, depending on $T$, such that
\begin{equation}
\label{eq:exp-norm}
\| \eee^{-tT} \|_{\bR^d \to \bR^d} \leq \wt C(1 + t^{d-1}),\qquad\text{for all }t \geq 0.
\end{equation}
Hence, if $\bs f \in N_\beta(t_0; \bR^d)$ and $\bs x \in N_\beta(t_0; \bR^d)$, we can pass to the limit $t \to \infty$
in \eqref{eq:lin-syst-2} and obtain
\begin{equation}
\bs x(t_0) = -\int_{t_0}^\infty \eee^{-(s - t_0)T}\bs f(s)\ud s.
\end{equation}
Plugging this into \eqref{eq:lin-syst-1}, we have
\begin{equation}
\bs x(t) = -\int_{t_0}^\infty \eee^{-(s-t)T}\bs f(s)\ud s + \int_{t_0}^t\eee^{(t-s)T}\bs f(s)\ud s = -\int_t^\infty \eee^{-(s-t)T}\bs f(s)\ud s,
\end{equation}
which is a continuous function. Using again \eqref{eq:exp-norm}, we compute
\begin{equation}
\begin{aligned}
|\bs x(t)| &\leq \wt C\int_t^\infty \big(1 + (s-t)^{d-1}\big)|\bs f(s)|\ud s \leq \wt C\|\bs f\|_{N_\beta}\int_t^\infty \big(1 + (s-t)^{d-1}\big)\eee^{-\beta s}\ud s \\
&= \wt C\eee^{-\beta t}\|\bs f\|_{N_\beta}\int_0^\infty \big(1+s^{d-1}\big)\eee^{-\beta s}\ud s \leq C(\beta^{-1} + \beta^{-d})\eee^{-\beta t}\|\bs f\|_{N_\beta},
\end{aligned}
\end{equation}
with $C$ depending only on $T$.
\end{proof}
\begin{proposition}
\label{prop:no-negative}
Let $T \in \bR^{d\times d}$ be a matrix having no eigenvalues with a negative real part.
For any $\beta > 0$ there exists $c_0 > 0$ such that the following is true.
Let $t_0 \in \bR$, $\eta > 0$ and $\bs f: [t_0, \infty) \times B_{\bR^d}(\eta) \to \bR^d$ be a~continuous function satisfying
\begin{align}
\label{eq:no-negative-ass1} |\bs f(t, 0)| &\leq \eee^{-\beta t},\qquad\qquad\text{for all }t \geq t_0, \\
\label{eq:no-negative-ass2} |\bs f(t, \sh{\bs x}) - f(t, \bs x)| &\leq c_0|\sh{\bs x} - \bs x|,\quad\ \,\text{for all }t \geq t_0\text{ and }|\bs x|, |\sh{\bs x}| \leq \eta.
\end{align}
There exist $t_1 \geq t_0$ and $\bs x_s \in N_\beta(t_1; \bR^d)$ such that
\begin{equation}
\label{eq:no-negative}
\bs x_s'(t) = T\bs x_s(t) + \bs f(t, \bs x_s(t)),\qquad\text{for all }t \geq t_1.
\end{equation}
If $t_2 \in \bR$ and $\sh {\bs x}_s \in N_\beta(t_2; \bR^d)$ solves \eqref{eq:no-negative} for all $t \geq t_2$,
then $\sh {\bs x}_s(t) = \bs x_s(t)$ for all $t \geq \max(t_1, t_2)$.
\end{proposition}
\begin{proof}%[Proof of Proposition~\ref{prop:no-negative}]
Given $\bs x : [t_1, \infty) \to \bR^d$ continuous and such that $\|\bs x\|_{L^\infty} \leq \eta$,
we denote $\bs f(\bs x)$ the function $t \mapsto \bs f(t, \bs x(t))$.
The system \eqref{eq:no-negative} is equivalent to the fixed point problem $\bs x_s = S\bs f(\bs x_s)$,
where $S$ is the operator from Lemma~\ref{lem:lin-syst}.
We now check that for any $\rho$ large enough and $t_1\in \bR$ large enough (depending on $\rho$), $\bs x \mapsto S\bs f(\bs x)$
is a~contraction on the ball in $N_\beta(t_1; \bR^d)$ of center $0$ and radius $\rho$,
which by the Contraction Principle will finish the proof.

Fix $\rho > 0$ and let $\|\bs x\|_{N_\beta(t_1)} \leq \rho$. If $t_1$ is large enough,
then the last bound implies in particular $\|\bs x\|_{L^\infty} \leq \eta$, hence $\bs f(\bs x)$ is a well-defined
continuous function. Using \eqref{eq:no-negative-ass1} and \eqref{eq:no-negative-ass2}, we have
\begin{equation}
|\bs f(t, \bs x(t))| \leq \eee^{-\beta t}(1 + c_0\rho) \quad\Rightarrow\quad \|\bs f(\bs x)\|_{N_\beta(t_1)} \leq 1 + c_0\rho.
\end{equation}
By Lemma~\ref{lem:lin-syst}, if $c_0$ is small enough and $\rho$ large enough, then $\|S\bs f(\bs x)\|_{N_\beta} \leq \rho$.

Now, let $\sh{\bs x}$ be another function satisfying $\|\sh{\bs x}\|_{N_\beta(t_1)} \leq \rho$.
Using \eqref{eq:no-negative-ass2}, we have
\begin{equation}
|\bs f(t, \sh{\bs x}(t)) - \bs f(t, \bs x(t))| \leq c_0|\sh{\bs x}(t) - \bs x(t)| \quad\Rightarrow\quad \|\bs f(\sh{\bs x}) - \bs f(\bs x)\|_{N_\beta(t_1)} \leq c_0\|\sh{\bs x} - \bs x\|_{N_\beta(t_1)}.
\end{equation}
By Lemma~\ref{lem:lin-syst}, if $c_0$ is small enough, then $\|S\bs f(\sh{\bs x}) - S\bs f(\bs x)\|_{N_\beta} \leq \frac 12 \|\sh{\bs x} - \bs x\|_{N_\beta}$.
\end{proof}
\begin{lemma}
\label{lem:one-negative}
Let $T \in \bR^{d\times d}$ be a matrix with exactly one negative eigenvalue $-\lambda$ and
all the other eigenvalues having a non-negative real part. Denote $\cY_s$ an eigenvector of $T$ corresponding to the eigenvalue $-\lambda$ and $\alpha_s$ the eigenvector of $T^*$, the transpose of $T$, corresponding to the eigenvalue $-\lambda$,
normalised so that $\alpha_s \cdot \cY_s = 1$.
Let $t_0 \in \bR$ and $\beta \in (0, \lambda)$.
For any $\bs f \in N_\beta(t_0; \bR^d)$ the system
\begin{equation}
\label{eq:lin-syst-neg}
\bs x'(t) = T\bs x(t) + \bs f(t)
\end{equation}
has a unique solution $\bs x \in N_\beta(t_0; \bR^d)$ such that $\alpha_s \cdot \bs x(t_0) = 0$.
The mapping $\bs f \mapsto \bs x = S_p\bs f$ is a bounded linear operator $N_\beta(t_0; \bR^d) \to N_\beta(t_0; \bR^d)$
whose norm depends on $T$ and $\beta$.

If $\bs x, \sh{\bs x} \in N_\beta(t_0; \bR^d)$ are two solutions of \eqref{eq:lin-syst-neg},
then there exists $a \in \bR$ such that $\sh{\bs x}(t) - \bs x(t) = a\eee^{-\lambda t}\cY_s$ for all $t \geq t_0$.
\end{lemma}
\begin{remark}
We denote the solution operator $S_p$ to indicate that it yields a \emph{particular} solution of \eqref{eq:lin-syst-neg},
chosen in an arbitrary and non-canonical way.
\end{remark}
\begin{proof}[Proof of Lemma~\ref{lem:one-negative}]
The solutions of \eqref{eq:lin-syst-neg} are given by the Duhamel formula:
\begin{equation}
\label{eq:lin-syst-neg-1}
\bs x(t) = \eee^{(t-t_0)T}\bs x(t_0) + \int_{t_0}^t \eee^{(t-s)T}\bs f(s)\ud s, \qquad\text{for all }t \geq t_0.
\end{equation}

Let $X_u := \{\bs x \in \bR^d: \alpha_s \cdot \bs x = 0\}$ and let $T_u: X_u \to X_u$ be the restriction of $T$ to $X_u$.
Let $\bs x(t) = a(t)\cY_s + \bs x_u(t)$ with $\bs x_u(t) \in X_u$ and $\bs f(t) = b(t)\cY_s + \bs f_u(t)$ with $\bs f_u(t) \in X_u$.
Taking the inner product of \eqref{eq:lin-syst-neg-1} with $\alpha_s$, we obtain
\begin{equation}
\label{eq:lin-syst-neg-2}
a(t) = \eee^{-\lambda(t-t_0)}a(t_0) + \int_{t_0}^t \eee^{-\lambda(t - s)}b(s)\ud s = \int_{t_0}^t \eee^{-\lambda(t - s)}b(s)\ud s,
\end{equation}
where the last equality follows since we require $\alpha_s \cdot \bs x(t_0) = 0$.
Taking the difference of \eqref{eq:lin-syst-neg-1} and \eqref{eq:lin-syst-neg-2} multiplied by $\cY_s$, we obtain
\begin{equation}
\label{eq:lin-syst-neg-3}
\bs x_u(t) = \eee^{(t-t_0)T_u}\bs x_u(t_0) + \int_{t_0}^t \eee^{(t-s)T_u}\bs f_u(s)\ud s, \qquad\text{for all }t \geq t_0.
\end{equation}
Applying $\eee^{-(t-t_0)T_u}$ to both sides, we get
\begin{equation}
\label{eq:lin-syst-neg-4}
\bs x_u(t_0) = \eee^{-(t-t_0)T_u}\bs x_u(t) - \int_{t_0}^t \eee^{-(s - t_0)T_u}\bs f_u(s)\ud s, \qquad\text{for all }t \geq t_0.
\end{equation}
By the Jordan decomposition, there exists $C_1 > 0$, depending on $T$, such that
\begin{equation}
\label{eq:exp-norm-neg}
\| \eee^{-tT_u} \|_{X_u \to X_u} \leq C_1(1 + t^{d-2}),\qquad\text{for all }t \geq 0.
\end{equation}
Hence, if $\bs f \in N_\beta(t_0; \bR^d)$ and $\bs x \in N_\beta(t_0; \bR^d)$, we can pass to the limit $t \to \infty$
in \eqref{eq:lin-syst-neg-4} and obtain
\begin{equation}
\bs x_u(t_0) = -\int_{t_0}^\infty \eee^{-(s - t_0)T_u}\bs f_u(s)\ud s.
\end{equation}
Plugging this into \eqref{eq:lin-syst-neg-3}, we have
\begin{equation}
\bs x_u(t) = -\int_{t_0}^\infty \eee^{-(s-t)T_u}\bs f_u(s)\ud s + \int_{t_0}^t\eee^{(t-s)T_u}\bs f_u(s)\ud s = -\int_t^\infty \eee^{-(s-t)T_u}\bs f_u(s)\ud s,
\end{equation}
which, together with \eqref{eq:lin-syst-neg-2}, uniquely determines a continuous function $\bs x(t)$. We have
\begin{equation}
\begin{aligned}
|a(t)| &\leq C_2\|\bs f\|_{N_\beta}\int_{t_0}^t \eee^{-\lambda(t - s)}\eee^{-\beta s}\ud s = C_2\eee^{-\beta t}\|\bs f\|_{N_\beta}\int_{t_0}^t \eee^{-(\lambda - \beta)(t-s)}\ud s \\
&\leq C_2 \eee^{-\beta t}\|\bs f\|_{N_\beta}\int_{0}^\infty \eee^{-(\lambda - \beta)s}\ud s \leq C_3 \eee^{-\beta t}\|\bs f\|_{N_\beta},
\end{aligned}
\end{equation}
where $C_2$ depends on $T$, and $C_3$ depends on $T$ and $\beta$. Finally,
\begin{equation}
\begin{aligned}
|\bs x_u(t)| &\leq C_4\int_t^\infty \big(1 + (s-t)^{d-2}\big)|\bs f(s)|\ud s
\leq C_4 \|\bs f\|_{N_\beta}\int_t^\infty \big(1 + (s-t)^{d-2}\big)\eee^{-\beta s}\ud s \\
&= C_4 \eee^{-\beta t}\|\bs f\|_{N_\beta}\int_0^\infty \big(1+s^{d-2}\big)\eee^{-\beta s}\ud s
\leq C_5 \eee^{-\beta t}\|\bs f\|_{N_\beta},
\end{aligned}
\end{equation}
where $C_4$ depends on $T$, and $C_5$ depends on $T$ and $\beta$.

If $\sh{\bs x}$ and $\bs x$ both belong to $N_\beta(t_0; \bR^d)$ and solve \eqref{eq:lin-syst-neg},
then $\bs z := \sh{\bs x} - \bs x \in N_\beta(t_0; \bR^d)$ solves $\bs z'(t) = T\bs z(t)$ for all $t \geq t_0$,
hence the Jordan decomposition yields $\bs z(t) = a\eee^{-\lambda t}\cY_s$ for some $a \in \bR$.
\end{proof}
The following version of the Stable Manifold Theorem for non-autonomous systems will be useful.
\begin{proposition}
\label{prop:one-negative}
Let $T \in \bR^{d\times d}$ be a matrix with exactly one negative eigenvalue $-\lambda$ and
all the other eigenvalues having a non-negative real part. Denote $\cY_s$ an eigenvector of $T$ corresponding to the eigenvalue $-\lambda$.
Let $t_0 \in \bR$, $\beta, \gamma, \eta > 0$
and $\bs f: [t_0, \infty) \times B_{\bR^d}(\eta) \to \bR^d$ be a~continuous function satisfying
\begin{gather}
\label{eq:one-negative-ass1} |\bs f(t, 0)| \leq \eee^{-\beta t},\qquad\text{for all }t \geq t_0, \\
\label{eq:one-negative-ass2} |\bs f(t, \sh{\bs x}) - f(t, \bs x)| \leq \eee^{-\gamma t}|\sh{\bs x} - \bs x|,\quad\text{for all }t \geq t_0\text{ and }|\bs x|, |\sh{\bs x}| \leq \eta.
\end{gather}

There exists a family $\{\bs x_a \in N_\beta(\tau_a; \bR^d): a \in \bR\}$
% a continuous map $\bR \owns a \mapsto \bs x_a \in N_\beta(t_0; \bR^d)$ 
having the following properties:
\begin{itemize}
\item for all $a \in \bR$, $\bs x = \bs x_a$ solves for all $t \geq \tau_a$ the equation
\begin{equation}
\label{eq:one-negative}
\bs x'(t) = T\bs x(t) + \bs f(t, \bs x(t)),
\end{equation}
\item for all $a, \sh a \in \bR$ and $\nu < \lambda + \gamma$, $\bs x_{\sh a} - \bs x_a - (\sh a - a)\exp(-\lambda\,\cdot)\cY_s \in N_{\nu}(\max(\tau_a, \tau_{\sh a}); \bR^d)$,
\item if $t_1 \in \bR$ and $\bs x \in N_\beta(t_1; \bR^d)$ solves \eqref{eq:one-negative} for all $t \geq t_1$,
then there exists $a \in \bR$ such that $\bs x(t) = \bs x_a(t)$ for all $t \geq \max(t_1, \tau_a)$.
\end{itemize}
\end{proposition}
\begin{proof}
By exactly the same computation as in the proof of Proposition~\ref{prop:no-negative},
we obtain that for any $\rho$ large enough and $\tau_0\in \bR$ large enough (depending on $\rho$), $\bs x \mapsto S_p\bs f(\bs x)$,
where $S_p$ is the operator defined in Lemma~\ref{lem:one-negative},
is a~contraction on the ball in $N_\beta(\tau_0; \bR^d)$ of center $0$ and radius $\rho$.
We denote $\bs x_0 \in N_\beta(\tau_0; \bR^d)$ its unique fixed point.

Setting $\bs x = \bs x_0 + \bs y$, we rewrite \eqref{eq:one-negative} as
\begin{equation}
\label{eq:one-negative-1}
\bs y'(t) = T\bs y(t) + \big[ \bs f(t, \bs x_0(t) + \bs y(t)) - \bs f(t, \bs x_0(t)) \big].
\end{equation}
Let $\nu \in (\lambda, \lambda + \gamma)$.
We first show that if $\bs y \in N_\beta(t_1; \bR^d)$ solves \eqref{eq:one-negative-1} for all $t \geq t_1$,
then there exists $a \in \bR$ such that $\bs y - a\exp(-\lambda\,\cdot)\cY_s \in N_\nu(t_1; \bR^d)$.

Indeed, if $\bs y \in N_\beta(t_1; \bR^d)$, then \eqref{eq:one-negative-ass2} implies
$\bs f(\bs x_0 + \bs y) - \bs f(\bs x_0) \in N_{\beta + \gamma}(t_1; \bR^d)$,
thus Lemma~\ref{lem:one-negative} yields $\bs y \in N_{\wt \beta}(t_1; \bR^d)$ for any $\wt \beta < \min(\beta + \gamma, \lambda)$. Repeating a finite number of times, we obtain $\bs y \in N_{\nu - \gamma}(t_1; \bR^d)$,
which, again using \eqref{eq:one-negative-ass2}, implies
$\bs f(\bs x_0 + \bs y) - \bs f(\bs x_0) \in N_{\nu}(t_1; \bR^d)$.
Hence,
\begin{equation}
S(\bs f(\bs x_0 + \bs y) - \bs f(\bs x_0)) \in N_{\nu}(t_1; \bR^d),
\end{equation}
where $S$ is the operator defined in Lemma~\ref{lem:lin-syst}.
But $\bs z := \bs y - S(\bs f(\bs x_0 + \bs y) - \bs f(\bs x_0))$ satisfies the homogeneous equation $\bs z'(t) = T\bs z(t)$,
yielding $\bs z(t) = a\eee^{-\lambda t}\cY_s$ for some $a \in \bR$.

Now, we prove that for every $a \in \bR$ there exists a unique solution $\bs y_a$ of \eqref{eq:one-negative-1}
such that $\bs y_a - a\exp(-\lambda\,\cdot)\cY_s \in N_\nu(t_1; \bR^d)$ for some $t_1 \in \bR$.
Writing $\bs y_a(t) = a\exp(-\lambda t)\cY_s + \bs z(t)$, \eqref{eq:one-negative-1} becomes
\begin{equation}
\bs z'(t) = T\bs z(t) + \big[ \bs f(t, \bs x_0(t) + a\exp(-\lambda t)\cY_s + \bs z(t)) - \bs f(t, \bs x_0(t)) \big].
\end{equation}
Since $\bs f(\bs x_0 + a\exp(-\lambda\,\cdot)\cY_s + \bs z) - \bs f(\bs x_0) \in N_\nu(t_1; \bR^d)$ and $\nu > \lambda$,
the last equation is equivalent to
\begin{equation}
\bs z = S\big( \bs f(\bs x_0 + a\exp(-\lambda\,\cdot)\cY_s + \bs z) - \bs f(\bs x_0)\big).
\end{equation}
By a similar computation as in the proof of Proposition~\ref{prop:no-negative},
if $\rho$ is large enough (depending on $a$), then the right hand side defines a contraction on the ball in $N_\nu(t_1; \bR^d)$ of center 0 and radius $\rho$ for $t_1$ large enough.

This proves existence and uniqueness of $\bs z$, and thus of $\bs y_a$. We set $\bs x_a := \bs x_0 + \bs y_a$.
\end{proof}
\subsection{Distinct limit speeds}
\label{ssec:constrained-distinct}
For given $\bs x(t) = (x_1(t), \ldots, x_K(t))$ we denote
\begin{equation}
L(t) := \min_{1\leq k < K}(x_{k+1}(t) - x_k(t)).
\end{equation}
We also denote
\begin{equation}
\begin{pmatrix} X(\bs x, \bs v) \\ V(\bs x, \bs v)\end{pmatrix}
:= \begin{pmatrix}A(\bs x, \bs v) & C(\bs x, \bs v) \\
-C(\bs x, \bs v) & B(\bs x, \bs v)\end{pmatrix}^{-1}\begin{pmatrix}\partial_{\bs x}E(\bs x, \bs v) \\ \partial_{\bs v}E(\bs x, \bs v)\end{pmatrix}
\end{equation}
the right hand side of \eqref{eq:constrained}.
\begin{proposition}
\label{prop:constrained-distinct}
Let $\bs v^\infty = (v_1^\infty, \ldots, v_K^\infty) \in ((0, v_*) \setminus V_\tx{crit})^K$
be such that $v_1^\infty < \ldots < v_K^\infty$  and let $(\bs x(t), \bs v(t))$ be a global solution
to \eqref{eq:constrained} such that $\lim_{t \to \infty}v_k(t) = v_k^\infty$ and $\lim_{t \to \infty}L(t) = \infty$.
Then there exist $x_1^\infty, \ldots, x_K^\infty \in \bR$ and $\beta > 0$ such that,
for $k \in \{1, \ldots, K\}$,
\begin{equation}
\label{eq:conv-exp}
|x_k(t) - (v_k^\infty t + x_k^\infty)| \leq \eee^{-\beta t},\qquad |v_k(t) - v_k^\infty| \leq \eee^{-\beta t},\qquad\text{for }t \text{ sufficiently large}.
\end{equation}
Moreover, for any $x_1^\infty, \ldots, x_K^\infty \in \bR$ there exists a unique solution to \eqref{eq:constrained}
satisfying \eqref{eq:conv-exp}.
\end{proposition}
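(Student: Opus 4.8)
The plan is to exploit the fact that, for well-separated solitons with distinct velocities, the reduced system \eqref{eq:constrained} decouples into $K$ independent single-soliton motions, up to corrections that are exponentially small in the separations. First I would record the leading-order structure of the coefficients as the minimal separation $L := L(t)$ tends to infinity. The diagonal entries $a_{kk} = b_{kk} = 0$ and $c_{kk} = \la Q_{v_k}, \wt Q_{v_k}\ra =: c_k \neq 0$ involve only the $k$-th soliton and carry no interaction, whereas every off-diagonal entry $a_{jk}, b_{jk}, c_{jk}$ ($j \neq k$) is an overlap integral of two profiles centered at $x_j$ and $x_k$; by Lemma~\ref{lem:Q-asym} there is a uniform $\gamma > 0$ (over the relevant compact velocity range) with all such off-diagonal entries of size $O(\eee^{-\gamma L})$. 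Hence the symplectic matrix is $\mathcal S = \mathcal S_0 + O(\eee^{-\gamma L})$ with $\mathcal S_0 = \begin{pmatrix} 0 & C_0 \\ -C_0 & 0 \end{pmatrix}$, $C_0 = \operatorname{diag}(c_1, \ldots, c_K)$, which is invertible with bounded inverse. Likewise $E(\bs x, \bs v) = \sum_k E(Q_{v_k}) + O(\eee^{-\gamma L})$, so $\partial_{x_k} E = O(\eee^{-\gamma L})$ and, using $\vD E(Q_{v_k}) = -v_k Q_{v_k}$, $\partial_{v_k} E = -v_k c_k + O(\eee^{-\gamma L})$. Inverting $\mathcal S$ in \eqref{eq:constrained} then gives the central reduction
\begin{equation}
\label{eq:reduction}
x_k'(t) = v_k(t) + O(\eee^{-\gamma L(t)}), \qquad v_k'(t) = O(\eee^{-\gamma L(t)}), \qquad k \in \{1, \ldots, K\}.
\end{equation}

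Next I would convert spatial decay into temporal decay by a bootstrap on $L(t)$. Since $v_k(t) \to v_k^\infty$ with $v_1^\infty < \cdots < v_K^\infty$, there is $\delta_0 := \tfrac{1}{2}\min_k(v_{k+1}^\infty - v_k^\infty) > 0$ with $v_{k+1}(t) - v_k(t) \geq \delta_0$ for $t$ large, so \eqref{eq:reduction} yields $\tfrac{\vd}{\vd t}(x_{k+1} - x_k) \geq \delta_0 - O(\eee^{-\gamma L}) \geq \tfrac{\delta_0}{2}$; hence $L(t) \geq \tfrac{\delta_0}{2} t - C$ and $\eee^{-\gamma L(t)} \lesssim \eee^{-\beta t}$ with $\beta := \gamma \delta_0 / 2 > 0$. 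Feeding this back into \eqref{eq:reduction}, the $v_k'$ are integrable in time, so $|v_k(t) - v_k^\infty| = |\int_t^\infty v_k'| \lesssim \eee^{-\beta t}$, and then $x_k'(t) - v_k^\infty = (v_k(t) - v_k^\infty) + O(\eee^{-\beta t}) = O(\eee^{-\beta t})$ is integrable, so $x_k(t) - v_k^\infty t$ converges to some $x_k^\infty$ at rate $\eee^{-\beta t}$. This establishes \eqref{eq:conv-exp} (after possibly shrinking $\beta$).

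For the existence-and-uniqueness part I would set up a contraction at $t = +\infty$. Writing $y_k := x_k - (v_k^\infty t + x_k^\infty)$, $w_k := v_k - v_k^\infty$, the system becomes $y_k' = w_k + G_k^x$, $w_k' = G_k^v$, where $G_k^x, G_k^v$ are the interaction remainders in \eqref{eq:reduction}, smooth in $(\bs x, \bs v)$ for separated configurations and of size $O(\eee^{-\gamma L})$. In the Banach space of continuous $(\bs y, \bs w)$ on $[T, \infty)$ with norm $\sup_{t \geq T} \eee^{\beta t}(|\bs y(t)| + |\bs w(t)|)$, I would consider the operator
\begin{equation}
w_k(t) = -\int_t^\infty G_k^v \ud s, \qquad y_k(t) = -\int_t^\infty \big(w_k + G_k^x\big)\ud s,
\end{equation}
whose fixed points are exactly the solutions with $(\bs y, \bs w) \to 0$, i.e. with the prescribed limits $x_k^\infty$. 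On this space the separations obey $x_{k+1} - x_k \geq \tfrac{\delta_0}{2} t$ for $T$ large, so $G_k^x, G_k^v = O(\eee^{-\beta t})$ and their Lipschitz dependence on $(\bs y, \bs w)$ inherits the same gain; taking $T$ large makes the operator a contraction, producing a unique fixed point for each choice of $(x_1^\infty, \ldots, x_K^\infty)$.

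I expect the main obstacle to be the bookkeeping of the first paragraph: proving the exponential smallness and, above all, the uniform Lipschitz estimates of all matrix entries and of $\partial E$ in terms of the separations, together with invertibility of $\mathcal S$ with a bound independent of $L$ (which rests on $v_k^\infty \notin V_\tx{crit}$, guaranteeing $c_k \neq 0$). Once \eqref{eq:reduction} and these Lipschitz bounds are secured, the bootstrap, the convergence, and the contraction argument are routine.
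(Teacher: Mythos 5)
Your overall strategy coincides with the paper's: establish $x_k' = v_k + O(\eee^{-\gamma L})$, $v_k' = O(\eee^{-\gamma L})$ (this is Lemma~\ref{lem:red-ham-distinct}), bootstrap $L(t) \gtrsim t$ from the distinctness of the limit velocities, integrate to get \eqref{eq:conv-exp}, and then solve a fixed-point problem at $t = +\infty$ (the paper invokes the Stable Manifold Theorem where you run a direct contraction; that difference is cosmetic). However, there is one concretely false step in your first paragraph: the claim that \emph{every} off-diagonal entry of the symplectic matrix is $O(\eee^{-\gamma L})$. This fails for the entries $b_{jk} = \sigma_j\sigma_k\int_\bR \wt Q_{v_j}(x - x_j)\,\partial_x^{-1}\wt Q_{v_k}(x - x_k)\ud x$ with $j \neq k$: the function $\partial_x^{-1}\wt Q_{v_k}$ is \emph{not} localized, since it tends to $\pm\frac 12\int_\bR \wt Q_{v_k}\ud x \neq 0$ at $x \to \pm\infty$, so this ``overlap integral'' is $O(1)$ rather than exponentially small in the separation. (The paper points this out explicitly when introducing the reduced system.) Consequently your leading-order matrix $\mathcal S_0 = \begin{pmatrix} 0 & C_0 \\ -C_0 & 0\end{pmatrix}$ and its inverse are wrong: the correct leading order is $\begin{pmatrix} 0 & D \\ -D & B\end{pmatrix}$ with $B$ bounded but not small, whose inverse is $\begin{pmatrix} D^{-1}BD^{-1} & -D^{-1} \\ D^{-1} & 0\end{pmatrix}$.

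The error is repairable and does not change the outcome, but you must argue it correctly: because $\partial_{\bs x}E = O(\eee^{-\gamma L})$, the first block row still forces $V = D^{-1}\partial_{\bs x}E + O(\eee^{-\gamma L}) = O(\eee^{-\gamma L})$, and then $B$ only ever multiplies the exponentially small quantities $V$ and $\partial_{\bs x}E$, so $X = -D^{-1}\partial_{\bs v}E + O(\eee^{-\gamma L}) = \bs v + O(\eee^{-\gamma L})$ survives. The same care is needed for the Lipschitz estimates (the derivatives of the $B$-block are bounded, not small). Two smaller points: the bound $|E(\bs x,\bs v) - \sum_k E(Q_{v_k})| \leq \eee^{-\gamma L}$ does not by itself give $|\partial_{x_k}E| \leq \eee^{-\gamma L}$ --- you must estimate the gradient of the error separately, as the paper does; and in your contraction the map $y_k \mapsto -\int_t^\infty (w_k + G_k^x)\ud s$ picks up a Lipschitz factor $1/\beta$ from the $\int_t^\infty w_k$ term, which is not small, so you should either substitute the updated $w_k$ into the $y_k$-equation or use a norm weighting $\bs y$ and $\bs w$ differently.
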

In this section and the next one, $\beta$ denotes a positive number which can change a finite number of times
in the course of the proof. This is why we do not distinguish for example between $\lesssim \eee^{-\beta t}$ and $\leq \eee^{-\beta t}$.

We need the following bounds on $(X, V)$.
\begin{lemma}
\label{lem:red-ham-distinct}
Let $I$ be a compact interval $\subset (0, v_*) \setminus V_\tx{crit}$.
There exist $\beta, L_0 > 0$ such that for all $(\bs x, \bs v)$ and $(\bs y, \bs w)$
with $\bs v, \bs w \in I^K$ and $L:= \min_{1\leq k < K}(x_{k+1} - x_k) \geq L_0$ the following bounds are true:
\begin{equation}
\label{eq:red-ham-distinct-1}
\big|X(\bs x, \bs v) - \bs v\big| + \big|V(\bs x, \bs v)\big|\leq \eee^{-\beta L},
%\bigg|R(\bs x, \bs v)
%- \begin{pmatrix}\bs v \\ \bs 0\end{pmatrix}\bigg| \leq \eee^{-\beta L},\qquad\text{for }L:= \min_{1\leq k < K}(x_{k+1} - x_k) \text{ large enough}.
\end{equation}
\begin{equation}
\label{eq:red-ham-distinct-2}
\begin{aligned}
|X(\bs x, \bs v)
- X(\bs y, \bs w)-(\bs v - \bs w)| + |V(\bs x, \bs v)
- V(\bs y, \bs w)| \leq \eee^{-\beta L}\big(|\bs x - \bs y| + |\bs v - \bs w|\big).
\end{aligned}
\end{equation}
%\begin{equation}
%\Big|A(\bs x, \bs v)^{-1}\begin{pmatrix}\partial_{\bs x}E(\bs x, \bs v) \\ \partial_{\bs v}E(\bs x, \bs v)\end{pmatrix}
%- \begin{pmatrix}\bs v \\ \bs 0\end{pmatrix}\Big| \leq \eee^{-\beta L},\qquad\text{for }L:= \min_{1\leq k < K}(x_{k+1} - x_k) \text{ large enough}.
%\end{equation}
\end{lemma}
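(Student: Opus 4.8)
The plan is to compute, in the regime of large minimal separation $L := \min_{1\le k<K}(x_{k+1}-x_k)$, the symplectic matrix $\begin{pmatrix} A & C \\ -C & B\end{pmatrix}$ and the gradient $(\partial_{\bs x}E, \partial_{\bs v}E)$ from \eqref{eq:constrained}, isolate their leading order, and show that all corrections are $O(\eee^{-\beta L})$. First I would record the diagonal entries: since $\int Q_{v_k}\partial_x Q_{v_k} = \frac12\int\partial_x(Q_{v_k}^2)=0$ we get $a_{kk}=0$, and since $\wt Q_{v_k}$ is even its primitive $\partial_x^{-1}\wt Q_{v_k}$ is odd, so $b_{kk}=0$, while $c_{kk}=\la Q_{v_k}, \wt Q_{v_k}\ra$. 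Every off-diagonal entry ($j\neq k$) integrates a product of two functions exponentially localized around the distant points $x_j,x_k$, hence is $O(\eee^{-\beta L})$, with $\beta$ bounded below by any number $<\inf_{v\in I}\sqrt v>0$ thanks to Lemma~\ref{lem:Q-asym} and Remark~\ref{rem:unif-bound}. Thus $A,B=O(\eee^{-\beta L})$ and $C=D+O(\eee^{-\beta L})$, where $D$ is the diagonal matrix with entries $\la Q_{v_k}, \wt Q_{v_k}\ra$, invertible uniformly on $I$ because $I\cap V_\tx{crit}=\varnothing$.

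For the gradient I would use $\vD E(u)=-\partial_x^2 u-f(u)$ together with \eqref{eq:Qv}, which for $U=\sum_j\sigma_j Q_{v_j}(\cdot-x_j)$ gives $\vD E(U)=-\sum_j\sigma_j v_j Q_{v_j}(\cdot-x_j)+G$, where the interaction $G:=\sum_j\sigma_j f(Q_{v_j}(\cdot-x_j))-f(U)$ satisfies $\|G\|_{L^2}\lesssim \eee^{-\beta L}$ by the cross-term bound \eqref{eq:f-cross}. Pairing with the tangent vectors $\partial_{x_k}=-\sigma_k\partial_x Q_{v_k}(\cdot-x_k)$ and $\partial_{v_k}=\sigma_k\wt Q_{v_k}(\cdot-x_k)$, the diagonal $x$-term vanishes (again $\int Q_{v_k}\partial_x Q_{v_k}=0$), leaving $\partial_{\bs x}E=O(\eee^{-\beta L})$ and $\partial_{\bs v}E=-D\bs v+O(\eee^{-\beta L})$. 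To obtain \eqref{eq:red-ham-distinct-1} I then invert the symplectic matrix: with $S_0:=\begin{pmatrix}0 & D\\ -D & 0\end{pmatrix}$, whose inverse is $\begin{pmatrix}0 & -D^{-1}\\ D^{-1} & 0\end{pmatrix}$, the matrix $S$ is a perturbation $S=S_0+O(\eee^{-\beta L})$ of an invertible matrix, so for $L_0$ large $S^{-1}=S_0^{-1}+O(\eee^{-\beta L})$. The key algebraic point is the exact identity $S_0^{-1}(0,-D\bs v)^\top=(\bs v,0)^\top$, i.e. at leading order the reduced flow is the free motion $\bs x'=\bs v$, $\bs v'=0$; feeding the leading gradient and the $O(\eee^{-\beta L})$ remainders through $S^{-1}$ yields $X=\bs v+O(\eee^{-\beta L})$ and $V=O(\eee^{-\beta L})$, using that $\bs v$ stays bounded on $I$.

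For the Lipschitz estimate \eqref{eq:red-ham-distinct-2} I would differentiate the above in $(\bs x,\bs v)$. Writing $S=S_0(\bs v)+R$ and $(\partial_{\bs x}E,\partial_{\bs v}E)=(0,-D(\bs v)\bs v)+r$ with $R,r=O(\eee^{-\beta L})$, one checks that $R$, $r$, and all their first $(\bs x,\bs v)$-derivatives stay $O(\eee^{-\beta L})$, since differentiation only replaces $Q_{v_k},\wt Q_{v_k}$ by other exponentially decaying profiles and preserves the two-center interaction structure. Because the exact part cancels, $(X-\bs v,V)=S_0^{-1}r+(S^{-1}-S_0^{-1})(0,-D\bs v)^\top+(S^{-1}-S_0^{-1})r$, where every summand carries a factor of $R$ or $r$; using $S^{-1}-S_0^{-1}=-S_0^{-1}R\,S^{-1}$ one sees that each term and its Jacobian are again $O(\eee^{-\beta L})$. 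The bound then follows from the mean value inequality along the segment joining $(\bs x,\bs v)$ to $(\bs y,\bs w)$, along which the minimal separation remains comparable to $L$.

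The main obstacle I anticipate is not any single estimate but the combination of two things: the clean structural computation identifying the leading symplectic matrix as $\begin{pmatrix}0 & D\\ -D & 0\end{pmatrix}$ together with the cancellation $S_0^{-1}(0,-D\bs v)^\top=(\bs v,0)^\top$, which is what makes the dominant interaction disappear and leaves only exponentially small remainders; and the bookkeeping needed to propagate the $O(\eee^{-\beta L})$ control through the matrix inverse and, for \eqref{eq:red-ham-distinct-2}, through one derivative, while keeping a uniform $\beta$ over the compact set $I$ and handling the variation of $L$ along the interpolating segment.
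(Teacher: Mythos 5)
Your overall strategy is the same as the paper's: estimate the entries of the symplectic matrix and of $(\partial_{\bs x}E,\partial_{\bs v}E)$, identify the leading order, and solve the linear system. However, there is one concrete error. You claim that \emph{every} off-diagonal entry of the symplectic matrix is $O(\eee^{-\beta L})$, and in particular that $B(\bs x,\bs v)=O(\eee^{-\beta L})$, so that the whole matrix is $S_0+O(\eee^{-\beta L})$ with $S_0=\begin{pmatrix}0&D\\-D&0\end{pmatrix}$. This is false for the block $B$: its entries are $b_{jk}=\sigma_j\sigma_k\int_\bR\wt Q_{v_j}(x-x_j)\,\partial_x^{-1}\wt Q_{v_k}(x-x_k)\ud x$, and the second factor is \emph{not} localized around $x_k$ --- since in general $\int_\bR\wt Q_{v}\ud x\neq 0$, the function $\partial_x^{-1}\wt Q_{v_k}(\cdot-x_k)$ tends to the constants $\mp\frac12\int_\bR\wt Q_{v_k}\ud x$ at $\mp\infty$. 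Hence for $j\neq k$ one has $b_{jk}\to-\frac{\sigma_j\sigma_k}{2}\big(\int\wt Q_{v_j}\big)\big(\int\wt Q_{v_k}\big)$ as the separation grows, which is generically of order one. Consequently the expansions $S=S_0+O(\eee^{-\beta L})$ and $S^{-1}=S_0^{-1}+O(\eee^{-\beta L})$, as written, do not hold, and the step where you feed the remainders through $S^{-1}$ is not justified.

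The conclusion is nevertheless salvageable, and this is exactly how the paper proceeds: it keeps $B$ in the leading matrix, writing $\begin{pmatrix}0&D\\-D&B\end{pmatrix}\begin{pmatrix}X\\V\end{pmatrix}=\begin{pmatrix}0\\-D\bs v\end{pmatrix}+O(\eee^{-\beta L})$, reads $V=O(\eee^{-\beta L})$ from the first row (only $D$ enters there), and then $X=\bs v+D^{-1}BV+O(\eee^{-\beta L})=\bs v+O(\eee^{-\beta L})$ from the second, using only that $B$ is \emph{bounded}. Equivalently, in your language, the correct leading inverse is $\begin{pmatrix}D^{-1}BD^{-1}&-D^{-1}\\D^{-1}&0\end{pmatrix}$, and the non-small block $D^{-1}BD^{-1}$ only ever multiplies $\partial_{\bs x}E$, which is already $O(\eee^{-\beta L})$; your key cancellation producing $(\bs v,0)$ from $(0,-D\bs v)$ survives unchanged. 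The same repair is needed in your Lipschitz step, where $\partial B$ is bounded but not small and must be paired with the smallness of $V$ (respectively of $\partial_{\bs x}E$) rather than discarded. The rest of your computation --- the vanishing of $a_{kk}$ and $b_{kk}$, the identification $C=D+O(\eee^{-\beta L})$, the formula $\vD E(U)=-\sum_j\sigma_jv_jQ_{v_j}(\cdot-x_j)+G$ with $\|G\|_{L^2}\lesssim\eee^{-\beta L}$, and the resulting $\partial_{\bs x}E=O(\eee^{-\beta L})$, $\partial_{\bs v}E=-D\bs v+O(\eee^{-\beta L})$ --- matches the paper.
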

\begin{proof}
Using the Chain Rule and Lemma~\ref{lem:Taylor}, one obtains
\begin{equation}
\begin{aligned}
\Big|E(\bs x, \bs v) - \sum_{k=1}^K E\big(Q_{v_k}\big)\Big| &+\Big|\grad\Big(E(\bs x, \bs v) - \sum_{k=1}^K E\big(Q_{v_k}\big)\Big)\Big| \\
&+\Big|\grad^2\Big(E(\bs x, \bs v) - \sum_{k=1}^K E\big(Q_{v_k}\big)\Big)\Big| \leq \eee^{-\beta L}
\end{aligned}
\end{equation}
for some $\beta > 0$ and $L$ large enough, where ``$\grad$'' is the gradient in $\bR^{2K}$.
Alternatively, we can write:
\begin{equation}
\begin{aligned}
&|\partial_{\bs x} E(\bs x, \bs v)| + |\partial_{\bs v} E(\bs x, \bs v) + D(\bs x, \bs v)\bs v| \\
&\ +|\partial_{\bs x}^2 E(\bs x, \bs v)| + |\partial_{\bs x}\partial_{\bs v} E(\bs x, \bs v)| + |\partial_{\bs v}^2 E(\bs x, \bs v) + \partial_{\bs v}(D(\bs x, \bs v)\bs v)| \leq \eee^{-\beta L},
\end{aligned}
\end{equation}
where $D(\bs x, \bs v)$ is the diagonal $K\times K$ matrix with entries $\la Q_{v_k}, \wt Q_{v_k}\ra$
(we are using here the fact that $\vD E(Q_v) = -v Q_v$).
Similarly, we have
\begin{equation}
|A(\bs x, \bs v)| + |\grad A(\bs x, \bs v)| + |C(\bs x, \bs v)-D(\bs x, \bs v)| + |\grad(C(\bs x, \bs v)-D(\bs x, \bs v))| \leq \eee^{-\beta L}.
\end{equation}
From the equality
\begin{equation}
\label{eq:XV-def-2}
\begin{pmatrix}A(\bs x, \bs v) & C(\bs x, \bs v) \\
-C(\bs x, \bs v) & B(\bs x, \bs v)\end{pmatrix}\begin{pmatrix} X(\bs x, \bs v) \\ V(\bs x, \bs v)\end{pmatrix}
= \begin{pmatrix}\partial_{\bs x}E(\bs x, \bs v) \\ \partial_{\bs v}E(\bs x, \bs v)\end{pmatrix}
\end{equation}
and the estimates above we obtain
\begin{equation}
\begin{pmatrix}0 & D(\bs x, \bs v) \\
-D(\bs x, \bs v) & B(\bs x, \bs v)\end{pmatrix}\begin{pmatrix} X(\bs x, \bs v) \\ V(\bs x, \bs v)\end{pmatrix}
= \begin{pmatrix}0 \\ -D(\bs x, \bs v)\bs v\end{pmatrix} + O(\eee^{-\beta L}),
\end{equation}
which easily implies \eqref{eq:red-ham-distinct-1}.

Similarly, differentiating \eqref{eq:XV-def-2} with respect to $\bs x$ and using $\partial_{\bs x}D(\bs x, \bs v) = 0$
we get
\begin{equation}
\begin{pmatrix}0 & 0 \\
0 & \partial_{\bs x}B(\bs x, \bs v)\end{pmatrix}\begin{pmatrix} X(\bs x, \bs v) \\ V(\bs x, \bs v)\end{pmatrix}
+ \begin{pmatrix}0 & D(\bs x, \bs v) \\
-D(\bs x, \bs v) & B(\bs x, \bs v)\end{pmatrix}\begin{pmatrix} \partial_{\bs x}X(\bs x, \bs v) \\ \partial_{\bs x}V(\bs x, \bs v)\end{pmatrix}
= O(\eee^{-\beta L}),
\end{equation}
which yields
\begin{equation}
\label{eq:partial-x-small}
|\partial_{\bs x}X(\bs x, \bs v)| + |\partial_{\bs x}V(\bs x, \bs v)| \lesssim \eee^{-\beta L}.
\end{equation}
Differentiating \eqref{eq:XV-def-2} with respect to $\bs v$ we get
\begin{equation}
\begin{pmatrix}0 & \partial_{\bs v}D \\
-\partial_{\bs v}D & \partial_{\bs v}B\end{pmatrix}\begin{pmatrix} X \\ V\end{pmatrix}
+ \begin{pmatrix}0 & D \\
-D & B\end{pmatrix}\begin{pmatrix} \partial_{\bs v}X\\ \partial_{\bs v}V\end{pmatrix}
= \begin{pmatrix}0 \\ -\partial_{\bs v}(D\bs v)\end{pmatrix}+O(\eee^{-\beta L}),
\end{equation}
which yields
\begin{equation}
\label{eq:partial-v-small}
|\partial_{\bs v}X - \tx{Id}| + |\partial_{\bs v}V| \lesssim \eee^{-\beta L}.
\end{equation}
Now \eqref{eq:red-ham-distinct-2} follows from \eqref{eq:partial-x-small} and \eqref{eq:partial-v-small},
perhaps with smaller $\beta$.
\end{proof}
\begin{proof}[Proof of Proposition~\ref{prop:constrained-distinct}]
First we prove \eqref{eq:conv-exp}, then we will prove the uniqueness part.
Let $\delta := \min_{1 \leq k < K}(v_{k+1}^\infty - v_k^\infty) > 0$.
Since we assume $L(t) \to \infty$, Lemma~\ref{lem:red-ham-distinct}
implies $|x_k'(t) - v_k(t)| \lesssim \frac \delta 3$ for $t$ large enough.
We also assume $v_k(t) \to v_k^\infty$, thus $x_{k+1}'(t) - x_k'(t) \geq \frac \delta 2$ for $t$ large,
which implies $L(t) \geq \frac \delta 3 t$ for $t$ large.
Applying again Lemma~\ref{lem:red-ham-distinct}, we obtain that there exists $\beta > 0$ such that
\begin{equation}
|x_k'(t) - v_k(t)| \leq \eee^{-2\beta t},\quad |v_k'| \leq \eee^{-2\beta t},\quad\text{for }t\text{ large},
\end{equation}
which yields \eqref{eq:conv-exp}.

Given $\bs x^\infty$, we define $(\wt {\bs x}(t), \wt {\bs v}(t))$ by
\begin{equation}
\begin{aligned}
\wt{\bs x}(t) &:= \bs x(t) - \big(\bs v^\infty t + \bs x^\infty \big), \\
\wt{\bs v}(t) &:= \bs v(t) - \bs v^\infty.
\end{aligned}
\end{equation}
Then $(\bs x(t), \bs v(t))$ solves \eqref{eq:constrained} if and only if $(\wt{\bs x}(t), \wt {\bs v}(t))$
solves
\begin{equation}
\begin{pmatrix} \wt{\bs x}'(t) \\ \wt{\bs v}'(t)\end{pmatrix}
= T\begin{pmatrix} \wt{\bs x}(t) \\ \wt{\bs v}(t)\end{pmatrix}
+ \begin{pmatrix} \wt X(t, \wt{\bs x}(t), \wt{\bs v}(t)) \\ \wt V(t, \wt{\bs x}(t), \wt{\bs v}(t))\end{pmatrix},
\end{equation}
where
\begin{equation}
\begin{aligned}
T &:= \begin{pmatrix} 0 & \tx{Id} \\ 0 & 0\end{pmatrix}, \\
\wt X(t, \wt{\bs x}, \wt{\bs v}) &:= X(\bs v^\infty t + \bs x^\infty + \wt{\bs x}, \bs v^\infty+\wt{\bs v})-(\bs v^\infty + \wt{\bs v}), \\
\wt V(t, \wt{\bs x}, \wt{\bs v}) &:= V(\bs v^\infty t + \bs x^\infty + \wt{\bs x}, \bs v^\infty+\wt{\bs v}).\end{aligned}
\end{equation}
From Lemma~\ref{lem:red-ham-distinct} we obtain that there exists $\beta > 0$
such that for $t$ large enough and $|\wt{\bs x}| + |\wt{\bs v}| +|\wt{\bs y}| +|\wt{\bs w}| \leq 1$ we have
\begin{equation}
\begin{aligned}
|\wt X(t, \wt{\bs x}, \wt{\bs v})| + |\wt V(t, \wt{\bs x}, \wt{\bs v})| &\leq \eee^{-\beta t}, \\
|\wt X(t, \wt{\bs x}, \wt{\bs v}) - \wt X(t, \wt{\bs y}, \wt{\bs w})| +
|\wt V(t, \wt{\bs x}, \wt{\bs v}) - \wt V(t, \wt{\bs y}, \wt{\bs w})| &\leq \eee^{-\beta t}(|\wt{\bs x} - \wt{\bs y}| + |\wt{\bs v} - \wt{\bs w}|).
\end{aligned}
\end{equation}
From \eqref{eq:conv-exp}, we obtain that if $(\bs x(t), \bs v(t))$
solves \eqref{eq:constrained}, then $|\wt{\bs x}(t)| + |\wt{\bs v}(t)| \leq \eee^{-\beta t}$ for $t$ large.
Since $T$ has no eigenvalues with a negative real part, invoking Proposition~\ref{prop:no-negative}
finishes the proof.
%For $c_0, \beta > 0$ (small) and $T_0 > 0$ (large) we consider the space $BC_\beta$ being the closure
%of compactly supported smooth functions $(\wt{\bs x}, \wt{\bs v}): [T_0, \infty) \to \bR^{2K}$ for the norm
%\begin{equation}
%\|(\wt{\bs x}, \wt{\bs v})\|_{BC_\beta} := \sup_{t\geq T_0}\eee^{\beta t}\big(|\wt{\bs x}(t)| + c_0^{-1}|\wt{\bs v}(t)|\big).
%\end{equation}
%For $(\wt {\bs x}, \wt {\bs v}) \in BC_{\beta}$ we define
%\begin{equation}
%\Phi(\wt {\bs x}, \wt {\bs v})(t) := -\int_t^\infty\bigg(R(\bs v^\infty \tau + \bs x^\infty + \wt{\bs x}, \bs v^\infty+\wt{\bs v})
%- \begin{pmatrix}\bs v^\infty \\ \bs 0\end{pmatrix}\bigg)\ud\tau.
%\end{equation}
%Then $(\wt {\bs x}, \wt {\bs v})$ is a fixed point of $\Phi$ if and only if
%$(\bs x, \bs v) = (\bs v^\infty t + \bs x^\infty + \wt{\bs x}, \bs v^\infty + \wt{\bs v})$ solves \eqref{eq:constrained}. Using Lemma~\ref{lem:red-ham-distinct-2}, it is easy to choose $c_0$
%such that $\Phi$ is a contraction on some ball in $BC_\beta$,
%which proves existence and uniqueness of solutions satisfying \eqref{eq:conv-exp}.
\end{proof}

\subsection{Two-solitons with the same limit speed}
\label{ssec:constrained-same}
We proceed to the proof of Proposition~\ref{prop:constrained}.
By rescaling the space variable we can assume $v^\infty = 1$.
The functional $H(\bs x, \bs v) := E(\bs x, \bs v) + \frac 12 M(\bs x, \bs v)$ plays
an important role in our analysis. It is a conservation law for \eqref{eq:constrained}.

Until the end of this section, we denote $L := x_2 - x_1$ and $L(t) := x_2(t) - x_1(t)$.
We abbreviate $\bs 1 := (1, 1)$ and $\bs\iota := (-1, 1)$.
\begin{lemma}
\label{lem:v-bound-by-H}
There exist numbers $\delta, L_0, C_0 > 0$ such that if $|\bs v - \bs 1| \leq \delta$
and $L \geq L_0$, then
\begin{equation}
|\bs v - \bs 1|^2 \leq C_0(|H(\bs x, \bs v) - 2H(Q)| + \eee^{-L}).
\end{equation}
\end{lemma}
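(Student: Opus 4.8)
The plan is to reduce the estimate to a Taylor expansion at $\bs v = \bs 1$ of the single-soliton profile $h(v) := H(Q_v)$, together with an interaction estimate controlling the cross terms in $H(\bs x, \bs v)$. Since $H$ is translation invariant, I may assume $x_1 = 0$, $x_2 = L$, so that $H(\bs x, \bs v)$ depends only on $L$ and $\bs v$. Writing $a := |\bs v - \bs 1|$, the two ingredients are: (i) $P := h(v_1) + h(v_2) - 2h(1)$ is comparable to $a^2$; and (ii) $H(\bs x, \bs v) - h(v_1) - h(v_2)$ is an exponentially small interaction whose sign is governed by $\sigma_1\sigma_2$.

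For (i), I would differentiate $h$ using the formula $\vD H(u) = -\partial_x^2 u - f(u) + u$ (consistent with $\vD^2 H = -\partial_x^2 - f'(\cdot) + 1$ from the proof of Lemma~\ref{lem:D2H}), which by \eqref{eq:Qv} gives $\vD H(Q_v) = (1-v)Q_v$. Since $\wt Q_v = \partial_v Q_v$ (Lemma~\ref{lem:Qv}), the chain rule yields $h'(v) = (1-v)\la Q_v, \wt Q_v\ra$, hence $h'(1) = 0$ and $h''(1) = -\la Q, \wt Q\ra$, which is strictly positive by \eqref{eq:supercrit}. As $h$ is $C^3$ near $v = 1$ (again Lemma~\ref{lem:Qv}), Taylor's formula gives $P = \tfrac12 h''(1)a^2 + O(a^3)$, so that $\tfrac14 h''(1)a^2 \le P \le Ca^2$ once $\delta$ is small enough. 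For (ii), I would rerun the computation of Lemma~\ref{lem:H-deux-solit} verbatim, now using $\partial_x^2 Q_{v_k} + f(Q_{v_k}) = v_k Q_{v_k}$ in the integrations by parts and the decay estimates of Remark~\ref{rem:unif-bound} (uniform for $\bs v$ near $\bs 1$); the $\sigma_1\sigma_2 f(R_1)R_2$ terms cancel as before, and the surviving tail products at the midpoint give $H(\bs x, \bs v) = h(v_1) + h(v_2) - \sigma_1\sigma_2(c + o(1))\eee^{-\theta L}$ with $c > 0$ and $\theta := \tfrac{\sqrt{v_1} + \sqrt{v_2}}{2}$.

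If $\sigma_1\sigma_2 = -1$ the interaction is positive, so for $L \ge L_0$ one has $H(\bs x, \bs v) - 2H(Q) \ge P \ge \tfrac14 h''(1)a^2$, and the conclusion is immediate without even using the $\eee^{-L}$ term. The genuine difficulty is the attractive case $\sigma_1\sigma_2 = 1$ (the one relevant under \eqref{eq:supercrit}), where $H(\bs x, \bs v) - 2H(Q) = P - N$ with $N = (c + o(1))\eee^{-\theta L} > 0$: the interaction can cancel the quadratic term $P$, and moreover $\theta$ may be strictly less than $1$ when $v_k < 1$, so $N$ need not be $\lesssim \eee^{-L}$.

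I would resolve this by a two-step bootstrap. From $\tfrac14 h''(1)a^2 \le P = (H(\bs x, \bs v) - 2H(Q)) + N \le |H(\bs x, \bs v) - 2H(Q)| + N$, together with the elementary bound $\theta \ge 1 - a/\sqrt2$ (from $\sqrt v \ge 1 - |v-1|$ and Cauchy--Schwarz on $\bR^2$), one gets $N \le c'\eee^{-L}\eee^{aL/\sqrt2}$. If $|H(\bs x, \bs v) - 2H(Q)| \ge \tfrac18 h''(1)a^2$ we are done; otherwise $\tfrac18 h''(1)a^2 \le c'\eee^{-L}\eee^{aL/\sqrt2}$, which after taking logarithms and using $a \le \delta \le 1/\sqrt2$ forces $a \le C_1\eee^{-L/4}$. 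Feeding this back gives $aL \to 0$, hence $\eee^{aL/\sqrt2} \le 3$ for $L \ge L_0$, and therefore $a^2 \lesssim \eee^{-L}$; choosing $C_0$ to dominate the constants from both alternatives finishes the proof. I expect this bootstrap --- showing that a near-cancellation of $P$ by the interaction forces $\bs v$ so close to $\bs 1$ that $\theta$ is effectively $1$ --- to be the main obstacle, the rest being the expansion of $h$ and a routine adaptation of Lemma~\ref{lem:H-deux-solit}.
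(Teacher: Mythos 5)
Your proof is correct, but it takes a genuinely different route from the paper's. The paper centers the expansion at the \emph{unit-speed} two-soliton $U := \sigma_1 Q(\cdot - x_1) + \sigma_2 Q(\cdot - x_2)$ and treats $\varepsilon := \sigma_1 Q_{v_1}(\cdot - x_1) + \sigma_2 Q_{v_2}(\cdot - x_2) - U \approx \sigma_1(v_1-1)\wt Q(\cdot - x_1) + \sigma_2(v_2-1)\wt Q(\cdot - x_2)$ as a perturbation: a second-order Taylor expansion of $H$ at $U$, the estimate $|H(U) - 2H(Q)| \lesssim \eee^{-L}$ from Lemma~\ref{lem:H-deux-solit}, the smallness of $\la \vD H(U), \varepsilon\ra$ from \eqref{eq:small-tension-0}, and the non-degeneracy $\la \wt Q, \vD^2 H(Q)\wt Q\ra = -\la \wt Q, Q\ra \neq 0$ (transferred to $\vD^2 H(U)$ as in Lemma~\ref{lem:D2H}) give $|\bs v - \bs 1|^2 \lesssim |\la\varepsilon, \vD^2 H(U)\varepsilon\ra| \lesssim |H(\bs x,\bs v) - 2H(Q)| + \eee^{-L}$ directly. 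The decisive advantage of that centering is that every interaction term involves only $Q = Q_1$, so all tails decay exactly like $\eee^{-L}$ and no exponent drift occurs. You instead center at the $\bs v$-speed solitons, which makes the diagonal part exact, $h(v_1)+h(v_2)$, and reduces the coercivity to the scalar non-degeneracy $h''(1) = -\la Q, \wt Q\ra$ --- the very same quantity the paper uses --- but at the price of a $\bs v$-dependent interaction exponent $\theta \le 1$, which you then repair with the dichotomy/bootstrap (near-cancellation of $P$ by the interaction forces $a \lesssim \eee^{-L/4}$, hence $aL \to 0$, hence $\theta$ is effectively $1$). That bootstrap is sound, and it is robust enough to absorb the imperfections in your step (ii): you do not actually need the sharp asymptotics $-\sigma_1\sigma_2(c+o(1))\eee^{-\theta L}$ (which are somewhat delicate to establish uniformly when $v_1 \neq v_2$, because the splitting point, the extra $(1-v_k)R_k$ remainders in the integrations by parts, and polynomial-in-$L$ prefactors all require care), only an upper bound of the form $|I| \lesssim (1+L)\,\eee^{-(1-Ca)L}$, which follows from the uniform decay of Remark~\ref{rem:unif-bound}, and your argument survives such prefactors. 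Two small remarks: your formula $h'(v) = (1-v)\la Q_v, \wt Q_v\ra$ correctly follows the normalization $\vD^2 H = -\partial_x^2 - f'(\cdot) + 1$ that the paper actually computes with (i.e.\ $H = E + \tfrac12 M$), which is the right reading of the statement; and the appeal to \eqref{eq:supercrit} for the sign of $h''(1)$ is inessential --- running the argument with $|h''(1)|$ and $|P|$ covers both signs of $\la Q, \wt Q\ra$, as is needed for the appendix's Proposition~\ref{prop:constrained}.
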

\begin{remark}
In this section, the conservation law $H$ is used only to obtain the bound \eqref{eq:bounds-v-equal-0} in the proof of Proposition~\ref{prop:constrained} below.
In the proof of Theorem~\ref{thm:classif}, the functional $H$ is crucially used
to obtain bounds on the error term $\varepsilon(t)$. The reason why we were unable to treat
the case $\int_\bR Q(x)\wt Q(x)\ud x > 0$ in Theorem~\ref{thm:classif} is that in this situation
the terms $\|\varepsilon\|_{H^1}^2$ and $|\bs v - \bs 1|^2$ come with opposite signs
in the expansion of $H$, so we could not obtain any useful estimate for either of them.
\end{remark}
\begin{proof}[Proof of Lemma~\ref{lem:v-bound-by-H}]
As in the proof of Proposition~\ref{prop:coer}, denote $R_1 := Q(\cdot - x_1)$, $R_2 := Q(\cdot - x_2)$, $U := \sigma_1 R_1 + \sigma_2 R_2$.
Let $\varepsilon := \sigma_1 Q_{v_1}(\cdot - x_1) + \sigma_2 Q_{v_2}(\cdot - x_2) - U$.
Then
\begin{equation}
\|\varepsilon -( \sigma_1(v_1 - 1)\wt Q(\cdot - x_1) + \sigma_2(v_2 - 1)\wt Q(\cdot - x_2))\|_{H^1} \lesssim |\bs v - \bs 1|^2.
\end{equation}
We have the crucial non-degeneracy
\begin{equation}
\la \wt Q, \vD^2 H(Q)\wt Q\ra = \la \wt Q, L\wt Q\ra = \la \wt Q, -Q\ra \neq 0.
\end{equation}
Combining this with similar estimates as in the proof of Lemma~\ref{lem:D2H}, we obtain
\begin{equation}
|\la \varepsilon, \vD^2 H(U)\varepsilon\ra| \gtrsim |\bs v - \bs 1|^2.
\end{equation}
We also have $|H(U) - 2H(Q)| \lesssim \eee^{-L}$ and $|\la \vD H(U), \varepsilon\ra| \ll \eee^{-L} + |\bs v - \bs 1|^2$,
see \eqref{eq:H-deux-solit} and \eqref{eq:small-tension-0}.
The conclusion follows from the Taylor expansion \eqref{eq:H-Tay}.
\end{proof}

We need a more precise estimate of the right hand side of \eqref{eq:constrained}
than the one provided by Lemma~\ref{lem:red-ham-distinct}.
\begin{lemma}
\label{lem:red-ham-same}
Assume $1 \in (0, v_*) \setminus V_\tx{crit}$. There exist $\beta, \delta > 0$ such that for all $(\bs x, \bs v)$ and $(\bs y, \bs w)$
with $|\bs v - \bs 1| + |\bs w - \bs 1| + |\bs x - \bs y| \leq \delta$ the following bounds hold for $L := x_2 - x_1$ large enough:
\begin{gather}
\big|X(\bs x, \bs v) - \bs v\big| \leq \eee^{-(\frac 12 + \beta)L},\\
\big| V(\bs x, \bs v) - \bs \phi(\bs x) \big| \leq \eee^{-(1+\beta)L} + |\bs v -\bs 1|\eee^{-(\frac 12 + \beta)L}, \\
\big|X(\bs x, \bs v) - X(\bs y, \bs w) - (\bs v - \bs w)\big| \leq \eee^{-(\frac 12 + \beta)L}|\bs x - \bs y| + \eee^{-\beta L}|\bs v - \bs w|, \\
\begin{aligned}
&\big| V(\bs x, \bs v) - V(\bs y, \bs w) - (\bs \phi(\bs x)-\bs\phi(\bs y)) \big| \\
&\qquad\qquad\leq \big(\eee^{-(1+\beta)L} + |\bs v -\bs 1|\eee^{-(\frac 12 + \beta)L}\big)|\bs x - \bs y| + \eee^{-(\frac 12 + \beta)L}|\bs v - \bs w|,
\end{aligned}
\end{gather}
with $\bs \phi(\bs x) := \big(\sigma\kappa^2\eee^{-L}, {-}\sigma\kappa^2\eee^{-L}
\big)$, where $\sigma := {-}\tx{sgn}\big(\la Q, \wt Q\ra\sigma_1\sigma_2\big)$ and $\kappa$ is a~constant.
%\begin{equation}
%\Big|A(\bs x, \bs v)^{-1}\begin{pmatrix}\partial_{\bs x}E(\bs x, \bs v) \\ \partial_{\bs v}E(\bs x, \bs v)\end{pmatrix}
%- \begin{pmatrix}\bs v \\ \bs 0\end{pmatrix}\Big| \leq \eee^{-\beta L},\qquad\text{for }L:= \min_{1\leq k < K}(x_{k+1} - x_k) \text{ large enough}.
%\end{equation}
\end{lemma}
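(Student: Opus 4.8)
The plan is to follow the scheme of the proof of Lemma~\ref{lem:red-ham-distinct}, but to retain the leading interaction terms instead of discarding them, and --- this is the crucial new point --- to respect an exact cancellation between the gradient $\partial_{\bs x}E$ and the off-diagonal (``magnetic'') block $A$ of the symplectic form. Write $R_k := Q_{v_k}(\cdot - x_k)$ and $s := \sigma_1\sigma_2$. By Lemma~\ref{lem:Q-asym} and Remark~\ref{rem:unif-bound}, every interaction integral decays like $\eee^{-\mu L}$ up to polynomial factors in $L$, where the rates $\mu = \mu(\bs v)$ that occur are governed by $\sqrt{v_1}, \sqrt{v_2}$, satisfy $\mu = 1 + O(|\bs v - \bs 1|)$, and obey $\mu \geq \tfrac12 + \beta$ once $\delta$ is small. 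I will first expand every ingredient of \eqref{eq:XV-def-2}, keeping track of prefactors and of the $\bs v$-dependent rates, then invert the $4\times 4$ system by a short bootstrap, and finally convert the $\bs v$-dependence into the clean statement.

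For the gradient, using $\partial_x^2 R_k = v_k R_k - f(R_k)$ and the oddness of $f$, a direct computation gives
\begin{equation}
\partial_{x_1}E = s v_2\,\la \partial_x R_1, R_2\ra + \la \partial_x R_1,\, f(R_1 + s R_2) - f(R_1) - s f(R_2)\ra,
\end{equation}
and the mirror identity for $\partial_{x_2}E$. By Lemma~\ref{lem:inter-asym} the second pairing equals $-s(2k_0^2 + o(1))\eee^{-\mu L}$, the clean interaction force I want. The first pairing, $J(L) := \la \partial_x R_1, R_2\ra$, is of the borderline size $L\,\eee^{-\mu L}$ (for $v_1 = v_2$ one computes $J(L) = -(k_0^2+o(1))L\eee^{-\mu L}$), hence a priori \emph{larger} than the target $\eee^{-L}$. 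The resolution is that the antisymmetric block is $A = \left(\begin{smallmatrix} 0 & a_{12}\\ -a_{12} & 0\end{smallmatrix}\right)$ with exactly $a_{12} = s J(L)$, so that in the combination $\partial_{\bs x}E - A X$ --- which is what actually determines $V$ --- the two copies of $sJ(L)$ cancel up to the small factor $v_2 - X_2$. Alongside this I record $C = D + O(\eee^{-\mu L})$ with $D := \tx{diag}(\la Q_{v_k}, \wt Q_{v_k}\ra)$ nondegenerate near $\bs 1$, boundedness of $B$ and of all $\bs x, \bs v$-derivatives of $A, B, C$ (via the decay of $\wt Q$ from Lemma~\ref{lem:Qv}), and, from $\vD E(Q_v) = -vQ_v$, the expansion $\partial_{\bs v}E = -D\bs v + O(\eee^{-\mu L})$, whose single-soliton part is $\frac{\vd}{\vd v_k}E(Q_{v_k}) = -v_k\la Q_{v_k}, \wt Q_{v_k}\ra$.

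I then solve $A X + C V = \partial_{\bs x}E$, $-C X + B V = \partial_{\bs v}E$ by bootstrap. Since $V = O(\eee^{-\mu L})$ and $C$ is invertible, the second equation gives $C X = D\bs v + O(\eee^{-\mu L})$, hence $X = \bs v + O(\eee^{-\mu L})$; as $\mu \geq \tfrac12 + \beta$ this already yields the first bound $|X - \bs v| \leq \eee^{-(\frac12 + \beta)L}$. Substituting into the first equation, the dangerous term becomes $s v_2 J(L) - a_{12}X_2 = sJ(L)(v_2 - X_2) = O(\eee^{-(1+\beta)L})$, so that
\begin{equation}
C V = \big({-}s\,2k_0^2\,\eee^{-\mu L};\ s\,2k_0^2\,\eee^{-\mu L}\big) + o(\eee^{-\mu L}).
\end{equation}
Inverting $C = D + O(\eee^{-\mu L})$ and evaluating the leading constant at $\bs v = \bs 1$ gives precisely $\bs\phi(\bs x)$ with $\kappa^2 = 2k_0^2/|\la Q, \wt Q\ra|$ and $\sigma = {-}\tx{sgn}(\la Q, \wt Q\ra\, s)$, in agreement with Remark~\ref{rem:kappa}.

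It remains to replace the $\bs v$-dependent quantity $c(\bs v)\eee^{-\mu L}$ (with $c$ smooth, $c(\bs 1) = \pm\sigma\kappa^2$) by $\bs\phi(\bs x)$; this is exactly what the two error terms are built to absorb. Splitting $c(\bs v)\eee^{-\mu L} - \bs\phi = (c(\bs v) - c(\bs 1))\eee^{-\mu L} + c(\bs 1)(\eee^{-\mu L} - \eee^{-L})$, the first summand is $O(|\bs v - \bs 1|)\eee^{-\mu L} \leq |\bs v - \bs 1|\eee^{-(\frac12 + \beta)L}$, while $|\eee^{-\mu L} - \eee^{-L}| \lesssim |\mu - 1|\,L\,\eee^{-(1 - C\delta)L} \lesssim |\bs v - \bs 1|\,\eee^{-(\frac12 + \beta)L}$ for $L$ large, because $L\,\eee^{-(\frac12 - C\delta - \beta)L} \to 0$; the genuinely higher-order tails (from the $O(\eee^{-2\sqrt v x})$ corrections in Remark~\ref{rem:unif-bound}) contribute the $\eee^{-(1+\beta)L}$ term. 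The two Lipschitz-type inequalities follow by differentiating these expansions in $\bs x$ and $\bs v$: the cancellation of the $L\eee^{-\mu L}$ terms persists under $\partial_{\bs x}$ (which preserves the exponential order), one has $\partial_{\bs v}X = \tx{Id} + O(\eee^{-\mu L})$ and $\partial_{\bs x}X = O(\eee^{-\mu L})$, and the stated coefficients of $|\bs x - \bs y|$ and $|\bs v - \bs w|$ are read off from $\partial_{\bs x}(V - \bs\phi)$ and $\partial_{\bs v}V$ via the mean value theorem. The main obstacle is precisely the cancellation highlighted above: one cannot approximate $V$ by $D^{-1}\partial_{\bs x}E$ (that would produce a spurious $L\eee^{-L}$ leading order); instead one must keep the borderline-size block $A$ and verify that it exactly removes the $L\eee^{-\mu L}$ term of $\partial_{\bs x}E$, leaving the clean interaction force of order $\eee^{-L}$.
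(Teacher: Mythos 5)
Your proposal is correct and takes essentially the same route as the paper: you expand $\partial_{\bs x}E$, $\partial_{\bs v}E$, $A$, $C$, isolate the borderline term $J(L)=\la\partial_x R_1,R_2\ra$ of size $L\eee^{-L}$ in $\partial_{\bs x}E$, and observe that it is exactly cancelled by the antisymmetric block $A$ acting on $X\approx\bs v$ — the paper encodes this same cancellation as $|\bs\psi+A\bs v|\lesssim|\bs v-\bs 1|\eee^{-(\frac12+\beta)L}$ after a Neumann-series inversion of the symplectic matrix, while you write it as $sJ(L)(v_2-X_2)$ after a block-wise bootstrap, which is the identical computation. The only presentational point to fix is the order of the bootstrap: establish the coarse bound $|V|\leq\eee^{-\beta L}$ (as in Lemma~\ref{lem:red-ham-distinct}, or directly from $CV=\partial_{\bs x}E-AX=O(L\eee^{-L})$) before invoking ``$V=O(\eee^{-\mu L})$'' in the second equation.
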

\begin{proof}
Denote $\bs \psi(\bs x) := \bs\iota\sigma_1\sigma_2\int_\bR (\partial_x Q(\cdot - x_1))Q(\cdot - x_2)\ud x$.
We claim that
\begin{align}
\label{eq:E-deriv-x-same}
|\partial_x E(\bs x, \bs v) -\la Q, \wt Q\ra\bs \phi(\bs x) +\bs\psi(\bs x) | &\leq \eee^{-(1+\beta)L}+|\bs v - \bs 1|\eee^{-(\frac 12 + \beta)L}, \\
|\partial_v E(\bs x, \bs v) + D(\bs x, \bs v)\bs v| &\leq \eee^{-(\frac 12 + \beta)L}, \label{eq:E-deriv-v-same} \\
|A(\bs x, \bs v)| + |C(\bs x, \bs v) - D(\bs x, \bs v)| &\leq \eee^{-(\frac 12 + \beta)L}, \label{eq:AC-same} \\
|A(\bs x, \bs v)\bs v + \bs \psi(\bs x)| &\leq \eee^{-(1+\beta)L}+|\bs v - \bs 1|\eee^{-(\frac 12 + \beta)L}. \label{eq:Av-psi}
\end{align}
Bounds \eqref{eq:E-deriv-v-same}, \eqref{eq:AC-same} and \eqref{eq:Av-psi} follow from the fact that $|\bs v - \bs 1|$ is small
($\beta$ could be any number $< \frac 12 - \delta$, see Remark~\ref{rem:unif-bound}).
The proof of the first bound, the details of which we skip, is similar to the proof of Lemma~\ref{lem:inter-asym}.

Consider the matrix
\begin{equation}
\begin{aligned}
\begin{pmatrix} G(\bs x, \bs v) & K(\bs x, \bs v) \\ -K(\bs x, \bs v) & J(\bs x, \bs v)\end{pmatrix} &:= \begin{pmatrix} A(\bs x, \bs v) & C(\bs x, \bs v) \\ -C(\bs x, \bs v) & B(\bs x, \bs v)\end{pmatrix}^{-1} \\
&= \left(\begin{pmatrix} 0 & D \\ -D & B\end{pmatrix}+\begin{pmatrix} A & C-D \\ -C+D & 0\end{pmatrix}\right)^{-1}.
\end{aligned}
\end{equation}
By the standard asymptotic expansion for the matrix inverse we have
\begin{multline}
\begin{pmatrix} G & K \\ -K & J\end{pmatrix}
= \begin{pmatrix} 0 & D \\ -D & B\end{pmatrix}^{-1}
- \begin{pmatrix} 0 & D \\ -D & B\end{pmatrix}^{-1}
\begin{pmatrix} A & C-D \\ -C+D & 0\end{pmatrix}
\begin{pmatrix} 0 & D \\ -D & B\end{pmatrix}^{-1}\\
+ O(\eee^{-(1+\beta)L}).
\end{multline}
Since $\begin{pmatrix} 0 & D \\ -D & B\end{pmatrix}^{-1} = \begin{pmatrix} D^{-1}BD^{-1} & -D^{-1} \\ D^{-1} & 0\end{pmatrix}
= O(1)$,
we obtain
\begin{equation}
\begin{aligned}
\label{eq:inv-matrix-same}
K &= -D^{-1} + O(\eee^{-(\frac 12 + \beta)L}), \qquad J = D^{-1}AD^{-1} + O(\eee^{-(1+\beta)L}).
\end{aligned}
\end{equation}
We have $X = G\partial_x E + K \partial_v E$, so \eqref{eq:E-deriv-x-same}, \eqref{eq:E-deriv-v-same} and \eqref{eq:inv-matrix-same} yield $|X - \bs v| \leq \eee^{-(\frac 12 + \beta)L}$.

Next, we have
\begin{equation}
\begin{aligned}
V &= -K\partial_x E + J\partial_v E \\
&= -D^{-1}({-}\la Q, \wt Q\ra\bs \phi +\bs \psi) -D^{-1}AD^{-1}D\bs v + O(\eee^{-(1+\beta)L} + |\bs v - \bs 1|\eee^{-(\frac 12 + \beta)L}).
\end{aligned}
\end{equation}
Since $|D^{-1}\la Q, \wt Q\ra - \tx{Id}| \lesssim |\bs v - \bs 1| + \eee^{-(\frac 12 + \beta)L}$,
it follows from \eqref{eq:Av-psi} that
\begin{equation}
|V - \bs \phi| \lesssim \eee^{-(1+\beta)L} + |\bs v - \bs 1|\eee^{-(\frac 12 + \beta)L}.
\end{equation}

In order to bound the derivatives of $X$ and $V$, we need the following estimates:
\begin{align}
|\partial_{\bs x}^2 E(\bs x, \bs v) - \la Q, \wt Q\ra\partial_{\bs x}\bs \phi(\bs x) + \partial_{\bs x}\bs \psi(\bs x)| &\leq \eee^{-(1+\beta)L}+|\bs v - \bs 1|\eee^{-(\frac 12 + \beta)L}, \label{eq:deriv-Exx} \\
|\partial_{\bs x}\partial_{\bs v} E(\bs x, \bs v)| &\leq \eee^{-(\frac 12+\beta)L}, \label{eq:deriv-Exv} \\
%|\partial_{\bs v}^2 E(\bs x, \bs v) + \partial_{\bs v}(D(\bs x, \bs v)\bs v)| &\leq \eee^{-(\frac 12+\beta)L}, \label{eq:deriv-Evv} \\
|\partial_{\bs x}A(\bs x, \bs v)\bs v + \partial_{\bs x}\bs \psi(\bs x)| &\leq \eee^{-(1+\beta)L}+|\bs v - \bs 1|\eee^{-(\frac 12 + \beta)L}, \label{eq:deriv-Ax} \\
|\partial_{\bs v}A(\bs x, \bs v)| &\leq \eee^{-(\frac 12+\beta)L}, \label{eq:deriv-Av} \\
|\partial_{\bs x}C(\bs x, \bs v)| &\leq \eee^{-(\frac 12+\beta)L}. \label{eq:deriv-Cx}
%|\partial_{\bs v}C(\bs x, \bs v) - \partial_{\bs v}D(\bs x, \bs v)| &\leq \eee^{-(\frac 12+\beta)L}. \label{eq:deriv-Cv}
\end{align}
\noeqref{eq:deriv-Exx,eq:deriv-Exv,eq:deriv-Ax,eq:deriv-Av,eq:deriv-Cx}We skip the proof. Note that we are simply claiming
that bounds \eqref{eq:E-deriv-x-same}, \eqref{eq:E-deriv-v-same}, \eqref{eq:AC-same} and \eqref{eq:Av-psi} still hold after differentiating the terms on the left hand side.
Hence the proof amounts to repeating the computations for \eqref{eq:E-deriv-x-same}, \eqref{eq:E-deriv-v-same}, \eqref{eq:AC-same} and \eqref{eq:Av-psi},
and checking that each discarded term has negligible partial derivatives.

By the Chain Rule, we have
\begin{equation}
\label{eq:deriv-x-same}
\begin{pmatrix} \partial_{\bs x}X \\ \partial_{\bs x}V\end{pmatrix}
= \begin{pmatrix}
G & K \\ -K & J
\end{pmatrix}
\begin{pmatrix}
\partial_{\bs x}^2 E \\ \partial_{\bs x}\partial_{\bs v}E
\end{pmatrix} -
\begin{pmatrix}
G & K \\ -K & J
\end{pmatrix}
\begin{pmatrix}
\partial_{\bs x}A & \partial_{\bs x}C \\ -\partial_{\bs x}C & \partial_{\bs x}B
\end{pmatrix}
\begin{pmatrix}
X \\ V
\end{pmatrix}.
\end{equation}
We know already that $|V| \lesssim \eee^{-(\frac 12 + \beta)L}$, which,
together with \eqref{eq:inv-matrix-same}--\eqref{eq:deriv-x-same}, leads to
$|\partial_{\bs x}X| \lesssim \eee^{-(\frac 12 + \beta)L}$.

Consider now $\partial_{\bs x} V$.
From \eqref{eq:deriv-x-same} we have
\begin{equation}
\partial_x V = -K \partial_{\bs x}^2 E + K(\partial_{\bs x}A)X + O(\eee^{-(1+\beta)L}+|\bs v - \bs 1|\eee^{-(\frac 12 + \beta)L})
\end{equation}
(all the other terms resulting from the matrix multiplication are negligible).
Using again \eqref{eq:inv-matrix-same}, \eqref{eq:deriv-Exx}, \eqref{eq:deriv-Ax}
and $|X - \bs v| \lesssim \eee^{-(\frac 12 + \beta)L}$, this yields
$|\partial_x V - \partial_x \bs \phi| \leq \eee^{-(1+\beta)L}+|\bs v - \bs 1|\eee^{-(\frac 12 + \beta)L}$.

Similarly, we have
\begin{equation}
\label{eq:deriv-v-same}
\begin{pmatrix} \partial_{\bs v}X \\ \partial_{\bs v}V\end{pmatrix}
= \begin{pmatrix}
G & K \\ -K & J
\end{pmatrix}
\begin{pmatrix}
\partial_{\bs x}\partial_{\bs v} E \\ \partial_{\bs v}^2 E
\end{pmatrix} -
\begin{pmatrix}
G & K \\ -K & J
\end{pmatrix}
\begin{pmatrix}
\partial_{\bs v}A & \partial_{\bs v}C \\ -\partial_{\bs v}C & \partial_{\bs v}B
\end{pmatrix}
\begin{pmatrix}
X \\ V
\end{pmatrix}.
\end{equation}
As for $\partial_{\bs v}X$, \eqref{eq:partial-v-small} is already enough.
Regarding $\partial_{\bs v}V$, we check all the terms resulting from the matrix multiplication and we see that they are all of size $\lesssim \eee^{-(\frac 12 + \beta)L}$.
We skip this routine computation.
\end{proof}
\begin{proof}[Proof of Proposition~\ref{prop:constrained}]
Let $(\bs x(t), \bs v(t))$ be a solution of \eqref{eq:constrained} such that \eqref{eq:conv-vitesse}
holds, with $v^\infty = 1$. Since $H$ is a conservation law, we have
$H(\bs x(t), \bs v(t)) = 2H(Q)$ for all $t$. By Lemma~\ref{lem:v-bound-by-H} we obtain
\begin{equation}
\label{eq:bounds-v-equal-0}
|\bs v(t) - \bs 1| \lesssim \eee^{-\frac 12 L(t)},\qquad\text{for all }t\text{ large enough.}
\end{equation}

Denote $v(t) := v_2(t) - v_1(t)$. Directly from Lemma~\ref{lem:red-ham-same} and \eqref{eq:bounds-v-equal-0} we obtain the differential inequalities
\begin{equation}
\label{eq:bounds-v-equal-1}
|L'(t) - v(t)| \leq \eee^{-(\frac 12+\beta)L(t)},\qquad \Big|v'(t) + 2\sigma{\kappa^2}\eee^{-L(t)}\Big| \leq \eee^{-(1+\beta)L(t)}.
\end{equation}
Suppose that $\sigma = -1$. Since we assume $\lim_{t \to \infty} v(t) = 0$, the second inequality above yields
\begin{equation}
\label{eq:bounds-v-equal-2}
v(t) \leq -c_0\int_t^\infty \eee^{-L(s)}\ud s,\qquad\text{for some }c_0 > 0
\text{ and }t\text{ large enough.}
\end{equation}
Let $l := \limsup_{t \to \infty}\frac{L(t)}{\log t}$.
Note that \eqref{eq:bounds-v-equal-2} implies $l \geq 1$.

From \eqref{eq:bounds-v-equal-2} we have $v(t) < 0$ for all $t$ large enough,
hence \eqref{eq:bounds-v-equal-1} yields $L'(t) < \eee^{-(\frac 12 + \beta)L(t)}$
for all $t$ large enough. We can integrate this inequality and obtain in particular
\begin{equation}
\label{eq:bounds-v-equal-4}
l \leq \frac{1}{\frac 12 + \beta}.
\end{equation}

Let $\varepsilon \in (0, l)$. We claim that there exists a sequence $t_n \to \infty$
such that
\begin{equation}
\label{eq:bounds-v-equal-5}
L(t_n) \geq (l - \varepsilon)\log(t_n)\quad\text{and}\quad L'(t_n) \geq 0.
\end{equation}
Indeed, by the definition of $\limsup$ there exists a sequence $\tau_n \to \infty$
such that $L(\tau_n) \geq (l - \varepsilon)\log(\tau_n)$.
It suffices to take $t_n$ the smallest time such that $L(t_n) \geq (l-\varepsilon)\log(\tau_n)$.

Using \eqref{eq:bounds-v-equal-1}, \eqref{eq:bounds-v-equal-2} and \eqref{eq:bounds-v-equal-5} we get
\begin{equation}
\label{eq:bounds-v-equal-6}
c_0 \int_{t_n}^\infty \eee^{-L(s)}\ud s \leq \eee^{-(\frac 12 + \beta)L(t_n)} \leq t_n^{-(\frac 12 + \beta)(l-\varepsilon)}.
\end{equation}
If $t_n$ is large enough, then $L(s) \leq (l + \varepsilon)\log s$ for $s \geq t_n$.
This implies
\begin{equation}
\label{eq:bounds-v-equal-7}
c_0 \int_{t_n}^\infty \eee^{-L(s)}\ud s \geq c_0 \int_{t_n}^\infty s^{-l-\varepsilon}\ud s
= \frac{c_0}{l + \varepsilon - 1}t_n^{-(l + \varepsilon - 1)}.
\end{equation}
Making $t_n \to \infty$ in \eqref{eq:bounds-v-equal-6} and \eqref{eq:bounds-v-equal-7},
we obtain $l + \varepsilon - 1 \geq (\frac 12 + \beta)(l - \varepsilon)$,
thus $l > \frac{1}{\frac 12 - \beta} - \frac{\frac 32 + \beta}{\frac 12 - \beta}\varepsilon$,
which contradicts \eqref{eq:bounds-v-equal-4} if $\varepsilon$ is sufficiently small.
This shows that $\sigma = 1$.

Our next objective is to prove that there exists $\beta > 0$ such that if $(\bs x(t), \bs v(t))$ is a solution of \eqref{eq:constrained}, then
\begin{equation}
\label{eq:bounds-v-equal-3}
\begin{aligned}
|\bs x(t) - (t\bs 1 + x^\infty\bs 1 + \log (\kappa t)\bs \iota)| &\lesssim t^{-\beta}, \\
|\bs v(t) - (\bs 1+ t^{-1}\bs\iota)| &\lesssim t^{-(1+\beta)},
\end{aligned}
\end{equation}
for some $x^\infty \in \bR$.

We claim that $L(t)$ is increasing for $t$ large enough.
Let $t_0$ be large (chosen later) and $t_1 \geq t_0$. We need to show that for all $t > t_1$
we have $L(t) > L(t_1)$.
Suppose this is not the case, and let
\begin{equation}
t_2 := \sup\big\{t: L(t) = \inf_{\tau \geq t_1}L(\tau)\big\}.
\end{equation}
Then $t_2 > t_1$, $L(t_2) = \inf_{\tau \geq t_2}L(\tau)$ and $L'(t_2) = 0$.

Let $L_0 := L(t_2)$, $t_3 := \inf\{t \geq t_2: L(t) = L_0 + 1\}$.
Since $\lim_{t \to \infty}L(t) = \infty$, $t_3$ is finite.
We will show that \eqref{eq:bounds-v-equal-1} implies
\begin{equation}
\label{eq:q-mono-red-0}
L(t_3) \leq L_0 + \frac 12,
\end{equation}
which is a contradiction.

We have, for $t_0$ and thus $L_0$ large enough, and for all $t \in [t_2, t_3]$,
\begin{equation}
\label{eq:q-mono-red-1}
v'(t) \leq ({-}2\kappa^2 + \eee^{-\beta L_0}) \eee^{-L(t)} \leq {-}\frac 32 \kappa^2 \eee^{-L(t)}
\leq -\frac 32 \kappa^2 \eee^{-L_0 - 1} \leq -\frac{\kappa^2}{2}\eee^{-L_0}.
\end{equation}
Since $L'(t_2) = 0$, \eqref{eq:bounds-v-equal-1} yields $v(t_2) \leq \eee^{-(\frac 12 + \beta)L_0}$.
Integrating \eqref{eq:q-mono-red-1}, we get
\begin{equation}
\label{eq:q-mono-red-2}
v(t) \leq \eee^{-(\frac 12 + \beta)L_0} -\frac{\kappa^2}{2}\eee^{-L_0}(t - t_2), \qquad\text{for all }t\in [t_2, t_3].
\end{equation}
Using \eqref{eq:bounds-v-equal-1} again we obtain
\begin{equation}
L'(t) \leq 2\eee^{-(\frac 12 + \beta)L_0} - \frac{\kappa^2}{2}\eee^{-L_0}(t-t_2), \qquad\text{for all }t\in [t_2, t_3].
\end{equation}
We now integrate for $t$ between $t_2$ and $t_3$:
\begin{equation}
\begin{aligned}
L(t_3) - L(t_2)
&\leq  \int_{t_2}^{t_3}\Big(2\eee^{-(\frac 12 + \beta)L_0} - \frac{\kappa^2}{2}\eee^{-L_0}(t-t_2)\Big)\ud t \\
&=2\eee^{-(\frac 12 + \beta)L_0}(t_3 - t_2) - \frac{\kappa^2}{4}\eee^{-L_0}(t_3 - t_2)^2 \leq\frac{4}{\kappa^2}\eee^{-2\beta L_0},
\end{aligned}
\end{equation}
so that \eqref{eq:q-mono-red-0} follows if $L_0$ is large enough.
This shows that $L(t)$ is increasing for $t$ large enough.

Let $r(t) := v(t) - 2\kappa \eee^{-L(t)/2}$. Note that $\lim_{t\to\infty}r(t) = 0$. Using \eqref{eq:bounds-v-equal-1},
we obtain
\begin{equation}
\label{eq:q-mono-red-3}
r'(t) = \kappa \eee^{-L(t)/2}r(t) + O(\eee^{-(1+\beta)L(t)}).
\end{equation}
This implies
\begin{equation}
\label{eq:q-mono-red-3p}
|r(t)| \lesssim \eee^{-(\frac 12 + \beta)L(t)},\qquad \text{for }t\text{ large enough.}
\end{equation}
Indeed, suppose $r(\tau_n) \geq C_0 \eee^{-(\frac 12 + \beta)L(\tau_n)}$
for any large constant $C_0 > 0$ and a sequence $\tau_n \to \infty$.
Let $t_n$ be the largest time such that $r(t_n) = C_0 \eee^{-(\frac 12 + \beta)L(\tau_n)}$.
Since $L(t)$ is increasing for large $t$, we have $r(t_n) \geq C_0 \eee^{-(\frac 12 + \beta)L(t_n)}$.
But also $r'(t_n) \leq 0$. This contradicts \eqref{eq:q-mono-red-3} if $C_0$ is large enough.
The case $r(\tau_n) \leq -C_0 \eee^{-(\frac 12 + \beta)L(\tau_n)}$ is similar.

From \eqref{eq:bounds-v-equal-1} and \eqref{eq:q-mono-red-3p} we deduce
\begin{equation}
\label{eq:q-mono-red-4}
|L'(t) - 2\kappa\eee^{-L(t)/2}| \lesssim \eee^{-(\frac 12 + \beta)L(t)}\ \Leftrightarrow\ \big|(\eee^{L(t)/2})' - \kappa\big| \lesssim \eee^{-\beta L(t)} \ll 1.
\end{equation}
Integrating, we obtain $L(t) = 2\log t + O(1)$. We can reinsert this to \eqref{eq:q-mono-red-4},
integrate, and obtain the more precise bound
\begin{equation}
\label{eq:q-mono-red-5}
|L(t) - 2\log(\kappa t)| \lesssim t^{-2\beta}.
\end{equation}
This implies $|\kappa^2 \eee^{-L(t)} - t^{-2}| \lesssim t^{-2 - 2\beta}$.
Thus Lemma~\ref{lem:red-ham-same} and \eqref{eq:bounds-v-equal-0} yield
\begin{equation}
|v_1'(t) - t^{-2}| \lesssim t^{-2-2\beta}, \qquad |x_1'(t) - v_1| \lesssim t^{-1-2\beta}.
\end{equation}
Since we assume $\lim_{t\to\infty}v_1(t) = 1$, the first inequality above yields $|v_1(t) - (1 - 1/t)| \lesssim t^{-(1+2\beta)}$.
Now the second inequality yields $|x_1(t) - (t + x_1^\infty - \log(\kappa t))| \lesssim t^{-2\beta}$
for some $x_1^\infty \in \bR$. Analogously, we obtain
$|v_2(t) - (1 + 1/t)| \lesssim t^{-(1+2\beta)}$ and
$|x_2(t) - (t + x_2^\infty + \log(\kappa t))| \lesssim t^{-2\beta}$.
Since $L(t) = x_2(t) - x_1(t)$, \eqref{eq:q-mono-red-5} yields $x_1^\infty = x_2^\infty = x^\infty$.
This finishes the proof of \eqref{eq:bounds-v-equal-3}.

We proceed to the proof of existence and uniqueness.
We introduce the new ``time'' variable $s$ by $t = {\kappa}^{-1}\eee^{s}$.
Given $x^\infty \in \bR$, we define $(\wt {\bs x}(s), \wt {\bs v}(s))$ by
\begin{equation}
\label{eq:corresp}
\begin{aligned}
\wt{\bs x}(s) &:= \bs x(\kappa^{-1}\eee^s) - \big(\kappa^{-1}\eee^s\bs 1 + x^\infty\bs 1 + s\bs \iota\big), \\
\wt{\bs v}(s) &:= \kappa^{-1}\eee^s\big(\bs v(\kappa^{-1}\eee^s) - (\bs 1 + \kappa \eee^{-s}\bs\iota)\big).
\end{aligned}
\end{equation}
We have
\begin{equation}
\begin{aligned}
\wt{\bs x}'(s) &= \kappa^{-1}\eee^s\bs x'(\kappa^{-1}\eee^s) - \big(\kappa^{-1}\eee^s\bs 1 +\bs \iota\big), \\
\wt{\bs v}'(s) &= \kappa^{-2}\eee^{2s}\bs v'(\kappa^{-1}\eee^s) + \kappa^{-1}\eee^s \bs v(\kappa^{-1}\eee^s) - \kappa^{-1}\eee^s \bs 1 \\
%&= \kappa^{-2}\eee^{2s}\bs v'(\kappa^{-1}\eee^s) + \wt{\bs v}(s) + \kappa^{-1}\eee^s \bs 1 + \bs\iota
%-\kappa^{-1}\eee^{s}\bs 1 \\
&= \kappa^{-2}\eee^{2s}\bs v'(\kappa^{-1}\eee^s) + \wt{\bs v}(s) + \bs\iota.
\end{aligned}
\end{equation}
Set
\begin{equation}
\begin{aligned}
\wt X(s, \wt{\bs x}, \wt{\bs v}) &:= {\kappa}^{-1}\eee^{s}X(
{\kappa}^{-1}\eee^{s}\bs 1 + x^\infty\bs 1 + s\bs \iota + \wt{\bs x},
\bs 1 + {\kappa}\eee^{-s}\bs \iota + \kappa\eee^{-s}\wt{\bs v}) - (\kappa^{-1}\eee^s\bs 1 + \bs \iota) - \wt{\bs v}, \\
\wt V(s, \wt{\bs x}, \wt{\bs v}) &:= \kappa^{-2}\eee^{2s} V({\kappa}^{-1}\eee^{s}\bs 1 + x^\infty\bs 1 + s\bs \iota + \wt{\bs x},
\bs 1 + {\kappa}\eee^{-s}\bs \iota + \kappa\eee^{-s}\wt{\bs v}) + \bs\iota - (\wt x_2 - \wt x_1)\bs \iota,
\end{aligned}
\end{equation}
so that
\begin{equation}
\begin{aligned}
\wt{X}(s, \wt{\bs x}(s), \wt{\bs v}(s)) &= \kappa^{-1}\eee^s X(\bs x(\kappa^{-1} \eee^s), \bs v(\kappa^{-1} \eee^s))
- (\kappa^{-1}\eee^s\bs 1 + \bs \iota) - \wt{\bs v}(s), \\
\wt{V}(s, \wt{\bs x}(s), \wt{\bs v}(s)) &= \kappa^{-2}\eee^{2s} V(\bs x(\kappa^{-1} \eee^s), \bs v(\kappa^{-1} \eee^s))
+\bs \iota - (\wt x_2(s) - \wt x_1(s))\bs\iota.
\end{aligned}
\end{equation}
We obtain that $(\bs x(t), \bs v(t))$ solves \eqref{eq:constrained} if and only if $(\wt{\bs x}(s), \wt {\bs v}(s))$
solves
\begin{equation}
\begin{pmatrix} \wt{\bs x}'(s) \\ \wt{\bs v}'(s)\end{pmatrix}
= T\begin{pmatrix} \wt{\bs x}(s) \\ \wt{\bs v}(s)\end{pmatrix}
+ \begin{pmatrix} \wt X(s, \wt{\bs x}(s), \wt{\bs v}(s)) \\ \wt V(s, \wt{\bs x}(s), \wt{\bs v}(s))\end{pmatrix},
\end{equation}
where
\begin{equation}
\begin{aligned}
T\begin{pmatrix} \wt{\bs x} \\ \wt{\bs v}\end{pmatrix} := \begin{pmatrix} \wt{\bs v} \\ \wt{\bs v} + (\wt x_2 - \wt x_1)\bs\iota\end{pmatrix}\ \Leftrightarrow \  T &:= \begin{pmatrix} 0 & 0 & 1 & 0 \\ 0 & 0 & 0 & 1 \\ 1 & -1 & 1 & 0 \\ -1 & 1 & 0 & 1\end{pmatrix}.
\end{aligned}
\end{equation}
Note that the lower left quarter of the matrix $T$ is related to the linearisation of $\bs\phi(\bs x)$.
The matrix $T$ has eigenvalues $-1, 0, 1, 2$, and $\cY_s := (-1,1,1,-1) = (\bs\iota, -\bs \iota)$ satisfies $T\cY_s = -\cY_s$.

Observe that $L(t) \sim 2s$ from \eqref{eq:q-mono-red-5}. Thus, from Lemma~\ref{lem:red-ham-same} we obtain that there exists $\beta > 0$
such that for $s$ large enough and $|\wt{\bs x}| + |\wt{\bs v}| +|\wt{\bs y}| +|\wt{\bs w}| \leq 1$ we have
\begin{gather}
|\wt X(s, \wt{\bs x}, \wt{\bs v})| + |\wt V(s, \wt{\bs x}, \wt{\bs v})| \lesssim \eee^{-\beta s}
+ |\wt{\bs x}|^2, \\
\begin{aligned}
&|\wt X(s, \wt{\bs x}, \wt{\bs v}) - \wt X(s, \wt{\bs y}, \wt{\bs w})| +
|\wt V(s, \wt{\bs x}, \wt{\bs v}) - \wt V(s, \wt{\bs y}, \wt{\bs w})| \\
&\qquad\qquad\qquad\lesssim (\eee^{-\beta s}
+ |\wt{\bs x}| + |\wt{\bs y}|)(|\wt{\bs x} - \wt{\bs y}| + |\wt{\bs v} - \wt{\bs w}|).
\end{aligned}
\end{gather}
From \eqref{eq:bounds-v-equal-3}, we obtain that if $(\bs x(t), \bs v(t))$
solves \eqref{eq:constrained}, then $|\wt{\bs x}(s)| + |\wt{\bs v}(s)| \leq \eee^{-\beta s}$ for $s$ large.
Let $\{(\wt {\bs x}_a, \wt {\bs v}_a): a \in\bR\}$ be the family of exponentially decaying solutions given by Proposition~\ref{prop:one-negative}. Let $(\bs x, \bs v)$ be the solution of \eqref{eq:constrained}
corresponding to $(\wt {\bs x}_0, \wt {\bs v}_0)$ through \eqref{eq:corresp}, in other words
\begin{equation}
\begin{aligned}
\bs x(t) &= \wt{\bs x}_0\big(\log(\kappa t)\big) + t\bs 1 + x^\infty \bs 1 + \log(\kappa t)\bs\iota, \\
\bs v(t) &= t^{-1}\wt{\bs v}_0\big(\log(\kappa t)\big) + \bs 1 + t^{-1}\bs\iota.
\end{aligned}
\end{equation}
Let $t^\infty \in \bR$ and define $(\sh{\bs x}, \sh{\bs v})$ by
\begin{equation}
(\sh{\bs x}(t), \sh{\bs v}(t)) := (\bs x(t - t^\infty) + t^\infty\bs 1, \bs v(t - t^\infty)),
\end{equation}
which is also a solution of \eqref{eq:constrained} for $t$ large enough. Let $(\sh{\wt{\bs x}}, \sh{\wt{\bs v}})$
be defined by \eqref{eq:corresp} with $(\bs x, \bs v)$ replaced by $(\sh{\bs x}, \sh{\bs v})$.
We thus have
\begin{equation}
\begin{aligned}
\sh{\wt{\bs x}}(s) &= \sh{\bs x}(\kappa^{-1}\eee^s) - (\kappa^{-1}\eee^s \bs 1 + x^\infty \bs 1 + s\bs \iota) \\
&= \bs x(\kappa^{-1}\eee^s - t^\infty) - \big((\kappa^{-1}\eee^s - t^\infty)\bs 1 + x^\infty \bs 1 + s\bs\iota\big) \\
&= \wt{\bs x}_0\big(\log(\eee^s - \kappa t^\infty)\big) + \big(\log(\eee^s - \kappa t^\infty) - s\big)\bs\iota \\
&= \wt{\bs x}_0\big(s + \log(1 - \kappa t^\infty \eee^{-s})\big) + \log(1 - \kappa t^\infty\eee^{-s})\bs\iota
\end{aligned}
\end{equation}
and
\begin{equation}
\begin{aligned}
\sh{\wt{\bs v}}(s) &= \kappa^{-1}\eee^s\big(\sh{\bs v}(\kappa^{-1}\eee^s) - (\bs 1 + \kappa \eee^{-s}\bs\iota)\big)
= \kappa^{-1}\eee^s\big({\bs v}(\kappa^{-1}\eee^s - t^\infty) - (\bs 1 + \kappa \eee^{-s}\bs\iota)\big) \\
&= \kappa^{-1}\eee^s\big((\kappa^{-1}\eee^s - t^\infty)^{-1}\wt{\bs v}_0\big(\log(\eee^s - \kappa t^\infty)\big)
+ \big((\kappa^{-1}\eee^s - t^\infty)^{-1} - \kappa\eee^{-s}\big)\bs\iota\big) \\
&=(1 - \kappa t^\infty \eee^{-s})^{-1}\wt{\bs v}_0\big(s + \log(1 - \kappa t^\infty \eee^{-s})\big)
+ \big((1 - \kappa t^\infty \eee^{-s})^{-1} - 1\big)\bs\iota.
\end{aligned}
\end{equation}
Since $|\wt{\bs x}_0'(s)| \lesssim \eee^{-\beta s}$ for all $s$ large enough, we have
$
\big|\wt{\bs x}_0\big(s + \log(1 - \kappa t^\infty \eee^{-s})\big) - \wt{\bs x}_0(s)\big| \lesssim \eee^{-(1+\beta)s}
$, thus
\begin{equation}
\sh{\wt{\bs x}}(s) - \wt{\bs x}_0(s) = -\kappa t^\infty \eee^{-s}\bs\iota + O(\eee^{-(1+\beta)s}).
\end{equation}
Similarly,
\begin{equation}
\sh{\wt{\bs v}}(s) - \wt{\bs v}_0(s) = \kappa t^\infty \eee^{-s}\bs\iota + O(\eee^{-(1+\beta)s}).
\end{equation}
Proposition~\ref{prop:one-negative} yields $(\sh{\wt{\bs x}}, \sh{\wt{\bs v}}) = (\wt{\bs x}_a, \wt{\bs v}_a)$
with $a := -\kappa t^\infty$, in particular there is a one-to-one correspondence between $t^\infty$ and $a$.
\end{proof}

\providecommand{\noopsort}[1]{}

\end{document}